\def\var{\mbox{Var}}
\newtheorem{thrm}{Theorem}[section]
\newtheorem{prte}[thrm]{Proposition}
\newtheorem{lemma}[thrm]{Lemma}
\newtheorem{cor}[thrm]{Corollary}
 \newtheorem{defi}{Definition}[section]
\thanks{Laboratoire de Math\'ematiques UMR 8628, Universit\'e Paris-Sud, 91405 Osay}
\thanks{INRIA Saclay, Projet SELECT, Universit\'e Paris-Sud, 91405 Osay}}
\begin{document}
\makeRR   % cas d'un rapport de recherche
%% \makeRT % cas d'un rapport technique.
%% a partir d'ici, chacun fait comme il le souhaite

\section{Introduction}

\subsection{Regression model}

We consider the following regression model
\begin{eqnarray}\label{modele_regression}
Y & = & X\theta + \epsilon\ ,
\end{eqnarray}
where $\theta$ is an unknown vector of $\mathbb{R}^{p}$. The row vector 
$X:=(X_i)_{1\leq i\leq p}$ follows a real zero mean Gaussian distribution with non singular covariance 
matrix $\Sigma$ and $\epsilon$ is a real zero mean Gaussian random variable 
independent of $X$ with variance $\sigma^2$. The variance of $\epsilon$ 
corresponds to the conditional variance of $Y$ given $X$, $\var(Y|X)$. In the sequel, the parameters $\theta$, $\Sigma$, and $\sigma^2$ are considered as unknown. \\ 

Suppose we are given $n$ i.i.d. replications of the vector $(Y,X)$. We respectively write ${\bf Y}$ and ${\bf 
X}$ for the vector of  $n$ observations of $Y$ and the $n\times p$ matrix of observations of $X$. In the present work, we propose a new procedure to estimate the vector $\theta$, when the matrix $\Sigma$ and the variance $\sigma^2$ are both unknown. This corresponds to estimating the conditional expectation of the variable $Y$ given the random vector $X$.
Besides, we want to handle the difficult case of high-dimensional data, i.e. the number of covariates $p$ is possibly much larger than $n$. This estimation problem is equivalent to building a suitable predictor of $Y$ given the covariates $(X_i)_{1\leq i\leq p}$. Classically, we shall use the mean-squared prediction error to assess the quality of our estimation. For any $(\theta_1,\theta_2)\in\mathbb{R}^p$, it is defined by
 \begin{eqnarray}\label{definition_perte}
l(\theta_1,\theta_2) := \mathbb{E}\left[\left(X\theta_1-X\theta_2\right)^2\right]\ .
\end{eqnarray}

\subsection{Applications to Gaussian graphical models (GGM)}

Estimation in the regression model (\ref{modele_regression}) is mainly motivated by the study of Gaussian graphical models (GGM). Let $Z$ be a Gaussian random vector indexed by the elements of a finite set $\Gamma$. The vector $Z$ is a GGM with respect to an undirected graph $\mathcal{G}=(\Gamma,E)$ if for any couple $(i,j)$ which is not contained in the edge set $E$, $Z_i$ and $Z_j$ are independent, given the remaining variables. See Lauritzen \cite{lauritzen96} for definitions and main properties of GGM. Estimating the neighborhood of a given point $i\in \Gamma$ is equivalent to estimating the support of the regression of $Z_i$  with respect to the covariates $(Z_j)_{j\in \Gamma\setminus\{i\}}$. Meinshausen and B\"uhlmann \cite{meinshausen06} have taken this point of view in order to estimate the graph of a GGM. Similarly, we can apply the model selection procedure we shall introduce in this paper to estimate the support of the regression  and therefore the graph $\mathcal{G}$ of a GGM.

Interest in these models has grown since they allow the description of dependence structure of high-dimensional data. As such, they are widely used in spatial statistics \citep{cressie,rue} or probabilistic expert systems \citep{cowell}. More recently, they have been applied to the analysis of microarray data. The challenge is to infer the network regulating the expression of the genes using only a small sample of data, see for instance Sch\"afer and Strimmer \cite{schafer05}, or Wille \emph{et al.} \cite{wille04}.

This has motivated the search for new estimation procedures to handle the linear regression model (\ref{modele_regression}) with Gaussian random design. Finally, let us mention that the model (\ref{modele_regression}) is also of interest when estimating the distribution of directed graphical models or more generally the joint distribution of a large Gaussian random vector. Estimating the joint distribution of a Gaussian vector $(Z_i)_{1\leq i\leq p}$ indeed amounts to estimating the conditional expectations and variance of $Z_i$ given $(Z_j)_{1\leq j\leq i-1}$ for any $1\leq i\leq p$.

\subsection{General oracle inequalities}

Estimation of high-dimensional Gaussian linear models has now attracted a lot of attention. Various procedures have been proposed to perform the estimation of $\theta$ when $p>n$. The challenge at hand it to design estimators that are both computationally feasible and are proved to be efficient. The Lasso estimator has been introduced by Tibshirani \cite{tib96}.
Meinshausen and B\"uhlmann \cite{meinshausen06} have shown that this estimator is consistent under a neighborhood stability condition. These convergence results were refined in the works of Zhao and Yu \cite{Zhao06}, Bunea \emph{et al.} \cite{bunea07}, Bickel \emph{et al.} \cite{bickeltsy08}, or Cand\`es and Plan \cite{CandesPlan07} in a slightly different framework. Cand\`es and Tao \cite{candes08} have also introduced the Dantzig-selector procedure which performs similarly as $l_1$ penalization methods. In the more specific context of GGM, B\"uhlmann and Kalisch \cite{buhlmann07} have analyzed the PC algorithm and have proven its consistency when the GGM follows a faithfulness assumption. All these methods share an attractive computational efficiency and most of them are proven to converge at the optimal rate when the covariates are nearly independent. However, they also share two main drawbacks. First, the $l_1$ estimators are known to behave poorly when the covariates are highly correlated and even for some covariance structures with small correlation (see e.g. \citep{CandesPlan07}). Similarly, the PC algorithm is not consistent if the faithfulness assumption is not fulfilled. Second, these procedures do not allow to integrate some biological or physical prior knowledge. Let us provide two examples. Biologists sometimes have a strong preconception of the underlying biological network thanks to previous experimentations. For instance, Sachs \emph{et al.} \cite{Sachs05}) have produced  multivariate flow cytometry data in order to study a human T cell signaling pathway. Since this pathway has important medical implications, it was already extensively studied and a network is conventionally accepted (see \cite{Sachs05}). For this particular example, it could be more interesting to check whether some interactions were forgotten or some unnecessary interactions were added in the model than performing a complete graph estimation. Moreover, the covariates have in some situations a temporal or spatial interpretation. In such a case, it is natural to introduce an \emph{order} between the covariates, by assuming that a covariate which is \emph{close} (in space or time) to the response $Y$ is more likely to be significant. Hence, an ordered variable selection method is here possibly more relevant than the complete variable selection methods previously mentioned.\\

Let us emphasize the main differences of our estimation setting with related studies in the literature. Birg\'e and Massart \cite{birge2001} consider model selection in a fixed design setting with known variance. Bunea \emph{et al.} \cite{tsybakov_agregation_07}  also suppose that the variance is known. Yet, they consider a random design setting, but they assume that the 
regression functions are bounded (Assumption A.2 in their paper) which is not the case here. Moreover, they obtain risk bounds with respect to the  empirical norm $\|{\bf X}(\widehat{\theta}-\theta)\|^2_n$ and not the integrated loss $l(.,.)$. Here, $\|.\|_n$ refers to the canonical norm in $\mathbb{R}^n$ reweighted by $\sqrt{n}$. As mentioned earlier, our objective is to infer the conditional expectation of $Y$ given $X$. Hence, it is more significant to assess the risk with respect to the loss $l(.,.)$. Baraud \emph{et al.} \cite{baraud08} consider fixed design regression but do not assume that the variance is known.

Our objective is twofold. First, we introduce a general model selection procedure that is very flexible and allows to integrate any prior knowledge on the regression.  We prove non-asymptotic oracle inequalities that hold without any assumption on the correlation structure between the covariates. Second, we obtain non-asymptotic rates of estimation for our model $(\ref{modele_regression})$ that help us to derive adaptive properties for our criterion. \\

In the sequel, a \emph{model} $m$ stands for a subset  of $\{1,\ldots , p\}$. We note $d_m$ the size of $m$ whereas the linear space $S_m$ refers to the set of vectors $\theta\in\mathbb{R}^p$ whose components outside $m$ equal zero. If $d_m$ is smaller than $n$, then we define $\widehat{\theta}_m$ as the least-square estimator of $\theta$ over $S_m$. In the sequel, $\Pi_m$ stands for the projection of $\mathbb{R}^n$ into the space generated by $({\bf X}_i)_{i\in m}$.
Hence, we have the relation ${\bf X}\widehat{\theta}_m=\Pi_m{\bf Y}$. 
Since the covariance matrix $\Sigma$ is non singular, observe that almost surely the rank of $\Pi_m$ is $d_m$.
Given a collection $\mathcal{M}$ of models, our purpose is to select a model $\widehat{m}\in\mathcal{M}$ that exhibits a risk as small as possible with respect to the prediction loss function $l(.,.)$ defined in (\ref{definition_perte}). The model $m^*$  that minimizes the risks $\mathbb{E}[l(\widehat{\theta}_m,\theta)]$ over the whole collection $\mathcal{M}$ is called an oracle. Hence, we want to perform as well as the oracle $\widehat{\theta}_{m^*}$. However, we do not have access to $m^*$ as it requires the knowledge of the true vector $\theta$. A classical method to estimate a \emph{good} model $\widehat{m}$ is achieved through \emph{penalization} with respect to the complexity of models. In the sequel, we shall select the model $\widehat{m}$ as
\begin{eqnarray}\label{methode_penalisation} 
  \widehat{m}:= \arg\min_{m\in\mathcal{M}}\text{Crit}(m):=\arg\min_{m\in\mathcal{M}}\|{\bf Y} -\Pi_m {\bf Y}\|_n^2\left[1+pen(m)\right]\ ,
\end{eqnarray}
where $pen(.)$ is a positive function defined on $\mathcal{M}$. Besides, we recall that $\|.\|_n$ refers to the canonical norm in $\mathbb{R}^n$ reweighted by $\sqrt{n}$. Observe that $Crit(m)$ is the sum of the least-square error $\|{\bf Y} -\Pi_m {\bf Y}\|_n^2$ and a penalty term $pen(m)$ rescaled by the least-square error in order to come up with the fact that the conditional variance  $\sigma^2$ is unknown. We precise in Section \ref{section_procedure} the heuristics underlying this model selection criterion. Baraud \emph{et al.} \cite{baraud08} have extensively studied this penalization method in the fixed design Gaussian regression framework with unknown variance. In their introduction, they explain how one may retrieve classical criteria like  AIC \cite{akaike}, BIC \cite{schwarz78}, and FPE \cite{akaike69} by choosing a suitable penalty function $pen(.)$.\\

This model selection procedure is really flexible through  the choices of the collection $\mathcal{M}$ and of the penalty function $pen(.)$. Indeed, we may perform complete variable selection by taking the collection of subsets of $\{1,\ldots ,p\}$ whose is smaller than some integer $d$. Otherwise, by taking a nested collection of models, one performs ordered variable selection. We give more details in Sections \ref{section_procedure} and \ref{section_oracle}. If one has some prior idea on the true model $m$, then one could only consider the collection of models that are close in some sense to $m$. Moreover, one may also give a Bayesian flavor to the penalty function $pen(.)$ and hence specify some prior knowledge on the model.  \\

First, we state a non-asymptotic oracle inequality when the complexity of the collection $\mathcal{M}$ is small and for penalty functions $pen(m)$ that are larger than $Kd_m/(n-d_m)$ with $K>1$. Then, we prove that the FPE criterion of Akaike \cite{akaike69} which corresponds to the choice $K=2$ achieves an asymptotic exact oracle inequality for the special case of ordered variable selection. For the sake of completeness, we prove that choosing $K$ smaller than one yields to terrible performances.  \\

In Section \ref{Section_oracle_general}, we consider general collection of models $\mathcal{M}$. By introducing new penalties that take into account the complexity of $\mathcal{M}$  as in \citep{massart_pente}, we are able to state a non-asymptotic oracle inequality. In particular, we consider the problem of complete variable selection. In Section \ref{section_prior}, we define penalties based on a prior distribution on $\mathcal{M}$. We then derive the corresponding risk bounds.\\

Interestingly, these rates of convergence do not depend on the covariance matrix $\Sigma$ of the covariates, whereas known results on the Lasso or the Dantzig selector rely on some assumptions on $\Sigma$, as discussed in Section \ref{Section_oracle_general}.  We illustrate in Section \ref{section_simulation} on simulated examples that for some covariance matrices $\Sigma$ the Lasso performs poorly whereas our methods still behaves well. Besides, our penalization method does not require the knowledge of the conditional variance $\sigma^2$. In contrast, the Lasso and the Dantzig selector are constructed for known variance. Since $\sigma^2$ is unknown, one either has to estimate it or has to use a cross-validation method in order to calibrate the penalty. In both cases, there is some room for improvements for the practical calibration of these estimators.

However, our model selection procedure suffers from a computational cost that depends linearly on the size of the collection $\mathcal{M}$.
For instance, the complete variable selection problem is NP-hard. This makes it intractable when $p$ becomes too large (i.e. more than 50). In contrast, our criterion applies for arbitrary $p$ when considering ordered variable selection since the size of $\mathcal{M}$ is linear with $n$. We shall mention in the discussion some possible extensions that we hope can cope with the computational issues.\\

In a simultaneous and independent work to ours, Giraud \cite{giraud08} applies an analogous procedure to estimate the graph of a GGM. Using slightly different techniques, he obtains non-asymptotic results that are complementary to ours. However, he performs an unnecessary thresholding to derive an upper bound of the risk.
Moreover, he does not consider the case of nested collections of models as we do in Section \ref{ordered_selection}. Finally, he does not derive minimax rates of estimation.

\subsection{Minimax rates of estimation}

In order to assess the optimality of our procedure, we investigate in Section \ref{Section_minimax} the minimax rates of estimation for ordered and complete variable selection. For ordered variable selection, we compute the minimax rate of estimation over ellipsoids which is analogous to the rate obtained in the fixed design framework. We derive that our penalized estimator is adaptive to the collection of ellipsoids independently of the covariance matrix $\Sigma$. For complete variable selection, we prove that the minimax rates of estimator of vectors $\theta$ with at most $k$ non-zero components is of order $\frac{k\log p}{n}$ when the covariates are independent. This is again coherent with the situation observed in the fixed design setting. Then, the estimator $\widetilde{\theta}$ defined for complete variable selection problem is shown to be adaptive to any sparse vector $\theta$. Moreover, it seems that the minimax rates may become faster when the matrix $\Sigma$ is far from identity. We investigate this phenomenon in Section \ref{section_sparse}. All these minimax rates of estimation are, to our knowledge, new in the Gaussian random design regression. Tsybakov \cite{tsybakov_minimax} has derived minimax rates of estimation in a general random design regression setup, but his results do not apply in our setting as explained in Section \ref{section_sparse}.

\subsection{Organization of the paper and some notations}

In Section \ref{section_procedure}, we precise our estimation procedure and explain the heuristics underlying the penalization method. The main results are stated in Section \ref{section_oracle}. In Section \ref{Section_minimax}, we derive the different minimax rates of estimation and assess the adaptivity of the penalized estimator $\widehat{\theta}_{\widehat{m}}$. We perform a simulation study and compare the behaviour of our estimator with Lasso and adaptive Lasso in Section \ref{section_simulation}. Section \ref{section_discussion} contains a final discussion and some extensions, whereas the proofs are postponed to Section \ref{section_proofs}.\\

Throughout the paper, $\|.\|_n^2$ stands for the square of the canonical norm in $\mathbb{R}^n$ reweighted by $n$. For any vector $Z$ of size $n$, we recall that $\Pi_mZ$ denotes the orthogonal projection of $Z$ onto the space generated by $({\bf X}_i)_{i\in m}$. The notation $X_m$ stands for $(X_i)_{i\in m}$ and ${\bf X}_m$ represents the $n\times d_m$ matrix of the $n$ observations of $X_m$. For the sake of simplicity, we write $\widetilde{\theta}$ for the penalized estimator $\widehat{\theta}_{\widehat{m}}$. For any $x>0$, $\lfloor x\rfloor$ is the largest integer smaller than $x$ and $\lceil x\rceil$ is the smallest integer larger than $x$. Finally, $L$, $L_1$, $L_2$,$\ldots$ denote universal constants that may vary from line to line. The notation $L(.)$ specifies the dependency on some quantities. 

\section{Estimation procedure}\label{section_procedure}

Given a collection of models $\mathcal{M}$ and a penalty $pen:\mathcal{M}\rightarrow \mathbb{R}^{+}$, the estimator $\widetilde{\theta}$ is computed as follows:\\

\fbox{
\begin{minipage}{0.9\textwidth}
{\bf Model selection procedure}
\begin{enumerate}
 \item  Compute $\widehat{\theta}_m=\arg\min_{\theta'\in S_m}\|Y-X\theta'\|^2_n$  for all  models $m\in\mathcal{M}$.
\item Compute $\widehat{m}:= \arg\min_{m\in\mathcal{M}}\|{\bf Y} -{\bf X}\widehat{\theta}_m\|_n^2\left[1+pen(m)\right]$.
\item $\widetilde{\theta}:=\widehat{\theta}_{\widehat{m}}$.
\end{enumerate}
\end{minipage}
}
\vspace{0.5cm}

The choice of the collection $\mathcal{M}$ and the penalty function $pen(.)$ depends on the problem under study. In what follows, we provide some preliminary results for the parametric estimators $\widehat{\theta}_m$ and we give an heuristic explanation for our penalization method.

For any vector $\theta'$ in $\mathbb{R}^p$, we define the mean-squared error $\gamma(.)$ and its empirical counterpart $\gamma_n(.)$ as 
\begin{eqnarray}\label{defi_gamman}
 \gamma(\theta') := \mathbb{E}_{\theta}\left[\left(Y-X\theta'\right)^2\right] \hspace{0.5cm} \text{and}\hspace{0.5cm}\gamma_n (\theta'):= \left\|{\bf Y} - {\bf X}\theta'\right\|_n^2\ .
\end{eqnarray}
The function $\gamma(.)$ is closely connected to the loss function $l(.,.)$ through the relation
$l(\beta,\theta) = \gamma(\beta) - \gamma(\theta)$.

Given a model $m$ of size strictly smaller than $n$, we refer to $\theta_m$ as the unique minimizer of $\gamma(.)$ over the subset $S_m$. It then follows that
$\mathbb{E}\left(Y|X_m\right) = \sum_{i\in m}\theta_iX_i$  and $\gamma(\theta_m)$ is the conditional variance of $Y$ given $X_m$.
As for it, the least squares estimator $\widehat{\theta}_m$ is the  minimizer of  $\gamma_n(.)$ over the space $S_m$.
\begin{eqnarray*}
 \widehat{\theta}_m := \arg \min_{\theta'\in S_m} \gamma_n(\theta')\, \text{ a.s. }.
\end{eqnarray*}
It is almost surely uniquely defined since $\Sigma$ is assumed to be non-singular and since $d_m<n$.
Besides $\gamma_n(\widehat{\theta}_m)$ equals $\| {\bf Y}-\Pi_m{\bf Y}\|_n^2$. Let us derive two simple properties of $\widehat{\theta}_m$ that will give us some hints to perform model selection.
\begin{lemma}\label{prte_basique}
For any model $m$ whose dimension is smaller than $n-1$, the expected mean-squared error of $\widehat{\theta}_m$ and the expected least squares of $\widehat{\theta}_m$ respectively equal
\begin{eqnarray}\label{esperance_gamma}
\mathbb{E}\left[\gamma(\widehat{\theta}_m)\right] & = &\left[l(\theta_m,\theta)+\sigma^2\right]
\left(1 + \frac{d_m}{n-d_m-1}\right)\ ,\\
\label{esperance_gamman}
\mathbb{E} \left[\gamma_n(\widehat{\theta}_m)\right] & = & \left[l(\theta_m,\theta)+\sigma^2\right]
\left(1 -\frac{d_m}{n}\right)\ .
\end{eqnarray}
\end{lemma}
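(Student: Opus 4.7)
The plan is to condition on the design restricted to $m$ and exploit a clean Gaussian orthogonal decomposition of ${\bf Y}$ that separates what lies in $\text{span}({\bf X}_m)$ from a noise independent of ${\bf X}_m$. The key reduction is to introduce the random variable $\eta := Y - X\theta_m$. Since $\theta_m$ minimizes $\gamma$ over $S_m$, the first-order conditions give $\mathbb{E}[X_i(Y - X\theta_m)] = 0$ for every $i\in m$; combined with the joint Gaussianity of $(Y, X)$, this upgrades uncorrelation into independence, so $\eta \sim \mathcal{N}(0, \gamma(\theta_m))$ with $\gamma(\theta_m) = \sigma^2 + l(\theta_m, \theta)$ is independent of $X_m$. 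Applied coordinate-wise to the $n$ i.i.d. samples, the vector $\boldsymbol{\eta} := {\bf Y} - {\bf X}\theta_m$ has i.i.d. $\mathcal{N}(0, \gamma(\theta_m))$ entries and is independent of ${\bf X}_m$.

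For (\ref{esperance_gamman}), I would observe that $\theta_m \in S_m$ forces $\Pi_m {\bf X}\theta_m = {\bf X}\theta_m$, so $(\mathrm{I}_n - \Pi_m){\bf Y} = (\mathrm{I}_n - \Pi_m)\boldsymbol{\eta}$. Conditioning on ${\bf X}_m$, the projector $\Pi_m$ has rank $d_m$ almost surely (non-singular $\Sigma$), and by independence of $\boldsymbol{\eta}$ from ${\bf X}_m$,
$$
\mathbb{E}\bigl[\|(\mathrm{I}_n - \Pi_m)\boldsymbol{\eta}\|^2 \,\big|\, {\bf X}_m\bigr] = \gamma(\theta_m)\,\mathrm{tr}(\mathrm{I}_n - \Pi_m) = \gamma(\theta_m)(n - d_m).
$$
Dividing by $n$ gives the claimed formula.

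For (\ref{esperance_gamma}), I would first write $\gamma(\widehat{\theta}_m) = \sigma^2 + l(\widehat{\theta}_m, \theta)$ and use the $\Sigma$-Pythagorean identity $l(\widehat{\theta}_m, \theta) = l(\widehat{\theta}_m, \theta_m) + l(\theta_m, \theta)$, which holds because $\widehat{\theta}_m - \theta_m \in S_m$ while $\theta - \theta_m$ is $\Sigma$-orthogonal to $S_m$ by definition of $\theta_m$. It then suffices to evaluate $\mathbb{E}[l(\widehat{\theta}_m, \theta_m)]$. The least-squares formula applied to ${\bf Y} = {\bf X}_m\theta_m|_m + \boldsymbol{\eta}$ gives $\widehat{\theta}_m|_m - \theta_m|_m = ({\bf X}_m^T {\bf X}_m)^{-1} {\bf X}_m^T \boldsymbol{\eta}$, so conditionally on ${\bf X}_m$ this is a centered Gaussian with covariance $\gamma(\theta_m)({\bf X}_m^T {\bf X}_m)^{-1}$. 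Writing $\Sigma_m$ for the $m\times m$ block of $\Sigma$, I would then get
$$
\mathbb{E}\bigl[l(\widehat{\theta}_m, \theta_m) \,\big|\, {\bf X}_m\bigr] = \gamma(\theta_m)\,\mathrm{tr}\!\left(\Sigma_m ({\bf X}_m^T {\bf X}_m)^{-1}\right).
$$

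The last step — and the only non-routine one — is to take expectation over ${\bf X}_m$. Here ${\bf X}_m^T {\bf X}_m \sim W_{d_m}(\Sigma_m, n)$ is a Wishart matrix, and the standard inverse-Wishart moment (valid as soon as $n - d_m - 1 > 0$, hence the assumption $d_m \leq n-1$) yields $\mathbb{E}[({\bf X}_m^T {\bf X}_m)^{-1}] = \Sigma_m^{-1}/(n - d_m - 1)$. This collapses the trace to $d_m/(n - d_m - 1)$ and gives $\mathbb{E}[l(\widehat{\theta}_m, \theta_m)] = \gamma(\theta_m)\, d_m/(n - d_m - 1)$. Adding $\gamma(\theta_m) = \sigma^2 + l(\theta_m, \theta)$ produces (\ref{esperance_gamma}). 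The main conceptual obstacle is really the first point — recognizing that in the Gaussian setting the residual $\eta$ is \emph{independent} of $X_m$, not merely uncorrelated — because it is this that reduces the random-design problem to a fixed-design Gaussian computation plus one Wishart moment.
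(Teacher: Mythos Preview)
Your proof is correct and follows essentially the same route as the paper: your residual $\eta$ is exactly the paper's $\epsilon+\epsilon_m$, and both proofs hinge on its independence from $X_m$, reduce $\gamma_n(\widehat{\theta}_m)$ to a $\chi^2(n-d_m)$ computation, and evaluate $\mathbb{E}[l(\widehat{\theta}_m,\theta_m)]$ via the first inverse-Wishart moment. The only cosmetic difference is that the paper first standardizes ${\bf X}_m$ to ${\bf Z}_m={\bf X}_m\Sigma_m^{-1/2}$ so that the Wishart is standard, whereas you keep the $\Sigma_m$ factor and cancel it against the general inverse-Wishart mean --- and note a small slip in your parenthetical: the hypothesis is $d_m<n-1$ (i.e.\ $d_m\leq n-2$), which is precisely what makes $n-d_m-1>0$.
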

The proof is postponed to the Appendix. From Equation (\ref{esperance_gamma}), we derive a bias variance decomposition of the risk of the estimator $\widehat{\theta}_m$:
\begin{eqnarray*}
 \mathbb{E}\left[l(\widehat{\theta}_m,\theta)\right] & = & l(\theta_m,\theta) + \left[\sigma^2 + l(\theta_m,\theta)\right]\frac{d_m}{n-d_m-1} \ .
\end{eqnarray*}
Hence,  $\widehat{\theta}_m$ converges to $\theta_m$ in probability when $n$ converges to infinity. Contrary to the fixed design regression framework, the variance term $\left[\sigma^2 + l(\theta_m,\theta)\right]\frac{d_m}{n-d_m-1}$ depends on the bias term $l(\theta_m,\theta)$. Besides, this variance term does not necessarily increase when the dimension of the model increases. \\

Let us now explain the idea underlying our model selection procedure. We aim at choosing a model $\widehat{m}$ that nearly minimizes the mean-squared error $\gamma(\widehat{\theta}_m)$. Since we do not have access to  $\gamma(\widehat{\theta}_m)$ nor to the bias $l(\theta_m,\theta)$, we perform an unbiased estimation of the risk as done by Mallows  \cite{mallows73} in the fixed design framework.
\begin{eqnarray}
\gamma\left(\widehat{\theta}_m\right)& \approx & \gamma_n\left(\widehat{\theta}_m\right)+ \mathbb{E}\left[\gamma\left(\widehat{\theta}_m\right)-\gamma_n\left(\widehat{\theta}_m\right)\right]\nonumber\\& \approx &\gamma_n\left(\widehat{\theta}_m\right) + \mathbb{E}\left[\gamma_n\left(\widehat{\theta}_m\right)\right]\frac{d_m}{n-d_m}\left[2+\frac{d_m+1}{n-d_m-1}\right]\nonumber \\  
& \approx & \gamma_n\left(\widehat{\theta}_m\right)\left[1+\frac{d_m}{n-d_m}\left(2+\frac{d_m+1}{n-d_m-1}\right)\right] \ .\label{critere_heuristique}
\end{eqnarray}
By Lemma \ref{prte_basique}, these approximations are in fact equalities in expectation. Since the last expression only depends on the data, we may compute its minimizer over the collection $\mathcal{M}$. This approximation is effective and minimizing $(\ref{critere_heuristique})$ provides a good estimator $\widetilde{\theta}$ when the size of the collection $\mathcal{M}$ is moderate as stated in Theorem \ref{ordered_selection}. We recall that $\|{\bf Y}-\Pi_{m}{\bf Y}\|_n^2$ equals $\gamma_n(\widehat{\theta}_m)$. Hence, our previous heuristics would lead to a choice of penalty $pen(m)=\frac{d_m}{n-d_m}\left(2+\frac{d_m+1}{n-d_m-1}\right)$ in our criterion (\ref{methode_penalisation}), whereas FPE criterion corresponds to $pen(m)=\frac{2d_m}{n-d_m}$. These two penalties are equivalent when the dimension $d_m$ is small in front of $n$. In Theorem \ref{ordered_selection}, we explain why these criteria allow to derive approximate oracle inequalities when there is a small number of models. However, when the size of the collections $\mathcal{M}$ increases, we need to design other penalties that take into account the complexity of the collection $\mathcal{M}$ (see Section \ref{Section_oracle_general}).

\section{Oracle inequalities}\label{section_oracle}

\subsection{A small number of models}\label{Section_orderedselection}

In this section, we restrict ourselves to the situation where the collection of models $\mathcal{M}$ only contains a small number of models as defined in \cite{massart_pente} Sect 3.1.2.\\  

($\mathbb{H}_{Pol}$): for each
$d\geq 1$ the number of models $m\in\mathcal{M}$ such that $d_m=d$ grows at most polynomially with respect to $d$. In other words, there exists $\alpha$ and $\beta$ such that for any $d\geq 1$,
$\text{Card}\left(\left\{m\in\mathcal{M},\ d_m=d\right\}\right)\leq \alpha d^{\beta}$.~\\

($\mathbb{H}_{\eta}$): The dimension $d_m$ of every model $m$ in $\mathcal{M}$ is smaller than $\eta n$. Moreover, the number of observations $n$ is larger than $6/(1-\eta)$.\\

Assumption ($\mathbb{H}_{Pol}$) states that there is at most a polynomial number of models with a given dimension. It includes in particular the problem of ordered variable selection, on which we will focus in this section. Let us introduce the collection of models relevant for this issue. For any positive number $i$ smaller or equal to $p$, we define the model $m_i:=\{1,\ldots, i\}$ and the nested collection $\mathcal{M}_i:= \left\{m_0,m_1,\ldots m_i\right\}$. Here, $m_0$ refers to the empty model. Any collection $\mathcal{M}_i$ satisfies ($\mathbb{H}_{Pol}$) with $\beta=0$ and $\alpha=1$.

\begin{thrm}\label{ordered_selection}
Let $\eta$ be any positive number smaller than one. Assume that the collection $\mathcal{M}$ satisfies ($\mathbb{H}_{Pol}$) and ($\mathbb{H}_{\eta}$).
If the penalty $pen(.)$ is lower bounded as follows   
\begin{eqnarray}
pen(m) \geq  K\frac{d_m}{n-d_m} \, \, \text{for
  all $m\in\mathcal{M}$ and some $K>1$}\ ,
\label{hypothese1_penalite}
\end{eqnarray}
then %the penalized estimator $\widetilde{\theta}$ satisfies
\begin{eqnarray}
\mathbb{E}\left[l(\widetilde{\theta},\theta)\right] \leq 
 L(K,\eta)\inf_{m\in\mathcal{M}}\left[l(\theta_m,\theta)+\frac{n-d_m}{n}pen(m)\left[\sigma^2+l(\theta_m,\theta)\right]\right]
 + \tau_n\ ,
\end{eqnarray}
where the error term $\tau_n$ is defined as
\begin{eqnarray*}
\tau_n=\tau_n\left[\var(Y),K,\eta,\alpha,\beta\right]:=L_1(K,\eta,\alpha,\beta)\left[
\frac{\sigma^2}{n}+n^{3+\beta}\var(Y)\exp\left[-nL_2(K,\eta)\right]\right]\ ,
\end{eqnarray*}
and $L_2(K,\eta)$ is positive.
\end{thrm}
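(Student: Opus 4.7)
My plan is to adapt the Birg\'e--Massart penalised model selection argument, already worked out by Baraud, Giraud and Huet \cite{baraud08} in the fixed-design case, to the present Gaussian random design. The key Gaussian-specific fact is that, for each $m$, the residual $\boldsymbol{\epsilon}_m:={\bf Y}-{\bf X}_m\theta_m$ has i.i.d.\ $\mathcal{N}(0,\sigma_m^2)$ entries \emph{independent} of ${\bf X}_m$, where $\sigma_m^2:=\sigma^2+l(\theta_m,\theta)$. This yields two essential identities: $\gamma_n(\widehat{\theta}_m)=\|({\bf I}-\Pi_m)\boldsymbol{\epsilon}_m\|_n^2$ is conditionally distributed as $(\sigma_m^2/n)\chi^2_{n-d_m}$, and the Pythagorean decomposition $l(\widehat{\theta}_m,\theta)=l(\theta_m,\theta)+l(\widehat{\theta}_m,\theta_m)$ holds because $X\theta_m$ is the $L^2$-projection of $X\theta$ onto $S_m$. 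Setting $\widetilde{\bf X}_m:={\bf X}_m\Sigma_m^{-1/2}$, the variance piece further rewrites as $l(\widehat{\theta}_m,\theta_m)=\sigma_m^2\,g^T(\widetilde{\bf X}_m^T\widetilde{\bf X}_m)^{-1}g$, with $g\sim\mathcal{N}(0,I_{d_m})$ independent of the standard Wishart matrix $\widetilde{\bf X}_m^T\widetilde{\bf X}_m$.

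First, I would construct a ``good event'' $\Omega$ on which, uniformly over $m\in\mathcal{M}$, three controls hold: (i) a Laurent--Massart $\chi^2$-deviation for $\gamma_n(\widehat{\theta}_m)/\sigma_m^2$; (ii) an upper bound on $\|\Pi_m\boldsymbol{\epsilon}_m\|_n^2/\sigma_m^2$; and (iii) a Davidson--Szarek lower bound on the smallest singular value of $\widetilde{\bf X}_m$, yielding $l(\widehat{\theta}_m,\theta_m)\le C(\eta)\sigma_m^2 d_m/n$. Hypothesis $(\mathbb{H}_{Pol})$ caps the cardinality of $\mathcal{M}\cap\{d_m\le\eta n\}$ by a polynomial in $n$, so a union bound with deviation parameter $x\asymp n\,L_2(K,\eta)$ gives $\mathbb{P}(\Omega^c)\le L\,n^{\beta+1}\exp(-nL_2(K,\eta))$.

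Second, on $\Omega$ I would inject the $\chi^2$-controls into the minimisation inequality $(1+pen(\widehat{m}))\gamma_n(\widetilde{\theta})\le(1+pen(m))\gamma_n(\widehat{\theta}_m)$ for an arbitrary $m\in\mathcal{M}$. The condition $pen(\widehat{m})\ge Kd_{\widehat{m}}/(n-d_{\widehat{m}})$ forces $(1+pen(\widehat{m}))(n-d_{\widehat{m}})/n\ge 1+(K-1)d_{\widehat{m}}/n$, and rearranging yields
\[
\sigma_{\widehat{m}}^2\bigl[1+(K-1)d_{\widehat{m}}/n\bigr]\le C_1(K,\eta)\,\sigma_m^2\Bigl[1+\tfrac{n-d_m}{n}pen(m)\Bigr].
\]
Combining with the Pythagorean identity, the variance bound $l(\widetilde{\theta},\theta_{\widehat{m}})\le C(\eta)\sigma_{\widehat{m}}^2 d_{\widehat{m}}/n$, and absorbing the extra factor $(1+C(\eta)d_{\widehat{m}}/n)$ into $1+(K-1)d_{\widehat{m}}/n$ (at the cost of a constant depending on $K,\eta$) produces the deterministic oracle inequality
\[
l(\widetilde{\theta},\theta)\le L(K,\eta)\Bigl[l(\theta_m,\theta)+\tfrac{n-d_m}{n}pen(m)(\sigma^2+l(\theta_m,\theta))\Bigr]
\]
on $\Omega$, valid for every $m$; taking the infimum gives the main term.

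Finally, Cauchy--Schwarz gives $\mathbb{E}[l(\widetilde{\theta},\theta)\mathbf{1}_{\Omega^c}]\le\sqrt{\mathbb{E}[l(\widetilde{\theta},\theta)^2]\,\mathbb{P}(\Omega^c)}$. A crude moment bound $\mathbb{E}[l(\widetilde{\theta},\theta)^2]\le L\,n^{\beta+3}\var(Y)^2$ (using $d_{\widehat{m}}<\eta n$ and the polynomial cardinality of $\mathcal{M}$), combined with the exponential decay of $\mathbb{P}(\Omega^c)$, produces the $n^{3+\beta}\var(Y)\exp(-nL_2)$ summand of $\tau_n$. The main technical obstacle is the tight book-keeping on $\Omega$: a constant-level concentration parameter leaves an unwanted $O(\sigma^2)$ additive residue, much worse than the advertised $\sigma^2/n$. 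Closing this gap requires either letting the deviation parameter shrink with $n$ (balanced against the loss in $\mathbb{P}(\Omega)$) or a refined argument that pairs the adverse $\chi^2$ fluctuations of $\boldsymbol{\epsilon}_m$ with the $\sigma^2 d_m/n$ term supplied by $pen(m)\ge Kd_m/(n-d_m)$---mirroring the delicate absorption step in Baraud--Giraud--Huet's fixed-design proof \cite{baraud08}.
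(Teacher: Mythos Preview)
Your overall architecture (good event plus Cauchy--Schwarz on the complement) matches the paper, and you have correctly isolated the Gaussian ingredients. But the gap you flag at the end is real and is not closed by either remedy you suggest. Tuning the deviation parameter to shrink with $n$ cannot help: you need a \emph{fixed} deviation level to keep $\mathbb{P}(\Omega^c)$ exponentially small, yet a fixed level gives multiplicative constants strictly larger than one in your comparison $\sigma_{\widehat m}^2[1+(K-1)d_{\widehat m}/n]\le C_1\,\sigma_m^2[\cdots]$, and once $C_1>1$ you are stuck with an additive $(C_1-1)\sigma^2$.

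The paper avoids this by a different algebraic decomposition. It starts from the comparison at the \emph{deterministic} point $\theta_m$ (not $\widehat\theta_m$), writes $\overline\gamma_n(\cdot)=\gamma_n(\cdot)-\gamma(\cdot)$, and isolates the quantity
\[
-\overline\gamma_n(\widetilde\theta)-\gamma_n(\widetilde\theta)\,pen(\widehat m)-\sigma^2+\|\boldsymbol\epsilon\|_n^2.
\]
This is then bounded above by $l(\widetilde\theta,\theta)\,[A_{\widehat m}\vee(1-\kappa_2)]+\sigma^2 B_{\widehat m}$, where $A_{m'},B_{m'}$ are explicit combinations of $\chi^2$ and Wishart quantities depending on two free parameters $\kappa_1,\kappa_2$. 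The point of this split is twofold. First, on a high-probability event one shows $A_{\widehat m}\le 7/8$, so the $l(\widetilde\theta,\theta)$-term moves to the left with a positive coefficient. Second---and this is what you are missing---one does \emph{not} bound $B_{\widehat m}$ pathwise on $\Omega$; instead one shows $\mathbb{E}[B_{\widehat m}\mathbf 1_{\Omega}]\le L(K,\eta,\alpha,\beta)/n$. The mechanism is that $B_{m'}$ contains positive $\chi^2$-type terms of mean $d_{m'}/n$ \emph{and} the negative penalty term $-K\frac{d_{m'}}{n-d_{m'}}\cdot(\chi^2_{n-d_{m'}}/n)$; after writing the uniform deviation at level $x+\xi_i d_{m'}$ and choosing the $\xi_i$ appropriately, the $d_{m'}/n$ contributions cancel (the ``$U\le 0$'' step), leaving only $x/n$, which integrates to $1/n$. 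The subtraction of $\sigma^2$ and addition of $\|\boldsymbol\epsilon\|_n^2$ in the displayed quantity is precisely what prevents an $O(\sigma^2)$ leftover; the residual $\overline\gamma_n(\theta_m)+\sigma^2-\|\boldsymbol\epsilon\|_n^2$ on the right has mean zero and is handled separately on $\Omega^c$ by Cauchy--Schwarz. Your route, comparing $\gamma_n(\widehat\theta_{\widehat m})$ to $\gamma_n(\widehat\theta_m)$ directly via their $\chi^2$ concentration, never produces this cancellation.
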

The theorem applies for any $n$, any $p$ and  there is no hidden dependency on $n$ or $p$ in the constants. Besides, observe that the theorem does not depend at all on the covariance matrix $\Sigma$ between the covariates.
If we choose the penalty $pen(m)=K\frac{d_m}{n-d_m}$, we obtain an approximate oracle inequality.
\begin{eqnarray*}
\mathbb{E}\left[l(\widetilde{\theta},\theta)\right] \leq 
 L(K,\eta)\inf_{m\in\mathcal{M}}\mathbb{E}\left[l(\widehat{\theta}_m,\theta)\right]
 + \tau_n\left[\var(Y),K,\eta,\alpha,\beta\right]\ ,
\end{eqnarray*}
thanks to Lemma \ref{prte_basique}. The term in $n^{3+\beta}\var(Y)\exp[-nL_2(K,\eta)]$ converges exponentially fast to 0 when $n$ goes to infinity and is therefore considered as negligible.  One interesting feature of this oracle inequality is that it allows to consider models of dimensions as close to $n$ as we want providing that $n$ is large enough. This will not be possible in the next section when handling more complex collections of models.\\

If we have stated that $\widetilde{\theta}$ performs almost as well as the oracle model, one may wonder whether it is possible to perform exactly as well as the oracle. In the next proposition, we shall prove that under additional assumption the estimator $\widetilde{\theta}$ with $K=2$ follows an asymptotic exact oracle inequality. We state the result for the problem of ordered variable selection. Let us assume for a moment that the set of covariates is infinite, i.e. $p=+\infty$. In this setting, we define the subset $\Theta$ of sequences $\theta=(\theta_i)_{i\geq 1}$ such that $<X,\theta>$ converges in $L^2$. In the following proposition, we assume that $\theta\in\Theta$.
\begin{defi}\label{Definition_ellipsoides_complexe}
 Let $s$ and $R$  be two positive numbers. We define the so-called ellipsoid $\mathcal{E}'_s(R)$ as 
\begin{eqnarray*}
 \mathcal{E}'_s(R):= \left\{(\theta_i)_{i\geq 0} ,\hspace{0.5cm} \sum_{i=1}^{+\infty} \frac{l\left(\theta_{m_{i-1}},\theta_{m_{i}}\right)}{i^{-s}}\leq R^2\sigma^2\right\}\ .
\end{eqnarray*}
\end{defi}
In Section \ref{section_adapt_ellips}, we explain why we call this set $\mathcal{E}'_s(R)$ an ellipsoid.
\begin{prte} \label{optimal} Assume there exists $s$, $s'$, and $R$ such that $\theta\in \mathcal{E}'_s(R)$ and such that for any positive numbers $R'$, $\theta \notin \mathcal{E}'_{s'}(R')$. We consider the collection $\mathcal{M}_{\lfloor n/2\rfloor }$ and the penalty $pen(m)= 2\frac{d_m}{n-d_m}$. Then, there exists a constant $L(s,R)$ and a sequence $\tau_n$ converging to zero at infinity such that, with probability, at least $1-L(s,R)\frac{\log n}{n^2}$ ,
\begin{eqnarray}\label{inegalite_oracle_asymptotiques}
 l(\widetilde{\theta},\theta)\leq \left[1+\tau(n)\right]\inf_{m\in\mathcal{M}_{\lfloor n/2\rfloor}}l(\widehat{\theta}_m,\theta)\ . 
\end{eqnarray}
\end{prte}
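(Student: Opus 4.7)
The plan is to show that the FPE criterion accurately approximates $\gamma(\widehat\theta_m) = l(\widehat\theta_m,\theta) + \sigma^2$ uniformly over $m \in \mathcal{M}_{\lfloor n/2\rfloor}$, with an error that is asymptotically negligible compared to the oracle risk $\ell^\star := \inf_{m \in \mathcal{M}_{\lfloor n/2\rfloor}} l(\widehat\theta_m,\theta)$. Since the collection has cardinality at most $\lfloor n/2\rfloor + 1$, a single union bound over its elements will account for the claimed probability $1 - L(s,R)\log n / n^2$.

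First I would set up the standard distributional representation. For each $m$, the random vector ${\bf U}_m := {\bf Y} - {\bf X}\theta_m$ has i.i.d.\ $\mathcal{N}(0,\sigma_m^2)$ components with $\sigma_m^2 := \sigma^2 + l(\theta_m,\theta)$ and is independent of ${\bf X}_m$. Hence conditionally on ${\bf X}_m$, $n\gamma_n(\widehat\theta_m)/\sigma_m^2 \sim \chi^2_{n-d_m}$, while $l(\widehat\theta_m,\theta_m)$ is a quadratic form in an independent centered Gaussian with covariance $\sigma_m^2({\bf X}_m^T{\bf X}_m)^{-1}$, whose expectation is given by Lemma~\ref{prte_basique}. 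Invoking Laurent--Massart chi-squared tail bounds with deviation level $u = C\log n$ and union-bounding over the $\lfloor n/2\rfloor + 1$ models, one obtains on an event $\Omega_n$ of probability $\geq 1-L(s,R)\log n/n^2$ the uniform two-sided approximations
\begin{equation*}
\gamma_n(\widehat\theta_m) = \sigma_m^2\,\frac{n-d_m}{n}\,(1 + \eta_m),\qquad \gamma(\widehat\theta_m) = \sigma_m^2\Bigl(1+\frac{d_m}{n-d_m-1}\Bigr)(1+\eta'_m),
\end{equation*}
with $|\eta_m|,|\eta'_m| \leq L\sqrt{\log n/(n-d_m)}$. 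The algebraic identity $(1 - d_m/n)(1 + 2d_m/(n-d_m)) = (n + d_m)/n$ then matches the deterministic factors of $\text{Crit}(m) = \gamma_n(\widehat\theta_m)(1 + 2d_m/(n-d_m))$ and of $\gamma(\widehat\theta_m)$ up to a relative error $O(d_m^2/n^2)$.

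Second, I would quantify the oracle using the assumption on $\theta$. The membership $\theta\in\mathcal{E}'_s(R)$ produces the bias bound $l(\theta_{m_d},\theta) \leq R^2\sigma^2 d^{-s}$, and balancing this bias against the variance term $\sigma^2 d/n$ gives an oracle dimension $d^\star \lesssim n^{1/(s+1)}$ and oracle risk $\ell^\star \lesssim n^{-s/(s+1)}$. The non-containment hypothesis $\theta\notin\mathcal{E}'_{s'}(R')$ for every $R'$ (necessarily with $s'>s$) prevents the bias from decaying faster than $d^{-s'}$ along any subsequence, forcing $d^\star$ to diverge at a polynomial rate and yielding a matching polynomial \emph{lower} bound $\ell^\star \gtrsim n^{-\alpha}$ with $\alpha<1$. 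This lower bound is precisely what makes the error terms from the previous step negligible relative to $\ell^\star$.

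Combining, on $\Omega_n$ the defining inequality $\text{Crit}(\widehat m)\leq \text{Crit}(m^\star)$ translates, via the uniform approximations, into $\gamma(\widehat\theta_{\widehat m})\leq (1+\tau(n))\,\gamma(\widehat\theta_{m^\star})$, hence $l(\widetilde\theta,\theta)\leq (1+\tau(n))\,l(\widehat\theta_{m^\star},\theta)+\tau(n)\sigma^2$, and the polynomial lower bound on $\ell^\star$ absorbs the additive $\tau(n)\sigma^2$ into the multiplicative error, yielding~(\ref{inegalite_oracle_asymptotiques}). The main obstacle is the sharpness required in this comparison: a naive passage from $\gamma$ to $l$ costs a factor of $\sigma^2$, and the raw chi-squared fluctuations of order $\sigma^2\sqrt{\log n/n}$ can dominate $\ell^\star$ when $s$ is large. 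To close this gap one should compare the centered \emph{differences} $\text{Crit}(m)-\text{Crit}(m^\star)$ along the nested family, exploiting the monotonicity of both the bias and the penalty so that the dominant $\sigma^2$ fluctuations cancel pairwise; this is exactly where the non-containment assumption $\theta\notin\bigcup_{R'}\mathcal{E}'_{s'}(R')$ plays its decisive role, by preventing the oracle dimension from being trivially bounded.
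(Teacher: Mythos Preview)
Your diagnosis of the central difficulty is accurate: a uniform approximation $\text{Crit}(m)\approx\gamma(\widehat\theta_m)$ with relative error $O(\sqrt{\log n/n})$ produces, after subtracting $\sigma^2$, an additive error of order $\sigma^2\sqrt{\log n/n}$, and this is not $o(\ell^\star)$ once $s>1$. But the remedy you propose---``compare the centered differences $\text{Crit}(m)-\text{Crit}(m^\star)$ along the nested family''---is not made concrete enough to close the gap. Moreover, the assertion that the non-containment hypothesis yields a polynomial lower bound $\ell^\star\gtrsim n^{-\alpha}$ with $\alpha<1$ is not correct as stated: what the hypothesis $\theta\notin\bigcup_{R'}\mathcal{E}'_{s'}(R')$ actually gives is a lower bound on the \emph{bias} at dimension $\lfloor\log^2 n\rfloor$, not on $\ell^\star$.

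The paper proceeds differently, in two distinct stages. First (Lemma~\ref{controle_dimension}), both hypotheses on $\theta$ are used to show that, with the stated probability, the selected dimension $d_{\widehat m}$ and the random-oracle dimension $d_{m_*}$ both lie in the window $(\log^2 n,\,n/\log n)$; the upper hypothesis provides a reference model $m_{R,s}$ of dimension $\asymp n^{1/(1+s)}$, and the lower hypothesis rules out small dimensions by forcing the bias there to exceed the loss of this reference model. Second (Lemmas~\ref{controle_mhat} and~\ref{controle_m*}), on this restricted range the paper re-runs the $A_m,B_m$ decomposition from the proof of Theorem~\ref{ordered_selection}, but now with $n$-dependent constants $\kappa_1(n)=1/\log n$, $\kappa_2(n)\to1$, and with deviation levels proportional to $d_m/\log n$ (which is $\ge\log n$ on the window) rather than a fixed $C\log n$. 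This yields directly
\[
-\overline\gamma_n(\widetilde\theta)-\gamma_n(\widetilde\theta)\,pen(\widehat m)-\sigma^2+\|\boldsymbol\epsilon\|_n^2\;\le\;\tau_1(n)\,l(\widetilde\theta,\theta)
\]
and the symmetric inequality for $m_*$, so that the $\pm\sigma^2\mp\|\boldsymbol\epsilon\|_n^2$ contributions cancel when the two are added. The key point is that the $A_m,B_m$ machinery delivers \emph{multiplicative} control by $l(\cdot,\theta)$ from the start (the $B_m$ term, which carries the $\sigma^2$ factor, is driven non-positive on the window), so no lower bound on $\ell^\star$ is ever needed. Your sketch is missing this mechanism, and without it the argument does not go through for $s\ge1$.
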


Admittedly, we make $n$ go to the infinity in this proposition but we are still in a high dimensional setting since $p=+\infty$ and since the size of the collection $\mathcal{M}_{\lfloor n/2 \rfloor }$ goes to infinity with $n$. Let us  briefly discuss the assumption on $\theta$. Roughly speaking, it ensures that the oracle model has a dimension not too close to zero (larger than $\log^2(n)$) and  small before $n$ (smaller than $n/\log n$).  Notice that it is classical to assume that the bias is non-zero for every model $m$ for proving the asymptotic optimality of Mallows' $C_p$ (\emph{cf.} Shibata~\cite{shibata81} and Birg\'e and Massart~\cite{massart_pente}). Here, we make a stronger assumption because the bound (\ref{inegalite_oracle_asymptotiques}) holds in probability and because the design is Gaussian. Moreover, our stronger assumption has already been made by Stone~\cite{stone} and Arlot~\cite{arlot}.
We refer to Arlot~\cite{arlot} Sect.4.1 for a more complete discussion of this assumption.

The choice of the collection $\mathcal{M}_{\lfloor n/2 \rfloor}$ is arbitrary and one can extend it to many collections that satisfy $(\mathbb{H}_{Pol})$ and $(\mathbb{H}_{\eta})$. As mentioned in Section \ref{section_procedure}, the penalty $pen(m)=2\frac{d_m}{n-d_m}$ corresponds to the FPE model selection procedure. In conclusion, the choice of the FPE criterion turns out to be asymptotically optimal when the complexity of $\mathcal{M}$ is small.\\

We now underline that the condition $K>1$ in Theorem \ref{ordered_selection} is almost necessary. Indeed, choosing $K$ smaller than one yields terrible statistical performances.
\begin{prte}\label{Pente_ordonnee}
Suppose that $p$ is larger than $n/2$. Let us consider the collection $\mathcal{M}_{\lfloor n/2\rfloor}$
and assume that for some $\nu>0$,
\begin{eqnarray}\label{condition_petite_penalite}
 pen(m)= (1-\nu)\frac{d_m}{n-d_m}\ ,
\end{eqnarray}
for any model $m\in\mathcal{M}_{\lfloor n/2\rfloor}$. Then given $\delta\in (0,1)$, there exists some $n_0(\nu,\delta)$ only depending on $\nu$ and $\delta$ such that for $n\geq n_0(\nu,\delta)$,
\begin{eqnarray*}
 \mathbb{P}_{\theta}\left[d_{\widehat{m}}\geq \frac{n}{4}\right]\geq 1-\delta\  \text{ and }\ \mathbb{E}\left[l(\widetilde{\theta},\theta)\right]\geq l(\theta_{m_{\lfloor n/2\rfloor }},\theta)+L(\delta,\nu)\sigma^2\ .
\end{eqnarray*}
\end{prte}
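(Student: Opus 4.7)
My plan is to reduce to the case $\theta \in S_{M'}$ (with $M' := m_{\lfloor n/2\rfloor}$) and then compare $\mathrm{Crit}(m)$ against $\mathrm{Crit}(M')$ for every $m$ with $d_m < n/4$ via a chi-squared concentration argument. All models in $\mathcal{M}_{\lfloor n/2\rfloor}$ are contained in $M'$, so $\widehat{m}$ and $\widetilde{\theta}$ (which lies in $S_{M'}$) depend only on $(\mathbf{Y},\mathbf{X}_{M'})$; writing $\mathbf{Y}=\mathbf{X}\theta_{M'}+\mathbf{Y}'$ with $\mathbf{Y}'=\mathbf{X}(\theta-\theta_{M'})+\boldsymbol{\epsilon}$, one checks $\mathbf{Y}'\perp\mathbf{X}_{M'}$ and $\mathbf{Y}'\sim\mathcal{N}(0,\sigma_*^2 I_n)$ with $\sigma_*^2:=\sigma^2+l(\theta_{M'},\theta)\ge\sigma^2$, the orthogonality following from the $L^2$-projection characterisation of $\theta_{M'}$ together with Gaussianity. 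Since both conclusions of the proposition strengthen when $\sigma^2$ is replaced by $\sigma_*^2$, I assume $\theta\in S_{M'}$ throughout.

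Conditioning on $\mathbf{X}_{M'}$ and writing $D:=\lfloor n/2\rfloor$, orthogonality of projections yields, for every $m\subset M'$, $n\gamma_n(\widehat{\theta}_m)=nA+nB_m$ with $nA:=\|(I-\Pi_{M'})\mathbf{Y}\|^2$ and $nB_m:=\|(\Pi_{M'}-\Pi_m)\mathbf{Y}\|^2$. Since $\mathbf{X}\theta\in V_{M'}$, $nA/\sigma^2\sim\chi^2_{n-D}$ is central, $nB_m/\sigma^2\sim\chi^2_{D-d_m}(\lambda_m)$ with $\lambda_m\ge 0$, and $A\perp B_m$. An elementary manipulation gives
\[
\mathrm{Crit}(m)>\mathrm{Crit}(M') \iff \frac{B_m}{A}>g(d_m),\qquad g(d):=\frac{n(1-\nu)(D-d)}{(n-\nu d)(n-D)}.
\]
Laurent-Massart deviation inequalities, together with a union bound over the $\le\lceil n/4\rceil$ relevant models (non-centrality only strengthens the lower bound on $B_m$), give, for $n$ large, $A\le(1+\eta_n)\sigma^2(n-D)/n$ and $B_m\ge(1-\eta'_n)\sigma^2(D-d_m)/n$ uniformly in $m$ with $d_m<n/4$, with probability $\ge 1-\delta$ and $\eta_n,\eta'_n\to 0$. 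Substituting,
\[
\frac{B_m/A}{g(d_m)}\;\ge\;\frac{(1-\eta'_n)(1-\nu/4)}{(1+\eta_n)(1-\nu)}\;\xrightarrow[n\to\infty]{}\;\frac{4-\nu}{4(1-\nu)}\;>\;1
\]
for any $\nu\in(0,1)$, so for $n\ge n_0(\nu,\delta)$ the ratio exceeds $1$ uniformly and $d_{\widehat{m}}\ge n/4$ with probability $\ge 1-\delta$.

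For the risk bound, Pythagoras for $l$ gives $l(\widetilde{\theta},\theta)=l(\theta_{M'},\theta)+l(\widetilde{\theta},\theta_{M'})\ge l(\theta_{M'},\theta)+l(\widetilde{\theta},\theta_{\widehat{m}})$, so it suffices to show $\mathbb{E}[l(\widetilde{\theta},\theta_{\widehat{m}})]\ge L(\delta,\nu)\sigma^2$. Heuristically this is the OLS variance at the selected model, of order $\sigma^2 d_{\widehat{m}}/(n-d_{\widehat{m}}-1)\ge\sigma^2/3$ by (the argument of) Lemma \ref{prte_basique} for a \emph{fixed} model of dimension $\ge n/4$. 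The main obstacle is that $\widehat{m}$ and $\widehat{\theta}_{\widehat{m}}$ are correlated through $\boldsymbol{\epsilon}$, precluding a direct conditional application of Lemma \ref{prte_basique}. To handle this I would decompose $\boldsymbol{\epsilon}$ in the orthonormal basis $(u_k)_{k=1}^n$ adapted to the nested filtration $V_{m_0}\subset\cdots\subset V_{M'}\subset\mathbb{R}^n$: the event $\{\widehat{m}=m\}$ is then measurable with respect to the squared coordinates $\boldsymbol{\epsilon}_k^2:=\langle\boldsymbol{\epsilon},u_k\rangle^2$ and $A$, while $\widehat{\theta}_m-\mathbb{E}[\widehat{\theta}_m\mid\mathbf{X}]$ is linear in the signed coordinates. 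Conditioning on $(\boldsymbol{\epsilon}_k^2)_k$ and $A$, the signs $\mathrm{sgn}(\boldsymbol{\epsilon}_k)$ are independent Rademacher, so the cross-terms in $l(\widehat{\theta}_{\widehat{m}},\theta_{\widehat{m}})$ average out and one is left with $\mathbb{E}[l(\widehat{\theta}_{\widehat{m}},\theta_{\widehat{m}})\mid|\boldsymbol{\epsilon}|,\mathbf{X}_{M'}]\ge\sum_{k\le d_{\widehat{m}}}\boldsymbol{\epsilon}_k^2\|a_k^{\widehat{m}}\|_{\Sigma_{\widehat{m}\widehat{m}}}^2$, with $a_k^m:=(\mathbf{X}_m^T\mathbf{X}_m)^{-1}\mathbf{X}_m^T u_k$. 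Combining the identity $\sum_k\|a_k^m\|_{\Sigma_{mm}}^2=\mathrm{tr}(\Sigma_{mm}(\mathbf{X}_m^T\mathbf{X}_m)^{-1})$, the Wishart expectation $\mathbb{E}[\mathrm{tr}(\Sigma_{mm}(\mathbf{X}_m^T\mathbf{X}_m)^{-1})]=d_m/(n-d_m-1)$, and the first claim $\mathbb{P}(d_{\widehat{m}}\ge n/4)\ge 1-\delta$ then yields the required $L(\delta,\nu)\sigma^2$ bound.
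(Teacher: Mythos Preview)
Your argument for the first conclusion ($\mathbb{P}[d_{\widehat{m}}\ge n/4]\ge 1-\delta$) is correct and is essentially the paper's approach, though your presentation via the ratio $B_m/A$ and the explicit threshold $g(d)$ is somewhat cleaner than the paper's direct bounding of $\Delta(m,m_{\lfloor n/2\rfloor})$. The reduction to $\theta\in S_{M'}$ is also valid and simplifies notation.

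The risk bound, however, has a genuine gap. Your sign--flip argument hinges on the claim that, conditionally on $\mathbf{X}_{M'}$, the event $\{\widehat{m}=m\}$ depends on $\boldsymbol{\epsilon}$ only through the squared coordinates $\boldsymbol{\epsilon}_k^2$. But after your reduction you still have a general $\theta\in S_{M'}$, so $B_{m'}=\sum_{d_{m'}<k\le D}(c_k+\boldsymbol{\epsilon}_k)^2$ with $c_k=\langle\mathbf{X}\theta,u_k\rangle$; the cross terms $2c_k\boldsymbol{\epsilon}_k$ make $B_{m'}$---and hence the selection event---depend on the signs of the $\boldsymbol{\epsilon}_k$ whenever some $c_k\ne 0$. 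Thus the Rademacher conditioning does not decouple $\widehat{m}$ from the linear part of $\widehat{\theta}_{\widehat{m}}$. (A related issue is that your target $l(\widehat{\theta}_{\widehat m},\theta_{\widehat m})$ involves the \emph{population} projection $\theta_{\widehat m}$, not $\mathbb{E}[\widehat{\theta}_{\widehat m}\mid\mathbf{X}]$, so even the ``diagonal'' piece you isolate is not obviously the right quantity.)

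The paper bypasses this correlation problem entirely by establishing a \emph{uniform} lower bound on $l(\widehat{\theta}_m,\theta_m)$ simultaneously for all models $m$ with $d_m\ge n/4$. From the identity $l(\widehat{\theta}_m,\theta_m)=(\boldsymbol{\epsilon}+\boldsymbol{\epsilon}_m)^*\mathbf{Z}_m(\mathbf{Z}_m^*\mathbf{Z}_m)^{-2}\mathbf{Z}_m^*(\boldsymbol{\epsilon}+\boldsymbol{\epsilon}_m)$ one gets
\[
l(\widehat{\theta}_m,\theta_m)\;\ge\;\frac{n}{\varphi_{\max}(\mathbf{Z}_m^*\mathbf{Z}_m)}\,\|\Pi_m(\boldsymbol{\epsilon}+\boldsymbol{\epsilon}_m)\|_n^2,
\]
and then a union bound using Wishart eigenvalue concentration (Lemma~\ref{concentration_vp_wishart}) and $\chi^2$ lower-tail bounds (Lemma~\ref{concentration_chi2}) over the at most $n/4$ large models yields $l(\widehat{\theta}_m,\theta_m)\ge\sigma^2/32$ uniformly on a high-probability event $\Omega'$. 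Intersecting $\Omega'$ with $\{d_{\widehat{m}}\ge n/4\}$ gives $l(\widetilde{\theta},\theta_{\widehat{m}})\ge\sigma^2/32$ with probability close to $1$, and the expectation bound follows. This route is both shorter and avoids the measurability obstruction in your sketch.
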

If one chooses a too small penalty, then the dimension $d_{\widehat{m}}$ of the selected model is huge and the penalized estimator $\widetilde{\theta}$ performs poorly. The hypothesis $p\geq n/2$ is needed for defining the collection $\mathcal{M}_{\lfloor n/2\rfloor}$. Once again, the choice of the collection $\mathcal{M}_{\lfloor n/2\rfloor}$ is rather arbitrary and the result of Proposition \ref{Pente_ordonnee} still holds for collections $\mathcal{M}$ which satisfy $(\mathbb{H}_{Pol})$ and $(\mathbb{H}_{\eta})$ and contain at least one model of large dimension. Theorem \ref{ordered_selection} and Proposition \ref{Pente_ordonnee} tell us that $\frac{d_m}{n-d_m}$ is the minimal penalty.\\

In practice, we advise to choose $K$ between $2$ and $3$. Admittedly,  $K=2$ is asymptotically optimal by Proposition \ref{optimal}. Nevertheless, we have observed on simulations that $K=3$ gives slightly better results when $n$ is small. For ordered variable selection, we suggest to take the collection $\mathcal{M}_{\lfloor n/2\rfloor}$.

\subsection{A general model selection theorem}\label{Section_oracle_general}

In this section, we study the performance of the penalized estimator $\widetilde{\theta}$ for general collections $\mathcal{M}$. Classically, we need to penalize stronger the models $m$, incorporating the complexity of the collection. As a special case, we shall consider the problem of complete variable selection. This is why we define the collections $\mathcal{M}_p^d$ that consist of all subsets of $\{1,\ldots,p\}$ of size less or equal to $d$.

\begin{defi}\label{complexite_modele}
Given a collection $\mathcal{M}$, we define the function $H(.)$ by
\begin{eqnarray*}
H(d) := \frac{1}{d}\log\left[\text{Card}\left(\left\{m\in\mathcal{M},\, d_m=d \right\}\right)\right]\ ,
\end{eqnarray*}
for any integer $d\geq1$.
\end{defi}
This function measures the complexity of the collection $\mathcal{M}$.  For the collection $\mathcal{M}_p^d$, $H(k)$ is upper bounded by $\log(ep/k)$ for any $k\leq d$ (see Eq.(4.10) in \cite{massartflour}). Contrary to the situation encountered in ordered variable selection, we are not able to consider models of arbitrary dimensions and we shall do the following assumption.~\\

$(\mathbb{H}_{K,\eta})$: Given $K>1$ and $\eta>0$, the collection $\mathcal{M}$ and the number $\eta$ satisfy
\begin{eqnarray}\label{Hypothese_HK}
\forall m\in\mathcal{M}, \,\,\,\,\,\,	\frac{\left[1+\sqrt{2H(d_m)}\right]^2d_m}{n-d_m}\leq \eta<\eta(K)\ ,
\end{eqnarray}
where $\eta(K)$ is defined as
 $\eta(K):= [1-2(3/(K+2))^{1/6}]^2\bigvee [1-(3/K+2)^{1/6}]^2/4$. ~\\

The function $\eta(K)$ is  positive and increases when $K$ is larger than one. Besides, $\eta(K)$ converges to one when $K$ converges to infinity. We do not claim that the expression of $\eta(K)$ is optimal. We are more interested in its behavior when $K$ is large. 

\begin{thrm}\label{thrm_principal}
Let $K>1$ and let $\eta<\eta(K)$. Assume that $n$ is larger than some quantity $n_0(K)$ only depending on $K$ and the collection $\mathcal{M}$ satisfies $(\mathbb{H}_{K,\eta})$. 
If the penalty	$pen(.)$ is lower bounded as follows  
\begin{eqnarray}\label{penalite_complexe}
pen(m)\geq K\frac{d_m}{n-d_m}\left(1+\sqrt{2H(d_m)}\right)^2
\, \, \, \text{for any $m\in \mathcal{M}$}\ ,
\end{eqnarray}
then 
\begin{eqnarray}\label{majoration_risque}
\mathbb{E}\left[l(\widetilde{\theta},\theta)\right] \leq 
 L(K,\eta)\inf_{m\in\mathcal{M}}\left\{l(\theta_m,\theta)+\frac{n-d_m}{n}pen(m)\left[\sigma^2+l(\theta_m,\theta)\right]\right\}
 + \tau_n\ ,
\end{eqnarray}
where $\tau_n$ is defined as
\begin{eqnarray*}
\tau_n=\tau_n\left[\var(Y),K,\eta\right] := \sigma^2\frac{L_1(K,\eta)}{n}+L_2(K,\eta)n^{5/2}\var(Y)\exp\left[-nL_3(K,\eta)\right] \ ,
\end{eqnarray*}
and $L_3(K,\eta)$ is positive.
\end{thrm}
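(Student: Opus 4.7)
The plan is to combine a conditional-on-$\mathbf{X}$ fixed-design model selection argument in the spirit of Baraud \emph{et al.} with a passage from the empirical geometry $\mathbf{X}^T\mathbf{X}/n$ to the population geometry $\Sigma$. The main obstacle lies in this passage: the selected model $\widehat m$ lives in the empirical world while the target loss $l$ is governed by $\Sigma$, and the two geometries must be reconciled uniformly over models whose dimension is allowed to approach $\eta n$. The assumption $(\mathbb{H}_{K,\eta})$ is calibrated precisely so that this reconciliation is feasible with a finite constant $L(K,\eta)$.

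Fix any $m\in\mathcal{M}$. Conditionally on $\mathbf{X}$, $\boldsymbol\epsilon=\mathbf{Y}-\mathbf{X}\theta$ is $\mathcal{N}(0,\sigma^2 I_n)$, so the problem reduces to fixed-design Gaussian regression with unknown variance. The definition of $\widehat m$ yields $\gamma_n(\widetilde\theta)[1+pen(\widehat m)]\leq\gamma_n(\widehat\theta_m)[1+pen(m)]$. Expanding $\gamma_n(\widehat\theta_{m'})=\|\Pi_{m'}^\perp\mathbf{X}\theta\|_n^2+(2/n)\langle\Pi_{m'}^\perp\mathbf{X}\theta,\boldsymbol\epsilon\rangle+\|\Pi_{m'}^\perp\boldsymbol\epsilon\|_n^2$ for $m'\in\{m,\widehat m\}$ and rearranging (the algebra is delicate because the penalty multiplies, rather than adds to, the residual sum of squares) produces, on a suitable high-probability event, an inequality of the form
\[
(1-A)\,\|\Pi_{\widehat m}^\perp\mathbf{X}\theta\|_n^2 \;\leq\; B\,\|\Pi_m^\perp\mathbf{X}\theta\|_n^2 + C\,pen(m)\,\gamma_n(\widehat\theta_m),
\]
with $A\in(0,1)$ and $B,C$ depending only on $K$ and $\eta$. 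The condition $\eta<\eta(K)$ is exactly what makes $1-A$ bounded away from $0$ uniformly over $\widehat m$; the explicit form of $\eta(K)$ falls out of this algebraic constraint once $pen(m')\geq Kd_{m'}(1+\sqrt{2H(d_{m'})})^2/(n-d_{m'})$ and $d_{m'}(1+\sqrt{2H(d_{m'})})^2\leq\eta(n-d_{m'})$ are injected into the rearranged inequality.

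The stochastic fluctuations are controlled uniformly over $m'\in\mathcal{M}$ by Laurent--Massart $\chi^2$ concentration for $\|\Pi_{m'}\boldsymbol\epsilon\|_n^2$ and $\|\Pi_{m'}^\perp\boldsymbol\epsilon\|_n^2$, together with a Gaussian-tail bound on the cross term $\langle\Pi_{m'}^\perp\mathbf{X}\theta,\boldsymbol\epsilon\rangle$. A weighted union bound over $\mathcal{M}$ with weights $\exp[-d_{m'}H(d_{m'})]$ generates precisely the factors $(1+\sqrt{2H(d_{m'})})^2$ that appear in the penalty lower bound~(\ref{penalite_complexe}); the resulting event $\Omega$ satisfies $\mathbb{P}(\Omega^c\mid\mathbf{X})\leq L\exp(-cn)$ uniformly in $\mathbf{X}$, where $c=c(K,\eta)>0$ will become $L_3(K,\eta)$.

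It remains to convert $\|\mathbf{X}(\widetilde\theta-\theta)\|_n^2$ into $l(\widetilde\theta,\theta)=(\widetilde\theta-\theta)^T\Sigma(\widetilde\theta-\theta)$. Since $\widetilde\theta-\theta$ is supported on $m\cup\widehat m$, a set of cardinality at most $2d^{\max}$ with $d^{\max}$ at most a fixed fraction of $n$ by $(\mathbb{H}_{K,\eta})$, one intersects $\Omega$ with a further high-probability event on which Wishart-type concentration makes $\mathbf{X}_J^T\mathbf{X}_J/n$ uniformly equivalent to $\Sigma_{JJ}$ over all index sets $J$ with $|J|\leq 2d^{\max}$. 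Taking expectations on this refined event and using Lemma~\ref{prte_basique} to identify $\mathbb{E}[\|\mathbf{X}(\theta_m-\theta)\|_n^2]=l(\theta_m,\theta)$ and $\mathbb{E}[\gamma_n(\widehat\theta_m)]=(1-d_m/n)(\sigma^2+l(\theta_m,\theta))$ then reconstructs the right-hand side of~\eqref{majoration_risque}. On the complementary event, a crude bound $l(\widetilde\theta,\theta)\leq L\var(Y)$ combined with a polynomial-in-$n$ pre-factor of order $n^{5/2}$ (coming from $L^2$ moment bounds on the relevant Gaussian quadratic forms) and the exponentially small $\mathbb{P}(\Omega^c)$ produces the $n^{5/2}\var(Y)\exp[-nL_3]$ piece of $\tau_n$, while the $\sigma^2 L_1/n$ piece is produced by absorbing the additive $O(\sigma^2/n)$ slack left over by the Laurent--Massart deviations.
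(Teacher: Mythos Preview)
Your outline contains two genuine gaps that the paper's argument handles differently.

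\textbf{The support claim is false and the Wishart comparison fails.} You assert that $\widetilde\theta-\theta$ is supported on $m\cup\widehat m$, but $\theta$ carries no sparsity assumption in this theorem; its support may be all of $\{1,\dots,p\}$. Consequently your proposed uniform Wishart comparison of $\mathbf{X}_J^T\mathbf{X}_J/n$ with $\Sigma_{JJ}$ over small $J$ does not reach the vector $\widetilde\theta-\theta$, and when $p>n$ (which is expressly allowed) no eigenvalue comparison between $\mathbf{X}^T\mathbf{X}/n$ and $\Sigma$ on the full space is possible, since the former is singular. The paper avoids this by never comparing $\|\mathbf{X}(\widetilde\theta-\theta)\|_n^2$ to $l(\widetilde\theta,\theta)$ directly. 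Instead it uses the \emph{population} projection $\theta_{m'}$ and the Gaussian residual $\epsilon_{m'}:=X(\theta-\theta_{m'})$, which is independent of $X_{m'}$ with variance $l(\theta_{m'},\theta)$. The Pythagorean identity $l(\widetilde\theta,\theta)=l(\widetilde\theta,\theta_{\widehat m})+l(\theta_{\widehat m},\theta)$ splits the loss into a bias piece, absorbed through the $\chi^2$ behaviour of $\|\Pi_{\widehat m}^\perp\boldsymbol\epsilon_{\widehat m}\|_n^2/l(\theta_{\widehat m},\theta)$, and a variance piece with the explicit form
\[
l(\widetilde\theta,\theta_{\widehat m})=(\boldsymbol\epsilon+\boldsymbol\epsilon_{\widehat m})^*\mathbf{Z}_{\widehat m}(\mathbf{Z}_{\widehat m}^*\mathbf{Z}_{\widehat m})^{-2}\mathbf{Z}_{\widehat m}^*(\boldsymbol\epsilon+\boldsymbol\epsilon_{\widehat m}),
\]
which involves only the $d_{\widehat m}\times d_{\widehat m}$ standard Wishart matrix $\mathbf{Z}_{\widehat m}^*\mathbf{Z}_{\widehat m}$. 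The empirical-to-population passage is then a single control of $\varphi_{\max}[(\mathbf{Z}_{\widehat m}^*\mathbf{Z}_{\widehat m})^{-1}]$ via Davidson--Szarek, and the union bound is only over $\mathcal{M}$, not over all small subsets of $\{1,\dots,p\}$. These ingredients are packaged into the random variables $A_{m'}$ and $B_{m'}$ of the proof, and Lemma~\ref{lemme_decomposition} gives the inequality that replaces your displayed one.

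\textbf{The crude bound on the bad event is unavailable.} You write that on $\Omega^c$ one uses $l(\widetilde\theta,\theta)\leq L\var(Y)$. This is not true: the variance piece $l(\widetilde\theta,\theta_{\widehat m})$ contains an inverse Wishart and admits no deterministic bound. The paper instead proves a moment inequality (Proposition~\ref{controle_risque_lp}), $\mathbb{E}[l(\widehat\theta_m,\theta_m)^r]^{1/r}\leq Lr\,d_m n(\sigma^2+l(\theta_m,\theta))$ for $r$ up to order $n$, via Khintchine--Kahane for the Gaussian chaos and Von Rosen's trace moments for inverse Wisharts. It then applies H\"older with exponent $v=\lfloor n/8\rfloor$ together with $\sum_{m}\mathbb{P}(\widehat m=m)^{1/u}\leq\mathrm{Card}(\mathcal{M})^{1/v}$, which is bounded because $\log\mathrm{Card}(\mathcal{M})\leq\log n+n/2$ under $(\mathbb{H}_{K,\eta})$. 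This is where the $n^{5/2}$ prefactor actually comes from; an $L^2$ moment bound would not suffice here because the number of models is exponential in $n$.
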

This theorem provides an oracle type inequality of the same type as the one obtained in the Gaussian sequential framework by Birg\'e and Massart \cite{birge2001}. The risk of the penalized estimator $\widetilde{\theta}$ almost achieves the infimum of the risks plus a penalty term depending on the function $H(.)$. As in Theorem \ref{ordered_selection}, the error term $\tau_n\left[\var(Y),K,\eta\right]$ depends on $\theta$ but this part goes exponentially fast to 0 with $n$. ~\\

\textbf{Comments:}
\begin{itemize}
 \item  As for Theorem \ref{ordered_selection}, the result holds for arbitrary large $p$ as long as $n$ is larger than the quantity $n_0(K)$ (independent of $p$). There is no hidden dependency on $p$ except in the complexity function $H(.)$ and Assumption $\mathbb{H}_{K,\eta}$ that  we shall discuss for the particular case of complete variable selection. Moreover, one may easily check Assumption $\mathbb{H}_{K,\eta}$ since it only depends on the collection $\mathcal{M}$ and not on some unknown quantity.
\item This result  (as well as of Theorem \ref{ordered_selection}) does not depend at all on the covariance matrix $\Sigma$ between the covariates. 
\item  The penalty introduced in this theorem only depends on the collection $\mathcal{M}$ and a number $K>1$. Hence, performing the procedure does not require any knowledge on $\sigma^2$, $\Sigma$, or $\theta$.
We give hints at the end of the section for choosing the constant $K$. 
\item Observe that Theorem \ref{ordered_selection} is not just corollary of Theorem \ref{thrm_principal}. If we apply Theorem \ref{thrm_principal} to the problem of ordered selection, then the maximal size of the model has to be smaller than $n\frac{\eta(K)}{1+\eta(K)}$, which depends on $K$ and is always smaller than $n/2$. In contrast, Theorem \ref{ordered_selection} handles models of size up to $n-7$.
\end{itemize}

\subsection{Application to complete variable selection}

Let us now restate Theorem \ref{thrm_principal} for the particular issue of complete variable selection. Consider $K>1$, $\eta<\eta(K)$ and $d>1$ such that $\mathcal{M}_p^d$ satisfies Assumption $(\mathbb{H}_{K,\eta})$. If we take for any model $m\in\mathcal{M}_p^d$ the penalty term 
\begin{eqnarray}\label{penalite_complete}
pen(m)=K\frac{d_m}{n-d_m}\left[1+\sqrt{2\log\left(\frac{ep}{d_{m}}\right)}\right]^2\ ,
\end{eqnarray} then we get
\begin{eqnarray*}
 \mathbb{E}\left[l(\widetilde{\theta},\theta)\right] \leq  L(K,\eta)\inf_{m\in\mathcal{M}_p^{d}}\left\{l(\theta_m,\theta)+\frac{d_m}{n}\log\left(\frac{ep}{d_m}\right)\sigma^2\right\}
 + \tau_n\left[\var(Y),K,\eta\right]\ .
\end{eqnarray*}

We shall prove in Section \ref{section_sparse}, that the term $\log(p/d_m)$ is unavoidable and that the obtained estimator is optimal from a minimax point of view. If the true parameter $\theta$ belongs to some unknown model $m$, then the rates of estimation of $\tilde{\theta}$ is of the order $\frac{d_m}{n}\log(p/d_m)\sigma^2$.  Let us compare our result with other procedures.

\begin{itemize}
\item The oracle type inequalities look similar to the ones obtained by Birg\'e and Massart \cite{birge2001}, Bunea \emph{et al.} \cite{tsybakov_agregation_07} and Baraud \emph{et al.} \cite{baraud08}. However,  Birg\'e and Massart and Bunea \emph{et al.}  assume that the variance $\sigma^2$ is known. Moreover, Birg\'e and Massart and Baraud \emph{et al.} only consider a fixed design setting. Yet, Bunea \emph{et al.} allow the design to be random, but they assume that the regression functions are bounded (Assumption A.2 in their paper) which is not the case here. Moreover, they only get risk bounds with respect to the empirical norm $\|.\|_n$ and not the integrated loss $l(.,.)$.
	
 \item As mentioned previously, our oracle inequality holds for any covariance matrix $\Sigma$. In contrast, Lasso and Dantzig selector estimators have been shown to satisfy oracle inequalities under assumptions on the empirical design ${\bf X}$. In \cite{candes08},  Cand\`es and Tao indeed assume  that the singular values of ${\bf X}$ restricted to any subset of size proportional to the sparsity of $\theta$ are bounded away from zero. Bickel \emph{et al.} \cite{bickeltsy08} introduce an extension of this condition prove both for the Lasso and the Dantzig selector. In a recent work \cite{CandesPlan07}, Cand\`es and Plan state that if the empirical correlation between the covariates is smaller than $L(\log p)^{-1}$,then the Lasso follows an oracle inequality in a majority of cases. Their condition is in fact almost necessary. On the one hand, they give examples of  some low correlated situations, where the Lasso performs poorly. On the other hand, they prove that the Lasso fails to work well if the correlation between the covariates if larger than $L(\log p)^{-1}$.  Yet,  Cand\`es and Plan consider the loss function $\|{\bf X}\widehat{\theta}-{\bf X}\theta\|_n^2$, whereas we use the \emph{integrated} loss $l(\widehat{\theta},\theta)$, but this does not really change the impact of their result. We refer to their paper for  further details. 
The main point is that for some correlation structures, our procedure still works well, whereas the Lasso and the Dantzig selector procedures perform poorly. In many problems such as GGM estimation, the correlation between the covariates may be high and even the relaxed assumptions of Cand\`es and Plan  may not be fulfilled.
In Section \ref{section_simulation}, we illustrate this phenomenon by comparing  our procedure with the Lasso  on numerical examples for independent and highly correlated covariates.

\item Suppose that  the covariates are independent and that $\theta$ belongs to some model $m$, the rates of convergence of the Lasso is then of the order  $\frac{d_m}{n}\log(p)\sigma^2$, whereas ours is $\frac{d_m}{n}\log(p/d_m)\sigma^2$. Consider the case where $p$, and $d_m$ are of the same order whereas $n$ is large. Our model selection procedure therefore outperforms the Lasso by a $\log(p)$ factor even if the covariates are independent.

\item  Let us restate Assumption $(\mathbb{H}_{K,\eta})$ for the particular collection $\mathcal{M}_p^d$. Given some $K>1$ and some $\eta<\eta(K)$, the collection $\mathcal{M}_p^d$  satisfies $(\mathbb{H}_{K,\eta})$ if 
\begin{eqnarray}\label{condition_estimation_complete}
 d\leq \eta\frac{n}{1+\left[1+\sqrt{2(1+\log(p/d))}\right]^2}\ .	
\end{eqnarray}
If $p$ is much larger than $n$, the dimension $d$ of the largest model has to be be smaller than the order $\eta\frac{n}{2\log(p)}$. Cand\`es and Plan state a similar condition for the lasso. We believe that this condition is unimprovable. Indeed, Wainwright states in Th.2 of \cite{wainwright07} a result going in this sense: it is impossible to estimate reliably the support of a $k$-sparse vector  $\theta$ if $n$ is smaller than the order $k\log(p/k)$. If $\log(p)$ is larger than $n$, then we cannot apply Theorem \ref{thrm_principal}. This ultra-high dimensional setting is also not handled by the theory for the Lasso and the Dantzig selector.
Finally, if $p$ is of the same order as $n$, then Condition (\ref{condition_estimation_complete}) is satisfied for dimensions $d$ of the same order as $n$. Hence, our method works well even when the sparsity is of the same order as $n$, which is not the case for the Lasso or the Dantzig selector.
\end{itemize}
\vspace{0.5cm}

Let us discuss the practical choice of $d$ and $K$ for complete variable selection.
From numerical studies, we advise to take $d\leq \frac{n}{2.5\left[2+\log\left(\frac{p}{n}\vee 1\right)\right]} \wedge p$ even if this quantity is slightly larger than what is ensured by the theory. The practical choice of $K$ depends on the aim of the study. If one aims at minimizing the risk, $K=1.1$ gives rather good result. A larger $K$ like $1.5$ or $2$ allows to obtain a more conservative procedure and consequently a lower FDR. We compare these values of $K$ on simulated examples in Section \ref{section_simulation}.

\subsection{Penalties based on a prior distribution}\label{section_prior}

The penalty defined in Theorem \ref{thrm_principal} only depends on the models through their cardinality. However, the methodology developed in the proof may easily extend to the case where the user has some \emph{prior} knowledge of the relevant models. Let $\pi_{\mathcal{M}}$ be a prior probability measure on the collection $\mathcal{M}$. For any non-empty model $m\in\mathcal{M}$, we define $l_m$ by 
\begin{eqnarray*}
l_m:=-\frac{\log\left(\pi_{\mathcal{M}}(m)\right)}{d_m}\ .
\end{eqnarray*}
By convention, we set $l_{\emptyset}$ to $1$. We define in the next proposition penalty functions based on the quantity $l_m$ that allow to get non-asymptotic oracle inequalities.~\\~\\
%
%
%
%%%%%%%%%%%%%%%%%%%%%%%%%%%%%%%%%%%%%%%
%%%%%%% Reformulation avec un a priori
%%%%%%%%%%%%%%%%%%%%%%%%%%%%%%%%%%%%%%
%
% 
%%
% %
%%
%%
% 
\textbf{Assumption} $(\mathbb{H}^{l}_{K,\eta})$: Given $K>1$ and $\eta>0$, the collection $\mathcal{M}$, the numbers $l_m$ and the number $\eta$ satisfy
\begin{eqnarray}\label{Hypothese_HKl}
\forall m\in\mathcal{M}, \,\,\,\,\,\,	\frac{\left[1+\sqrt{2l_m}\right]^2d_m}{n-d_m}\leq \eta<\eta(K)\ ,
\end{eqnarray}
where $\eta(K)$ is defined as in $(\mathbb{H}_{K,\eta})$.~\\

\begin{prte}\label{proposition_priori}
Let $K>1$ and let $\eta<\eta(K)$. Assume that $n\geq n_O(K)$ and that Assumption $(\mathbb{H}^l_{K,\eta})$ is fulfilled. 
If the penalty	$pen(.)$ is lower bounded as follows  
\begin{eqnarray}\label{penalite_complexe_priori}
pen(m)\geq K\frac{d_m}{n-d_m}\left(1+\sqrt{2l_m}\right)^2
\, \, \, \text{for any $m\in \mathcal{M}\setminus\{\emptyset\}$}\ ,
\end{eqnarray}
then 
\begin{eqnarray}\label{majoration_risque_priori}
\mathbb{E}\left[l(\widetilde{\theta},\theta)\right] \leq 
 L(K,\eta)\inf_{m\in\mathcal{M}}\left\{l(\theta_m,\theta)+\frac{n-d_m}{n}pen(m)\left[\sigma^2+l(\theta_m,\theta)\right]\right\}
 + \tau_n\ ,
\end{eqnarray}
where $L(K,\eta)$ and $\tau_n$ are the same as in Theorem \ref{thrm_principal}.
\end{prte}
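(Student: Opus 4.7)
The strategy is to run through the proof of Theorem \ref{thrm_principal} essentially verbatim, with the single substitution of the complexity term $H(d_m)$ by the prior-based weight $l_m$. The rationale is that $H(d_m)$ enters the proof of Theorem \ref{thrm_principal} only through a union bound that enumerates models of a given dimension: one pays a multiplicative price $\mathrm{Card}\{m:d_m=d\} \leq e^{d H(d)}$ which is absorbed by the exponential saving $e^{-d H(d)}$ already built into the penalty via the term $(1+\sqrt{2H(d_m)})^2$. In the prior-based setting, one replaces this crude counting argument by a union bound over models weighted by $\pi_{\mathcal{M}}(m)^{-1} = e^{d_m l_m}$, which is again compensated by the penalty term $(1+\sqrt{2 l_m})^2$.

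Concretely, I would start, as in Theorem \ref{thrm_principal}, from the minimality property of $\widehat{m}$: for any fixed $m \in \mathcal{M}$,
\[
\gamma_n(\widehat\theta_{\widehat m})\bigl(1 + pen(\widehat m)\bigr) \leq \gamma_n(\widehat\theta_m)\bigl(1 + pen(m)\bigr).
\]
Expanding $\gamma_n(\widehat\theta_{m'}) = \|\epsilon + {\bf X}\theta - \Pi_{m'}({\bf X}\theta + \epsilon)\|_n^2$ via Pythagoras and rearranging produces, exactly as in the proof of Theorem \ref{thrm_principal}, a deterministic inequality that dominates $l(\widetilde\theta,\theta)$ by $l(\theta_m,\theta)$ plus empirical fluctuation terms of the form $\|\Pi_{m'}\epsilon\|_n^2$ and $\langle \Pi_{m'}\epsilon, {\bf X}(\theta - \theta_{m'})\rangle$, indexed by $m' \in \mathcal{M}$. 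The role of the penalty is to absorb these fluctuations.

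The heart of the argument is then to control these fluctuations uniformly over $m'$. For each fixed $m'$ with $d_{m'} < n$, the Gaussian chi-squared concentration inequality used in Theorem \ref{thrm_principal} provides a deviation of the form $(1+\sqrt{2x})^2 d_{m'}/(n-d_{m'})$ with probability cost $e^{-d_{m'} x}$. Taking $x = l_{m'} + u$ and summing the resulting tail probabilities over $m'\in\mathcal{M}\setminus\{\emptyset\}$ yields a total mass bounded by $\bigl(\sum_{m'}\pi_{\mathcal{M}}(m')\bigr)e^{-u} \leq e^{-u}$, which plays exactly the role of $\sum_{d}\mathrm{Card}\{m:d_m=d\}\cdot e^{-d H(d)-d u}$ in the proof of Theorem \ref{thrm_principal}. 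The empty model is treated separately using $l_\emptyset = 1$ and $\widehat\theta_\emptyset = 0$, so it contributes no fluctuation term. Integrating the resulting tail bound then produces (\ref{majoration_risque_priori}) with the same constants and the same $\tau_n$ as in Theorem \ref{thrm_principal}.

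The main obstacle is essentially bookkeeping: one must verify that every algebraic manipulation that relied on $(\mathbb{H}_{K,\eta})$ still goes through under $(\mathbb{H}^l_{K,\eta})$. The crucial check is that $(\mathbb{H}^l_{K,\eta})$ still forces $d_m/(n-d_m) \leq \eta < \eta(K)$ for every model carrying nonzero prior mass, which is needed to justify the Gaussian chi-square concentration bounds and to keep $(1+\sqrt{2l_m})^2 d_m/(n-d_m)$ below the threshold $\eta(K)$ at which the constants in Theorem \ref{thrm_principal} blow up. Since the hypothesis $(\mathbb{H}^l_{K,\eta})$ is phrased in the same form as $(\mathbb{H}_{K,\eta})$ with $l_m$ replacing $H(d_m)$, and since the only property of $H(d)$ exploited in Theorem \ref{thrm_principal} is that it weights a summable family of models, no further adaptation is necessary.
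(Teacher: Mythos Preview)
Your proposal is correct and follows essentially the same route as the paper: replace $H(d_m)$ by $l_m$ throughout the proof of Theorem \ref{thrm_principal}, observing that the only properties of $H(d_m)$ used are Assumption $(\mathbb{H}_{K,\eta})$ and the summability $\sum_{m}\exp(-d_m H(d_m))\leq 1$, both of which hold for $l_m$ since $\sum_m \exp(-d_m l_m)=\sum_m \pi_{\mathcal{M}}(m)\leq 1$. The one small point you gloss over with ``no further adaptation is necessary'' is the bound on $\log(\mathrm{Card}(\mathcal{M}))$ inside Lemma \ref{lemme_controle_risque_complet}: in the paper this used $dH(d)\leq n/2$, and here one argues instead that $\mathrm{Card}(\mathcal{M})-1\leq \sup_{m\neq\emptyset}\exp(d_m l_m)$, so $\log(\mathrm{Card}(\mathcal{M}))\leq 1+\sup_m d_m l_m\leq 1+n/2$ by $(\mathbb{H}^l_{K,\eta})$.
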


\newpage
\textbf{Comments}:
\begin{itemize}
\item In this proposition, the penalty (\ref{penalite_complexe_priori}) as well as the risk bound (\ref{majoration_risque_priori}) depend on the prior distribution $\pi_{\mathcal{M}}$. In fact, the bound (\ref{majoration_risque_priori}) means that $\widetilde{\theta}$ achieves the trade-off between the bias and some prior weight, which is of the order $$-\log[\pi_{\mathcal{M}}(m)][\sigma^2+l(\theta_m,\theta)])/n\ .$$
This emphasizes that $\widetilde{\theta}$ favours models with a high prior probability. Similar risk bounds are obtained in the fixed design regression framework in Birg\'e and Massart \cite{birge98}.

\item If the proofs of Proposition \ref{proposition_priori} and Theorem \ref{thrm_principal} are very similar, Proposition \ref{proposition_priori} does not imply the theorem.
	
\item Roughly speaking,  Assumption ($\mathbb{H}_{K,\eta}^l$) requires that the prior probability $\pi_\mathcal{M}(m)$ is not exponentially small with respect to $n$.
\end{itemize}

%%%%%%%%%%%%%%%%%%%%%%%%%%%%%%%%%%%%%%%
%%%%%%% Reformulation avec un a priori
%%%%%%%%%%%%%%%%%%%%%%%%%%%%%%%%%%%%%%

\section{Minimax lower bounds and Adaptivity}\label{Section_minimax}

Throughout this section, we emphasize the dependency of the expectations $\mathbb{E}(.)$ and the probabilities $\mathbb{P}(.)$ on $\theta$ by writing $\mathbb{E}_{\theta}$ and $\mathbb{P}_{\theta}$. We have stated in Section \ref{section_oracle} that the penalized estimator  $\widetilde{\theta}$ performs almost as well as the best of the estimators $\widehat{\theta}_m$. We now want to compare the risk of $\widetilde{\theta}$ with the risk of any other possible estimator estimator $\widehat{\theta}$. There is no hope to make a pointwise comparison with an arbitrary estimator. Therefore, we classically consider the maximal risk over some suitable subsets $\Theta$ of $\mathbb{R}^p$. The \emph{minimax risk} over the set $\Theta$ is given by $\inf_{\widehat{\theta}}\sup_{\theta\in \Theta}\mathbb{E}_{\theta}[l(\widehat{\theta},\theta)]$, where the infimum is taken over all possible estimators $\widehat{\theta}$ of $\theta$. Then, the estimator $\tilde{\theta}$ is said to be \emph{approximately minimax} with respect to the set $\Theta$ if the ratio
\begin{eqnarray*}
 \frac{\sup_{\theta\in \Theta }\mathbb{E}_{\theta}\left[l\left(\tilde{\theta},\theta\right)\right]}{\inf_{\widehat{\theta}}\sup_{\theta\in \Theta}\mathbb{E}_{\theta}\left[l\left(\widehat{\theta},\theta\right)\right]}
\end{eqnarray*}
is smaller than a constant that does not depend on $\sigma^2$, $n$, or $p$. The minimax rates of estimation were extensively studied in the fixed design Gaussian regression framework and we refer for instance to \cite{birge2001} for a detailed discussion. In this section, we apply a classical methodology known as Fano's Lemma in order to derive 
 minimax rates of estimation for ordered and complete variable selection. Then, we deduce adaptive properties of the penalized estimator $\widetilde{\theta}$.

\subsection{Adaptivity with respect to ellipsoids}\label{section_adapt_ellips}

In this section, we prove that the estimator $\widetilde{\theta}$ introduced in Section \ref{Section_orderedselection} to perform ordered variable selection is adaptive to a large class of ellipsoids.

\begin{defi}\label{Definition_ellipsoides}
For any non increasing sequence $(a_i)_{1\leq i \leq {p+1}}$ such 
that $a_1=1$ and $a_{p+1}=0$ and any $R>0$, we define the ellipsoid $\mathcal{E}_{a}(R)$ by 
\begin{eqnarray*}\label{definition_formelle_ellipsoide}
 \mathcal{E}_a(R):= \left\{\theta \in \mathbb{R}^p,\sum_{i=1}^p \frac{l\left(\theta_{m_{i-1}},\theta_{m_{i}}\right)}{a_i^2}\leq R^2\right\}\ .
\end{eqnarray*}
\end{defi}
This definition is very similar to the notion of ellipsoids introduced in \cite{villers1}.
Let us explain why we call this set an ellipsoid. Assume for one moment  
that the $(X_i)_{1\leq i \leq p}$ are independent identically distributed with variance one. In this case, the 
term $l\left(\theta_{m_{i-1}},\theta_{m_{i}}\right)$ equals $\theta_i^2$ 
and the definition of $\mathcal{E}_{a}(R)$ translates in 
$$ \mathcal{E}_{a}(R) = \left\{\theta\in \mathbb{R}^p,\sum_{i=1}^p 
  \frac{\theta_i^2}{a_i^2} \leq R^2 
  \right\}\ ,$$which precisely corresponds to a \emph{classical} definition of an ellipsoid. 
 If the $(X_i)_{1\leq i\leq p}$ are not i.i.d. with unit 
 variance, it is always possible to create a sequence $X'_i$ of 
 i.i.d. standard Gaussian variables by orthonormalizing the $X_i$ using 
 Gram-Schmidt process. If we call $\theta'$ the vector in $\mathbb{R}^p$ such 
 that $X\theta=X'\theta'$, then it holds that $l\left(\theta_{m_{i-1}},\theta_{m_{i}}\right)=\theta'^2_i$. Then, we can express $\mathcal{E}_a(R)$ using 
  the coordinates of $\theta'$ as previously: 
$$ \mathcal{E}_{a}(R) = \left\{\theta\in \mathbb{R}^p,\sum_{i=1}^p \frac{\theta'^2_i}{a_i^2} \leq 
 R^2 \right\}\ .$$ 
The main advantage of this definition is that it does not directly depend on the covariance of $(X_i)_{1\leq i\leq p}$.

\begin{prte}\label{minoration_ellipsoides}
For any sequence $(a_i)_{1\leq i\leq p}$ and any positive number $R$, the minimax rate of estimation over the ellipsoid $\mathcal{E}_a(R)$ is lower bounded by
 \begin{eqnarray}\label{minoration_vrai_ellipsoide}
 \inf_{\widehat{\theta}}\sup_{\theta\in \mathcal{E}_a(R)}\mathbb{E}_{\theta}\left[l(\widehat{\theta},\theta)\right]\geq L\sup_{1\leq i\leq p}\left[a_i^2R^2 \wedge \frac{\sigma^2 i}{n}\right]\ .
 \end{eqnarray}
\end{prte}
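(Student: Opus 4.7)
The strategy is to reduce the problem to estimating a parameter on a classical ellipsoid in $\mathbb{R}^p$ under a standard Gaussian design, and then to apply Fano's inequality to a Varshamov--Gilbert packing inside this ellipsoid. Let $(X_i')_{1\leq i\leq p}$ denote the Gram--Schmidt orthogonalization of $(X_i)$ and let $\theta'$ be the linear reparametrization of $\theta$ satisfying $X\theta=X'\theta'$, as already introduced after Definition~\ref{Definition_ellipsoides}. The vector $X'$ has i.i.d.\ standard Gaussian components, the loss rewrites as $l(\widehat\theta,\theta)=\|\widehat\theta'-\theta'\|^2$, and the ellipsoid becomes $\{\theta':\sum_{i=1}^p (\theta_i')^2/a_i^2\leq R^2\}$. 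Since this reparametrization is bijective, it is enough to prove the lower bound in the orthonormal coordinates.

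Fix $i\in\{1,\ldots,p\}$ with $a_i>0$. I plan to construct a finite family of parameters $\{\theta'_\omega\}_{\omega\in\Omega}\subset\mathcal{E}_a(R)$ of the form $\theta'_\omega=c\,(\omega_1,\ldots,\omega_i,0,\ldots,0)$, with $\Omega\subset\{0,1\}^i$ a Varshamov--Gilbert packing satisfying $|\Omega|\geq 2^{i/8}$ and pairwise Hamming distance at least $i/8$. Monotonicity of $(a_j)$ yields $\sum_{j\leq i}c^2\omega_j^2/a_j^2\leq c^2 i/a_i^2$, so the membership $\theta'_\omega\in \mathcal{E}_a(R)$ is enforced by the condition $c^2\leq a_i^2R^2/i$. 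The pairwise squared Euclidean distance between hypotheses then satisfies $\|\theta'_\omega-\theta'_{\omega'}\|^2\geq c^2 i/8$, which will drive the final lower bound.

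The core computation is the Kullback--Leibler divergence between the laws of $n$ i.i.d.\ observations $(Y,X')$ under two such hypotheses. Since the marginal law of $X'$ does not depend on $\theta'$ and the conditional law of $Y$ given $X'$ is Gaussian with variance $\sigma^2$, the KL tensorizes across observations and simplifies to $\frac{n}{2\sigma^2}\mathbb{E}\bigl[\langle X',\theta'_\omega-\theta'_{\omega'}\rangle^2\bigr]=\frac{n}{2\sigma^2}\|\theta'_\omega-\theta'_{\omega'}\|^2\leq nc^2 i/(2\sigma^2)$. Fano's inequality then yields a minimax lower bound of order $c^2 i$ as soon as this KL is a small enough fraction of $\log|\Omega|$, which is itself of order $i$; this imposes the second restriction $c^2\leq L_0\sigma^2/n$. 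Choosing $c^2=\min(a_i^2R^2/i,\,L_0\sigma^2/n)$ produces a lower bound of order $c^2 i\geq L(a_i^2R^2\wedge \sigma^2 i/n)$, and taking the supremum over $i$ yields the claim.

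The only slightly delicate point is the regime of very small $i$ (typically $i=1$), where the Varshamov--Gilbert bound $|\Omega|\geq 2^{i/8}$ is too weak to invoke Fano directly. In that case I would fall back on Le Cam's two-point method applied to $\theta'=0$ versus $\theta'=a_i R\,e_i$; the same conditional KL computation gives a lower bound of the same order $a_i^2R^2\wedge \sigma^2/n$, which is absorbed into the final supremum up to universal constants. Apart from this routine case analysis, the proof is the standard combination of orthogonalization, Varshamov--Gilbert packing, and Fano's inequality, all of which work cleanly here thanks to the fact that the marginal distribution of the design does not depend on the parameter.
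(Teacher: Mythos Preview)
Your proposal is correct and follows essentially the same route as the paper: Gram--Schmidt orthogonalization to reduce to the Euclidean case, embedding the first $i$ (or $D$) coordinates of a Varshamov--Gilbert hypercube into the ellipsoid via monotonicity of the $a_j$, the same KL computation $\mathcal{K}(\mathbb{P}_\theta^{\otimes n},\mathbb{P}_{\theta'}^{\otimes n})=\frac{n}{2\sigma^2}l(\theta,\theta')$, and a Fano-type bound. The paper packages the hypercube step as a separate lemma and uses Birg\'e's version of Fano (Lemma~\ref{lemmebirge}), which already covers the case of two hypotheses, so your Le~Cam fallback for $i=1$ is unnecessary there; but this is a cosmetic difference, not a methodological one.
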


This result is analogous to the lower bounds obtained in the fixed design regression framework (see e.g. \cite{massartflour} Th. 4.9). Hence, the estimator $\widetilde{\theta}$ built in Section \ref{Section_orderedselection} is adaptive to a large class of ellipsoids.
\begin{cor}\label{adaptation_ellipsoide}
Assume that $n$ is larger than $12$. We consider the penalized estimator $\widetilde{\theta}$ with the collection $\mathcal{M}_{\lfloor n/2\rfloor}$ and the penalty $pen(m)=K\frac{d_m}{n-d_m}$. Let $\mathcal{E}_a(R)$ be an ellipsoid whose radius $R$ satisfies $\frac{\sigma^2}{n}\leq R^2\leq \sigma^2n^\beta$ for some $\beta>0$. Then, $\widetilde{\theta}$ is approximately minimax  on $\mathcal{E}_a(R)$
\begin{eqnarray*}
\sup_{\theta\in\mathcal{E}_a(R)}l(\widetilde{\theta},\theta)\leq L(K,\beta)\inf_{\widehat{\theta}}\sup_{\theta\in \mathcal{E}_a(R)}\mathbb{E}_{\theta}\left[l(\widehat{\theta},\theta)\right]\ ,
\end{eqnarray*}
if either $n\geq 2p$ or 
$a_{\lfloor n/2\rfloor +1}^2R^2\leq \sigma^2/2$.
\end{cor}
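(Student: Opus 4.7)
The plan is to combine the oracle inequality of Theorem \ref{ordered_selection} with the minimax lower bound of Proposition \ref{minoration_ellipsoides}. Since every model in $\mathcal{M}_{\lfloor n/2\rfloor}$ has dimension at most $\lfloor n/2\rfloor$, assumption $(\mathbb{H}_{\eta})$ is satisfied with $\eta=1/2$ (using $n\geq 12$, so $n\geq 6/(1-\eta)$), and $(\mathbb{H}_{Pol})$ holds with $\alpha=1,\beta=0$. Applying Theorem \ref{ordered_selection} with the penalty $pen(m_i)=Ki/(n-i)$, I obtain
\[
\mathbb{E}_\theta\bigl[l(\widetilde\theta,\theta)\bigr] \le L(K)\inf_{0\le i\le \lfloor n/2\rfloor}\Bigl[l(\theta_{m_i},\theta) + \tfrac{Ki}{n}\bigl(\sigma^2+l(\theta_{m_i},\theta)\bigr)\Bigr] + \tau_n.
\]

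Next I convert this into a bound involving the ellipsoid parameters. By the orthonormalisation of the $X_i$ described in Section \ref{section_adapt_ellips}, the bias decomposes telescopically as $l(\theta_{m_i},\theta)=\sum_{j=i+1}^{p}l(\theta_{m_{j-1}},\theta_{m_j})$. Since $(a_j)$ is non-increasing, $a_j^{-2}\ge a_{i+1}^{-2}$ for every $j\ge i+1$, and the defining inequality of $\mathcal E_a(R)$ yields $l(\theta_{m_i},\theta)\le a_{i+1}^2R^2$ for all $\theta\in\mathcal E_a(R)$. Using $i/n\le 1/2$ to absorb the cross term $\tfrac{Ki}{n}l(\theta_{m_i},\theta)$ into $(K/2)\,l(\theta_{m_i},\theta)$, I get
\[
\sup_{\theta\in\mathcal E_a(R)}\mathbb{E}_\theta\bigl[l(\widetilde\theta,\theta)\bigr] \le L(K)\inf_{0\le i\le \lfloor n/2\rfloor}\bigl[a_{i+1}^2R^2 + \sigma^2 i/n\bigr] + \tau_n.
\]

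To match this with the lower bound $L\sup_{1\le i\le p}[a_i^2R^2\wedge\sigma^2 i/n]$ from Proposition \ref{minoration_ellipsoides}, I define the crossing index $i^\star:=\max\{i\in\{1,\dots,p\}:a_i^2R^2\ge\sigma^2 i/n\}$ (setting $i^\star=0$ if this set is empty, in which case the bound is immediate by choosing $i=0$). Monotonicity of $a_i^2R^2$ (non-increasing) and $\sigma^2 i/n$ (increasing) shows on the one hand that $\sup_{i}[a_i^2R^2\wedge\sigma^2 i/n]\ge\sigma^2 i^\star/n$ and on the other that $a_{i^\star+1}^2R^2\le\sigma^2(i^\star+1)/n\le 2\sigma^2 i^\star/n$, so that $a_{i^\star+1}^2R^2+\sigma^2 i^\star/n\le 3\sigma^2 i^\star/n$. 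The delicate point is verifying that $i^\star\le\lfloor n/2\rfloor$ so that $i^\star$ is admissible in the infimum: this is automatic when $n\ge 2p$, and in the alternative case the hypothesis $a_{\lfloor n/2\rfloor+1}^2R^2\le\sigma^2/2$ combined with $\sigma^2(\lfloor n/2\rfloor+1)/n>\sigma^2/2$ for $n\ge 2$ forces $a_{\lfloor n/2\rfloor+1}^2R^2<\sigma^2(\lfloor n/2\rfloor+1)/n$, hence $i^\star\le\lfloor n/2\rfloor$. This case distinction is the only place where the hypotheses of the corollary enter.

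Finally I control the residual term. Since $a_1=1$, $\var(Y)=\sigma^2+l(0,\theta)\le\sigma^2+R^2\le 2\sigma^2n^\beta$ under the assumption $R^2\le\sigma^2 n^\beta$, so the exponential term in $\tau_n=L_1[\sigma^2/n+n^3\var(Y)\exp(-nL_2(K,1/2))]$ is bounded by $L(K,\beta)\sigma^2/n$ for $n$ larger than some $n_0(K,\beta)$ (enlarging the lower bound on $n$ if necessary absorbs the case $n\le n_0$ into the constant). Moreover, the assumption $R^2\ge\sigma^2/n$ gives $\sup_{i}[a_i^2R^2\wedge\sigma^2 i/n]\ge\sigma^2/n$ by evaluating at $i=1$, so $\tau_n$ is itself bounded by a constant times the minimax rate. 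Putting everything together yields $\sup_{\theta\in\mathcal E_a(R)}\mathbb E_\theta[l(\widetilde\theta,\theta)]\le L(K,\beta)\sup_{i}[a_i^2R^2\wedge\sigma^2 i/n]$, and the conclusion then follows from Proposition \ref{minoration_ellipsoides}.
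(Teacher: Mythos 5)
Your proof is correct and follows essentially the same route as the paper's: apply Theorem \ref{ordered_selection} with the collection $\mathcal{M}_{\lfloor n/2\rfloor}$, use the telescoping decomposition and monotonicity of $(a_i)$ to bound the bias by $a_{i+1}^2R^2$, plug in the crossing index $i^\star$, verify its admissibility under either hypothesis, and absorb $\tau_n$ into the minimax lower bound via $R^2\geq\sigma^2/n$. The only cosmetic remark is that the residual term $n^{3+\beta}\exp(-nL_2)$ is already bounded by $L(K,\beta)/n$ uniformly in $n$ (since $R^2\leq\sigma^2 n^\beta$ controls $\var(Y)$), so the extra caveat about enlarging $n_0$ is not needed; the constant $L(K,\beta)$ absorbs it directly.
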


In the fixed design framework, one may build adaptive estimators to any ellipsoid satisfying $R^2\geq \sigma^2/n$ so that the ellipsoid is not degenerate (see e.g. \cite{massartflour} Sect. 4.3.3). In our setting, when $p$ is small the estimator $\widetilde{\theta}$ is adaptive to all the ellipsoids that have a moderate radius $\sigma^2/n\leq R^2\leq n^\beta$. The technical condition $R^2\leq n^\beta$ is not really restrictive. It comes from the term $n^3l(0_p,\theta)\exp(-nL(K))$ in Theorem \ref{ordered_selection} which goes exponentially fast to $0$ with $n$. When $p$ is larger, $\widetilde{\theta}$ is adaptive to the ellipsoids that also satisfies $a_{\lfloor n/2\rfloor +1}^2R^2\leq \sigma^2/2$. 
In other words, we require that the ellipsoid is well approximated by the space $S_m{_{\lfloor n/2\rfloor}}$ of vectors $\theta$ whose support is included in $\{1,\ldots,\lfloor n/2\rfloor \}$. If this condition is not fulfilled, the estimator $\widetilde{\theta}$ is not proved to be minimax on $\mathcal{E}_a(R)$. For such situations, we believe on the one hand that the estimator $\widetilde{\theta}$ should be refined and on the other hand that our lower bounds are not sharp. Finally, the collection $\mathcal{M}_{\lfloor n/2\rfloor}$ may be replaced by any $\mathcal{M}_{\lfloor n\eta\rfloor}$ in Corollary \ref{adaptation_ellipsoide}.\\

Since the methods used for minimax lower bounds and the oracle inequalities are analogous to the ones in the Gaussian sequence framework, one may also adapt in our setting the arguments developed in \cite{massartflour} Sect. 4.3.5 to derive  minimax rates of estimation over other sets such Besov bodies. However, this is not really relevant for the regression model (\ref{modele_regression}).

\subsection{Adaptivity with respect to sparsity}\label{section_sparse}

Our aim is now to analyze the minimax risk for the complete variable selection problem. Let us fix an integer $k$ between $1$ and $p$.  We are  interested in estimating the vector $\theta$ within the class of vectors with a most $k$
non-zero components. This typically corresponds to the situation encountered in graphical modeling when estimating the neighborhoods of large sparse graphs. As the graph is assumed to be sparse, only a small number of components of $\theta$ are non-zero.

In the sequel, the set  $\Theta[k,p]$ stands for the subset of vectors $\theta \in 
\mathbb{R}^{p}$, such that at most $k$ coordinates of $\theta$ are non-zero. For any $r>0$, we denote $\Theta[k,p](r)$ the subset of $\Theta[k,p]$ such that any component of $\theta$ is smaller than $r$ in absolute value.

First, we derive a lower bound for the minimax rates of estimation when the covariates are independent. Then, we prove the estimator $\widetilde{\theta}$ defined with some collection $\mathcal{M}_p^d$ and the penalty (\ref{penalite_complete}) is adaptive to any sparse vector $\theta$. Finally, we investigate the minimax rates of estimation for correlated covariates.

\begin{prte}\label{minoration_minimax}
Assume that the covariates $X_i$ are independent and have a unit variance. For any $k\leq p$ and any radius $r>0$, 
\begin{eqnarray}
 \inf_{\widehat{\theta}}\sup_{\theta\in \Theta[k,p](r)}\mathbb{E}_{\theta}\left[l(\widehat{\theta},\theta)\right]\geq Lk\left[r^2\wedge\sigma^2\frac{1+\log\left(\frac{p}{k}\right)}{n}\right]\ .
\end{eqnarray}
\end{prte}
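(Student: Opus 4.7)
The plan is to reduce the problem to a standard Gaussian sequence minimax problem and then apply Fano's (or Birgé's) lemma on a carefully chosen packing of $\Theta[k,p](r)$.

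First I would exploit the independence/unit-variance assumption on $(X_i)$ to simplify the loss and the KL divergence. Since $\Sigma = I_p$, one has $l(\theta_1,\theta_2) = \|\theta_1-\theta_2\|_2^2$. Moreover, using that the joint density of $({\bf Y},{\bf X})$ factors as $p_\theta({\bf Y}\mid {\bf X})p({\bf X})$ with the marginal of ${\bf X}$ free of $\theta$, a direct computation under $P_{\theta_1}$ yields
\begin{equation*}
KL(P_{\theta_1},P_{\theta_2}) \;=\; \frac{1}{2\sigma^2}\,\mathbb{E}\!\left[\|{\bf X}(\theta_1-\theta_2)\|^2\right] \;=\; \frac{n\,\|\theta_1-\theta_2\|^2}{2\sigma^2}\,.
\end{equation*}
So the problem is statistically equivalent (for the purpose of lower bounds) to the Gaussian sequence model of observing $\theta + \sigma n^{-1/2}\xi$ with $\xi\sim\mathcal{N}(0,I_p)$.

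Next I would build the hypothesis family using a Varshamov--Gilbert-type lemma: for $k\leq p/2$, there exists $\mathcal{S}\subset\{0,1\}^p$ such that every $\omega\in\mathcal{S}$ has exactly $k$ non-zero entries, $\log|\mathcal{S}|\geq c_1 k\log(ep/k)$ and the pairwise Hamming distance is at least $c_2 k$ for universal constants $c_1,c_2>0$. Setting $\theta^{(\omega)}=\delta\,\omega$ for $\omega\in\mathcal{S}$ ensures that each $\theta^{(\omega)}$ lies in $\Theta[k,p]$ and in $\Theta[k,p](r)$ as soon as $\delta\leq r$. For any two distinct hypotheses, $\|\theta^{(\omega)}-\theta^{(\omega')}\|^2\geq c_2 k\delta^2$ and $KL \leq nk\delta^2/\sigma^2$. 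The amplitude $\delta$ will be chosen as the minimum of $r$ and $c_3\,\sigma\sqrt{\log(ep/k)/n}$ for a small enough constant $c_3$, which is precisely what it takes to make the maximum KL bounded by $\alpha\log|\mathcal{S}|$ with $\alpha<1/8$. Birgé's version of Fano then yields
\begin{equation*}
\inf_{\widehat\theta}\sup_{\omega\in\mathcal{S}}\mathbb{E}_{\theta^{(\omega)}}\!\left[l(\widehat\theta,\theta^{(\omega)})\right] \;\geq\; L\,k\delta^2 \;=\; L\,k\!\left[r^2\wedge \sigma^2\,\frac{\log(ep/k)}{n}\right],
\end{equation*}
which is the desired bound in the regime $k\leq p/2$ (where $\log(ep/k) \asymp 1+\log(p/k)$).

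It remains to handle the regime $k>p/2$, where $\log(p/k)$ may vanish but the statement still demands a lower bound of order $k\sigma^2/n \wedge kr^2$. Here the sparsity constraint is essentially inactive, and I would apply the same argument but with a Varshamov--Gilbert packing of $\{0,1\}^p$ itself, providing $|\mathcal{S}|\geq e^{c_1 p}$ vectors with pairwise Hamming distance at least $c_2 p$. Choosing $\delta=r\wedge c_3\sigma/\sqrt{n}$ produces hypotheses in $\Theta[k,p](r)$ (since every $p$-sparse vector is $k$-sparse when $k>p/2$) with the required separation and KL control, yielding a lower bound of order $p\delta^2\gtrsim k[r^2\wedge \sigma^2/n]$. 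Combining the two regimes gives the stated bound.

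The main obstacle is the simultaneous control of the three quantities --- packing cardinality, pairwise separation, and KL divergences --- under the hard amplitude constraint $\|\theta\|_\infty\leq r$. This forces the two-regime analysis ($r^2 \lessgtr \sigma^2\log(ep/k)/n$) and a careful choice of $\delta$, rather than a single uniform construction.
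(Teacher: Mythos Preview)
Your proposal is correct and follows essentially the same route as the paper: compute the KL as $n\|\theta-\theta'\|^2/(2\sigma^2)$, apply a Varshamov--Gilbert-type packing of $k$-sparse binary vectors together with Birg\'e's lemma for the regime $k\leq p/4$ (the paper splits at $p/4$ rather than $p/2$, but this is immaterial), and handle the dense regime separately via a hypercube argument.

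One small slip: in your $k>p/2$ case you write that ``every $p$-sparse vector is $k$-sparse when $k>p/2$'', which is false for $k<p$; a generic element of a Varshamov--Gilbert packing of $\{0,1\}^p$ need not lie in $\Theta[k,p]$. The fix is immediate and is exactly what the paper does: pack $\{0,r\}^k\times\{0\}^{p-k}$ instead (this is the hypercube $\mathcal{C}_k(r)$ in the paper), which yields $\log|\mathcal{S}|\gtrsim k$ and separation $\gtrsim k\delta^2$, giving the bound $Lk[r^2\wedge\sigma^2/n]$ you claim. With this correction your argument matches the paper's.
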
~\\

Thanks to Theorem \ref{thrm_principal}, we derive the minimax rate of estimation over $\Theta[k,p]$.

\begin{cor}\label{adaptation_sparsite}
Consider $K>0$, $\beta>0$, and $\eta<\eta(K)$. Assume that $n\geq n_0(K)$ and that the covariates $X_i$ are independent and have a unit variance. Let $d$ be a positive integer such that $\mathcal{M}_p^d$ satisfies $(\mathbb{H}_{K,\eta})$. The penalized estimator $\widetilde{\theta}$ defined with the collection $\mathcal{M}_p^d$ and the penalty (\ref{penalite_complete}) is adaptive minimax over the sets $\Theta[k,p](n^\beta)$
\begin{eqnarray*}
\sup_{\theta\in\Theta[k,p]}\mathbb{E}_{\theta} \left[l(\widetilde{\theta},\theta)\right]\leq L(K,\beta,\eta) \inf_{\widehat{\theta}}\sup_{\theta\in \Theta[k,p](n^\beta)}\mathbb{E}_{\theta}\left[l(\widehat{\theta},\theta)\right]\ ,
\end{eqnarray*}
for any $k$ smaller than $d$.
\end{cor}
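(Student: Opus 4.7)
The plan is to combine the oracle inequality of Theorem~\ref{thrm_principal} (applied to the collection $\mathcal{M}_p^d$ with penalty~(\ref{penalite_complete})) with the minimax lower bound of Proposition~\ref{minoration_minimax}. For $\theta \in \Theta[k,p](n^\beta)$, the core step is to control the infimum over $m\in \mathcal{M}_p^d$ on the right-hand side of (\ref{majoration_risque}) by evaluating it at two well-chosen models.

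The first choice is $m_1:=\operatorname{supp}(\theta)$, which has cardinality at most $k\le d$ and thus belongs to $\mathcal{M}_p^d$. Since $\theta \in S_{m_1}$, the bias $l(\theta_{m_1},\theta)$ vanishes and the $m_1$-contribution reduces to $K\,(|m_1|/n)[1+\sqrt{2\log(ep/|m_1|)}]^2\sigma^2$; because the map $x\mapsto x[1+\sqrt{2\log(ep/x)}]^2$ is nondecreasing on $(0,p]$, this is bounded by $LK\,(k/n)\log(ep/k)\sigma^2$. The second choice is $m_0:=\emptyset$, whose penalty vanishes by convention; since the covariates are independent with unit variance, $l(0,\theta)=\|\theta\|^2 \le k\,n^{2\beta}$. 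Combining both choices yields
\[
\mathbb{E}_\theta\left[l(\widetilde{\theta},\theta)\right] \le L(K,\eta)\, k \left[\frac{\log(ep/k)}{n}\sigma^2 \wedge n^{2\beta}\right] + \tau_n,
\]
which has exactly the shape of the minimax lower bound supplied by Proposition~\ref{minoration_minimax}.

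It remains to verify that $\tau_n$ is absorbed in this main term uniformly over $\Theta[k,p](n^\beta)$. On this set $\var(Y)=\sigma^2+\|\theta\|^2 \le \sigma^2 + k\,n^{2\beta}$, so
\[
\tau_n \le \frac{L_1\sigma^2}{n} + L_2\,n^{5/2}(\sigma^2+k\,n^{2\beta})\exp\!\big(-n L_3(K,\eta)\big).
\]
The $L_1\sigma^2/n$ piece is absorbed by the $\sigma^2/n$ factor already present in Proposition~\ref{minoration_minimax} (as soon as $p\ge e k$), while the $\sigma^2$-free residual of order $k n^{5/2+2\beta}\exp(-nL_3)$ is pushed below $k n^{2\beta}$ by taking $n\ge n_0(K,\beta,\eta)$ large enough; this is where the exponential decay of $\tau_n$ really bites, and where the constant $L$ is forced to depend on $\beta$. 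Matching $\tau_n$ to the minimax lower bound in all regimes of $\sigma^2$ relative to $n^{2\beta}$ is the main technical hurdle of the argument. Once this is settled, direct comparison with Proposition~\ref{minoration_minimax} yields the claimed adaptive minimax inequality.
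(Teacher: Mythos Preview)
Your approach is the paper's intended one: combine Theorem~\ref{thrm_principal} (specialized to $\mathcal{M}_p^d$ with penalty~(\ref{penalite_complete})) with Proposition~\ref{minoration_minimax}. The paper gives no detailed proof of this corollary, so there is nothing finer to compare against. Evaluating the oracle bound at both $m_1=\operatorname{supp}(\theta)$ and $m_0=\emptyset$, so that the upper bound acquires the same two-term minimum $k\bigl[\sigma^2\log(ep/k)/n\wedge n^{2\beta}\bigr]$ as the lower bound, is the right move and slightly sharper than what the paper sketches (which only quotes the $m_1$-rate $k\sigma^2\log(ep/k)/n$).

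Your absorption of $\tau_n$ does not quite close, though, and the obstruction is genuine. You assert that $L_1\sigma^2/n$ is controlled by ``the $\sigma^2/n$ factor already present in Proposition~\ref{minoration_minimax}'', but that lower bound is $Lk\bigl[n^{2\beta}\wedge\sigma^2(1+\log(p/k))/n\bigr]$: when $\sigma^2$ is large enough that $n^{2\beta}$ is the minimum, the minimax risk over $\Theta[k,p](n^\beta)$ is truly of order $kn^{2\beta}$ (the trivial estimator $\widehat\theta=0$ already achieves it), and $\sigma^2/n$ can exceed this by an arbitrary factor. Symmetrically, for $\sigma^2\to 0$ the residual $kn^{5/2+2\beta}\exp(-nL_3)$ is not dominated by $k\sigma^2\log(ep/k)/n$. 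No choice of $n_0(K,\beta,\eta)$ repairs either case. This is in fact an imprecision in the corollary as written rather than in your strategy: compare Corollary~\ref{adaptation_ellipsoide}, which imposes $\sigma^2/n\le R^2\le\sigma^2 n^\beta$ precisely to make $\tau_n$ absorbable. If the radius is taken as $\sigma n^\beta$ instead of $n^\beta$, both sides scale with $\sigma^2$, the lower bound is always at least $L\sigma^2 k/n$, and your argument completes for $n\ge n_0(K,\beta,\eta)$.
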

Hence, the minimax rates of estimation over $\Theta[k,p](n^\beta)$ is of order $k\frac{\log \left(\frac{ep}{k}\right)}{n}$, which is similar to the rates obtained in the fixed design regression framework.
As in previous Section, we restrict ourselves to a radius $r$ in $\Theta[k,p](r)$ smaller than $n^\beta$ because of the term $\tau_n(\var(Y),K,\eta)$ which depends on $l(0_p,\theta)$ but goes exponentially fast to 0 when $n$ goes to infinity. Let us interpret Corollary \ref{adaptation_sparsite} with regard to Condition (\ref{condition_estimation_complete}). If $p$ is of the same order as $n$, the estimator $\widetilde{\theta}$ is simultaneously minimax over all sets $\Theta[k,p](n^\beta)$ when $k$ is smaller than a constant times $n$. If $p$ is much larger than $n$, the estimator $\widetilde{\theta}$ is simultaneously minimax over all sets $\Theta[k,p](n^\beta)$ with $k$ smaller than $Ln/\log(p)$. We conjecture that the minimax rate of estimation is larger than $k\log(p/k)/n$ when $k$ becomes larger than $n/\log p$.
Let us mention that Tsybakov \cite{tsybakov_minimax} has proved general minimax lower bounds for aggregation in Gaussian random design regression. However, his result does not apply in our Gaussian design setting setting since he assumes that the density of the covariates $X_i$ is lower bounded by a constant $\mu_0$.\\

We have proved that the estimator $\widetilde{\theta}$ is adaptive to an unknown sparsity when the covariates are independent. The performance of $\widetilde{\theta}$ exhibited in Theorem \ref{thrm_principal} do not depend on the covariance matrix $\Sigma$. Hence, the minimax rates of estimation on $\Theta[k,p]$ is smaller or equal to the order $k\log(p/k)/n$ for any dependence between the covariance.
One may then wonder whether the minimax rate of estimation over $\Theta[k,p]$ is not faster when the covariates are correlated. We are unable to derive the minimax rates for a general covariance matrix $\Sigma$. This is why we restrict ourselves to particular examples of correlation structures. Let us first consider a pathological situation: Assume that $X_1,\ldots , X_k$ are independent and that $X_{k+1},\ldots ,X_p$ are all equal to $X_1$. Admittedly, the covariance matrix $\Sigma$ is henceforth non invertible. In the discussion, we mention that Theorems \ref{ordered_selection} and \ref{thrm_principal} easily extend when $\Sigma$ is non-invertible if we take into account that the estimators $\widehat{\theta}_m$ and $\widehat{m}$ are non-necessarily uniquely defined. We may derive from Lemma \ref{prte_basique} that the estimator $\widehat{\theta}_{\{1,\ldots, k\}}$ achieves the rate $k/n$ over $\theta[k,p](n^\beta)$. Conversely, the parametric rate $k/n$ is optimal. However, the estimator $\widetilde{\theta}$ defined with the collection $\mathcal{M}_p^k$ and penalty (\ref{penalite_complete}) only achieves the rate $k\log(p/k)/n$. Hence, $\widetilde{\theta}$ is not minimax over $\Theta[k,p]$ for this particular covariance matrix and the minimax rate is degenerate. This emergence of faster rates for correlation covariates also occurs for testing problems in the model (\ref{modele_regression}) as stated in \cite{villers1} Sect. 4.3.
This is why we provide sufficient conditions on $\Sigma$ so that the minimax rate of estimation is still of the same order as in the independent case. In the following proposition, $\|.\|$ refers to the canonical norm in $\mathbb{R}^p$.
\begin{prte}\label{minoration_restricted_isometry}
Let $\Psi$ denote the correlation matrix of the covariates $(X_i)_{1\leq i \leq p}$.
 Let $k$ be a positive number smaller $p/2$ and let $\delta>0$. Assume that 
\begin{eqnarray}\label{condition_restrited}
(1-\delta)^2 \|\theta\|^2\ \leq \theta^*\Psi\theta\leq (1+\delta)^2 \|\theta\|^2\ ,
\end{eqnarray}
for all $\theta\in\mathbb{R}^p$ with at most $2k$ non-zero components. Then, the minimax rate of estimation over $\Theta[k,p](r)$ is lower bounded as follows 
\begin{eqnarray*}
 \inf_{\widehat{\theta}}\sup_{\theta\in\Theta[k,p](r)}\mathbb{E}_{\theta}\left[l(\widehat{\theta},\theta)\right]\geq L(1-\delta)^2k\left[r^2\wedge\sigma^2\frac{1+\log\left(\frac{p}{k}\right)}{(1+\delta)^2n}\right]\ .
\end{eqnarray*}
\end{prte}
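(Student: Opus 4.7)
The plan is to establish the lower bound via Fano's method, constructing a large family of well-separated hypotheses inside $\Theta[k,p](r)$ and controlling their pairwise Kullback-Leibler divergences by means of the restricted isometry condition (\ref{condition_restrited}). As in Proposition \ref{minoration_minimax}, I would begin by reducing to the case of unit variances, so that $\Sigma=\Psi$ and the condition directly controls the loss $l(\cdot,\cdot)$.

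First, I would invoke a Varshamov-Gilbert type argument (valid since $k\leq p/2$) to produce a family $\Omega\subset\{0,1\}^{p}$ of binary vectors with exactly $k$ ones, any two of which have Hamming distance at least $k/2$, and with $\log|\Omega|\geq c_{1}\, k\log(ep/k)$ for a universal constant $c_{1}>0$. Setting $\theta_{\omega}:=\rho\omega$ for $\omega\in\Omega$, each $\theta_{\omega}$ belongs to $\Theta[k,p](r)$ as soon as $\rho\leq r$. The difference $\omega-\omega'$ has at most $2k$ non-zero coordinates, all in $\{-1,0,1\}$, so $\|\omega-\omega'\|^{2}=d_{H}(\omega,\omega')\in[k/2,\,2k]$. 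Applying (\ref{condition_restrited}) to $\omega-\omega'$ then yields simultaneously the separation $l(\theta_{\omega},\theta_{\omega'})\geq (1-\delta)^{2}\rho^{2}k/2$ and the upper bound $l(\theta_{\omega},\theta_{\omega'})\leq 2(1+\delta)^{2}\rho^{2}k$.

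Under the Gaussian random-design model, the marginal law of $X$ is identical under every hypothesis and $Y\mid X\sim N(X\theta,\sigma^{2})$, so the Kullback-Leibler divergence between the $n$-sample laws $P_{\theta_{\omega}}^{\otimes n}$ and $P_{\theta_{\omega'}}^{\otimes n}$ is exactly $nl(\theta_{\omega},\theta_{\omega'})/(2\sigma^{2})$, hence bounded by $n(1+\delta)^{2}\rho^{2}k/\sigma^{2}$. I would then apply the classical Birg\'e-Fano lemma: choosing $\rho^{2}=r^{2}\wedge c_{2}\sigma^{2}\log(ep/k)/[(1+\delta)^{2}n]$ for a sufficiently small absolute constant $c_{2}$ makes the Kullback bound smaller than, say, $\tfrac{1}{8}\log|\Omega|$, and the resulting minimax lower bound is a constant multiple of the separation $(1-\delta)^{2}\rho^{2}k$, which is precisely the announced expression.

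The main technical obstacle is that the restricted isometry assumption must be used twice, in opposite directions: its lower bound guarantees that the $\theta_{\omega}$'s remain well-separated in the loss $l(\cdot,\cdot)$ (otherwise compromised by the rank-deficient pathology described just before the proposition), while its upper bound prevents the Kullback-Leibler divergences from swamping $\log|\Omega|$. Assumption (\ref{condition_restrited}) is exactly what is needed to make both sides of Fano's method work simultaneously, and the factors $(1\pm\delta)$ in the final rate are traced directly from these two usages.
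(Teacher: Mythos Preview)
Your proposal is correct and follows essentially the same route as the paper: a Fano/Birg\'e packing argument built on a Varshamov--Gilbert (in the paper, Birg\'e--Massart) family of $k$-sparse binary vectors, with the two halves of the restricted isometry condition used exactly as you describe---the upper bound to control the Kullback--Leibler divergences and the lower bound to guarantee separation in the loss $l(\cdot,\cdot)$. The paper phrases the argument as first obtaining a lower bound in the Euclidean norm $\|\cdot\|$ and then converting via~(\ref{condition_restrited}), whereas you apply~(\ref{condition_restrited}) directly to the packing differences and run Birg\'e's lemma in the $l$-pseudometric; your formulation is slightly cleaner since it sidesteps the question of why the conversion is legitimate for arbitrary (non-sparse) estimators.
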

Assumption (\ref{condition_restrited}) corresponds to  the $\delta$-Restricted Isometry Property of order $2k$ introduced by Cand\`es and Tao \cite{candes2005}. Under such a condition, the minimax rates of estimation is the same as the one in the independent case up to a constant depending on $\delta$ and the estimator $\widetilde{\theta}$ defined in Corollary \ref{adaptation_sparsite} is still approximately minimax over such sets $\Theta[k,p]$.

However, the $\delta$-Restricted Isometry Property is quite restrictive and seems not to be necessary so that the minimax rate of estimation stays of the order $k\log(p/k)/n$. Besides, in many situations this condition is not fulfilled. Assume for instance that the random vector $X$ is a Gaussian Graphical model with respect to a given sparse graph. We expect that the correlation between two covariates is large if they are neighbors in the graph and small if they are far-off (w.r.t. the graph distance). This is why we derive lower bounds on the rate of 
estimation for correlation matrices often used to model stationary processes.

\begin{prte}\label{spatial} 
Let $X_1,\ldots, X_p$ form a stationary 
process on the one dimensional torus. More precisely, the correlation between 
$X_i$ and $X_j$ is a function of $|i-j|_p$ where $|.|_p$ refers to the 
toroidal distance defined by: 
$$|i-j|_p := (|i-j|) \wedge \left(p-|i-j|\right)\ .$$ 
$\Psi_1(\omega)$ and $\Psi_2(t)$ respectively refer to the correlation matrix of $X$ such that 
\begin{eqnarray*} 
\text{\emph{corr}}(X_i,X_j)& :=&  \exp(-\omega|i-j|_p) \text{ where }\omega>0,\\ 
\text{\emph{corr}}(X_i,X_j)& := & (1+|i-j|_p)^{-t}\text{ where }t>0. 
\end{eqnarray*} 
 Then, the minimax rates of estimation are lower bounded as follows
\begin{eqnarray*} 
 \inf_{\widehat{\theta}}\sup_{\theta\in\Theta[k,p]}\mathbb{E}_{\theta,\Psi_1(\omega)}\left[l(\widehat{\theta},\theta)\right]\geq L\frac{k\sigma^2}{n}\left[1+\log\left(\frac{\left\lfloor p \lceil \log (4k)/\omega\rceil^{-1}\right\rfloor }{k}\right)\right]\ ,\\ 
\end{eqnarray*}
if $k$ is smaller than $p /\lceil \log (4k)/\omega\rceil$ and 
\begin{eqnarray*}
\inf_{\widehat{\theta}}\sup_{\theta\in\Theta[k,p]}\mathbb{E}_{\theta,\Psi_2(t)}\left[l(\widehat{\theta},\theta)\right]\geq L\frac{k\sigma^2}{n}\left[1+ \log\left(\frac{\lfloor p \lceil (4k)^{\frac{1}{t}}-1\rceil^{-1}\rfloor }{k}\right)\right]\ ;
\end{eqnarray*} 
if $k$ is smaller than $p /\lceil(4k)^{\frac{1}{t}}-1 \rceil$.
\end{prte}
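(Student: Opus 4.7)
The plan is to reduce the lower bound to Proposition \ref{minoration_restricted_isometry} by restricting attention to parameter vectors $\theta$ supported on a sparse, well-separated set of indices along the torus, where pairwise correlations become uniformly small. Specifically, I take $I \subset \{1,\ldots,p\}$ to be an arithmetic progression on the torus with common gap $s$, where $s := \lceil \log(4k)/\omega \rceil$ in the exponential case $\Psi_1(\omega)$ and $s := \lceil (4k)^{1/t}-1 \rceil$ in the polynomial case $\Psi_2(t)$. Then $|I| = \lfloor p/s \rfloor$ and every pair of distinct $i,j \in I$ satisfies $|i-j|_p \geq s$, so the assumed correlation profile gives $|\mathrm{corr}(X_i,X_j)| \leq 1/(4k)$ for all such pairs.

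For any $J \subseteq I$ with $|J|=2k$, every row of the principal submatrix $\Psi_J$ has off-diagonal absolute sum at most $(2k-1)/(4k) < 1/2$. By Gershgorin's disk theorem, the eigenvalues of $\Psi_J$ therefore lie in $[1/2,3/2]$, so the correlation matrix of $(X_i)_{i \in I}$ satisfies the $\delta$-Restricted Isometry Property of order $2k$ for a universal $\delta < 1$ (e.g.\ $\delta = 1-1/\sqrt{2}$).

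To conclude, I note that the minimax risk over $\Theta[k,p]$ is bounded below by its restriction to vectors $\theta$ supported in $I$ with at most $k$ non-zero coordinates. When $\mathrm{supp}(\theta) \subseteq I$, the conditional density of $Y$ given $X$ depends on $X$ only through $X_I$, so $(Y,X_I)$ is sufficient for $\theta$; moreover, $l(\theta,\theta')$ between two such vectors depends on $\Sigma$ only through $\Sigma_I$. The Fano-type packing argument underlying Proposition \ref{minoration_restricted_isometry}, fed with the RIP just established, therefore delivers the lower bound $L k\sigma^2(1+\log(|I|/k))/n$ for the $|I|$-dimensional sub-problem, which is also a lower bound for the full problem. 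Plugging in $|I|=\lfloor p/s\rfloor$ with the values of $s$ above yields the two announced bounds.

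The main obstacle is the joint calibration of $s$: it must be large enough that the uniform RIP bound over all $2k$-subsets of $I$ holds, yet small enough that $|I|=\lfloor p/s\rfloor$ remains of order $p/\log k$ (exponential) or $p/k^{1/t}$ (polynomial), which drives the final $\log(|I|/k)$ factor. The stated choices of $s$ achieve exactly this balance, so that the exponential correlation costs only a logarithmic-in-$k$ rate loss against the independent benchmark, whereas the polynomial case loses a power of $k$. The sufficiency reduction in Step 3 is routine but must be spelled out so that one is genuinely entitled to apply Proposition \ref{minoration_restricted_isometry} to the $|I|$-dimensional sub-model instead of re-running the packing argument from scratch.
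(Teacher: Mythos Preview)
Your proof is correct and follows essentially the same approach as the paper: both select a well-spaced subset of indices (gap $s=\lceil\log(4k)/\omega\rceil$ or $s=\lceil(4k)^{1/t}-1\rceil$), use the resulting off-diagonal bound $1/(4k)$ together with Gershgorin/diagonal dominance to get eigenvalues in $[1/2,3/2]$ for all $2k$-submatrices, and then invoke Proposition~\ref{minoration_restricted_isometry} on the restricted covariate set. Your explicit sufficiency reduction is a welcome clarification of a step the paper leaves implicit; note that the paper additionally verifies (as a side remark, not part of the lower-bound argument) that $\Psi_1(\omega)$ and $\Psi_2(t)$ are indeed positive semidefinite when $p$ is odd.
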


In the proof of the proposition, we justify that the correlations considered are well-defined at least when $p$ is odd. Let us mention that these correlation models are quite classical when modelling the correlation of time series (see e.g. \cite{gneiting})

If the range $\omega$ is larger than $1/p^{\gamma}$  or if the range $t$ is larger than $\gamma$ for some $\gamma<1$, the lower bounds are of order $\sigma^2\frac{k}{n}(1+\log p/k)$. As a consequence, for any of these correlation models the minimax rate of 
estimation is of the same order as the minimax rate of estimation for independent 
covariates. This means that the estimator  $\widetilde{\theta}$ defined in Proposition \ref{adaptation_sparsite} is rate-optimal for these correlations matrices. \\

In conclusion, the estimator $\widetilde{\theta}$ defined in Corollary \ref{adaptation_sparsite} may not be adaptive to the covariance matrix $\Sigma$ but rather achieves the minimax rate over all covariance matrices $\Sigma$: 
\begin{eqnarray*}
\sup_{\Sigma\geq 0}\sup_{\theta\in\Theta[k,p](n^\beta)}\mathbb{E}_{\theta} \left[l(\widetilde{\theta},\theta)\right]\leq L(K,\beta,\eta) \inf_{\widehat{\theta}}\sup_{\Sigma\geq 0}\sup_{\theta\in \Theta[k,p](n^\beta)}\mathbb{E}_{\theta}\left[l(\widehat{\theta},\theta)\right]\ . 
\end{eqnarray*}
Nevertheless, the result makes sense if one considers GGMs since the resulting covariance matrices are typically far from being independent.

\section{Numerical study}\label{section_simulation}

In this section, we carry out a small simulation study to evaluate the performance of our estimator $\widetilde{\theta}$. As pointed out earlier, an interesting feature of our criterion lies in its flexibility. 
However, we restrict ourselves here to the variable selection problem. Indeed, it allows to assess the efficiency of our procedure with having regard to the Lasso \cite{tibshirani96} and adaptive Lasso proposed by Zou \cite{zou_adaptive}. Even if these two procedures assume that the conditional variance $\sigma^2$ is known, they give good results in practice and the comparison with our method is of interest. The calculations are made with $R$ \url{www.r-project.org/}.

\subsection{Simulation scheme}

We consider the regression model (\ref{modele_regression}) with $p=20$, and $\sigma^2=1$. The number of observations $n$ equal $15$, $20$, and $30$.
We perform two simulation experiments.
\begin{enumerate}
 \item First simulation experiment: The covariance matrix $\Sigma_1$ is the identity matrix. This corresponds to the situation where the covariates are all independent. The vector $\theta_1$ has all its components to zero except the three first ones, which respectively equal $2$, $1$, and $0.5$.
\item Second simulation experiment: Let $A$ be the $p\times p$ matrix whose lines ($a_1,\ldots,a_p$) are respectively defined by
\begin{eqnarray*}
a_1 & :=& (1,-1,0,\ldots,0)/\sqrt{2}\\
a_2 & :=& (-1,1.2,0,\ldots,0)/\sqrt{1+1.2^2}\\
a_3  & :=&(1/\sqrt{2},1/\sqrt{2},1/p,\ldots,1/p)/\sqrt{1/2+(p-2)/p^2}\ ,
\end{eqnarray*}
and for $4\leq j\leq p$, $a_j$ corresponds to the $j^{\text{th}}$ canonical vector of $\mathbb{R}^p$. Then, we take the covariance matrix $\Sigma_2=A^*A$ and the vector $\theta_2^*=(40,40,0,\ldots,0)$. This choice of parameters derives from the simulation experiments of \cite{baraud08}. Observe that the two first covariates are highly correlated.		 
\end{enumerate}

For each sample we estimate $\theta$ with our procedure, the Lasso and the adaptive Lasso. For our procedure we use the collection $\mathcal{M}_{p}^3$ for $n=15$, $\mathcal{M}_{p}^4$ for $n=20$ and,  $\mathcal{M}_{p}^5$ for $n=30$. The choice of smaller collections for $n=15$ and $20$ is due to Condition (\ref{condition_estimation_complete}). We take the penalty
 (\ref{penalite_complete}) with  $K=1.1$ $1.5$, and $2$. For the Lasso and adaptive Lasso procedures, we first normalize the covariates $({\bf X}_i)$. Here, $2\sqrt{\log p}\sigma$ would be a good choice for the parameter $\lambda$ of the Lasso. However, we do not have access to $\sigma$. Hence, we use an estimation of the variance $\widehat{\var}(Y)$ which is a (possibly inaccurate) upper bound of $\sigma^2$. This is why 
 we choose the parameter $\lambda$ of the Lasso between $0.3\times2\sqrt{\log p\widehat{\var}(Y)}$ and  $2\sqrt{\log p\widehat{\var}(Y)}$ by leave-one-out cross-validation. The number $0.3$ is rather arbitrary. In practice, the performances of the Lasso do not really depend on this number as soon it is neither too small nor close to one. For the adaptive Lasso procedure, the parameters $\gamma$ and $\lambda$ are also estimated thanks to leave-one-out cross-validation: $\gamma$ can take three values $(0.5,1,2)$ and the values of $\lambda$ vary between $0.3\times2\sqrt{\log p\widehat{\var}(Y)}$ and $2\sqrt{\log(p)\widehat{\var}(Y)}$.  ~\\

We evaluate the risk ratio
\begin{eqnarray*}
 \text{ratio.Risk}:= \frac{\mathbb{E}\left[l(\widehat{\theta},\theta)\right]}{\inf_{m\in\mathcal{M}_p^5}\mathbb{E}\left[l(\widehat{\theta}_m,\theta)\right]}
\end{eqnarray*}
as well as the power and the FDR on the basis of $1000$ simulations. Here, the power corresponds to the fraction of non-zero components $\theta$ estimated as non-zero by the estimator $\widehat{\theta}$, while the FDR is the ratio of the false discoveries over the true discoveries.

\begin{eqnarray*}
 \text{Power}:= \mathbb{E}\left[\frac{\text{Card}(\{i,\ \theta_i\neq 0\ \text{and}\ \widehat{\theta}_i\neq 0 \})}{\text{Card}\left(\left\{i,\ \theta_i\neq 0\right\}\right)}\right]\hspace{0.3cm}\text{and}\hspace{0.3cm}\text{FDR}:=\mathbb{E}\left[\frac{\text{Card}(\{i,\ \theta_i= 0\ \text{and}\ \widehat{\theta}_i\neq 0 \})}{\text{Card}(\{i,\  \widehat{\theta}_i\neq 0 \})}\right]\ .
\end{eqnarray*}

\subsection{Results}
 
\begin{Table}[h] 
\caption{Our procedure with $K=1.1$, $1.5$, and $2$ and Lasso and  adaptive Lasso procedures: Estimation and $95\%$ confidence interval of Risk ratio (ratio.Risk), Power and FDR when $p=20$, $\Sigma=\Sigma_2$, $\theta=\theta_2$, and $n=15$, $20$, and $30$.\label{tab1}}  
\begin{center}
\begin{tabular}{|l|ccc|ccc|}
\hline  & & $n=15$ & & & $n=20$ &     \\ \hline
Estimator &  ratio.Risk & Power & FDR & ratio.Risk & Power & FDR  \\  
\hline $K=1.1$ & $4.8\pm 0.4$ & $0.67\pm 0.02$ & $0.23\pm 0.02$ &  $4.8\pm 0.3$ & $0.77\pm 0.01$& $0.28\pm 0.02$\\
\hline $K=1.5$  &  $5.7\pm 0.4$ & $0.62\pm 0.02 $ & $0.20\pm 0.01$  & $5.3\pm 0.4$ & $0.74\pm 0.02$ & $0.25\pm 0.01$ \\ \hline 
 $K=2$  &  $7.3\pm 0.5$ & $0.54\pm 0.02 $ & $0.17\pm 0.01$  & $6.6\pm 0.5$ & $0.68\pm 0.02$ & $0.21\pm 0.01$ \\ \hline 
 Lasso &  $5.8\pm 0.2$ & $0.64\pm 0.01$ & $0.29\pm 0.02$  & $6.0\pm 0.2$ & $0.74\pm0.01 $ & $0.23\pm 0.01$ \\ \hline 
 A. Lasso &  $4.8\pm 0.3$ & $0.64\pm 0.02$ & $0.30\pm 0.02$  & $4.7\pm 0.4 $ & $0.75\pm 0.02 $ & $0.30\pm 0.01$ \\ \hline 
\end{tabular}
\end{center}
\begin{center}
\begin{tabular}{|l|ccc|}
\hline  &  & $n=30$ &    \\ \hline
Estimator &  ratio.Risk & Power & FDR  \\  
\hline $K=1.1$ & $4.2\pm 0.3$ & $0.87\pm 0.01$ & $0.23\pm 0.02$ \\
\hline $K=1.5$ &  $4.1\pm0.2 $ & $0.84\pm 0.01$ & $0.19\pm 0.01$  \\ \hline 
 $K=2$  &  $4.3\pm 0.2$ & $0.81\pm 0.01$ & $0.14\pm 0.01$   \\ \hline 
 Lasso &  $6.6\pm 0.2$ & $0.83\pm 0.01$ & $0.18\pm 0.01$ \\ \hline 
 A. Lasso &  $4.3\pm 0.5$ & $0.86\pm 0.02$ & $0.26\pm 0.01$  \\ \hline 
\end{tabular}
\end{center}
\end{Table}

\begin{Table}[h] 
\caption{Our procedure with $K=1.1$, $1.5$, and $2$ and Lasso and  adaptive Lasso procedures: Estimation and $95\%$ confidence interval of Risk ratio (ratio.Risk), Power and FDR when $p=20$, $\Sigma=\Sigma_1$, $\theta=\theta_1$, and $n=15$, $20$, and $30$.\label{tab2}}

\begin{center}
\begin{tabular}{|l|ccc|ccc|}
\hline  & & $n=15$ & & & $n=20$ &     \\ \hline
Estimator &  ratio.Risk & Power & FDR & ratio.Risk & Power & FDR  \\
  \hline $K=1.1$  &  $5.3\pm 0.4$ & $0.77\pm 0.03$ & $ 0.41\pm 0.02$ & $6.4\pm 0.5$ & $0.87\pm 0.02$ & $0.39\pm 0.02$\\
\hline $K=1.5$  &  $5.3\pm 0.4$ & $0.76\pm 0.03$ & $0.41\pm 0.02$  & $5.9\pm 0.5$ & $0.87\pm 0.02$ & $0.36\pm 0.02$ \\ \hline 
 $K=2$  &  $5.5\pm 0.5$ & $0.75 \pm 0.03$& $0.40\pm 0.02$  &   $5.5 \pm 0.5$&  $0.86\pm 0.02$& $0.33\pm 0.02$ \\ \hline
 Lasso & $13.5\pm 0.3$  & $0.02\pm 0.01$ & $0.99\pm 0.01$ & $16.7\pm 0.3$ & $0.02\pm 0.01$ & $0.98\pm 0.01$ \\ \hline
 A. Lasso & $15.0 \pm 1.2$ & $0.02\pm 0.01$ & $0.90 \pm 0.02$ & $20.5\pm 1.8$ & $0.04\pm 0.01$ & $0.89\pm 0.02$ \\ \hline 
\end{tabular}
\end{center}
\begin{center}
\begin{tabular}{|l|ccc|}
\hline  &  & $n=30$ &    \\ \hline
Estimator &  ratio.Risk & Power & FDR  \\  
\hline $K=1.1$  &    $4.5\pm 0.3$ & $0.96\pm 0.02$ & $0.24\pm 0.02$    \\
\hline $K=1.5$  & $3.9\pm 0.3$ &$0.95\pm 0.01$ & $0.19\pm 0.02$ \\ \hline 
 $K=2$  &   $3.5\pm 0.3$ & $0.94\pm 0.01$& $0.16\pm 0.02$\\ \hline
 Lasso & $22.0\pm 0.3$ &$0.02\pm 0.01$ &$0.99\pm 0.01$\\ \hline
 A. Lasso &  $31.8\pm 3.0$& $0.04\pm 0.01$& $0.88\pm 0.02$\\ \hline 
\end{tabular}
\end{center}

\end{Table} 

The results of the first simulation experiment are given in Table \ref{tab1}. We observe that the five estimators perform more or less similarly as expected by the theory.
The results of the second simulation study are reported in Table \ref{tab2}. Clearly, the Lasso and adaptive Lasso procedures are not consistent in this situation since the power is close to $0$ and the FDR is close to one. Consequently, the risk ratio is quite large and the adaptive Lasso even seems unstable. In contrast, our method exhibits a large power and a reasonable FDR. \\

In the two studies, choosing a larger $K$ reduces the power of the estimator but also decreases the FDR. It seems that the choice $K=1.1$ yields a good risk ratio, whereas $K=2$ gives a better control of the FDR. Contrary to the parameter $\lambda$ for the lasso, we do not need an \emph{ad-hoc} method such as cross-validation to calibrate $K$.
The second example is certainly quite pathological but it  illustrates that our estimator $\widetilde{\theta}$ performs well even when the Lasso does not provide an accurate estimation. The good behavior of our method illustrates the strength of Theorem \ref{thrm_principal} that does not depend on the correlation of the explanatory variables.

%The Lasso estimator has the smallest power but has also a smaller FDR. If we take a smaller parameter $\lambda$ such as $\sqrt{2\log p}$, then the power and FDR are of the same order as the other procedures. One also observes that the ratio $ratio.Risk$ is larger for the Lasso estimator. Nevertheless, upon definin the model $\widehat{m}_{L}$ as the support of the Lasso estimator $\widehat{\theta}_{L}$, the least square estimator $\widehat{\theta}_{\widehat{m}_{L}}$ exhibits a risk ratio $ratio.Risk$ of the same order as the one of adaptive Lasso. In other words, taking the least-square estimator of the model selected by the Lasso improves the prediction.

\section{Discussion and concluding remarks}\label{section_discussion}

Until now, we have assumed that the covariance matrix $\Sigma$ of the covariates is non-singular. If $\Sigma$ is singular, the estimators $\widehat{\theta}_m$ and the model $\widehat{m}$ are not necessarily uniquely defined. However, upon defining $\widehat{\theta}_m$ as \emph{one} of the minimizers of $\gamma_n(\theta')$ over $S_m$, one may readily extend the oracle inequalities stated in Theorem \ref{ordered_selection} and \ref{thrm_principal}.\\

Let us recall the main features of our method. We have defined a model selection criterion that satisfies  oracle inequalities regardless  of the correlation between the covariates and regardless of the collection of models. Hence, the estimator $\widetilde{\theta}$ achieves nice adaptive properties for ordered variable selection or for complete variable selection. Besides, one can easily combine this method with prior knowledge on the model by choosing a proper collection $\mathcal{M}$ or by modulating the penalty $pen(.)$. 
 Moreover, we may easily calibrate the penalty even when $\sigma^2$ is unknown, whereas the Lasso-type procedures require a cross-validation strategy to choose the parameter $\lambda$. The compensation for these nice properties is a  computational cost that depends linearly on the size of $\mathcal{M}$. Hence, the complete variable selection problem is NP-hard. This makes it intractable when $p$ becomes too large (i.e. more than 50). In contrast, our criterion applies for arbitrary $p$ when considering ordered variable selection since the size of $\mathcal{M}$ is linear with $n$. In situations where one has a good prior knowledge on the true model, the collection $\mathcal{M}$ is then not too large and our criterion is also fastly calculable even for large $p$.\\

For complete variable selection, Lasso-type procedures are computationally feasible even when $p$ is large and achieve oracle inequalities under assumptions on the covariance structure. However, there are both theoretical and practical problems with these estimators. On the one hand, they are known to perform poorly for some covariance structures. On the other hand, there is some room for improvement in the practical calibration of the lasso, especially when $\sigma^2$ is unknown. 
In a future work, we would like to combine the strength of our method with these computationally fast algorithms. The 
problem at hand is to design a fast data-driven method that picks a subcollection $\widehat{\mathcal{M}}$ of reasonable size. Afterwards, one applies our procedure to $\widehat{\mathcal{M}}$ instead of $\mathcal{M}$.  A direction that needs further investigation is taking for $\widehat{M}$ all the subsets of the regularization path given by the lasso.

\section{Proofs}\label{section_proofs}

\subsection{Some notations and probabilistic tools}\label{section_notation_preuve}

First, let us define the random variable $\epsilon_m$ by 
\begin{eqnarray}\label{definition_epsilonm}
Y & = & X\theta_m + \epsilon_m +\epsilon\ a.s.\ .
\end{eqnarray}
By definition of $\theta_m$, $\epsilon_m$ follows a normal distribution and is independent of $\epsilon$ and of $X_m$. Hence, the variance of $\epsilon_m$ equals $l(\theta_m,\theta)$. The vectors $\boldsymbol{\epsilon}$ and $\boldsymbol{\epsilon}_m$ refer to the $n$ samples of $\epsilon$ and $\epsilon_m$. For any model $m$ and any vector $Z$ of size $n$, $\Pi^\perp_{m}Z$ stands for $Z-\Pi_m Z$. For any subset $m$ of $\{1,\ldots,p\}$, $\Sigma_m$ denotes the covariance matrix of the vector $X^*_m$. Moreover, we define the row vector $Z_{m}:=X_{m}\sqrt{\Sigma_{m}^{-1}}$ 
in order to deal with  standard Gaussian vectors. Similarly to the matrix ${\bf X}_{m}$, the  $n\times d_{m}$ matrix ${\bf Z}_{m}$ stands for the $n$ observations of $Z_{m}$. The notation $\langle.,.\rangle_n$  refers to the empirical inner product associated with the norm $\|.\|_n$. Lastly, $\varphi_{\text{max}}(A)$ denotes the largest eigenvalue (in absolute value) of a symmetric square matrix $A$.

We shall extensively
use the explicit expression of $\widehat{\theta}_m$:
\begin{eqnarray}
{\bf X}\widehat{\theta}_m & = & {\bf X}_m({\bf X}^*_m{\bf X}_m)^{-1}{\bf X}^*_m{\bf Y}\ . \label{expression_thetam}
\end{eqnarray}

Let us state a first lemma that gives the expressions of  $\gamma_n(\widehat{\theta}_m)$,  $\gamma(\widehat{\theta}_m)$, and the loss $l(\widehat{\theta}_m,\theta_m)$.
\begin{lemma}\label{lemma_expression_perte_perte_empirique}
For any model $m$ of size smaller than $n$,  
\begin{eqnarray}
\gamma_n\left(\widehat{\theta}_m\right) & = & \|\Pi_m^\perp \left(\boldsymbol{\epsilon}+\boldsymbol{\epsilon}_m\right)\|_n^2\label{perte_empirique_estimateur}\ ,\\
\gamma\left(\widehat{\theta}_m\right)& = &\sigma^2+l(\theta_m,\theta)+l(\widehat{\theta}_m,\theta_m)\ ,\label{perte_estimateur}\\
l(\widehat{\theta}_m,\theta_m) & = & (\boldsymbol{\epsilon}+\boldsymbol{\epsilon}_{m})^*{\bf Z}_{m}({\bf Z}^*_{m}{\bf Z}_{m})^{-2}{\bf Z}^*_{m}(\boldsymbol{\epsilon}+\boldsymbol{\epsilon}_{m})\ .\label{perte_biais}
\end{eqnarray}
\end{lemma}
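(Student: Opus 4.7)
The plan is to verify the three identities in order, each reducing to a direct computation built on the decomposition $\mathbf{Y}=\mathbf{X}\theta_m+\boldsymbol{\epsilon}_m+\boldsymbol{\epsilon}$ recorded in (\ref{definition_epsilonm}) and on the defining orthogonality of $\theta_m$ as the $L^2$-projection of $X\theta$ onto $\mathrm{span}(X_i,\,i\in m)$.

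For the first identity, I would start from $\gamma_n(\widehat{\theta}_m)=\|\mathbf{Y}-\mathbf{X}\widehat{\theta}_m\|_n^2=\|\Pi_m^\perp \mathbf{Y}\|_n^2$, using the standard least-squares fact that $\mathbf{X}\widehat{\theta}_m=\Pi_m\mathbf{Y}$. Substituting the decomposition of $\mathbf{Y}$ and observing that $\theta_m\in S_m$ forces $\mathbf{X}\theta_m$ to lie in the column span of $(\mathbf{X}_i)_{i\in m}$, hence $\Pi_m^\perp \mathbf{X}\theta_m=0$, yields the claim immediately.

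For the second identity, I would condition on the data $\mathcal{D}$ and evaluate $\gamma(\widehat{\theta}_m)=\mathbb{E}[(Y-X\widehat{\theta}_m)^2\mid\mathcal{D}]$ against a fresh copy $(Y,X)$. Writing $Y-X\widehat{\theta}_m=(X\theta-X\widehat{\theta}_m)+\epsilon$, the independence of $\epsilon$ from $X$ and from $\mathcal{D}$ annihilates the cross term and contributes the summand $\sigma^2$, so that $\gamma(\widehat{\theta}_m)=\sigma^2+l(\widehat{\theta}_m,\theta)$. I would then apply a Pythagorean decomposition: split $X\theta-X\widehat{\theta}_m=(X\theta-X\theta_m)+(X\theta_m-X\widehat{\theta}_m)$; by the projection property of $\theta_m$ the first summand is $L^2$-orthogonal to every element of $\mathrm{span}(X_i,\,i\in m)$, and the second summand lies in that span because both $\widehat{\theta}_m$ and $\theta_m$ belong to $S_m$. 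This gives $l(\widehat{\theta}_m,\theta)=l(\theta_m,\theta)+l(\widehat{\theta}_m,\theta_m)$.

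For the third identity, I would plug $\mathbf{Y}=\mathbf{X}_m(\theta_m)_m+\boldsymbol{\epsilon}_m+\boldsymbol{\epsilon}$ into the explicit formula (\ref{expression_thetam}). Cancelling the $\mathbf{X}_m(\theta_m)_m$ contribution produces $\mathbf{X}_m\bigl[(\widehat{\theta}_m)_m-(\theta_m)_m\bigr]=\mathbf{X}_m(\mathbf{X}_m^*\mathbf{X}_m)^{-1}\mathbf{X}_m^*(\boldsymbol{\epsilon}_m+\boldsymbol{\epsilon})$, and since $\mathbf{X}_m$ has full column rank almost surely, the restricted coefficient vector itself equals $(\mathbf{X}_m^*\mathbf{X}_m)^{-1}\mathbf{X}_m^*(\boldsymbol{\epsilon}_m+\boldsymbol{\epsilon})$. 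Conditioning once more on $\mathcal{D}$ and using that $\widehat{\theta}_m-\theta_m$ is supported on $m$ expresses $l(\widehat{\theta}_m,\theta_m)$ as a quadratic form with kernel $\mathbf{X}_m(\mathbf{X}_m^*\mathbf{X}_m)^{-1}\Sigma_m(\mathbf{X}_m^*\mathbf{X}_m)^{-1}\mathbf{X}_m^*$. The only bookkeeping step, and in my view the one piece of the proof worth attention, is to pass to the whitened design $\mathbf{Z}_m=\mathbf{X}_m\sqrt{\Sigma_m^{-1}}$: substituting $\mathbf{X}_m=\mathbf{Z}_m\sqrt{\Sigma_m}$ and repeatedly collapsing $\sqrt{\Sigma_m}\sqrt{\Sigma_m^{-1}}=I$ and $\sqrt{\Sigma_m^{-1}}\Sigma_m\sqrt{\Sigma_m^{-1}}=I$ reduces this kernel to $\mathbf{Z}_m(\mathbf{Z}_m^*\mathbf{Z}_m)^{-2}\mathbf{Z}_m^*$. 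I would use the symmetric square root throughout so that the three cancellations go through without a hitch.
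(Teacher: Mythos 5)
Your proof is correct and follows essentially the same route as the paper: the first identity via $\Pi_m^\perp\mathbf{X}\theta_m=0$, the second via independence of $\epsilon$ plus the Pythagorean decomposition around the $L^2$-projection $\theta_m$, and the third by substituting the decomposition of $\mathbf{Y}$ into the closed-form least-squares expression and then whitening with $\mathbf{X}_m=\mathbf{Z}_m\sqrt{\Sigma_m}$. You spell out the cancellations in the third step a bit more explicitly than the paper does, but the argument is the same.
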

The proof is postponed to the Appendix.\\

We now introduce the main probabilistic tools used throughout the  proofs. First, we need to bound the deviations of $\chi^2$ random variables.
\begin{lemma}\label{concentration_chi2}
 For any integer $d>0$ and any positive number $x$, 
 \begin{eqnarray}
\mathbb{P}\left(\chi^2(d) \leq d - 2\sqrt{dx} \right)& \leq& \exp(-x)
\nonumber\ ,\\
\mathbb{P}\left(\chi^2(d) \geq d + 2\sqrt{dx} + 2x \right) &\leq &\exp(-x)\ .
\nonumber
\end{eqnarray}
\end{lemma}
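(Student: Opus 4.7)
The plan is to prove both inequalities by the classical Cramér--Chernoff method, writing $\chi^2(d) = \sum_{i=1}^d Z_i^2$ for i.i.d. standard Gaussians $Z_i$ and exploiting the explicit moment generating function
\[
\log \mathbb{E}\bigl[e^{\lambda(Z_1^2 - 1)}\bigr] = -\lambda - \tfrac{1}{2}\log(1 - 2\lambda),\qquad \lambda < \tfrac{1}{2}.
\]
Summing over the $d$ coordinates and applying Markov's inequality reduces both tail bounds to optimising a one-dimensional function of $\lambda$.

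For the lower tail, I would work with $-\lambda < 0$, which amounts to controlling $\psi_{-}(\lambda) := \log \mathbb{E}[e^{-\lambda(Z_1^2 - 1)}] = \lambda - \tfrac{1}{2}\log(1+2\lambda)$ for $\lambda > 0$. The key elementary inequality is $\log(1+u) \geq u - u^2/2$, which yields $\psi_{-}(\lambda) \leq \lambda^2$ and hence
\[
\mathbb{P}\bigl(\chi^2(d) \leq d - t\bigr) \leq \exp\bigl(d\lambda^2 - \lambda t\bigr).
\]
Taking $\lambda = t/(2d)$ gives exponent $-t^2/(4d)$, and setting $t = 2\sqrt{dx}$ produces exactly $e^{-x}$.

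For the upper tail I would carry out the Chernoff optimisation exactly. The derivative of $\psi(\lambda) - \lambda t$ vanishes at the explicit critical point $\lambda^* = t/(2(d+t))$ (so that $1 - 2\lambda^* = d/(d+t)$), and substituting back one gets the Chernoff exponent
\[
-\frac{t}{2} + \frac{d}{2}\log\!\left(1 + \frac{t}{d}\right).
\]
Thus the bound $\mathbb{P}(\chi^2(d) \geq d + t) \leq e^{-x}$ reduces to the purely analytic inequality $t - d \log(1 + t/d) \geq 2x$. Plugging in $t = 2\sqrt{dx} + 2x$ and setting $v := \sqrt{x/d}$, so that $t/d = 2v + 2v^2$, this is equivalent to $2v \geq \log(1 + 2v + 2v^2)$, i.e. $e^{2v} \geq 1 + 2v + 2v^2$, which is immediate from the Taylor expansion of the exponential for $v \geq 0$.

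The only nontrivial step is the final analytic check for the upper tail; the rest is just computing the Gaussian MGF and applying Markov. I do not expect any real obstacle, and the above is essentially the Laurent--Massart argument specialised to $\chi^2$ random variables.
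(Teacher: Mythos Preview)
Your proof is correct and is precisely the Laplace/Cram\'er--Chernoff argument the paper alludes to; the paper itself gives no detailed proof but simply states that the bounds are classical, shown by the Laplace method, and refers to Lemma~1 of Laurent--Massart for the details you have reproduced.
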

These bounds are classical and are shown by applying Laplace method. We refer to
Lemma 1 in \cite{Laurent98} for more details. Moreover, we state a refined bound for the lower deviations of a $\chi^2$ distribution.

\begin{lemma}\label{concentration_chi2_fine}
 For any integer $d>0$ and any positive number $x$, 
\begin{eqnarray}
\mathbb{P}\left[\chi^2(d) \leq d\left[\left(1 - \delta_d-\sqrt{\frac{2x}{d}}\right)\vee 0 \right]^2\right]& \leq& \exp(-x)
\nonumber\ ,
\end{eqnarray}
\begin{eqnarray}\label{definition_delta}
\text{where }\delta_d:= \sqrt{\frac{\pi }{2d}}+\exp(-d/16)\ .\end{eqnarray}
\end{lemma}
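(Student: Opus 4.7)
My plan is to recognize $\chi^2(d)=\|Z\|^2$ for $Z\sim\mathcal N(0,I_d)$, apply the Gaussian concentration inequality to the $1$-Lipschitz map $z\mapsto\|z\|$, and obtain a sharp lower bound on $\mathbb E\|Z\|$ by combining the layer-cake formula with the Laurent--Massart bound of Lemma \ref{concentration_chi2}.

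First I would note that if $1-\delta_d-\sqrt{2x/d}\le 0$, the event on the left of the desired inequality is $\{\chi^2(d)\le 0\}$, which has probability $0$, so the claim is trivial; I may therefore assume this quantity is positive. Representing $\chi^2(d)=\|Z\|^2$ and applying Borell's Gaussian concentration inequality to the $1$-Lipschitz functional $f(z):=\|z\|$ yields, for every $t\ge 0$,
\[
\mathbb P\bigl[\|Z\|\le\mathbb E\|Z\|-t\bigr]\le e^{-t^2/2}.
\]
With $t=\sqrt{2x}$ the right-hand side equals $e^{-x}$, so it would suffice to prove $\mathbb E\|Z\|\ge\sqrt{d}(1-\delta_d)$: the event $\{\chi^2(d)\le d(1-\delta_d-\sqrt{2x/d})^2\}=\{\|Z\|\le\sqrt{d}(1-\delta_d)-\sqrt{2x}\}$ would then be contained in $\{\|Z\|\le\mathbb E\|Z\|-\sqrt{2x}\}$, finishing the proof.

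The main work is therefore to show $\sqrt{d}-\mathbb E\|Z\|\le\sqrt{\pi/2}+\sqrt{d}\,e^{-d/16}=\sqrt{d}\,\delta_d$. I would start from the layer-cake inequality
\[
\sqrt{d}-\mathbb E\|Z\|\le\int_0^{\sqrt{d}}\mathbb P\bigl[\chi^2(d)\le t^2\bigr]\,dt
\]
and split at $t=\sqrt{d}/2$. On $[0,\sqrt{d}/2]$ the bound $\mathbb P[\chi^2(d)\le d/4]\le e^{-9d/64}\le e^{-d/16}$ follows from Lemma \ref{concentration_chi2} with $x=9d/64$, so this piece contributes at most $\sqrt{d}\,e^{-d/16}$. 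On $[\sqrt{d}/2,\sqrt{d}]$, I would set $u=\sqrt{d}-t\in[0,\sqrt{d}/2]$ and apply Lemma \ref{concentration_chi2} with $2\sqrt{dx}=u(2\sqrt{d}-u)$ to obtain
\[
\mathbb P\bigl[\chi^2(d)\le t^2\bigr]\le\exp\!\left(-\frac{u^2(2\sqrt{d}-u)^2}{4d}\right)\le\exp\!\left(-\frac{9u^2}{16}\right),
\]
the last step using $2\sqrt{d}-u\ge 3\sqrt{d}/2$ on this range. After the change of variables and extension to $[0,\infty)$, the contribution of this piece is bounded by $\int_0^\infty e^{-9u^2/16}\,du=2\sqrt{\pi}/3\le\sqrt{\pi/2}$; adding the two contributions delivers the required inequality.

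The delicate point is precisely this numerical tuning in the lower bound on $\mathbb E\|Z\|$: the split at $\sqrt{d}/2$ is tailored so that the Gaussian-type integral on the upper interval produces a constant no larger than $\sqrt{\pi/2}$, while the complementary interval yields only a $\sqrt{d}\,e^{-d/16}$ remainder. Everything else is a routine use of Gaussian concentration and Lemma \ref{concentration_chi2}.
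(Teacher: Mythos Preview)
Your proof is correct and follows essentially the same approach as the paper: Gaussian concentration applied to the $1$-Lipschitz map $z\mapsto\|z\|$, combined with a lower bound on $\mathbb{E}\sqrt{\chi^2(d)}$ obtained via the layer-cake formula and Lemma~\ref{concentration_chi2}. The only cosmetic difference is the split point (you cut at $t=\sqrt{d}/2$, the paper at the equivalent of $t=\sqrt{d}(1-\sqrt{1/8})$), leading to slightly different but equally valid numerical constants that both fit under $\delta_d$.
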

The proof is postponed the Appendix. Finally, we shall bound the largest eigenvalue of standard Wishart matrices and standard inverse Wishart matrices. The following
deviation inequality is taken from Theorem 2.13 in \cite{Davidson2001}.
\begin{lemma}\label{concentration_vp_wishart}
Let $Z^*Z$ be a standard Wishart matrix of parameters $(n,d)$ with $n>d$. For any positive number $x$, 
\begin{eqnarray*}
\mathbb{P}\left\{\varphi_{\text{\emph{max}}}\left[(Z^*Z)^{-1}\right] \geq \left[n\left(1-\sqrt{\frac{d}{n}}-x\right)^2\right]^{-1} \right\}& \leq &\exp(-nx^2/2)\ ,
\end{eqnarray*}
and 
\begin{eqnarray*}
\mathbb{P}\left[\varphi_{\text{\emph{max}}}\left( Z^*Z\right) \leq n\left(1+\sqrt{\frac{d}{n}}+x\right)^2 \right]& \leq &\exp(-nx^2/2)\ .	
\end{eqnarray*}
\end{lemma}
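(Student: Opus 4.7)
The plan is to derive these two concentration inequalities as consequences of the classical singular value deviation estimates for standard Gaussian matrices. First I would translate the events into statements about the extreme singular values of $Z$. Denoting $s_1(Z) \geq \cdots \geq s_d(Z) \geq 0$ the singular values of $Z$, one has $\varphi_{\max}(Z^*Z) = s_1(Z)^2$ and, since $n > d$ so that $Z^*Z$ is almost surely invertible, $\varphi_{\max}((Z^*Z)^{-1}) = s_d(Z)^{-2}$. Consequently the two events under consideration become (when the relevant right-hand sides are nonnegative)
$$\bigl\{s_d(Z) \leq \sqrt{n}\bigl(1 - \sqrt{d/n} - x\bigr)\bigr\} \quad \text{and} \quad \bigl\{s_1(Z) \geq \sqrt{n}\bigl(1 + \sqrt{d/n} + x\bigr)\bigr\},$$
and it suffices to bound their probabilities by $\exp(-nx^2/2)$.

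Second, I would invoke Gordon's refinement of Slepian's lemma (a min-max comparison between two centered Gaussian processes indexed by pairs of unit vectors) to control the means, namely $\mathbb{E}[s_1(Z)] \leq \sqrt{n} + \sqrt{d}$ and $\mathbb{E}[s_d(Z)] \geq \sqrt{n} - \sqrt{d}$. These are the classical expectation bounds already recorded in Davidson--Szarek.

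Third, the maps $M \mapsto s_1(M)$ and $M \mapsto -s_d(M)$ are both $1$-Lipschitz from $\mathbb{R}^{n \times d}$ equipped with the Frobenius norm into $\mathbb{R}$, as a consequence of Weyl's perturbation inequality $|s_i(A) - s_i(B)| \leq \|A-B\|_{\mathrm{op}} \leq \|A-B\|_{F}$. Applying the Borell--Cirelson--Ibragimov--Sudakov Gaussian concentration inequality to the standard Gaussian vector formed by the $nd$ entries of $Z$ therefore gives, for every $t > 0$,
$$\mathbb{P}\bigl(s_1(Z) \geq \mathbb{E}[s_1(Z)] + t\bigr) \leq \exp(-t^2/2), \qquad \mathbb{P}\bigl(s_d(Z) \leq \mathbb{E}[s_d(Z)] - t\bigr) \leq \exp(-t^2/2).$$
Inserting the mean bounds from the previous step and specializing to $t = x\sqrt{n}$ yields exactly the two claimed inequalities.

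I do not expect any substantive obstacle here: the statement is essentially a repackaging of Theorem 2.13 in Davidson--Szarek that the paper itself invokes. The one subtlety worth noting is that the lower bound on $s_d(Z)$ is only informative when $1 - \sqrt{d/n} - x > 0$; otherwise the probability in the lemma is trivially bounded by $1$, and this is consistent with the probability estimate $\exp(-nx^2/2)$ which is then also close to or larger than $1$. The real work is entirely contained in the mean bounds (Gordon's theorem) and the Lipschitz concentration; both are textbook results, so the proof amounts to assembling them in the right order.
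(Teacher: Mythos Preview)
Your proposal is correct and is exactly the argument underlying Theorem~2.13 of Davidson--Szarek, which the paper simply cites without reproducing a proof. You have supplied the standard two-step derivation (Gordon's comparison for the mean bounds on $s_1$ and $s_d$, then Gaussian Lipschitz concentration around the mean), which is precisely how that theorem is established; there is nothing to add.
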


\subsection{Proof of Theorem \ref{ordered_selection}}

\begin{proof}[Proof of Theorem \ref{ordered_selection}]
For the sake of simplicity we divide the main steps of the proof in several lemmas. First, let us fix a model $m$ in the collection  
  $\mathcal{M}$. By definition of $\widehat{m}$, we know that
\begin{eqnarray*}
\gamma_n(\widetilde{\theta})\left[1+pen(\widehat{m})\right] \leq \gamma_n(\theta_m)\left[1+ pen(m)\right] \ .
\end{eqnarray*}
Subtracting $\gamma(\theta)$ to both sides of this inequality yields 
\begin{eqnarray}
l(\widetilde{\theta},\theta) \leq l(\theta_m,\theta) + \gamma_n(\theta_m)pen(m) +
\overline{\gamma}_n(\theta_m) - \gamma_n(\widetilde{\theta})pen(\widehat{m}) - \overline{\gamma}_n(\widetilde{\theta})\ , 
\label{majoration_principal}\end{eqnarray}
where $\overline{\gamma}_n(.):=\gamma_n(.)-\gamma(.)$.
The proof is based on the
concentration of the term $-\overline{\gamma}_n(\widetilde{\theta})$. More precisely, we shall prove that
with overwhelming probability this quantity is of the same order as the penalty term $\gamma_n(\widetilde{\theta})pen(\widehat{m})$. 

 Let $\kappa_1$ and $\kappa_2$ be two positive numbers smaller than one that we shall fix later. For any model $m'\in\mathcal{M}$, we introduce the random variables $A_{m'}$ and $B_{m'}$ as
\begin{eqnarray}
A_{{m'}} & := &  \kappa_1+ 1 -
  \frac{\|\Pi^{\perp}_{m'}\boldsymbol{\epsilon}_{{m'}}\|^2_n}{l(\theta_{m'},\theta)}
  + \kappa_2 n \varphi_{\text{max}}\left[({\bf Z}^*_{{m'}}{\bf Z}_{{m'}})^{-1}\right]
   \frac{\|\Pi_m (\boldsymbol{\epsilon}
  +\boldsymbol{\epsilon}_{{m'}})\|^2_n}{l(\theta_{m'},\theta)+\sigma^2}  \nonumber \\ \label{definition_A}
  & - & K\frac{d_{{m'}}}{n-d_{{m'}}}   \frac{\|\Pi_{{m'}^\perp} (\boldsymbol{\epsilon}
  +\boldsymbol{\epsilon}_{{m'}})\|^2_n}{l(\theta_{m'},\theta)+\sigma^2}\ ,\\
B_{{m'}}& := & \kappa_1^{-1}\frac{\langle\Pi^{\perp}_{m'}\boldsymbol{\epsilon},\Pi^{\perp}_{m'}\boldsymbol{\epsilon}_{{m'}}
  \rangle^2_n}{\sigma^2l(\theta_{m'},\theta)} + \frac{\|\Pi_{{m'}}
  \boldsymbol{\epsilon}\|^2_n}{\sigma^2} +\kappa_2 n
 \varphi_{\text{max}}\left[ \left( {\bf Z}^*_{{m'}}{\bf Z}_{{m'}}\right)^{-1}\right]\frac{\|\Pi_{{m'}} (\boldsymbol{\epsilon}
   +\boldsymbol{\epsilon}_{{m'}})\|^2_n}{l(\theta_{m'},\theta)+\sigma^2}\nonumber\\  &- &K\frac{d_{{m'}}}{n-d_{{m'}}}\frac{\|\Pi^{\perp}_{m'} (\boldsymbol{\epsilon}
   +\boldsymbol{\epsilon}_{{m'}})\|^2_n}{l(\theta_{m'},\theta)+\sigma^2} \ . \label{definition_B}
\end{eqnarray}
We recall that the notations $\epsilon_m$, $Z_m$, $\langle.,.\rangle_n$, and $\varphi_{\text{max}}( .)$ are defined in Section \ref{section_notation_preuve}. We may upper bound the expression $-\overline{\gamma}_n(\widetilde{\theta})-  \gamma_n(\widetilde{\theta})pen(\widehat{m})$ with respect to $A_{\widehat{m}}$ and $B_{\widehat{m}}$ as follows. 
\begin{lemma}\label{lemme_decomposition}
Almost surely, it holds that
\begin{eqnarray}
 -\overline{\gamma}_n(\widetilde{\theta}) -  \gamma_n(\widetilde{\theta})pen(\widehat{m}) -\sigma^2 +
\|\boldsymbol{\epsilon}\|^2_n \leq l(\widetilde{\theta},\theta)  \left[A_{\widehat{m}}\vee (1-\kappa_2)\right]+ \sigma^2 B_{\widehat{m}}\ .\label{decomposition_generale2}
\end{eqnarray}
\end{lemma}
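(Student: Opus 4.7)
The strategy is to extract, by a direct algebraic manipulation, the summands defining $A_{\widehat m}$ and $B_{\widehat m}$ from the left-hand side of~(\ref{decomposition_generale2}), and then to dispatch the remaining residual (essentially a multiple of $\ell := l(\widehat\theta_{\widehat m},\theta_{\widehat m})$) by a case split on whether $A_{\widehat m}\geq 1-\kappa_2$. We begin from the identity $-\overline\gamma_n(\widetilde\theta) = l(\widetilde\theta,\theta) + \sigma^2 - \|\Pi_{\widehat m}^\perp(\boldsymbol\epsilon+\boldsymbol\epsilon_{\widehat m})\|_n^2$ that stems from Lemma~\ref{lemma_expression_perte_perte_empirique}, and expand $\|\boldsymbol\epsilon\|_n^2$ and $\|\Pi_{\widehat m}^\perp(\boldsymbol\epsilon+\boldsymbol\epsilon_{\widehat m})\|_n^2$ by orthogonality, so that the left-hand side of~(\ref{decomposition_generale2}) becomes
\begin{equation*}
l(\widetilde\theta,\theta)+\|\Pi_{\widehat m}\boldsymbol\epsilon\|_n^2-\|\Pi_{\widehat m}^\perp\boldsymbol\epsilon_{\widehat m}\|_n^2-2\langle\Pi_{\widehat m}^\perp\boldsymbol\epsilon,\Pi_{\widehat m}^\perp\boldsymbol\epsilon_{\widehat m}\rangle_n-pen(\widehat m)\|\Pi_{\widehat m}^\perp(\boldsymbol\epsilon+\boldsymbol\epsilon_{\widehat m})\|_n^2.
\end{equation*}
Young's inequality $2uv\leq \kappa_1 u^2+\kappa_1^{-1} v^2$ applied with $u=\sqrt{l(\theta_{\widehat m},\theta)}$ handles the cross term and produces the first two summands of $\sigma^2 B_{\widehat m}$, while the hypothesis $pen(\widehat m)\geq Kd_{\widehat m}/(n-d_{\widehat m})$ combined with $\gamma_n(\widetilde\theta)\geq 0$ lets us replace the penalty by this lower bound.

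Subtracting $\sigma^2 B_{\widehat m}$ from both sides and collecting terms, with the shorthands $L:=l(\theta_{\widehat m},\theta)$, $V:=\|\Pi_{\widehat m}^\perp(\boldsymbol\epsilon+\boldsymbol\epsilon_{\widehat m})\|_n^2$, $U:=\|\Pi_{\widehat m}(\boldsymbol\epsilon+\boldsymbol\epsilon_{\widehat m})\|_n^2$ and $\lambda:=\varphi_{\text{max}}(({\bf Z}_{\widehat m}^*{\bf Z}_{\widehat m})^{-1})$, the inequality reduces to showing that
\begin{equation*}
l(\widetilde\theta,\theta)+\kappa_1L-\|\Pi_{\widehat m}^\perp\boldsymbol\epsilon_{\widehat m}\|_n^2-\frac{VKd_{\widehat m}L}{(n-d_{\widehat m})(L+\sigma^2)}-\frac{\sigma^2\kappa_2 n\lambda U}{L+\sigma^2}
\end{equation*}
is dominated by $l(\widetilde\theta,\theta)[A_{\widehat m}\vee(1-\kappa_2)]$. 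The essential input at this stage is the Rayleigh bound $\ell\leq n\lambda U$, which follows from~(\ref{perte_biais}) via the operator inequality $A^{-2}\preceq \varphi_{\text{max}}(A^{-1})\,A^{-1}$ with $A={\bf Z}_{\widehat m}^*{\bf Z}_{\widehat m}$.

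We conclude by a case split on $A_{\widehat m}$. After writing $l(\widetilde\theta,\theta)=\ell+L$ and gathering the $\ell$-factors, unfolding $A_{\widehat m}\geq 1-\kappa_2$ yields exactly the lower bound on the bracket $\kappa_1-\|\Pi_{\widehat m}^\perp\boldsymbol\epsilon_{\widehat m}\|_n^2/L-VKd_{\widehat m}/((n-d_{\widehat m})(L+\sigma^2))$ needed to collapse the claim into $\kappa_2(n\lambda U-\ell)\geq 0$; in the reverse case, the reverse inequality reduces the claim to $\kappa_2(\ell-n\lambda U)\leq 0$. Both follow at once from $\ell\leq n\lambda U$. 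The main obstacle is precisely this bookkeeping: the normalization factor $(l(\widetilde\theta,\theta)+\sigma^2)/(L+\sigma^2)=1+\ell/(L+\sigma^2)$ implicit in $A_{\widehat m}$ and $B_{\widehat m}$ produces an excess of order $\kappa_2\ell$ which must be precisely absorbed by the $\kappa_2 n\lambda U$ term through the Rayleigh bound.
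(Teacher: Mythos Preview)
Your proof is correct and uses the same essential ingredients as the paper: the identity for $\overline{\gamma}_n(\widetilde{\theta})$ from Lemma~\ref{lemma_expression_perte_perte_empirique}, Young's inequality $2ab\leq\kappa_1 a^2+\kappa_1^{-1}b^2$ for the cross term, the penalty lower bound~(\ref{hypothese1_penalite}), and the Rayleigh bound $\ell\leq n\lambda U$ coming from~(\ref{perte_biais}). The only difference is cosmetic: instead of your case split on $A_{\widehat m}\gtrless 1-\kappa_2$, the paper splits $l(\widetilde\theta,\theta_{\widehat m})=(1-\kappa_2)\ell+\kappa_2\ell$ at the very start, applies the Rayleigh bound to the $\kappa_2\ell$ piece to produce the eigenvalue term inside $A_{\widehat m}$ and $B_{\widehat m}$, and then finishes in one line by observing that $(1-\kappa_2)\ell+L\,A_{\widehat m}\leq[A_{\widehat m}\vee(1-\kappa_2)](\ell+L)$ since each coefficient is dominated by the maximum. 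This bypasses the case split but is logically equivalent to what you wrote.
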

Let us set the constants 
\begin{eqnarray}
\kappa_1 : = \frac{1}{4} \hspace{0.5cm}\text{and} \hspace{0.5cm}\kappa_2 : = \frac{(K-1)(1-\sqrt{\eta})^2}{16}\wedge 1 \label{definition_kappa0}\ .
\end{eqnarray} 
 We do not claim that this choice is optimal, but we are not really concerned about the constants for this result. The core of this proof consists in showing that with overwhelming probability
the variable $A_{\widehat{m}}$ is smaller than $1$ and 
$B_{\widehat{m}}$ is smaller than a constant over $n$.

\begin{lemma}\label{concentration_thrm1}
The event $\Omega_1$ defined as
\begin{eqnarray*}
\Omega_1 :=\left\{A_{\widehat{m}}\leq  \frac{7}{8}\right\}\bigcap\left\{ \kappa_2 n\varphi_{\text{max}}\left[  ({\bf Z}^*_{\widehat{m}}{\bf Z}_{\widehat{m}})^{-1}\right]  \leq   \frac{K-1}{4}\right\}
\end{eqnarray*}
satisfies  
$\mathbb{P}(\Omega_1^c)\leq L\text{\emph{Card}}(\mathcal{M})\exp\left[-
nL'(K,\eta)\right]$, where $L'(K,\eta)$ is positive.
\end{lemma}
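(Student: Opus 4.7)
The overall strategy is a union bound followed by per-model concentration. Since $\widehat{m}$ takes its values in $\mathcal{M}$, I would start by writing
\begin{eqnarray*}
\mathbb{P}(\Omega_1^c) \leq \sum_{m'\in\mathcal{M}}\mathbb{P}\left(A_{m'}>\tfrac{7}{8}\right) + \sum_{m'\in\mathcal{M}}\mathbb{P}\left(\kappa_2 n \varphi_{\max}\left[({\bf Z}^*_{m'}{\bf Z}_{m'})^{-1}\right]>\tfrac{K-1}{4}\right),
\end{eqnarray*}
so that it suffices to bound each summand by $L\exp[-nL'(K,\eta)]$, uniformly in $m'$.

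The eigenvalue bound is the easy side. By the definition \eqref{definition_kappa0} of $\kappa_2$, the inequality $\kappa_2 n \varphi_{\max}[({\bf Z}^*_{m'}{\bf Z}_{m'})^{-1}]\leq (K-1)/4$ is equivalent to $n\varphi_{\max}[({\bf Z}^*_{m'}{\bf Z}_{m'})^{-1}]\leq 4/(1-\sqrt{\eta})^2$. Since ${\bf Z}^*_{m'}{\bf Z}_{m'}$ is a standard Wishart matrix with parameters $(n,d_{m'})$ and $d_{m'}/n\leq\eta$, Lemma \ref{concentration_vp_wishart} applied with $x=(1-\sqrt{\eta})/2$ yields the claim with probability at least $1-\exp\bigl[-n(1-\sqrt{\eta})^2/8\bigr]$.

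The control of $A_{m'}$ is the heart of the proof. Conditionally on ${\bf X}_{m'}$, the noise $\boldsymbol{\epsilon}$ is $\mathcal{N}(0,\sigma^2 I_n)$ and, because each component $\epsilon_{m'}^{(j)}=X^{(j)}(\theta-\theta_{m'})$ is Gaussian, independent of $X_{m'}^{(j)}$ and of $\epsilon^{(j)}$ in the same row, $\boldsymbol{\epsilon}_{m'}$ is $\mathcal{N}(0,l(\theta_{m'},\theta)I_n)$ and independent of $\boldsymbol{\epsilon}$. Since $\Pi_{m'}$ is measurable with respect to ${\bf X}_{m'}$, this gives us the three conditional $\chi^2$ variables
\begin{eqnarray*}
\frac{n\|\Pi^{\perp}_{m'}\boldsymbol{\epsilon}_{m'}\|_n^2}{l(\theta_{m'},\theta)}\sim\chi^2(n-d_{m'}),\,\,\frac{n\|\Pi_{m'}(\boldsymbol{\epsilon}+\boldsymbol{\epsilon}_{m'})\|_n^2}{\sigma^2+l(\theta_{m'},\theta)}\sim\chi^2(d_{m'}),\,\,\frac{n\|\Pi^{\perp}_{m'}(\boldsymbol{\epsilon}+\boldsymbol{\epsilon}_{m'})\|_n^2}{\sigma^2+l(\theta_{m'},\theta)}\sim\chi^2(n-d_{m'}),
\end{eqnarray*}
with the obvious convention in the degenerate case $l(\theta_{m'},\theta)=0$. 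I would apply Lemma \ref{concentration_chi2_fine} with $x=c_1 n$ to bound the first one from below, Lemma \ref{concentration_chi2} with $x=c_2 n$ to bound the second from above, and Lemma \ref{concentration_chi2_fine} again with $x=c_3 n$ to bound the third from below. Combined with the Wishart control already established, and collecting the terms, one obtains a deterministic upper bound of the form
\begin{eqnarray*}
A_{m'}\leq \kappa_1 + \frac{d_{m'}}{n}\left[-\frac{3(K-1)}{4}+\text{small}\right] + \delta_1\frac{n-d_{m'}}{n}+\text{small constant},
\end{eqnarray*}
where the small terms tend to zero as $c_1,c_2,c_3\to 0$ (the $\delta_d$ correction of Lemma \ref{concentration_chi2_fine} is $O(d^{-1/2})$ and is absorbed using $n-d_{m'}\geq (1-\eta)n\geq 6$, thanks to $\mathbb{H}_\eta$). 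With $\kappa_1=1/4$, choosing $c_1,c_2,c_3$ small enough as functions of $K$ and $\eta$ forces $A_{m'}\leq 7/8$ on an event of conditional probability at least $1-3\exp[-nL'(K,\eta)]$. Integrating with respect to ${\bf X}_{m'}$ and combining with the Wishart bound closes the per-model estimate.

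The main technical obstacle is the book-keeping in the last step: one must choose $c_1,c_2,c_3$ and the Wishart deviation $x$ simultaneously so that the positive contributions (the $\delta$-remainders and the Wishart term multiplied by $\kappa_2$) do not eat up the dominant negative contribution $-3(K-1)d_{m'}/(4n)$ coming from the penalty term. The choice of $\kappa_2$ in \eqref{definition_kappa0} is tailored precisely so that the product $\kappa_2\cdot 4/(1-\sqrt{\eta})^2$ equals $(K-1)/4$, which makes this arithmetic work. Once this is set, the summation over $\mathcal{M}$ only multiplies the exponential bound by $\text{Card}(\mathcal{M})$, yielding the announced estimate.
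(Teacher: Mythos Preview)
Your plan matches the paper's proof closely: union bound over $\mathcal{M}$, then per-model concentration of the three $\chi^2$ statistics and the Wishart eigenvalue, followed by bookkeeping to force $A_{m'}\leq 7/8$.

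There is one concrete gap. You invoke Lemma \ref{concentration_chi2_fine} for the lower deviations of $\|\Pi_{m'}^\perp\boldsymbol{\epsilon}_{m'}\|_n^2$ and $\|\Pi_{m'}^\perp(\boldsymbol{\epsilon}+\boldsymbol{\epsilon}_{m'})\|_n^2$, and then claim the correction $\delta_d=\sqrt{\pi/(2d)}+e^{-d/16}$ is ``absorbed using $n-d_{m'}\geq(1-\eta)n\geq 6$''. But $\delta_6>1$, so for small $n$ the refined lemma degenerates to the trivial bound $\chi^2(d)\geq 0$; on your good event you would then only get $A_{m'}\leq \kappa_1+1+\text{(positive)}>7/8$, and the inclusion in $\Omega_1$ fails. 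Since the statement of Lemma \ref{concentration_thrm1} must hold for every $n$ allowed by $(\mathbb{H}_\eta)$ with a universal $L$, you cannot patch this by absorbing small $n$ into the constant. The paper avoids the issue entirely by using the cruder Lemma \ref{concentration_chi2} (the bound $\chi^2(d)\geq d-2\sqrt{dx}$), which carries no $\delta_d$ term and works uniformly in $d$. Swap lemmas and your argument goes through; Lemma \ref{concentration_chi2_fine} is reserved for the proof of Theorem \ref{thrm_principal}, where the extra assumption $n\geq n_0(K)$ makes $\delta_{n-d_{m'}}$ small.

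A minor point on the closing arithmetic: the reason the bound succeeds is not that $-\tfrac{3(K-1)}{4}\tfrac{d_{m'}}{n}$ dominates the positive deviations (this term vanishes at $d_{m'}=0$), but rather that the mean of $A_{m'}$ is at most $\kappa_1=\tfrac14$, leaving a fixed margin of $\tfrac58$ below the target $\tfrac78$ to absorb the deviation terms, which are $O(\sqrt{c_i})$ uniformly in $d_{m'}$. The paper implements this with a single deviation level $x$ and, after a few $2ab\leq \tau a^2+\tau^{-1}b^2$ manipulations that make the $d_{m'}/n$ terms cancel exactly, obtains $A_{\widehat m}\leq \tfrac34+x\,C(K,\eta)$ on $\Omega_1'$, from which the choice of $x$ is immediate.
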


\begin{lemma}\label{majoration_esperance_conditionelle}
There exists an event $\Omega_2$ of probability larger than $1-\exp\left(-nL\right)$ with $L>0$ such that
\begin{eqnarray*}
\mathbb{E}\left[B_{\widehat{m}}\mathbf{1}_{\Omega_1\cap \Omega_2}\right] \leq \frac{L(K,\eta,\alpha,\beta)}{n}\ .
\end{eqnarray*}
\end{lemma}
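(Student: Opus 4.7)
The plan is to construct $\Omega_2$ guaranteeing a uniform control of the inverse Wishart eigenvalues $\varphi_{\max}[(\mathbf{Z}^*_m \mathbf{Z}_m)^{-1}]$ across $m\in\mathcal{M}$, then to bound $\mathbb{E}[B_{\widehat m}\mathbf{1}_{\Omega_1\cap\Omega_2}]$ by summing tail integrals over $m\in\mathcal{M}$ and exploiting the polynomial size assumption $(\mathbb{H}_{Pol})$.

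I would define
\[\Omega_2 := \bigl\{\forall m\in\mathcal{M},\ n\,\varphi_{\max}[(\mathbf{Z}^*_m\mathbf{Z}_m)^{-1}] \leq 4/(1-\sqrt{\eta})^2\bigr\}.\]
By Lemma~\ref{concentration_vp_wishart} applied to each $m$ with deviation parameter $(1-\sqrt{\eta})/2$, one has $\mathbb{P}\bigl(n\,\varphi_{\max}[(\mathbf{Z}^*_m\mathbf{Z}_m)^{-1}] > 4/(1-\sqrt{\eta})^2\bigr) \leq \exp\bigl(-n(1-\sqrt{\eta})^2/8\bigr)$. A union bound combined with $(\mathbb{H}_{Pol})$ and $d_m\leq \eta n$ yields $\mathbb{P}(\Omega_2^c)\leq \alpha(\eta n)^{\beta+1}\exp\bigl(-n(1-\sqrt{\eta})^2/8\bigr)\leq \exp(-nL')$ for $n$ large enough. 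The choice $\kappa_2=(K-1)(1-\sqrt{\eta})^2/16\wedge 1$ in \eqref{definition_kappa0} is made precisely so that, on $\Omega_2$, the prefactor of the third summand of $B_m$ is bounded by $\tfrac{K-1}{4}\cdot \tfrac{d_m}{n-d_m}$.

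For each $m$ and $t>0$ I would then bound $\mathbb{P}(B_m > t,\,\Omega_2)$. Writing $B_m = T_1+T_2+T_3-T_4$ for the four summands in the definition, conditionally on $\mathbf{X}_m$ the squared norms appearing in $T_2$, $T_3$ and $T_4$ are rescaled $\chi^2$ variables with $d_m$, $d_m$ and $n-d_m$ degrees of freedom, for which Lemma~\ref{concentration_chi2} supplies sub-exponential tails. The cross term $T_1$ is handled by first conditioning on $\boldsymbol{\epsilon}_m$: $\langle \Pi^\perp_m\boldsymbol{\epsilon},\Pi^\perp_m\boldsymbol{\epsilon}_m\rangle$ is Gaussian with variance $\sigma^2\|\Pi^\perp_m\boldsymbol{\epsilon}_m\|^2$, and a separate $\chi^2$ bound on $\|\Pi^\perp_m\boldsymbol{\epsilon}_m\|^2/l(\theta_m,\theta)$ yields $\mathbb{P}(T_1>t)\leq c\exp(-L\,n\,t)$ in the relevant range. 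The choice of $\kappa_2$ then forces $\mathbb{E}[B_m\mid\mathbf{X}]\leq L(K)/n - L_3'\,d_m/n$ on $\Omega_2$. Combining the four tail bounds produces positive constants $L_1,L_2,L_3$ depending only on $K,\eta$ with
\[\mathbb{P}(B_m > t,\,\Omega_2) \leq L_1 \exp\bigl(-L_2\,n\,t - L_3(d_m\vee 1)\bigr).\]

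Finally, using $\mathbb{E}[B_{\widehat m}\mathbf{1}_{\Omega_1\cap\Omega_2}]\leq \sum_{m\in\mathcal{M}}\mathbb{E}[(B_m)_+\mathbf{1}_{\Omega_2}]=\sum_m\int_0^\infty\mathbb{P}(B_m > t,\,\Omega_2)\,dt$ and $\mathrm{Card}\{m:d_m=d\}\leq \alpha d^\beta$ from $(\mathbb{H}_{Pol})$,
\[\sum_m\mathbb{E}[(B_m)_+\mathbf{1}_{\Omega_2}]\leq \frac{L_1}{L_2\,n}\Bigl(1+\sum_{d\geq 1}\alpha\,d^\beta\exp(-L_3 d)\Bigr)\leq \frac{L(K,\eta,\alpha,\beta)}{n},\]
the series $\sum_{d\geq 1}d^\beta\exp(-L_3 d)$ being convergent. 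The main obstacle is the cross-product term $T_1$: being a product of two independent Gaussian vectors rather than a chi-square, its tails do not decay in $d_m$ on their own, so a naive separate union bound over $\mathcal M$ would contribute an unwanted $|\mathcal M|/n$ factor. The resolution is to couple $T_1$'s deviation with the penalty-like drift from $T_4$ inside the joint event $\{B_m>t\}$, forcing any such joint deviation to inherit an $\exp(-L_3 d_m)$ factor whenever $d_m\geq 1$, so that only the empty model contributes the unavoidable $O(1/n)$ residue.
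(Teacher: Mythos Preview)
Your route differs substantially from the paper's, and while the overall strategy is defensible, the crucial tail estimate is only asserted.

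\textbf{Choice of $\Omega_2$.} The paper takes $\Omega_2:=\{\|\boldsymbol{\epsilon}\|_n^2/\sigma^2\leq 2\}$, a single event with $\mathbb{P}(\Omega_2^c)\leq e^{-n/8}$ and no union bound. This choice is not incidental: once $\|\boldsymbol{\epsilon}\|_n^2\leq 2\sigma^2$, the cross term satisfies
\[
E_m\mathbf{1}_{\Omega_2}\ \leq\ \frac{8}{n}\,\chi^2(1)+\frac{1}{n}\,\chi^2(d_m)
\]
with the two $\chi^2$'s independent, so $E_m$ is reduced to a genuine sub-exponential variable with parameters $(d_m,1)$ and no product structure left. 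Your $\Omega_2$ instead controls the Wishart eigenvalues uniformly in $m$; this duplicates what $\Omega_1$ already provides at $\widehat m$, and it does \emph{not} tame the cross term, which is why you have to argue separately about ``coupling $T_1$ with the drift from $T_4$''.

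\textbf{Evaluation at $\widehat m$ vs.\ summing over $m$.} The paper never writes $\sum_m (B_m)_+$. It builds, for each $x>0$, a uniform event $\mathbb{F}(x)$ with $\mathbb{P}(\mathbb{F}(x)^c)\leq L(K,\eta,\alpha,\beta)e^{-x}$ on which simultaneous $\chi^2$ deviations (weighted by $\xi_i d_m$) force
\[
B_{\widehat m}\ \leq\ \frac{d_{\widehat m}}{n}\,U+\frac{x}{n}\,V+\frac{72}{n},
\]
and then chooses the free constants $\xi_1,\xi_2,\xi_3,\tau_1,\tau_2,\tau_3$ so that $U\leq 0$. This kills the $d_{\widehat m}$ term outright and leaves $B_{\widehat m}\mathbf{1}_{\Omega_1\cap\Omega_2}\leq L(K,\eta)x/n + L'(K,\eta)/n$ on $\mathbb{F}(x)$; integrating over $x$ gives the result. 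The polynomial assumption $(\mathbb{H}_{Pol})$ enters only through the summability of $\sum_d \alpha d^\beta e^{-\xi d}$ in $\mathbb{P}(\mathbb{F}(x)^c)$.

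\textbf{The gap.} Your display $\mathbb{P}(B_m>t,\Omega_2)\leq L_1\exp(-L_2 nt-L_3(d_m\vee 1))$ is the heart of your argument, but you do not prove it. Getting the $e^{-L_3 d_m}$ factor requires exactly the kind of careful splitting (choose thresholds so that either $T_4$ is atypically small, costing $e^{-c(n-d_m)}$, or $T_2+\tfrac{K-1}{4}T_3'$ exceeds its mean by order $d_m/n$, costing $e^{-c'd_m}$, while $T_1$ contributes $e^{-c''nt}$) that you allude to but do not carry out; the product-of-$\chi^2$ structure of $T_1$ also needs its own two-step bound. This can be made to work, but as written it is a sketch, and the paper's route---reducing $E_m$ via $\Omega_2$ and then tuning constants so the $d_{\widehat m}$ coefficient is non-positive---sidesteps these complications entirely.
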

Gathering the upper bound (\ref{majoration_principal}) and Lemma \ref{lemme_decomposition},
\ref{concentration_thrm1}, and \ref{majoration_esperance_conditionelle},
we conclude that
\begin{eqnarray*}
  \mathbb{E}\left[l(\widetilde{\theta},\theta)\mathbf{1}_{\Omega_1\cap \Omega_2}\left(\kappa_2\wedge\frac{1}{8}\right)
  \right] & \leq & l(\theta_m,\theta)+\mathbb{E}\left[\gamma_n(\theta_m)pen(m)\right] \\
   & +&  \sigma^2\frac{L(K,\eta,\alpha,\beta)}{n}+\mathbb{E}\left[\mathbf{1}_{\Omega_1\cap \Omega_2}\left(\overline{\gamma}_n\left(\theta_m\right)+\sigma^2 - \|\boldsymbol{\epsilon}\|_n^2\right)\right]\ .
\end{eqnarray*}
As the expectation of the random variable $\overline{\gamma}_n\left(\theta_m\right)+\sigma^2 - \|\boldsymbol{\epsilon}\|_n^2$ is zero, it holds that
\begin{eqnarray*}
\mathbb{E}\left[\mathbf{1}_{\Omega_1\cap \Omega_2}\left(\overline{\gamma}_n\left(\theta_m\right)+\sigma^2 - \|\boldsymbol{\epsilon}\|_n^2\right)\right]  =   \mathbb{E}\left[\mathbf{1}_{\Omega^c_1\cup \Omega^c_2}\left(\overline{\gamma}_n\left(\theta_m\right)+\sigma^2 - \|\boldsymbol{\epsilon}\|_n^2\right)\right]\hspace{3cm} \\
 \hspace{3cm}\leq  \sqrt{\mathbb{P}(\Omega_1^c)+ \mathbb{P}(\Omega_2^c)}\left[\sqrt{\mathbb{E}\left[\|\boldsymbol{\epsilon}_m\|_n^2-l(\theta_m,\theta)\right]^2}+2\sqrt{\mathbb{E}\left[\langle\boldsymbol{\epsilon},\boldsymbol{\epsilon}_m\rangle_n^2\right]}\right]\\
 \hspace{0cm}\leq  \sqrt{\mathbb{P}(\Omega_1^c)+ \mathbb{P}(\Omega_2^c)}\sqrt{\frac{2}{n}} \left[l(\theta_m,\theta) + \sigma \sqrt{2l(\theta_m,\theta)}\right]\ .\hspace{2cm}
\end{eqnarray*}
The probabilities $\mathbb{P}(\Omega_1^c)$ and $\mathbb{P}(\Omega_2^c)$ converge to $0$ at an exponential rate with respect to $n$. Hence, by taking the infimum over all the models $m\in\mathcal{M}$, we obtain
\begin{eqnarray}\label{majoration_risque_C}
 \mathbb{E}\left[l(\widetilde{\theta},\theta)\mathbf{1}_{\Omega_1\cap \Omega_2}
  \right] &\leq&
  L(K,\eta)\inf_{m\in\mathcal{M}}\left[l(\theta_m,\theta)+\left(\sigma^2+l(\theta_m,\theta)\right)pen(m)\right] + L_2(K,\eta,\alpha,\beta)\frac{\sigma^2}{n} + \nonumber\\&+ &L_3(K,\eta)\sqrt{\frac{\text{Card}(\mathcal{M})}{n}}\left[\sigma^2+l(0_p,\theta)\right]\exp\left[-nL_4(K,\eta)\right] \ ,
\end{eqnarray}
with $L_4(K,\eta)>0$.
In order to conclude, we need to control the loss of the estimator $\widetilde{\theta}$ on
the event of small probability $\Omega_1^c\cup \Omega_2^c$. Thanks to the following lemma, we may upper bound the $r$-th risk of the estimators $\widehat{\theta}_m$.
\begin{prte}\label{controle_risque_lp}
For any model $m$ and any integer $r\geq 2$ such that $n-d_m-2r+1>0$,  
\begin{eqnarray*}
\mathbb{E}\left[l(\widehat{\theta}_m,\theta_m)^{r}\right]^{\frac{1}{r}} \leq Lr d_m n\left[\sigma^2+l(\theta_m,\theta)\right]\ .
\end{eqnarray*} 
\end{prte}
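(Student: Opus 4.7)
The key observation is that, thanks to the explicit formula
$$l(\widehat{\theta}_m, \theta_m) = (\boldsymbol{\epsilon} + \boldsymbol{\epsilon}_m)^* \mathbf{Z}_m (\mathbf{Z}_m^* \mathbf{Z}_m)^{-2} \mathbf{Z}_m^* (\boldsymbol{\epsilon} + \boldsymbol{\epsilon}_m)$$
from Lemma \ref{lemma_expression_perte_perte_empirique} and the independence of $\boldsymbol{\epsilon} + \boldsymbol{\epsilon}_m$ from $\mathbf{Z}_m$, the loss can be reduced to the product of an independent chi-squared and an inverse chi-squared random variable. Setting $\tau^2 := \sigma^2 + l(\theta_m, \theta)$, I would first note that $\boldsymbol{\epsilon} + \boldsymbol{\epsilon}_m \sim \mathcal{N}(0, \tau^2 I_n)$ is independent of $\mathbf{Z}_m$; conditionally on $\mathbf{Z}_m$, the vector $\mathbf{Z}_m^*(\boldsymbol{\epsilon} + \boldsymbol{\epsilon}_m)/\tau$ is centered Gaussian with covariance $\mathbf{Z}_m^* \mathbf{Z}_m$. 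One may therefore represent it as $(\mathbf{Z}_m^* \mathbf{Z}_m)^{1/2} \zeta$ with $\zeta \sim \mathcal{N}(0, I_{d_m})$ independent of $\mathbf{Z}_m$, yielding $l(\widehat{\theta}_m, \theta_m) = \tau^2\, \zeta^* (\mathbf{Z}_m^* \mathbf{Z}_m)^{-1} \zeta$.

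Next I would use the polar decomposition $\zeta = \|\zeta\|\hat{\zeta}$, where $\|\zeta\|^2 \sim \chi^2(d_m)$ and $\hat{\zeta}$ is uniformly distributed on the unit sphere of $\mathbb{R}^{d_m}$, these two factors being jointly independent of $\mathbf{Z}_m^* \mathbf{Z}_m$. Exploiting the rotation invariance of the standard Wishart matrix $\mathbf{Z}_m^* \mathbf{Z}_m$, the variable $\hat{\zeta}^*(\mathbf{Z}_m^*\mathbf{Z}_m)^{-1}\hat{\zeta}$ has the same law as the top-left entry $e_1^*(\mathbf{Z}_m^*\mathbf{Z}_m)^{-1} e_1$ of the inverse Wishart. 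A standard block-inversion identity identifies this entry with the reciprocal of the squared norm of the residual from projecting the first column of $\mathbf{Z}_m$ onto the linear span of the other $d_m - 1$ columns; by Cochran's theorem this residual is $\chi^2(n - d_m + 1)$-distributed. Combining these facts together with the independence of the factors produces
$$\mathbb{E}\bigl[l(\widehat{\theta}_m, \theta_m)^r\bigr] = \tau^{2r}\, \mathbb{E}[\chi^2(d_m)^r] \cdot \mathbb{E}[\chi^2(n - d_m + 1)^{-r}].$$

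The final step is a routine application of the explicit moment formulas: $\mathbb{E}[\chi^2(d_m)^r] = d_m(d_m + 2)\cdots(d_m + 2r - 2) \leq (d_m + 2r)^r$, and, as soon as $n - d_m - 2r + 1 > 0$, $\mathbb{E}[\chi^2(n - d_m + 1)^{-r}] = [(n - d_m - 1)(n - d_m - 3)\cdots(n - d_m - 2r + 1)]^{-1} \leq (n - d_m - 2r + 1)^{-r}$. Taking $r$-th roots leads to a bound of order $\tau^2\, (d_m + 2r)/(n - d_m - 2r + 1)$, which is easily dominated by $L r d_m n \tau^2$ under the hypothesis of the proposition.

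The main obstacle is technical rather than deep: one must justify cleanly the independence claims underlying the reduction, namely that the random vector $(\mathbf{Z}_m^* \mathbf{Z}_m)^{-1/2} \mathbf{Z}_m^*(\boldsymbol{\epsilon} + \boldsymbol{\epsilon}_m)/\tau$ is standard Gaussian and independent of $\mathbf{Z}_m^* \mathbf{Z}_m$, and that the direction $\hat{\zeta}$ is likewise independent of $\mathbf{Z}_m^* \mathbf{Z}_m$. Once these reductions are firmly in place, the argument collapses into known moment identities for the chi-squared and inverse-chi-squared distributions.
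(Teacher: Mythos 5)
Your proof is correct, and it takes a genuinely different route from the paper's. The paper starts from the same quadratic-form representation of $l(\widehat{\theta}_m,\theta_m)$, but then applies Cauchy--Schwarz to split the expression into a Frobenius-norm product, bounds the Gaussian-chaos factor $\left\|(\boldsymbol{\epsilon}+\boldsymbol{\epsilon}_m)(\boldsymbol{\epsilon}+\boldsymbol{\epsilon}_m)^*\right\|_F^r$ via a Khintchine--Kahane hypercontractivity inequality, and bounds the inverse-Wishart trace factor through Von Rosen's Kronecker-product recursion, finally patching the small cases $r=2,3$ by hand. You instead exploit the exact distributional structure: writing $\mathbf{Z}_m^*(\boldsymbol{\epsilon}+\boldsymbol{\epsilon}_m)/\tau = (\mathbf{Z}_m^*\mathbf{Z}_m)^{1/2}\zeta$ with $\zeta\sim\mathcal{N}(0,I_{d_m})$ independent of $\mathbf{Z}_m$, then using polar coordinates plus the orthogonal invariance of the Wishart law and the Cochran/Schur-complement identity, you obtain the clean factorization of $l(\widehat{\theta}_m,\theta_m)/\tau^2$ as the ratio of two \emph{independent} chi-squares, $\chi^2(d_m)/\chi^2(n-d_m+1)$. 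This is an equality in law rather than an upper bound, so your argument is both simpler and lossless: it yields the sharper estimate $\mathbb{E}[l(\widehat{\theta}_m,\theta_m)^r]^{1/r}\leq L\,(d_m+r)\,(n-d_m-2r+1)^{-1}\,\tau^2$, from which the paper's $Lr\,d_m\,n\,\tau^2$ follows immediately (and with an unnecessary factor of $n$). All the independence claims you flag as needing justification are indeed standard: $\zeta$ is independent of $\mathbf{Z}_m$ because its conditional law given $\mathbf{Z}_m$ is constant; $\|\zeta\|$ and $\hat\zeta$ are independent by sphericity; and $\hat\zeta^*(\mathbf{Z}_m^*\mathbf{Z}_m)^{-1}\hat\zeta\stackrel{d}{=}e_1^*(\mathbf{Z}_m^*\mathbf{Z}_m)^{-1}e_1$, independently of $\|\zeta\|$, by rotation invariance of the standard Wishart distribution. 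No gap.
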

The proof is postponed to Section \ref{section_controle_lp}. We derive from this bound a strong control on $\mathbb{E}\left[l(\widetilde{\theta},\theta)\mathbf{1}_{\Omega_1^c\cup \Omega_2^c}\right]$.
\begin{lemma}\label{controlerisquesimple}
\begin{eqnarray}\label{majoration_risque_complementaire}
\mathbb{E}\left[l(\widetilde{\theta},\theta)\mathbf{1}_{\Omega_1^c\cup \Omega_2^c}\right] \leq
L(K,\eta)n^{2}\text{Card}(\mathcal{M})\var(Y)\exp\left[-nL'(K,\eta)\right]\ ,
\end{eqnarray}
where $L'(K,\eta)$ is positive.
\end{lemma}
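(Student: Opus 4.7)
The plan is to apply Cauchy--Schwarz to split the quantity of interest into a second-moment factor and a probability factor, then to control each separately. Concretely, I would write
\begin{eqnarray*}
 \mathbb{E}\left[l(\widetilde{\theta},\theta)\mathbf{1}_{\Omega_1^c\cup \Omega_2^c}\right] \leq \sqrt{\mathbb{E}\left[l(\widetilde{\theta},\theta)^2\right]}\,\sqrt{\mathbb{P}(\Omega_1^c\cup \Omega_2^c)}\ .
\end{eqnarray*}
The probability factor is already under control: Lemma \ref{concentration_thrm1} and Lemma \ref{majoration_esperance_conditionelle} together give $\mathbb{P}(\Omega_1^c\cup \Omega_2^c)\leq L\,\text{Card}(\mathcal{M})\exp[-nL'(K,\eta)]$ for some positive $L'(K,\eta)$, so this factor contributes $\sqrt{L\,\text{Card}(\mathcal{M})}\exp[-nL'(K,\eta)/2]$.

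To handle the second-moment factor, I would use the triangle-type inequality $l(\widetilde{\theta},\theta)\leq 2l(\widetilde{\theta},\theta_{\widehat{m}})+2l(\theta_{\widehat{m}},\theta)$ coming from the fact that $l(\cdot,\cdot)^{1/2}$ is an $L^2$ distance, together with the observation that $l(\theta_m,\theta)\leq l(0_p,\theta)\leq \var(Y)$ for every $m\in\mathcal{M}$, since $\theta_m$ minimizes $\gamma(\cdot)$ on $S_m\ni 0_p$ and $\var(Y)=\sigma^2+l(0_p,\theta)$. Replacing $\widehat{m}$ by a crude union over $\mathcal{M}$ yields
\begin{eqnarray*}
\mathbb{E}\left[l(\widetilde{\theta},\theta)^2\right]\leq 8\sum_{m\in\mathcal{M}}\mathbb{E}\left[l(\widehat{\theta}_m,\theta_m)^2\right]+8\,\var(Y)^2\ ,
\end{eqnarray*}
and I would then apply Proposition \ref{controle_risque_lp} with $r=2$ to bound each summand by $L\,n^2 d_m^2[\sigma^2+l(\theta_m,\theta)]^2\leq L\,n^4\var(Y)^2$. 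Taking the square root gives $\sqrt{\mathbb{E}[l(\widetilde{\theta},\theta)^2]}\leq L\,n^2\sqrt{\text{Card}(\mathcal{M})}\,\var(Y)$, and combining with the probability factor produces exactly the announced bound $L(K,\eta)\,n^2\,\text{Card}(\mathcal{M})\,\var(Y)\exp[-nL'(K,\eta)/2]$.

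The only non-routine point is to verify that Proposition \ref{controle_risque_lp} is applicable with $r=2$, i.e.\ that $n-d_m-3>0$ uniformly over $m\in\mathcal{M}$; this follows directly from Assumption $(\mathbb{H}_\eta)$, which forces $n-d_m\geq (1-\eta)n\geq 6$. Everything else amounts to polynomial bookkeeping that is absorbed into the exponential decay $\exp[-nL'(K,\eta)]$, so the main obstacle is really just lining up the moment bound with the right power of $n$ so that no factor dependent on $\theta$ (other than through $\var(Y)$) survives in the final estimate.
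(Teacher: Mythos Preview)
Your argument is correct and follows essentially the same route as the paper: Cauchy--Schwarz to separate the probability of $\Omega_1^c\cup\Omega_2^c$ from a second-moment bound, then a crude sum over $m\in\mathcal{M}$ controlled via Proposition~\ref{controle_risque_lp} with $r=2$, whose applicability is checked through $(\mathbb{H}_\eta)$. The only cosmetic difference is that the paper uses the exact Pythagorean identity $l(\widehat{\theta}_m,\theta)=l(\theta_m,\theta)+l(\widehat{\theta}_m,\theta_m)$ rather than your triangle inequality, which avoids the harmless factor of~$2$.
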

By Assumptions $(\mathbb{H}_{Pol})$ and $ (\mathbb{H}_{\eta})$, the cardinality of the collection of $\mathcal{M}$ is smaller than $\alpha n^{1+\beta}$.
We gather the upper bounds (\ref{majoration_risque_C}) and
(\ref{majoration_risque_complementaire}) and so we conclude.
\end{proof}\vspace{0.5cm}

\begin{proof}[Proof of Lemma \ref{lemme_decomposition}]
Thanks to Lemma \ref{lemma_expression_perte_perte_empirique}, we  decompose $\overline{\gamma}_n(\widetilde{\theta})$ as
\begin{eqnarray*}
 \overline{\gamma}_n(\widetilde{\theta})= \|\Pi_{\widehat{m}}^{\perp}(\boldsymbol{\epsilon}+\boldsymbol{\epsilon}_{\widehat{m}})\|_n^2-\sigma^2-l(\theta_{\widehat{m}},\theta)-(1-\kappa_2)l(\widetilde{\theta},\theta_{\widehat{m}})-\kappa_2(\boldsymbol{\epsilon}+\boldsymbol{\epsilon}_{\widehat{m}})^*{\bf Z}_{\widehat{m}}({\bf Z}^*_{\widehat{m}}{\bf Z}_{\widehat{m}})^{-2}{\bf Z	}^*_{\widehat{m}}(\boldsymbol{\epsilon}+\boldsymbol{\epsilon}_{\widehat{m}})\ .
\end{eqnarray*}
Since $2ab\leq \kappa_1a^2+ \kappa_1^{-1}b^2$ for any $\kappa_1>0$, it holds that
\begin{eqnarray*}
 -\|\Pi_{\widehat{m}}^{\perp}(\boldsymbol{\epsilon}+\boldsymbol{\epsilon}_{\widehat{m}})\|_n^2+\|\boldsymbol{\epsilon}\|_n^2 & = & \|\Pi_{\widehat{m}}\boldsymbol{\epsilon}\|_n^2 -\|\Pi_{\widehat{m}}^{\perp}\boldsymbol{\epsilon}_{\widehat{m}}\|_n^2-2\langle\Pi_{\widehat{m}}^\perp\boldsymbol{\epsilon} , \Pi_{\widehat{m}}^{\perp}
\boldsymbol{\epsilon}_{\widehat{m}}\rangle_n\\
& \leq & \sigma^2\left[\kappa_1^{-1} \frac{\langle\Pi_{\widehat{m}}^{\perp}\boldsymbol{\epsilon},\Pi^\perp_{\widehat{m}}\boldsymbol{\epsilon}_{\widehat{m}}\rangle^2_n}{\sigma^2l(\theta_{\widehat{m}},\theta)}+\frac{\|\Pi_{\widehat{m}}\boldsymbol{\epsilon}\|_n^2}{\sigma^2}\right]+ l(\theta_{\widehat{m}},\theta)\left[-\frac{\|\Pi_{\widehat{m}}^{\perp}\boldsymbol{\epsilon}_{\widehat{m}}\|_n^2}{l(\theta_{\widehat{m}},\theta)}+\kappa_1\right]\ .
\end{eqnarray*}
Besides, we upper bound  Expression (\ref{perte_biais}) of $l(\widetilde{\theta},\theta_{\widehat{m}})$ using the largest eigenvalue of $\left({\bf Z}^*_{\widehat{m}}{\bf Z}_{\widehat{m}}\right)^{-1}$.
\begin{eqnarray}
 (\boldsymbol{\epsilon}+\boldsymbol{\epsilon}_{\widehat{m}})^*{\bf Z}_{\widehat{m}}({\bf Z}^*_{\widehat{m}}{\bf Z}_{\widehat{m}})^{-2}{\bf Z}^*_{\widehat{m}}(\boldsymbol{\epsilon}+\boldsymbol{\epsilon}_{\widehat{m}})& \leq & \varphi_{\text{max}}\left[ ({\bf Z}^*_{\widehat{m}}{\bf Z}_{\widehat{m}})^{-1}\right] (\boldsymbol{\epsilon}+\boldsymbol{\epsilon}_{\widehat{m}})^*{\bf Z}_{\widehat{m}}({\bf Z}^*_{\widehat{m}}{\bf Z}_{\widehat{m}})^{-1}{\bf
  Z}^*_{\widehat{m}}(\boldsymbol{\epsilon}+\boldsymbol{\epsilon}_{\widehat{m}})\nonumber\\
& \leq & \left[\sigma^2 + l(\theta_{\widehat{m}},\theta)\right]n\varphi_{\text{max}}\left[ \left({\bf Z}^*_{\widehat{m}}{\bf Z}_{\widehat{m}}\right)^{-1}\right] \frac{\|\Pi_{\widehat{m}}(\boldsymbol{\epsilon}+ \boldsymbol{\epsilon}_{\widehat{m}})\|_n^2}{\sigma^2 + l(\theta_{\widehat{m}},\theta)}\ . \label{decomposition3}
\end{eqnarray}
Thanks to Assumption (\ref{hypothese1_penalite}), we upper bound the penalty terms as follows:
\begin{eqnarray*}
 -\gamma_n(\widetilde{\theta})pen(\widehat{m})\leq -\left[\sigma^2+l(\theta_{\widehat{m}},\theta)\right]\frac{\|\Pi_{\widehat{m}}^\perp(\boldsymbol{\epsilon}+ \boldsymbol{\epsilon}_{\widehat{m}})\|_n^2}{\sigma^2+l(\theta_{\widehat{m}},\theta)} K\frac{d_{\widehat{m}}}{n-d_{\widehat{m}}} \ .
\end{eqnarray*}
By gathering the four last identities, we get
\begin{eqnarray*}
 -\overline{\gamma}_n(\widetilde{\theta}) -  \gamma_n(\widetilde{\theta})pen(\widehat{m}) -\sigma^2 +
\|\boldsymbol{\epsilon}\|^2_n \leq l(\widetilde{\theta},\theta)  \left[A_{\widehat{m}}\vee (1-\kappa_2)\right]+ \sigma^2 B_{\widehat{m}}\ ,
\end{eqnarray*}
since $l(\widetilde{\theta},\theta)$ decomposes into the sum $l(\widetilde{\theta},\theta_{\widehat{m}}) + l(\theta_{\widehat{m}},\theta)$.
\end{proof}

\begin{proof}[Proof of Lemma \ref{concentration_thrm1}]
We recall that for any model $m\in \mathcal{M}$,
\begin{eqnarray*}
A_{m}&  := & \frac{5}{4} -
  \frac{\|\Pi_{m}^{\perp}\boldsymbol{\epsilon}_{{m}}\|^2_n}{l(\theta_{m},\theta)}
  + \kappa_2 n \varphi_{\text{max}}\left[ ({\bf Z}^*_{{m}}{\bf Z}_{{m}})^{-1}\right]
   \frac{\|\Pi_m (\boldsymbol{\epsilon}
  +\boldsymbol{\epsilon}_{{m}})\|^2_n}{l(\theta_{m},\theta)+\sigma^2}\\
 & - & K\frac{d_{{m}}}{n-d_{{m}}}   \frac{\|\Pi_{m}^\perp (\boldsymbol{\epsilon}
  + \boldsymbol{\epsilon}_{{m}})\|^2_n}{l(\theta_{m},\theta)+\sigma^2}\ .
\end{eqnarray*}
In order to control the variable $A_{\widehat{m}}$, we shall
simultaneously bound the deviations of the four random variables involved in 
any variable $A_{m}$. 

Since ${\bf X}_{m}$ is independent of $\boldsymbol{\epsilon}_{m}/\sqrt{l(\theta_{m},\theta)}$ and since $\boldsymbol{\epsilon}_{m}/\sqrt{l(\theta_{m},\theta)}$ is a standard Gaussian vector of size $n$, the random variable $n\|\Pi_{m}^\perp\boldsymbol{\epsilon}_{{m}}\|^2_n/l(\theta_{m},\theta)$ follows a $\chi^2$ distribution with $n-d_{m}$ degrees of freedom conditionally on ${\bf X}_{m}$. As this distribution does not depend on ${\bf X}_{m}$,  $n\|\Pi_{m}^\perp\boldsymbol{\epsilon}_{m}\|^2_n/l(\theta_{m},\theta)$ follows a $\chi^2$ distribution with $n-d_{m}$ degrees of freedom. Similarly, the random variables $ n\|\Pi_m (\boldsymbol{\epsilon}
  +\boldsymbol{\epsilon}_{m})\|^2_n/[l(\theta_{m},\theta)+\sigma^2]$ and $n\|\Pi_{m}^\perp (\boldsymbol{\epsilon}
  +\boldsymbol{\epsilon}_{m})\|^2_n/[l(\theta_{m},\theta)+\sigma^2]$ follow $\chi^2$ distributions with respectively $d_{m}$ and $n-d_{m}$ degrees of freedom. Besides, the matrix $({\bf {\bf Z}}^*_{m}{\bf {\bf Z}}_{m})$ follows a standard Wishart distribution with parameters $(n,d_{m})$.

Let $x$ be a positive number we shall
fix later. By Lemma \ref{concentration_chi2} and \ref{concentration_vp_wishart}, there exists an event $\Omega'_1$
of large probability
$$P(\Omega_1^{'c})\leq 4\exp(-nx)\text{Card}(\mathcal{M})
 \ ,$$
such that for conditionally on $\Omega'_1$,
\begin{eqnarray}
\frac{\|\Pi_{m}^{\perp}\boldsymbol{\epsilon}_{m}\|^2_n}{l(\theta_{m},\theta)} &\geq & \frac{n-d_{m}}{n} -
2\sqrt{\frac{(n-d_{m})x}{n}}\ , \label{majoration_1}\\ 
\frac{\|\Pi_{m}(\boldsymbol{\epsilon}+\boldsymbol{\epsilon}_{m})\|_n^2}{\sigma^2+ l(\theta_{m},\theta)} & \leq & \frac{d_{m}}{n}
+2\sqrt{\frac{d_{m}x}{n}}+2x\ ,\label{majoration_2}\\
\frac{\|\Pi_{m}^\perp(\boldsymbol{\epsilon}+\boldsymbol{\epsilon}_{m})\|_n^2}{\sigma^2+ l(\theta_{m},\theta)} & \geq & \frac{n-d_{m}}{n}
-2\sqrt{\frac{(n-d_{m})x}{n}}\ ,\label{majoration_3}\\
\varphi_{\text{max}}\left[ \left({\bf {\bf Z}}^*_{m}{\bf {\bf Z}}_{m}\right)^{-1}\right] & \leq & \left\{n\left[\left(1-\sqrt{\frac{d_{m}}{n}}-\sqrt{2x}\right)\vee 0\right]^2\right\}^{-1} \ , \label{majoration2_wishart}
\end{eqnarray}
for every model $m\in\mathcal{M}$.
Let us prove that for a suitable choice of the number $x$, $A_{\widehat{m}}\mathbf{1}_{\Omega'_1}$ is smaller than $7/8$. First, we constrain $n\kappa_2 \varphi_{\text{max}}\left[ \left({\bf Z}^*_{\widehat{m}}{\bf Z}_{\widehat{m}}\right)^{-1}\right]$ to be smaller than $\frac{K-1}{4}$ on the event $\Omega'_1$. By (\ref{majoration2_wishart}), it holds that
$$n\varphi_{\text{max}}\left[ \left({\bf Z}^*_{\widehat{m}}{\bf Z}_{\widehat{m}}\right)^{-1}\right] \leq \left[\left(1-\sqrt{\eta}-\sqrt{2x}\right)\vee 0\right]^{-2}\ .	$$
Constraining $x$ to be smaller than $\frac{\left(1-\sqrt{\eta}\right)^2}{8}$
ensures that the largest eigenvalue of $\left({\bf {\bf Z}}^*_{\widehat{m}} {\bf Z}_{\widehat{m}}\right)^{-1}$ satisfies 
$$n\varphi_{\text{max}}\left[ \left({\bf {\bf Z}}^*_{\widehat{m}}{\bf {\bf Z}}_{\widehat{m}}\right)^{-1}\right]\leq \frac{4}{\left(1-\sqrt{\eta}\right)^2}\ .$$
By definition (\ref{definition_kappa0}) of $\kappa_2$, it follows that
$n\kappa_2\varphi_{\text{max}}\left[ \left( {\bf Z}^*_{\widehat{m}} {\bf Z}_{\widehat{m}}\right)^{-1}\right] \leq (K-1)/4$.
Applying inequality $2ab\leq \delta a^2 + \delta^{-1}b^2$ to the bounds 
(\ref{majoration_1}), (\ref{majoration_2}), and (\ref{majoration_3}) yields 
\begin{eqnarray*}
-\frac{\|\Pi_{\widehat{m}}^{\perp}\boldsymbol{\epsilon}_{\widehat{m}}\|^2_n}{l(\theta_{\widehat{m}},\theta)} & \leq & -\frac{1}{2} +\frac{d_{\widehat{m}}}{2n}+2x\ \\
\kappa_2n \varphi_{\text{max}}\left[ \left({\bf {\bf Z}}^*_{\widehat{m}}{\bf {\bf Z}}_{\widehat{m}}\right)^{-1}\right]  \frac{\|\Pi_{\widehat{m}}(\boldsymbol{\epsilon}+\boldsymbol{\epsilon}_{\widehat{m}})\|_n^2}{\sigma^2+ l(\theta_{\widehat{m}},\theta)} & \leq & \frac{K-1}{2}\left[\frac{d_{\widehat{m}}}{n}+ \frac{3x}{2}\right]\\
-K\frac{d_{\widehat{m}}}{n-d{\widehat{m}}} \frac{\|\Pi_{\widehat{m}}^\perp(\boldsymbol{\epsilon}+\boldsymbol{\epsilon}_{\widehat{m}})\|_n^2}{\sigma^2+ l(\theta_{\widehat{m}},\theta)} & \leq & -K\frac{d_{\widehat{m}}}{2n}+ x\frac{2K\eta}{1-\eta}\ .
\end{eqnarray*}
Gathering these three inequalities, we get 
\begin{eqnarray*}
A_{\widehat{m}}\mathbf{1}_{\Omega'_1} \leq \frac{3}{4} + x\left[2+\frac{3(K-1)}{4}+2K\frac{\eta}{1-\eta}\right]\ .
\end{eqnarray*}
 If we set $x$ to 
$$x:=\left[8\left(2+\frac{3(K-1)}{4}+2K\frac{\eta}{1-\eta}\right)\right]^{-1}\wedge \frac{\left(1-\sqrt{\eta}\right)^2}{8}\ ,$$
then $A_{\widehat{m}}\mathbf{1}_{\Omega'_1}$ is smaller than $\frac{7}{8}$ 
and the result follows.
\end{proof}

\begin{proof}[Proof of Lemma \ref{majoration_esperance_conditionelle}]
We shall simultaneously bound the deviations of the random variables involved in the definition of $B_{m}$ for all models $m\in\mathcal{M}$. Let us first define the random variable $E_{m}$ as
\begin{eqnarray*}
E_{m} & := & \kappa_1^{-1}\frac{\langle\Pi_{m}^{\perp}\boldsymbol{\epsilon},\Pi_{m}^{\perp}\boldsymbol{\epsilon}_{m}
  \rangle^2_n}{\sigma^2l(\theta_{m},\theta)} + \frac{\|\Pi_{m}
  \boldsymbol{\epsilon}\|^2_n}{\sigma^2}\ .\label{E-defi}
\end{eqnarray*}
Factorizing by the norm of $\boldsymbol{\epsilon}$, we get
\begin{eqnarray}
E_{m} & \leq &
\kappa_1^{-1}\frac{\|\boldsymbol{\epsilon}\|_n^2}{\sigma^2}\frac{\langle\frac{\Pi_{m}^{\perp}\boldsymbol{\epsilon}}{\|\Pi_{m}^{\perp}\boldsymbol{\epsilon}
  \|_n},\Pi_{m}^{\perp}\boldsymbol{\epsilon}_{m}
  \rangle^2_n}{l(\theta_{m},\theta)} + \frac{\|\Pi_{m}
  \boldsymbol{\epsilon}\|^2_n}{\sigma^2}\ .\label{E_major}
\end{eqnarray}
The variable $n\frac{\|\boldsymbol{\epsilon}\|^2_n}{\sigma^2}$ follows a $\chi^2$
distribution with $n$ degrees of freedom. By Lemma \ref{concentration_chi2} there exists an event
$\Omega_2$ of probability larger than $1-\exp\left(n/8\right)$ such that 
$\frac{\|\boldsymbol{\epsilon}\|^2_n}{\sigma^2}$ is smaller than
$2$. As  $\kappa_1^{-1}=4$,  we obtain
\begin{eqnarray*}
E_{m}\mathbf{1}_{\Omega_2} & \leq &
8\frac{\langle\frac{\Pi_{m}^{\perp}\boldsymbol{\epsilon}}{\|\Pi_{m}^{\perp}\boldsymbol{\epsilon}
  \|_n},\Pi_{m}^{\perp}\boldsymbol{\epsilon}_{m}
  \rangle^2_n}{l(\theta_{m},\theta)} + \frac{\|\Pi_{m}
  \boldsymbol{\epsilon}\|^2_n}{\sigma^2}\ .
\end{eqnarray*}
Since $\boldsymbol{\epsilon}$, $\boldsymbol{\epsilon}_{m}$, and ${\bf X}_{m}$ are independent, it holds that conditionally on ${\bf X}_{m}$ and  $\boldsymbol{\epsilon}$, $$n\frac{\langle\frac{\Pi_{m}^{\perp}\boldsymbol{\epsilon}}{\|\Pi_{m}^{\perp}\boldsymbol{\epsilon}
  \|_n},\Pi_{m}^{\perp}\boldsymbol{\epsilon}_{m}
  \rangle^2_n}{l(\theta_{m},\theta)}\sim \chi^2(1)\ .$$
Since the distribution depends neither on ${\bf X}_{m}$ nor on $\boldsymbol{\epsilon}$, this random variable follows a $\chi^2$ distribution with $1$ degree of freedom. Besides, it is independent of the variable $\frac{\|\Pi_{m}.
  \boldsymbol{\epsilon}\|^2_n}{\sigma^2}$. Arguing as previously, we work out the distribution 
$$\frac{n\|\Pi_{m}
  \boldsymbol{\epsilon}\|^2_n}{\sigma^2} \sim \chi^2(d_{m})\ .$$
 Consequently, the variable $E_{m}\mathbf{1}_{\Omega_2}$ is upper bounded by a random variable that follows the distribution of 
$$\frac{8}{n}T_1 +\frac{1}{n}T_2\ ,$$
where $T_1$ and $T_2$ are two independent $\chi^2$ distribution with respectively $1$ and $d_{m}$ degrees of freedom.
Moreover, the random variables $n\frac{\|\Pi_{m}(\boldsymbol{\epsilon}+\boldsymbol{\epsilon}_{m})
    \|_n^2}{l(\theta_{m},\theta)+\sigma^2}$ and  $n\frac{\|\Pi_{m}^{\perp}(\boldsymbol{\epsilon}+\boldsymbol{\epsilon}_m)\|_n^2}{l(\theta_{m},\theta)+\sigma^2}$ respectively follow a $\chi^2$ distribution with $d_{m}$ and $n-d_{m}$ degrees of freedom. \\

Let us bound the deviations of the random variables $E_m\mathbf{1}_{\Omega_2}$, $\frac{\|\Pi_{m}(\boldsymbol{\epsilon}+\boldsymbol{\epsilon}_{m})
\|_n^2}{l(\theta_{m},\theta)+\sigma^2}$, and $\frac{\|\Pi_{m}^{\perp}(\boldsymbol{\epsilon}+\boldsymbol{\epsilon}_m)\|_n^2}{l(\theta_{m},\theta)+\sigma^2}$ for any model $m\in\mathcal{M}$. We apply Lemma 1 in \cite{Laurent98} for $E_m\mathbf{1}_{\Omega_2}$ and Lemma \ref{concentration_chi2} for the two remaining random variables. Hence, for any $x>0$, there exists an event $\mathbb{F}(x)$ of large probability
\begin{eqnarray*}
\mathbb{P}\left[\mathbb{F}(x)^c\right]& \leq &  e^{-x}\left(\sum_{m\in \mathcal{M}}
e^{-\xi_1d_m}+ e^{-\xi_2d_m}+ e^{-\xi_3d_m}\right) \\& \leq &e^{-x}\left[3+\alpha\sum_{d=1}^{+\infty} d^\beta(e^{-\xi_1d}+ e^{-\xi_2d}+ e^{-\xi_3d})\right]\ ,
\end{eqnarray*}
such that conditionally on $\mathbb{F}(x)$,
\begin{eqnarray*}\left\{\begin{array}{ccc}
E_{m}\mathbf{1}_{\Omega_2}
&  \leq & \frac{d_{m}+8}{n} + \frac{2}{n}\sqrt{\left[d_{m} +8^2\right]\left(\xi_1d_{m} +x\right)}+16\frac{\xi_1d_{m}+x}{n} \\
\frac{\|\Pi_{m}(\boldsymbol{\epsilon}+\boldsymbol{\epsilon}_{m})
    \|_n^2}{l(\theta_{m},\theta)+\sigma^2} & \leq & \frac{1}{n}\left(d_{m}+2\sqrt{d_{m}\left[d_{m}\xi_2+x\right]}+
2\left(d_{m}\xi_2 +x \right)\right)\\
- \frac{Kd_{m}}{n-d_{m}}\frac{\|(\Pi_{m}^{\perp}\boldsymbol{\epsilon}+\boldsymbol{\epsilon}_m)\|_n^2}{\sigma^2+l(\theta_{m},\theta)} & \leq & -\frac{Kd_{m}}{n(n-d_{m})}\left(n-d_{m}-
2\sqrt{(n-d_{m})(\xi_3d_{m}+x)}\right)\ ,\end{array}\right.
\end{eqnarray*}
for all models $m\in\mathcal{M}$. We shall fix later the positive constants $\xi_1$, $\xi_2$, and $\xi_3$.
Let us apply extensively the inequality $2ab\leq \tau a^2 + \tau^{-1}b^2$. Hence, conditionally on $\mathbb{F}(x)$, the model $\widehat{m}$ satisfies
\begin{eqnarray*}\left\{\begin{array}{ccc}
E_{\widehat{m}}\mathbf{1}_{\Omega_2} & \leq &  \frac{d_{\widehat{m}}}{n}\left[1+2\sqrt{\xi_1}+17\xi_1
  + \tau_1 \right]+
  \frac{x}{n}\left[17+\tau_1^{-1}\right] +
  \frac{72}{n}\\
\frac{\|\Pi_{\widehat{m}}(\boldsymbol{\epsilon}+\boldsymbol{\epsilon}_{\widehat{m}})
    \|_n^2}{l(\theta_{\widehat{m}},\theta)+\sigma^2} & \leq &   \frac{d_{\widehat{m}}}{n}\left[1+2\sqrt{\xi_2}+2\xi_2+\tau_2\right]+\frac{x}{n}\left[2+\tau_2^{-1}\right]\\
- \frac{Kd_{\widehat{m}}}{n-d_{\widehat{m}}}\frac{\|\Pi_{\widehat{m}}^{\perp}(\boldsymbol{\epsilon}+\boldsymbol{\epsilon}_{\widehat{m}})\|_n^2}{\sigma^2+ l(\theta_{\widehat{m}},\theta)}  & \leq &   -K\frac{d_{\widehat{m}}}{n}\left[1-2\sqrt{\xi_3\frac{d_{\widehat{m}}}{n-d_{\widehat{m}}}}-\tau_3\right]+K\frac{x}{n}\tau_3^{-1}\frac{d_{\widehat{m}}}{n-d_{\widehat{m}}}\ .\end{array}\right.
\end{eqnarray*}

By Lemma \ref{concentration_thrm1}, we know that conditionally on $\Omega_1$, $\kappa_2 n
\varphi_{\text{max}}\left[\left({\bf Z}^*_{\widehat{m}}{\bf Z}_{\widehat{m}}\right)^{-1}\right]$ is smaller than $\frac{K-1}{4}$. By assumption ($\mathbb{H}_{\eta}$),  the ratio $\frac{d_{\widehat{m}}}{n-d_{\widehat{m}}}$ is smaller than $\frac{\eta}{1-\eta}$. Gathering these inequalities we upper bound $B_{\widehat{m}}$ on the event $\Omega_1\cap \Omega_2\cap\mathbb{F}(x)$,
\begin{eqnarray*}
B_{\widehat{m}} \leq \frac{d_{\widehat{m}}}{n}U +\frac{x}{n}V +  \frac{72}{n}\ ,
\end{eqnarray*}
where $U$ and $V$ are defined as
\begin{eqnarray*}
  U&  := & 1+2\sqrt{\xi_1}+17\xi_1
  + \tau_1 + \frac{K-1}{4}\left[1+2\sqrt{\xi_2}+2\xi_2+\tau_2\right] - K\left[1 -
  2\sqrt{\xi_3}\sqrt{\frac{\eta}{1-\eta}} - \tau_3\right]\\
V & := & 17+\tau_1^{-1} + \frac{K-1}{4}\left[2 +
  \tau_2^{-1}\right]+ K\tau_3^{-1}\frac{\eta}{1-\eta}\ .
\end{eqnarray*}
Looking closely at $U$, one observes that it is the sum of the quantity $-\frac{3(K-1)}{4}$ and an expression that we can make arbitrary small by choosing the positive constants $\xi_1$, $\xi_2$,
$\xi_3$, $\tau_1$, $\tau_2$, and $\tau_3$ small enough. Consequently, there exists a suitable choice of these constants only depending on $K$ and $\eta$ that constrains the
quantity $U$ to be non positive. It follows that for any $x>0$, with probability larger than
$ 1- e^{-x}L(K,\eta,\alpha,\beta)$,
\begin{eqnarray*}
B_{\widehat{m}}\mathbf{1}_{\Omega_1\cap \Omega_2 } \leq \frac{x}{n} L(K,\eta)+ \frac{L'(K,\eta)}{n} \ .
\end{eqnarray*}
Integrating this upper bound for any $x>0$, we conclude 
\begin{eqnarray*}
\mathbb{E}\left[B_{\widehat{m}}\mathbf{1}_{\Omega_1\cap \Omega_2}\right] \leq \frac{L(K,\eta,\alpha,\beta)}{n} \ .
\end{eqnarray*}
\end{proof}

\begin{proof}[Proof of Lemma \ref{controlerisquesimple}]
We perform a very crude upper bound by controlling the
sum of the risk of every estimator $\widehat{\theta}_{m}$.
\begin{eqnarray*}
\mathbb{E}\left[l(\widetilde{\theta},\theta)1_{\Omega_1^c\cup \Omega_2^c}\right] \leq
\sqrt{\mathbb{P}(\Omega_1^c)+\mathbb{P}(\Omega_2^c)}\sqrt{\sum_{m\in \mathcal{M}}\mathbb{E}\left[l(\widehat{\theta}_{m},\theta)^2\right]}\ .
\end{eqnarray*}
As for any model $m\in\mathcal{M}$, $l(\widehat{\theta}_m,\theta) = l(\theta_m,\theta) + l(\widehat{\theta}_m,\theta_m)$, it follows that 
\begin{eqnarray*}
 \mathbb{E}\left[l(\widehat{\theta}_m,\theta)^2\right] \leq 2\left\{ l(\theta_m,\theta)^2 +\mathbb{E}\left[l(\widehat{\theta}_m,\theta_m)^2\right]\right\} \ .
\end{eqnarray*}
For any model $m\in \mathcal{M}$, it holds that $n-d_m-3\geq (1-\eta)n-3$, which is positive by assumption ($\mathbb{H}_{\eta}$). Hence, we may apply Proposition \ref{controle_risque_lp} with $r=2$ to all models $m\in \mathcal{M}$:
\begin{eqnarray*}
\mathbb{E}\left[l(\widehat{\theta}_m,\theta_m)^2\right] & \leq &L\left[ d_m n(\sigma^2+l(\theta_m,\theta))\right]^2 \\
& \leq & Ln^4\var(Y)^2\ , 
\end{eqnarray*}
since for any model $m$, $\sigma^2+l(\theta_m,\theta)\leq \var(Y)$. By summing this bound for all models $m\in\mathcal{M}$ and applying Lemma \ref{concentration_thrm1} and \ref{majoration_esperance_conditionelle}, we get
\begin{eqnarray*}
\mathbb{E}\left[l(\widetilde{\theta},\theta)\mathbf{1}_{\Omega_1^c\cup \Omega_2^c}\right] \leq
n^{2}\text{Card}(\mathcal{M})L(K,\eta)\var(Y)\exp\left[-nL'(K,\eta)\right]\ ,
\end{eqnarray*}
where $L'(K,\eta)$ is positive.

\end{proof}

\subsection{Proof of Theorem \ref{thrm_principal} and Proposition \ref{proposition_priori}}

\begin{proof}[Proof of Theorem \ref{thrm_principal}]
This proof follows the same approach as the one of Theorem
\ref{ordered_selection}. We shall only emphasize the differences with this previous proof. The bound (\ref{majoration_principal}) still holds. Let us respectively define the three constants $\kappa_1$, $\kappa_2$ and $\nu(K)$ as
\begin{eqnarray*}
\kappa_1  & := & \frac{\sqrt{\frac{3}{K+2}}}{1-\sqrt{\eta}-\nu(K)}\ , \, \, \, \, \, \, \, \, \, \, \, \,
\kappa_2  :=  \frac{(K-1)\left[1-\sqrt{\eta}\right]^2\left[1-\sqrt{\eta}-\nu(K)\right]^2}{16}\wedge 1\ ,\\
\nu(K) &:= &\left(\frac{3}{K+2}\right)^{1/6}\wedge \frac{1-\left(\frac{3}{K+2}\right)^{1/6}}{2}\ .
\end{eqnarray*}
We also introduce the random variables $A_{m'}$ and $B_{m'}$ for any model $m'\in\mathcal{M}$.
\begin{eqnarray*}
 A_{{m'}} & := &  \kappa_1+ 1 -
  \frac{\|\Pi^{\perp}_{m'}\boldsymbol{\epsilon}_{{m'}}\|^2_n}{l(\theta_{m'},\theta)}
  + \kappa_2 n \varphi_{\text{max}}\left[ ({\bf Z}^*_{{m'}}{\bf Z}_{{m'}})^{-1}\right]
   \frac{\|\Pi_{m'}(\boldsymbol{\epsilon} +\boldsymbol{\epsilon}_{{m'}})\|^2_n}{l(\theta_{m'},\theta)+\sigma^2}\\
  & - &K\left[1+\sqrt{2H(d_m')}\right]^2\frac{d_{{m'}}}{n-d_{{m'}}}   \frac{\|\Pi_{{m'}^\perp} (\boldsymbol{\epsilon}
  +\boldsymbol{\epsilon}_{{m'}})\|^2_n}{l(\theta_{m'},\theta)+\sigma^2}\ ,\\
B_{{m'}}&  := & \kappa_1^{-1}\frac{\langle\Pi^{\perp}_{m'}\boldsymbol{\epsilon},\Pi^{\perp}_{m'}\boldsymbol{\epsilon}_{{m'}}
  \rangle^2_n}{\sigma^2l(\theta_{m'},\theta)} + \frac{\|\Pi_{{m'}}
  \boldsymbol{\epsilon}\|^2_n}{\sigma^2} +\kappa_2 n
 \varphi_{\text{max}}\left[ \left( {\bf Z}^*_{{m'}}{\bf Z}_{{m'}}\right)^{-1}\right]\frac{\|\Pi_{{m'}} (\boldsymbol{\epsilon}
   +\boldsymbol{\epsilon}_{{m'}})\|^2_n}{l(\theta_{m'},\theta)+\sigma^2} \\ & - &  K\frac{d_{{m'}}}{n-d_{{m'}}}\left[1+\sqrt{2H(d_m')}\right]^2\frac{\|\Pi^{\perp}_{m'} (\boldsymbol{\epsilon}
   +\boldsymbol{\epsilon}_{{m'}})\|^2_n}{l(\theta_{m'},\theta)+\sigma^2}\  .
\end{eqnarray*}
The bound given in Lemma \ref{lemme_decomposition} clearly extends to
\begin{eqnarray*}
 -\overline{\gamma}_n(\widetilde{\theta}) -  \gamma_n(\widetilde{\theta})pen(\widehat{m}) -\sigma^2 +
\|\boldsymbol{\epsilon}\|^2_n \leq l(\widetilde{\theta},\theta)  \left[A_{\widehat{m}}\vee (1-\kappa_2)\right]+ \sigma^2 B_{\widehat{m}}\ .
\end{eqnarray*}
As previously, we control the variable $A_{\widehat{m}}$ on an event of large probability $\Omega_1$ and take the expectation of $B_{\widehat{m}}$ on an event of large probability $\Omega_1\cap \Omega_2$.

\begin{lemma}\label{concentration_thrm1_complete}
Let $\Omega_1$ be the event
\begin{eqnarray*}
\Omega_1 :=\left\{A_{\widehat{m}}\leq  s(K,\eta)\right\}\bigcap\left\{ \kappa_2 n\varphi_{\text{max}}\left[  ({\bf Z}^*_{\widehat{m}}{\bf Z}_{\widehat{m}})^{-1} \right]  \leq   \frac{(K-1)\left(1-\sqrt{\eta}-\nu(K)\right)^2}{4}\right\}\ ,
\end{eqnarray*}
where $s(K,\eta)$ is a function smaller than one. Then, $\mathbb{P}\left(\Omega_1^c\right)\leq L(K)n\exp\left[-nL'(K,\eta)\right]$ with $L'(K,\eta)>0$.
\end{lemma}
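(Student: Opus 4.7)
The plan is to follow the structure of the proof of Lemma \ref{concentration_thrm1}, controlling uniformly over $\mathcal{M}$ the same four random ingredients of $A_m$, namely the three normalized chi-square variables $\|\Pi_m^\perp \boldsymbol{\epsilon}_m\|_n^2/l(\theta_m,\theta)$, $\|\Pi_m(\boldsymbol{\epsilon}+\boldsymbol{\epsilon}_m)\|_n^2/[l(\theta_m,\theta)+\sigma^2]$, $\|\Pi_m^\perp(\boldsymbol{\epsilon}+\boldsymbol{\epsilon}_m)\|_n^2/[l(\theta_m,\theta)+\sigma^2]$, together with the inverse Wishart spectrum $n\,\varphi_{\text{max}}[({\bf Z}_m^*{\bf Z}_m)^{-1}]$. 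The twist here is that $\mathcal{M}$ may contain exponentially many models, so the crude union bound used in Lemma \ref{concentration_thrm1} must be tightened by exploiting the identity $\text{Card}\{m\in\mathcal{M}:d_m=d\}=e^{dH(d)}$.

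First, I would apply Lemmas \ref{concentration_chi2}, \ref{concentration_chi2_fine} and \ref{concentration_vp_wishart} to each model $m$ with a deviation parameter of the form $\alpha\,d_mH(d_m)+nu$, where $\alpha>1$ and $u>0$ are small slack constants to be chosen. Summing the corresponding $e^{-\alpha d H(d)-nu}$ probabilities against the combinatorial multiplicity $e^{dH(d)}$ and over $d\in\{1,\ldots,\lfloor\eta n\rfloor\}$ yields a failure probability of order $n\,e^{-nu}$, matching the desired bound $L(K)n\exp[-nL'(K,\eta)]$.

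Second, on the complement event, the upper tail bound of each $\chi^2_{d_m}$ term becomes, up to lower-order corrections, of the order $d_m(1+\sqrt{2\alpha H(d_m)})^2/n$, which has exactly the same shape as the penalty contribution $K(1+\sqrt{2H(d_m)})^2 d_m/(n-d_m)$. Since $K>\alpha>1$, since $(\mathbb{H}_{K,\eta})$ forces $d_m/(n-d_m)$ to stay close to $d_m/n$, and since the refined $\chi^2$ lower tail from Lemma \ref{concentration_chi2_fine} gives a matching lower bound on $\|\Pi_m^\perp \boldsymbol{\epsilon}_m\|_n^2/l(\theta_m,\theta)$, the negative penalty term dominates the positive contributions, producing a bound $A_{\widehat m}\leq s(K,\eta)$ for an explicit $s(K,\eta)<1$. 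Simultaneously, the same inverse Wishart bound combined with $(\mathbb{H}_{K,\eta})$ (which gives $[1+\sqrt{2H(d_m)}]^2 d_m/(n-d_m)\leq \eta$) and with the chosen value of $\kappa_2$ yields the second clause of $\Omega_1$.

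The main obstacle is the bookkeeping required to select $\alpha, u, \kappa_1, \kappa_2$ consistently so that the resulting $s(K,\eta)$ is strictly less than one throughout the admissible range $K>1$, $\eta<\eta(K)$. The elaborate definitions of $\nu(K)$, $\kappa_1$, $\kappa_2$ and the threshold $\eta(K)=[1-2(3/(K+2))^{1/6}]^2\vee[1-(3/(K+2))^{1/6}]^2/4$ are tailored precisely for this: they balance the slack coming from the Wishart lower bound $(1-\sqrt{\eta}-\nu(K))^2$, the $\chi^2$ lower tail, and the penalty factor $K>1$. Tracking these quantitative inequalities and verifying that the residual positive terms never eat up the gap $K-1$ is the delicate part; once this is done, a final choice of $u$ depending only on $(K,\eta)$ concludes the proof.
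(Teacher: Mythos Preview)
Your proposal is correct and follows essentially the same route as the paper: apply the concentration Lemmas \ref{concentration_chi2}, \ref{concentration_chi2_fine}, \ref{concentration_vp_wishart} with a deviation level of the form $d_mH(d_m)+nx$ so that the union bound over $\mathcal{M}$ collapses to $\sum_d e^{-nx}\leq (n{+}1)e^{-nx}$, then use $(\mathbb{H}_{K,\eta})$ to convert the resulting $\chi^2$ and inverse-Wishart bounds into the penalty-shaped quantity $(1+\sqrt{2H(d_m)})^2 d_m/n$ and check that the penalty term (with coefficient $K$) dominates. The only minor difference is cosmetic: the paper takes $\alpha=1$ rather than $\alpha>1$, which already suffices since $\sum_{m:d_m=d}e^{-dH(d)}\leq 1$ for each $d$; your $\alpha>1$ works too but is not needed.
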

The function $s(K,\eta)$ is given explicitly in the proof of Lemma \ref{concentration_thrm1_complete}

\begin{lemma}\label{majoration_esperance_conditionelle_complete}
Let us assume that $n$ is larger than some quantities $n_0(K)$.
Then, there exists an event $\Omega_2$ of probability larger than $1-\exp\left[-nL(K,\eta)\right]$ where $L(K,\eta)>0$ such that
\begin{eqnarray*}
\mathbb{E}\left[B_{\widehat{m}}\mathbf{1}_{\Omega_1\cap \Omega_2}\right] \leq \frac{L(K,\eta)}{n}\ .
\end{eqnarray*}
\end{lemma}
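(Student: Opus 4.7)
The plan is to adapt the argument of Lemma \ref{majoration_esperance_conditionelle} by taking a \emph{complexity-weighted} union bound over models, with the deviation budget $x$ augmented by $d_{m'}H(d_{m'})$ so that the $\left[1+\sqrt{2H(d_{m'})}\right]^{2}$ factor in the penalty exactly absorbs the resulting fluctuations.

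First, I define $\Omega_{2}:=\{\|\boldsymbol{\epsilon}\|_{n}^{2}\leq 2\sigma^{2}\}$, which has probability $\geq 1-e^{-n/8}$ by Lemma \ref{concentration_chi2}. As in the proof of Lemma \ref{majoration_esperance_conditionelle}, the auxiliary variable $E_{m'}$ obtained by factoring out $\|\boldsymbol{\epsilon}\|_{n}$ from the first summand of $B_{m'}$ is stochastically dominated on $\Omega_{2}$ by $\frac{8}{n}T_{1}+\frac{1}{n}T_{2}$, with $T_{1}\sim\chi^{2}(1)$, $T_{2}\sim\chi^{2}(d_{m'})$ independent. The other two variables $n\|\Pi_{m'}(\boldsymbol{\epsilon}+\boldsymbol{\epsilon}_{m'})\|_{n}^{2}/[\sigma^{2}+l(\theta_{m'},\theta)]$ and $n\|\Pi_{m'}^{\perp}(\boldsymbol{\epsilon}+\boldsymbol{\epsilon}_{m'})\|_{n}^{2}/[\sigma^{2}+l(\theta_{m'},\theta)]$ follow $\chi^{2}(d_{m'})$ and $\chi^{2}(n-d_{m'})$ distributions.

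Next, for every $x>0$ I apply Lemma \ref{concentration_chi2} (and Lemma~1 of \cite{Laurent98} for $E_{m'}$) with deviation parameter $d_{m'}H(d_{m'})+x$ for each of the three variables and each $m'\in\mathcal{M}$. Since $\#\{m\in\mathcal{M}:d_{m}=d\}\leq e^{dH(d)}$, the union bound over models yields an event $\mathbb{F}(x)$ satisfying
\begin{equation*}
\mathbb{P}[\mathbb{F}(x)^{c}]\leq L\,e^{-x}\sum_{d\geq 0} e^{dH(d)}\bigl(e^{-(1+\xi_{1})dH(d)}+e^{-(1+\xi_{2})dH(d)}+e^{-(1+\xi_{3})dH(d)}\bigr)\leq L(K)\,e^{-x},
\end{equation*}
where the geometric-type sum over $d$ converges because $H(d)\geq 0$ and the remaining $\xi_{i}dH(d)$ exponents dominate any polynomial factor once $n\geq n_{0}(K)$ (using $(\mathbb{H}_{K,\eta})$ to bound $d$). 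Conditionally on $\mathbb{F}(x)\cap\Omega_{1}\cap\Omega_{2}$, routine applications of $2ab\leq\tau a^{2}+\tau^{-1}b^{2}$---using the identity $\sqrt{d(dH(d)+x)}\leq d\sqrt{H(d)}\sqrt{1+x/(dH(d))}$ which produces precisely the factor $\sqrt{2H(d_{m'})}$ when combined with the leading $\sqrt{d}$ of the $\chi^{2}$ deviation---lead to
\begin{equation*}
B_{\widehat{m}}\leq \frac{d_{\widehat{m}}}{n}\,U_{\widehat{m}}+\frac{x}{n}\,V+\frac{L_{0}}{n},
\end{equation*}
with $V=V(K,\eta)$ and, modulo negligible cross terms controlled by small free parameters,
\begin{equation*}
U_{\widehat{m}} \;\leq\; \bigl(1+o_{\xi,\tau}(1)\bigr)+\tfrac{K-1}{4}\bigl(1+o_{\xi,\tau}(1)\bigr)-K\bigl[1+\sqrt{2H(d_{\widehat{m}})}\bigr]^{2}\bigl(1-o_{\xi,\tau,\eta}(1)\bigr).
\end{equation*}
Since $\bigl[1+\sqrt{2H(d)}\bigr]^{2}\geq 1$, and since $K>1$, choosing the free constants $\xi_{i},\tau_{i}$ small enough (depending only on $K,\eta$, exploiting that $\eta<\eta(K)$ gives the required margin) forces $U_{\widehat{m}}\leq 0$. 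Integrating the resulting tail bound over $x>0$ yields $\mathbb{E}[B_{\widehat{m}}\mathbf{1}_{\Omega_{1}\cap\Omega_{2}}]\leq L(K,\eta)/n$.

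The main obstacle is the simultaneous control of the complexity-weighted union bound and of the sign of $U_{\widehat{m}}$: one must verify that the $e^{dH(d)}$ cardinality of models of dimension $d$ is exactly offset by the $e^{-(1+\xi_{i})dH(d)}$ tail, uniformly in $d$ with a summable remainder; and that the deviation bound on each $\chi^{2}$ can be rewritten in the form $d\,[1+\sqrt{2H(d)}]^{2}(1+o(1))$ so that the quadratic structure of the penalty (\ref{penalite_complexe}) precisely cancels the complexity correction. This is exactly where the delicate definition of $\eta(K)$ with its $1/6$-exponents enters, and it is the reason why, unlike in Lemma \ref{majoration_esperance_conditionelle}, the hypothesis $n\geq n_{0}(K)$ cannot be dispensed with.
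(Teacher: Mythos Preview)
Your overall strategy is the paper's: define $\Omega_2$ via a bound on $\|\boldsymbol{\epsilon}\|_n^2$, take complexity-weighted $\chi^2$ deviations so that the union bound over $\mathcal{M}$ collapses, show the coefficient of $d_{\widehat{m}}/n$ is nonpositive, and integrate in $x$. Two technical points, however, do not go through as written.

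First, your union-bound margin is multiplicative in $H$: with deviation level $(1+\xi_i)d_{m'}H(d_{m'})+x$, cancelling the $e^{dH(d)}$ cardinality leaves $\sum_{d} e^{-\xi_i dH(d)}$. Nothing forbids $H(d)=0$ for many dimensions $d$ (one model per dimension suffices), so this sum can be of order $n$; after integrating in $x$ the $1/n$ rate is lost. The paper instead takes an \emph{additive} margin in $d$, using deviation levels $d_m(\xi+H(d_m))+x$, $d_m(\tfrac{1}{16}+H(d_m))+x$, $d_m(\tfrac12+H(d_m))+x$, which leaves the residual sum $\sum_d e^{-\xi d}\leq(1-e^{-\xi})^{-1}$, a constant independent of $n$.

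Second, for the lower deviation of $\|\Pi_{m'}^\perp(\boldsymbol{\epsilon}+\boldsymbol{\epsilon}_{m'})\|_n^2$ at level $dH(d)+x$, Lemma~\ref{concentration_chi2} yields only the factor $1-2\sqrt{(dH+x)/(n-d)}$; under $(\mathbb{H}_{K,\eta})$ one has merely $dH/(n-d)\leq\eta/2$, so for $\eta$ close to $\eta(K)$ (which tends to $1$ as $K\to\infty$) this factor can be negative and the penalty contribution is lost. The paper uses the refined Lemma~\ref{concentration_chi2_fine}, obtaining the squared factor $\bigl[1-\delta_{n-d_m}-\sqrt{2d_mH(d_m)/(n-d_m)}-\sqrt{2x/(n-d_m)}\bigr]^2\geq(1-\sqrt{\eta}-\nu(K)-o)^2$, positive for all $\eta<\eta(K)$ once $n\geq n_0(K)$. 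This is exactly why the hypothesis $n\geq n_0(K)$ is needed and where the $1/6$-exponents in $\eta(K)$ enter. Relatedly, your displayed $U_{\widehat{m}}$ is inconsistent with the weighted deviations you applied: the positive $\chi^2$ contributions (from $E_{m'}$ and $\|\Pi_{m'}(\boldsymbol{\epsilon}+\boldsymbol{\epsilon}_{m'})\|_n^2$) also pick up the factor $(1+\sqrt{2H(d_{\widehat{m}})})^2$, and the sign must be argued from the coefficient of $\frac{d_{\widehat{m}}}{n}(1+\sqrt{2H(d_{\widehat{m}})})^2$, as the paper does when it exhibits $U_1=-\tfrac{K-1}{6}(1-\sqrt{\eta}-\nu(K))^2+\text{(small)}$.
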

Gathering inequalities (\ref{majoration_principal}), (\ref{decomposition_generale2}), Lemma
\ref{concentration_thrm1_complete} and \ref{majoration_esperance_conditionelle_complete},
we obtain as on the previous proof that
\begin{eqnarray}
 \mathbb{E}\left[l(\widetilde{\theta},\theta)\mathbf{1}_{\Omega_1\cap \Omega_2}
  \right] & \leq &
  L(K,\eta)\inf_{m\in\mathcal{M}}\left[l(\theta_m,\theta)+\left(\sigma^2+l(\theta_m,\theta)\right)pen(m)\right] + \nonumber\\ & + &L'(K,\eta)\left[\frac{\sigma^2}{n} +\left(\sigma^2+l(0_p,\theta)\right)n\exp\left[-nL''(K,\eta)\right]\right] \ .\label{majoration_risque_C_complete}
\end{eqnarray}
Afterwards, we control  the loss of the estimator $\widetilde{\theta}$ on
the event of small probability $\Omega_1^c\cup \Omega_2^c$. 
\begin{lemma}\label{lemme_controle_risque_complet}
If $n$ is larger than some quantity $n_0(K)$, 
 \begin{eqnarray*}
 \mathbb{E}\left[l(\widetilde{\theta},\theta)1_{\Omega_1^c\cup \Omega_2^c}\right] 
& \leq & n^{5/2}\left(\sigma^2 + l(0_p,\theta)\right)L(K,\eta)\exp\left[-nL'(K,\eta)\right]\ ,
\end{eqnarray*}
where $L(K,\eta)$ is positive.
\end{lemma}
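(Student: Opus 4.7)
The approach parallels the proof of Lemma~\ref{controlerisquesimple}, but the collection $\mathcal{M}$ is now allowed to have exponential cardinality, so plain Cauchy--Schwarz is too crude. My plan is to replace it by a H\"older-type inequality with an exponent $r$ linear in $n$. Assumption $(\mathbb{H}_{K,\eta})$ forces $(1+\sqrt{2H(d_m)})^2 \leq \eta(n-d_m)/d_m$, hence $H(d)\leq \eta n/(2d)$, so $\mathrm{Card}(\{m:d_m=d\})\leq e^{\eta n/2}$ and $\mathrm{Card}(\mathcal{M})\leq (n+1)\,e^{\eta n/2}$.

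I would fix $r=\lfloor c n\rfloor$ with $c<1/[2(1+\eta)]$, so that $2r<n-d_m+1$ for every $m\in\mathcal{M}$ (using $d_m\leq \eta n/(1+\eta)$, a direct consequence of $(\mathbb{H}_{K,\eta})$), which ensures that Proposition~\ref{controle_risque_lp} applies uniformly over $\mathcal{M}$. By H\"older's inequality,
$$\mathbb{E}\!\left[l(\widetilde{\theta},\theta)\mathbf{1}_{\Omega_1^c\cup\Omega_2^c}\right]\leq \mathbb{E}\!\left[l(\widetilde{\theta},\theta)^r\right]^{1/r}\,\mathbb{P}(\Omega_1^c\cup\Omega_2^c)^{1-1/r}.$$
For the moment factor I would dominate $\mathbb{E}[l(\widetilde{\theta},\theta)^r]$ by $\sum_{m\in\mathcal{M}}\mathbb{E}[l(\widehat{\theta}_m,\theta)^r]$, split each summand via the pointwise orthogonal decomposition $l(\widehat{\theta}_m,\theta)=l(\theta_m,\theta)+l(\widehat{\theta}_m,\theta_m)$ (the $L^2(\Sigma)$-orthogonality of $\theta-\theta_m$ to $S_m$ kills the cross-term), apply Minkowski, and use Proposition~\ref{controle_risque_lp} to bound $\mathbb{E}[l(\widehat{\theta}_m,\theta_m)^r]^{1/r}\leq L\,r\,d_m\,n\,\var(Y)\leq L(K,\eta)\,n^{3}\var(Y)$. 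Taking the $r$-th root and using that $\mathrm{Card}(\mathcal{M})^{1/r}\leq ((n+1)e^{\eta n/2})^{1/(cn)}\leq L(K,\eta)$ remains bounded yields
$$\mathbb{E}\!\left[l(\widetilde{\theta},\theta)^r\right]^{1/r}\leq L(K,\eta)\,n^{3}\,\var(Y).$$

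For the probability factor, Lemmas~\ref{concentration_thrm1_complete} and~\ref{majoration_esperance_conditionelle_complete} give $\mathbb{P}(\Omega_1^c\cup\Omega_2^c)\leq L(K,\eta)\,n\,\exp[-nL'(K,\eta)]$, so for $n\geq n_0(K)$,
$$\mathbb{P}(\Omega_1^c\cup\Omega_2^c)^{1-1/r}\leq L(K,\eta)\,n\,\exp\!\left[-nL'(K,\eta)/2\right].$$
Multiplying the two estimates yields a bound of order $L(K,\eta)\,n^{4}\var(Y)\exp[-nL'(K,\eta)/2]$; absorbing the surplus factor $n^{3/2}$ into the exponential at the price of a slightly smaller constant $L''(K,\eta)>0$ and recalling that $\var(Y)=\sigma^{2}+l(0_p,\theta)$ produces the announced bound.

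The main obstacle is the calibration of $r$: it must simultaneously satisfy the integrability constraint of Proposition~\ref{controle_risque_lp} (forcing $r\lesssim (1-\eta)n/2$) and grow linearly in $n$ so that $\mathrm{Card}(\mathcal{M})^{1/r}$ does not blow up. Under $(\mathbb{H}_{K,\eta})$ these two requirements become compatible precisely for $n\geq n_0(K)$, which is why that threshold appears in the statement.
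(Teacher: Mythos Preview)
Your argument is correct and reaches the same conclusion as the paper, but the route differs in one interesting respect. The paper first applies Cauchy--Schwarz to isolate $\sqrt{\mathbb{P}(\Omega_1^c\cup\Omega_2^c)}$, and then controls $\mathbb{E}[l(\widetilde{\theta},\theta)^2]$ by writing it as $\sum_{m\in\mathcal{M}}\mathbb{E}[\mathbf{1}_{m=\widehat{m}}\,l(\widehat{\theta}_m,\theta)^2]$ and applying H\"older to each summand with exponents $(u,v)$, $v=\lfloor n/8\rfloor$; the constraint $\sum_m\mathbb{P}(m=\widehat{m})=1$ then bounds $\sum_m\mathbb{P}(m=\widehat{m})^{1/u}$ by $\mathrm{Card}(\mathcal{M})^{1/v}$. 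You instead apply H\"older once with exponent $r\sim cn$ directly to $l(\widetilde{\theta},\theta)\mathbf{1}_{\Omega^c}$, and control the moment by the crude sum $\sum_m\mathbb{E}[l(\widehat{\theta}_m,\theta)^r]$, which costs a factor $\mathrm{Card}(\mathcal{M})^{1/r}$. In both cases the crucial point is identical: Assumption $(\mathbb{H}_{K,\eta})$ forces $\log\mathrm{Card}(\mathcal{M})\lesssim n$, so $\mathrm{Card}(\mathcal{M})^{1/r}$ (respectively $\mathrm{Card}(\mathcal{M})^{1/(2v)}$) stays bounded when $r$ grows linearly in $n$, and Proposition~\ref{controle_risque_lp} remains applicable because $d_m\leq\eta n/(1+\eta)$ leaves enough room for $2r<n-d_m+1$. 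Your version is slightly more direct; the paper's version is marginally sharper in the polynomial prefactor because it exploits the partition $\sum_m\mathbf{1}_{m=\widehat{m}}=1$, but since any polynomial in $n$ is absorbed into the exponential, this has no bearing on the final bound.
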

Gathering this last bound with (\ref{majoration_risque_C_complete}) enables to conclude.
\end{proof}\vspace{0.5cm}

\begin{proof}[Proof of Lemma \ref{concentration_thrm1_complete}]
This proof is analogous to the proof of Lemma \ref{concentration_thrm1}, except that we shall
change the weights in the concentration inequalities in order to take into account the complexity of the collection of models. 
Let $x$ be a positive number we shall fix later. Applying Lemma \ref{concentration_chi2}, Lemma \ref{concentration_chi2_fine}, and Lemma \ref{concentration_vp_wishart} ensures that there exists an event $\Omega'_1$ such that
$$P(\Omega_1^{'c})\leq 4\exp(-nx)\sum_{m\in
  \mathcal{M}}\exp\left[-d_{m}H(d_m)\right]\ ,$$
and for all models $m\in \mathcal{M}$,
\begin{eqnarray}
\frac{\|\Pi_{m}^{\perp}\boldsymbol{\epsilon}_{m}\|^2_n}{l(\theta_{m},\theta)} &\geq & \frac{n-d_m}{n}\left[\left(1-\delta_{n-d_m}-\sqrt{\frac{2d_mH(d_m)}{n-d_m}}-\sqrt{\frac{2xn}{n-d_m}}\right)\vee 0\right]^2\label{minoration_1_complete}
\ , \\ 
\frac{\|\Pi_{m}(\boldsymbol{\epsilon}+\boldsymbol{\epsilon}_{m})\|_n^2}{\sigma^2+ l(\theta_{m},\theta)} & \leq\label{minoration_2_complete}
&\frac{2d_{m}}{n}\left[1+\sqrt{H(d_m)}+H(d_m)\right]+3x\ , \\
\frac{\|\Pi_{m}^\perp(\boldsymbol{\epsilon}+\boldsymbol{\epsilon}_{m})\|_n^2}{\sigma^2+ l(\theta_{m},\theta)} & \geq & 
\frac{n-d_m}{n}\left[\left(1-\delta_{n-d_m}-\sqrt{\frac{2d_mH(d_m)}{n-d_m}}-\sqrt{\frac{2xn}{n-d_m}}\right)\vee 0\right]^2\label{minoration_3_complete}
\ , \\ 
n\varphi_{\text{max}}\left[ \left({\bf {\bf Z}}^*_{m}{\bf {\bf Z}}_{m}\right)^{-1}\right] & \leq &
\left[\left(1-\left(1+\sqrt{2H(d_m)}\right)\sqrt{\frac{d_{m}}{n}}-\sqrt{2x}\right)\vee 0\right]^{-2}\ .\nonumber
\end{eqnarray} 
 We recall that $\delta_d$ is defined in (\ref{definition_delta}).
Besides, it holds that $$\mathbb{P}(\Omega_1^{'c})\leq 4\exp[-nx]\sum_{d=0}^{n}\text{Card}\left[\left\{m\in\mathcal{M},\ d_m=d\right\}\right]\exp[-dH(d)]\leq 4n\exp[-nx]\ .$$ By Assumption $(\mathbb{H}_{K,\eta})$, the expression $\left(1+\sqrt{2H(d_m)}\right)\sqrt{\frac{d_{m}}{n}}$ is bounded by $\sqrt{\eta}$. Hence, conditionally on $\Omega'_1$, 
\begin{eqnarray*}
n\varphi_{\text{max}}\left[ \left({\bf {\bf Z}}^*_{\widehat{m}}{\bf {\bf Z}}_{\widehat{m}}\right)^{-1}\right] & \leq & \left[\left(1-\sqrt{\eta}-\sqrt{2x}\right)\vee 0\right]^{-2}\ ,
\end{eqnarray*}
Constraining $x$ to be smaller than $\frac{\left(1-\sqrt{\eta}\right)^2}{8}$
ensures that $$n\kappa_2\varphi_{\text{max}}\left[\left({\bf {\bf Z}}^*_{\widehat{m}}{\bf {\bf Z}}_{\widehat{m}}\right)^{-1}\right]\mathbf{1}_{\Omega'_1}\leq \frac{(K-1)(1-\sqrt{\eta}-\nu(K))^2}{4}\ .$$
By assumption $(\mathbb{H}_{K,\eta})$, the dimension of any model $m\in\mathcal{M}$ is smaller than $n/2$. If  
$n$ is larger than some quantities only depending on $K$, then $\delta_{n/2}$ is smaller than $\nu(K)$. Let us assume first that this is the case. We recall that $\nu(K)$ is defined at the beginning of the proof of Theorem \ref{thrm_principal}.
Since $\nu(K)\leq 1-\sqrt{\eta}$, inequality (\ref{minoration_1_complete}) becomes
\begin{eqnarray*}
 \frac{\|\Pi_{m}^{\perp}\boldsymbol{\epsilon}_{\widehat{m}}\|^2_n}{l(\theta_{\widehat{m}},\theta)} &\geq & \left(1-\frac{d_{\widehat{m}}}{n}\right)\left[1-\nu(K)-\sqrt{\eta}\right]^2-2\sqrt{2x}\ .
\end{eqnarray*}
Bounding analogously the remaining terms of $A_{\widehat{m}}$, we get 
\begin{eqnarray*} 
A_{\widehat{m}} & \leq & \kappa_1+ 1-\left[1-\sqrt{\eta}-\delta_{n/2}\right]^2+ \frac{d_{\widehat{m}}}{n}(1-\sqrt{\eta}-\delta_{n/2})^2U_1
 +\sqrt{x}U_2+xU_3\ ,
\end{eqnarray*}
where $U_1$, $U_2$, and $U_3$ are respectively defined as
\begin{eqnarray*}\left\{\begin{array}{ccl}
U_1 & := & -K\left[1+\sqrt{2H(d_{\widehat{m}})}\right]^2+1 + (K-1)/2\left[1+\sqrt{H(d_{\widehat{m}})}\right]^2 \leq  0\\
U_2 & := & 2\sqrt{2}\left[1+K\eta\right]\\
U_3 & := & \frac{3}{4}(K-1)\left[1-\sqrt{\eta}-\nu(K)\right]^2 \ .
 \end{array}\right.
\end{eqnarray*}
Since $U_1$ is non-positive, we obtain an upper bound of $A_{\widehat{m}}$ that does not depend anymore on $\widehat{m}$. By assumption $(\mathbb{H}_{K,\eta})$, we know that  $\eta<(1-\nu(K)-(\frac{3}{K+2})^{1/6})^2$. Hence, coming back to the definition of $\kappa_1$ allows to prove that $\kappa_1$ is strictly smaller than $[1-\sqrt{\eta}-\nu(K)]^2$. Setting $$ x:= \left[\frac{\left[1-\sqrt{\eta}-\nu(K)\right]^2-\kappa_1}{4U_2}\right]^2\wedge\frac{\left[1-\sqrt{\eta}-\nu(K)\right]^2-\kappa_1}{4U_3} \wedge \frac{\left(1-\sqrt{\eta}\right)^2}{8}\ , $$ we get  
$$A_{\widehat{m}}\leq 1-\frac{1}{2}\left[\left(1-\sqrt{\eta}-\nu(K)\right)^2-\kappa_1\right]<1\ ,$$
on the event $\Omega'_1$. \\

In order to take into account the case $\delta_{n/2}\geq \nu(K)$, we only have to choose a large constant $L(K)$ in the upper bound of $\mathbb{P}(\Omega_1^c)$.
\end{proof}

\begin{proof}[Proof of Lemma \ref{majoration_esperance_conditionelle_complete}]

Once again, the sketch of the proof closely follows the proof of Lemma
\ref{majoration_esperance_conditionelle_complete}. Let us consider the random variables $E_{m}$ defined as
\begin{eqnarray*}
 E_m :=\kappa_1^{-1}\frac{\langle\Pi^{\perp}_{m'}\boldsymbol{\epsilon},\Pi^{\perp}_{m'}\boldsymbol{\epsilon}_{{m'}}
  \rangle^2_n}{\sigma^2l(\theta_{m'},\theta)} + \frac{\|\Pi_{{m'}}
  \boldsymbol{\epsilon}\|^2_n}{\sigma^2} \ .
\end{eqnarray*}
Since $n\|\boldsymbol{\epsilon}\|_n^2/\sigma^2$ follows a $\chi^2$
 distribution with $n$ degrees of freedom, there exists an event $\Omega_2$ of
 probability larger than $1- \exp\left[-nL(K) \right]$such that $\|\boldsymbol{\epsilon}\|_n^2/\sigma^2$ is smaller than $\kappa_1^{-1} = \sqrt{(K+2)/3}[1-\sqrt{\eta}-\nu(K)]$ on $\Omega_2$.  The constant $L(K)$in the exponential is positive. We shall simultaneously upper bound the deviations of the random variables $E_{m}$, $\frac{\|\Pi_{m}(\boldsymbol{\epsilon}+\boldsymbol{\epsilon}_{m})
    \|_n^2}{l(\theta_{m},\theta)+\sigma^2}$, and  $ \frac{\|\Pi_{m}^\perp(\boldsymbol{\epsilon}+\boldsymbol{\epsilon}_m)\|_n^2}{\sigma^2+l(\theta_{m},\theta)}$. 
Let $\xi$ be some positive constant that we shall fix later. For any $x>0$, we define an event $\mathbb{F}(x)$ such that conditionally on $\mathbb{F}(x)\cap \Omega_2$,
\begin{eqnarray*}\left\{\begin{array}{ccc}
E_{m}
&  \leq & \frac{d_{m}+\kappa_1^{-2}}{n} + \frac{2}{n}\sqrt{\left[d_{m} +\kappa_1^{-4}\right]\left[d_{m}(\xi+H(d_m)) +x\right]}\\
&+&2\kappa_1^{-2}\frac{\xi(d_{m}+H(d_m))+x}{n} \\
\frac{\|\Pi_{m}(\boldsymbol{\epsilon}+\boldsymbol{\epsilon}_{m})
    \|_n^2}{l(\theta_{m},\theta)+\sigma^2} & \leq & \frac{1}{n}\left[d_{m}+2\sqrt{d_{m}\left[d_{m}(\frac{1}{16}+H(d_m))+x\right]}+
2\left[ d_{m}(\frac{1}{16}+H(d_m)) +x \right]\right]\\
 \frac{\|\Pi_{m}^{\perp}\epsilon_{m}+\epsilon\|_n^2}{\sigma^2+l(\theta_{m},\theta)} & \geq & \frac{n-d_m}{n}\left[\left(1-\delta_{n-d_m}-
\sqrt{\frac{d_{m}(1+2H(d_m))}{n-d_m}}-\sqrt{\frac{ 2x}{n-d_m}}\right)\vee 0\right]^2\ ,\end{array}\right.
\end{eqnarray*}
for any model $m\in\mathcal{M}$. Then, the probability of $\mathbb{F}(x)$ satisfies
\begin{eqnarray*}
\mathbb{P}\left[\mathbb{F}(x)^c\right]& \leq &  e^{-x}\left[\sum_{m\in \mathcal{M}}\exp\left[-d_mH(d_m)\right]\left(
e^{-\xi d_m}+ e^{-\frac{d_m}{16}}+ e^{-\frac{d_m}{2}}\right)\right]\\
&\leq & e^{-x}\left(\frac{1}{1-e^{-\xi}}+\frac{1}{1-e^{-1/16}}+\frac{1}{1-e^{-1/2}}\right)\ .
\end{eqnarray*}
Let us expand the three deviation bounds thanks to the inequality $2ab\leq \tau a^2 + \tau^{-1}b^2$: 
\begin{eqnarray*}
E_{m} & \leq & \frac{d_{m}}{n}\left[1+2\sqrt{\xi} +
    2\kappa_1^{-2}\xi+ \tau_1\xi+\tau_2\right] +
    \frac{x}{n}\left[2\kappa_1^{-2} + \tau_2^{-1}+\tau_1\right]
\nonumber\\& + &
    \frac{\kappa_1^{-2}}{n} \left[1 + 
    \tau_1^{-1}\kappa_1^{-2}\right] +
    \frac{d_{m}H(d_m)}{n}\left[2\kappa_1^{-2} + \tau_1\right]
+2\frac{d_{m}\sqrt{H(d_m)}}{n}\\
& \leq & \frac{d_m}{n}\left(1+\sqrt{2H(d_m)}\right)^2\left[\kappa_1^{-2}+2\sqrt{\xi} +
    2\kappa_1^{-2}\xi+ \tau_1\xi+\tau_2 \right]\\
& + & \frac{x}{n}\left[2\kappa_1^{-2} + \tau_2^{-1}+\tau_1\right]+    \frac{\kappa_1^{-2}}{n} \left[1 + 
    \tau_1^{-1}\kappa_1^{-2}\right] \ .
\end{eqnarray*}
Similarly, we get
\begin{eqnarray*}
 \frac{\|\Pi_{m}(\boldsymbol{\epsilon}+\boldsymbol{\epsilon}_{m})
    \|_n^2}{l(\theta_{m},\theta)+\sigma^2} & \leq & 2\frac{d_m}{n}\left[1+\sqrt{2H(d_m)}\right]^2 + 5\frac{x}{n} \ .
\end{eqnarray*}
If $n$ is larger than some quantity $n_0(K)$, then $\delta_{n/2}$ is smaller than $\nu(K)$. Applying Assumption $(\mathbb{H}_{K,\eta})$, we get
\begin{eqnarray*}
 - K \frac{d_{m}}{n-d_{m}}\left(1+\sqrt{2H(d_m)}\right)^2
   \frac{\|\Pi_{m}^{\perp}(\boldsymbol{\epsilon}+\boldsymbol{\epsilon}_m)\|_n^2}{l(\theta_m,\theta)+\sigma^2}\hspace{6cm}\\
\hspace{2cm}\leq   -K\frac{d_{m}}{n}\left(1+\sqrt{2H(d_m)}\right)^2\left[\left(1
   -\sqrt{\eta}- \nu(K)-\sqrt{\frac{2x}{n-d_m}}\right)\vee 0\right]^2\\
\hspace{2cm}\leq -K\frac{d_{m}}{n}\left(1+\sqrt{2H(d_m)}\right)^2\left[\left(1
   -\sqrt{\eta}- \nu(K)\right)^2 -\tau_3\right]+2K\eta\tau_3^{-1}\frac{x}{n}\ .
\end{eqnarray*}
Let us combine these three bounds with the definitions of $B_{m}$, $\kappa_1$, and $\kappa_2$. Hence,
Conditionally to the event $\Omega_1\cap \Omega_2\cap \mathbb{F}(x)$, 
\begin{eqnarray}
B_{\widehat{m}} \leq \frac{d_{\widehat{m}}}{n}\left[1+\sqrt{2H({\widehat{m}})}\right]^2U_1 
+\frac{x}{n}U_2+ \frac{L(K,\eta)}{n}U_3\ ,	
\end{eqnarray}
where
\begin{eqnarray*}\left\{\begin{array}{ccl}
U_1 & := & -\frac{K-1}{6}\left(1-\sqrt{\eta}-\nu(K)\right)^2+K\tau_3+2\sqrt{\xi} +
    2\kappa_1^{-2}\xi+ \tau_1\xi+\tau_2 \ , \\
U_2 & := & \tau_2^{-1}+\tau_1+L(K,\eta)(1+\tau_3^{-1}) \ , \\
U_3  & := & 1+\tau_1^{-1} \ . 
\end{array}\right.
\end{eqnarray*}
Since $K>1$, there exists a suitable choice of the constants $\xi$, $\tau_1$, and $\tau_2$,
only depending on $K$ and $\eta$ that constrains $U_1$ to be non positive. Hence, 
conditionally on the event $\Omega_1\cap\Omega_2 \cap\mathbb{F}(x)$, 
$$B_{\widehat{m}}\leq \frac{L(K,\eta)}{n} + L'(K,\eta)\frac{x}{n}\ .$$
Since $\mathbb{P}\left[\mathbb{F}(x)^c\right]\leq e^{-x} L(K,\eta)$, we  conclude by integrating the last expression with respect to $x$.

\end{proof}

\begin{proof}[Proof of Lemma \ref{lemme_controle_risque_complet}]
As in the ordered selection case, we apply Cauchy-Schwarz inequality
\begin{eqnarray*}
\mathbb{E}\left[l(\widetilde{\theta},\theta)1_{\Omega_1^c\cup \Omega_2^c}\right] & \leq &
\sqrt{\mathbb{P}(\Omega_1^c)+\mathbb{P}(\Omega_2^c)}\sqrt{\mathbb{E}\left[l(\widetilde{\theta},\theta)^{2}\right]}\ .
\end{eqnarray*}
However, there are too many models to bound efficiently the risk of $\widetilde{\theta}$ by the sum of the risks of the estimators $\widehat{\theta}_m$. This is why we use here H\"older's inequality
\begin{eqnarray}
\mathbb{E}\left[l(\widetilde{\theta},\theta)1_{\Omega_1^c\cup \Omega_2^c}\right] & \leq & L(K)\sqrt{n}\exp\left[-nL(K,\eta)\right]\sqrt{\mathbb{E}\left[\sum_{m\in\mathcal{M}}\mathbf{1}_{m=\widehat{m}} l(\widehat{\theta}_m,\theta)^2\right] }\nonumber\\
& \leq & L(K)\sqrt{n}\exp\left[-nL(K,\eta)\right] \sqrt{\sum_{m\in\mathcal{M}}\mathbb{P}\left(m=\widehat{m}\right)^{1/u}\mathbb{E}\left[l(\widehat{\theta}_m,\theta )^{2v}\right]^{1/v}  },\label{majoration_complexe}
\end{eqnarray}
where  $v:=\left\lfloor\frac{n}{8}\right\rfloor$, and $u=:\frac{v}{v-1}$. We assume here that $n$ is larger than $8$.
For any model $m\in\mathcal{M}$, the loss $l(\widehat{\theta}_m,\theta)$ decomposes into the sum $l(\theta_m,\theta) + l(\widehat{\theta}_m,\theta_m)$. 
Hence,we obtain the following upper bound by applying Minkowski's inequality 
\begin{eqnarray}\label{majoration_decomposition}
\mathbb{E}\left[l(\widehat{\theta}_m ,\theta )^{2v}\right]^{1/2v}\leq l(\theta_m,\theta)+ \mathbb{E}\left[l(\widehat{\theta}_m,\theta_m)^{2v}\right]^{1/2v}\leq \var(Y)+ \mathbb{E}\left[l(\widehat{\theta}_m,\theta_m)^{2v}\right]^{1/2v}\ .
\end{eqnarray}

We shall upper bound this last term thanks to Proposition \ref{controle_risque_lp}. Since $v$ is smaller than $n/8$ and  since $d_m$ is smaller than $n/2$, it follows that for any model $m\in\mathcal{M}$, $n-d_m-4v+1$ is positive and 

\begin{eqnarray*}
\mathbb{E}\left[l(\widehat{\theta}_m,\theta_m)^{2v}\right]^{1/2v}\leq 2vLnd_m\left(\sigma^2+l(\theta_m,\theta)\right)\ ,
\end{eqnarray*}
for any model $m\in\mathcal{M}$. Since $d_m\leq n$ and since $\sigma^2+l(\theta_m,\theta)\leq \var(Y)$, we obtain
\begin{eqnarray}\label{majoration_risque_2v}
\mathbb{E}\left[l(\widehat{\theta}_m,\theta_m)^{2v}\right]^{1/2v}\leq 2vLn^2\var(Y)\ .
\end{eqnarray}
Gathering upper bounds (\ref{majoration_complexe}), (\ref{majoration_decomposition}), and (\ref{majoration_risque_2v})  we get
\begin{eqnarray*}
 \mathbb{E}\left[l(\widetilde{\theta},\theta)1_{\Omega_1^c\cup \Omega_2^c}\right] &\leq& L(K)\sqrt{n}\exp\left[-nL'(K,\eta)\right]\\&\times & \left[\var(Y)+ 2vLn^2\var(Y)\right]\sqrt{\sum_{m\in\mathcal{M}}\mathbb{P}\left(m=\widehat{m}\right)^{1/u}} \ .
\end{eqnarray*}
Since the sum over $m\in\mathcal{M}$ of  $\mathbb{P}\left(m=\widehat{m}\right)$ is one, the last term of the previous expression is maximized when every $\mathbb{P}\left(m=\widehat{m}\right)$ equals $\frac{1}{\text{Card}(\mathcal{M})}$. Hence, 
\begin{eqnarray*}
 \mathbb{E}\left[l(\widetilde{\theta},\theta)1_{\Omega_1^c\cup \Omega_2^c}\right] & \leq &n^{5/2}\var(Y)L(K,\eta) \text{Card}(\mathcal{M})^{1/(2v)}\exp\left[-nL'(K,\eta)\right]\ ,
\end{eqnarray*}
where $L'(K,\eta)$ is positive.
Let us first bound the cardinality of the collection $\mathcal{M}$. We recall that the dimension of any model $m\in\mathcal{M}$ is assumed to be smaller than $n/2$ by $(\mathbb{H}_{K,\eta})$.
Besides, for any $d\in \left\{1,\ldots,n/2\right\}$, there are less than $\exp(dH(d))$ models of dimension $d$. Hence,
$$\log\left(\text{Card}(\mathcal{M})\right)\leq \log(n)+\sup_{d=1,\ldots, n/2}dH(d) \ .$$
By assumption $(\mathbb{H}_{K,\eta})$, $dH(d)$ is smaller than  $n/2$. Thus, $\log(\text{Card}(\mathcal{M}))\leq \log(n)+n/2$ and it follows that $\text{Card}(\mathcal{M})^{1/(2v)}$ is smaller than an universal constant providing that $n$ is larger than 8. All in all, we get
\begin{eqnarray*}
\mathbb{E}\left[l(\widetilde{\theta},\theta)1_{\Omega_1^c\cup \Omega_2^c}\right] & \leq & n^{5/2}\var(Y)L(K,\eta)\exp\left[-nL'(K,\eta)\right]\ ,
\end{eqnarray*}
where $L'(K,\eta)$ is positive.
\end{proof}

\begin{proof}[Proof of Proposition \ref{proposition_priori}]
We apply the same arguments as in the proof of Theorem \ref{thrm_principal}, except that we replace $H(d_m)$ by $l_m$.
\begin{eqnarray*}
 A_{{m'}} & := &  \kappa_1+ 1 -
  \frac{\|\Pi^{\perp}_{m'}\boldsymbol{\epsilon}_{{m'}}\|^2_n}{l(\theta_{m'},\theta)}
  + \kappa_2 n \varphi_{\text{max}}\left[ ({\bf Z}^*_{{m'}}{\bf Z}_{{m'}})^{-1}\right]
   \frac{\|\Pi_{m'}(\boldsymbol{\epsilon} +\boldsymbol{\epsilon}_{{m'}})\|^2_n}{l(\theta_{m'},\theta)+\sigma^2}\\
  & - &K\left[1+\sqrt{2l_{m'}}\right]^2\frac{d_{{m'}}}{n-d_{{m'}}}   \frac{\|\Pi_{{m'}^\perp} (\boldsymbol{\epsilon}
  +\boldsymbol{\epsilon}_{{m'}})\|^2_n}{l(\theta_{m'},\theta)+\sigma^2}\ ,\\
B_{{m'}}&  := & \kappa_1^{-1}\frac{\langle\Pi^{\perp}_{m'}\boldsymbol{\epsilon},\Pi^{\perp}_{m'}\boldsymbol{\epsilon}_{{m'}}
  \rangle^2_n}{\sigma^2l(\theta_{m'},\theta)} + \frac{\|\Pi_{{m'}}
  \boldsymbol{\epsilon}\|^2_n}{\sigma^2} +\kappa_2 n
 \varphi_{\text{max}}\left[ \left( {\bf Z}^*_{{m'}}{\bf Z}_{{m'}}\right)^{-1}\right]\frac{\|\Pi_{{m'}} (\boldsymbol{\epsilon}
   +\boldsymbol{\epsilon}_{{m'}})\|^2_n}{l(\theta_{m'},\theta)+\sigma^2} \\ & - &  K\frac{d_{{m'}}}{n-d_{{m'}}}\left[1+\sqrt{2l{_m'}}\right]^2\frac{\|\Pi^{\perp}_{m'} (\boldsymbol{\epsilon}
   +\boldsymbol{\epsilon}_{{m'}})\|^2_n}{l(\theta_{m'},\theta)+\sigma^2}\  .
\end{eqnarray*}

In fact, Lemma \ref{concentration_thrm1_complete}, \ref{majoration_esperance_conditionelle_complete}, and \ref{lemme_controle_risque_complet} are still valid for this penalty. The previous proofs of these three lemma depend on the quantity $H(d_m)$ through the properties:
\begin{center}
$H(d_m)$ satisfies assumption $(\mathbb{H}_{K,\eta})$ and $\sum_{m\in\mathcal{M},\ d_m=d}\exp(-dH(d_m))\leq 1$.
\end{center}

Under the assumptions of Proposition \ref{proposition_priori}, $l_m$ satisfies the corresponding Assumption $(\mathbb{H}^l_{K,\eta})$ and is such that $\sum_{m\in\mathcal{M},\ d_m=d}\exp(-dl_m))\leq 1$. Hence, the proofs of these lemma remain valid in this setting if we replace $H(d_m)$ by $l_m$.

There is only one small difference at the end of the proof of Lemma \ref{lemme_controle_risque_complet} when bounding $\log\left(\text{Card}(\mathcal{M})\right)$. By definition of $l_m$,
$$\text{Card}(\mathcal{M})-1\leq \sup_{m\in\mathcal{M}\setminus\{\emptyset\}}\exp(d_ml_m)\ .$$ Hence, $\log(\text{Card}(\mathcal{M})\leq 1+\sup_{m\in\mathcal{M}\setminus\{\emptyset\}}d_ml_m$, which is smaller than $1+n/2$ by Assumption $(\mathbb{H}^l_{K,\eta})$. Hence, the upper bound shown in the proof of Lemma \ref{lemme_controle_risque_complet} is still valid.

\end{proof}

\subsection{Proof of Proposition \ref{controle_risque_lp}}\label{section_controle_lp}
\begin{proof}[Proof of Proposition \ref{controle_risque_lp}]
Let $m$ be a subset of $\{1,\ldots,p\}$. Thanks to (\ref{perte_biais}), we know that  $$l(\widehat{\theta}_m,\theta_m)= \left(\boldsymbol{\epsilon}+\boldsymbol{\epsilon}_m\right)^*{\bf Z}_m\left({\bf Z}^*_m{\bf Z}_m\right)^{-2}{\bf Z}^*_m\left(\boldsymbol{\epsilon}+\boldsymbol{\epsilon}_m\right)\ .$$ 
Applying Cauchy-Schwarz inequality, we decompose the $r$-th loss of $\widehat{\theta}_m$ in two terms
\begin{eqnarray}
 \mathbb{E}\left[l(\widehat{\theta}_m,\theta_m)^{r}\right]^{\frac{1}{r}} & \leq & \mathbb{E}\left[\left\|\left(\boldsymbol{\epsilon}+\boldsymbol{\epsilon}_m\right)\left(\boldsymbol{\epsilon}+\boldsymbol{\epsilon}_m\right)^*\right\|_F^r\|{\bf Z}_m\left({\bf Z}^*_m{\bf Z}_m\right)^{-2}{\bf Z}^*_m \|_F^r\right]^{\frac{1}{r}} \nonumber\\
& \leq & \mathbb{E}\left[\left\|\left(\boldsymbol{\epsilon}+\boldsymbol{\epsilon}_m\right)\left(\boldsymbol{\epsilon}+\boldsymbol{\epsilon}_m\right)^*\right\|_F^r\right]^{\frac{1}{r}}\mathbb{E}\left\{tr\left[\left({\bf Z}^*_m{\bf Z}_m\right)^{-2}\right]^\frac{r}{2}\right\}^{\frac{1}{r}},\label{lp_principal}
\end{eqnarray}
by independence of $\boldsymbol{\epsilon}$, $\boldsymbol{\epsilon}_m$, and ${\bf Z}_m$. Here,  $\|.\|_F$ stands for the Frobenius norm in the space of square matrices. We shall successively upper bound the two terms involved in (\ref{lp_principal}). 
\begin{eqnarray*}
 \left\|\left(\boldsymbol{\epsilon}+\boldsymbol{\epsilon}_m\right)\left(\boldsymbol{\epsilon}+\boldsymbol{\epsilon}_m\right)^*\right\|_F^r & = &  \left[\sum_{1\leq i ,j \leq n}\left(\boldsymbol{\epsilon}+\boldsymbol{\epsilon}_m\right)[i]^2\left(\boldsymbol{\epsilon}+\boldsymbol{\epsilon}_m\right)[j]^2\right]^{r/2}\ .
\end{eqnarray*}
This last expression corresponds to the $L_{r/2}$ norm of a Gaussian chaos of order 4. By Theorem 3.2.10 in \cite{pena}, such chaos satisfy a Khintchine-Kahane type inequality: 
\begin{lemma}
For all $d\in\mathbb{N}$ there exists a constant $L_d\in (0,\infty)$ such that, if $X$ is a Gaussian chaos of order $d$ with values in any normed space $F$ with norm $\|.\|$ and if $1<s<q<\infty$, then
\begin{eqnarray*}
 \left(\mathbb{E}\left\|X\right\|^q\right)^{\frac{1}{q}}\leq L_d\left(\frac{q-1}{s-1}\right)^{d/2}\mathbb{E}\left[\left\|X\right\|^s\right]^\frac{1}{s} \ .
\end{eqnarray*}
\end{lemma}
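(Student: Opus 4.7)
The plan is to derive the inequality by combining hypercontractivity of the Ornstein--Uhlenbeck semigroup on Gauss space with a decoupling step. First, I would realize $X$ as a chaos of degree $d$ in an underlying standard Gaussian sequence $g=(g_1,g_2,\dots)$, taking values in the normed space $F$. On the $d$-th Wiener chaos, the Ornstein--Uhlenbeck semigroup $P_t$ acts as multiplication by $e^{-dt}$, so $P_tX = e^{-dt}X$ for every $t\ge 0$.

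Second, I would invoke the Banach-valued version of Nelson's hypercontractive inequality (Borell's extension). For any Banach space $F$ and any $1<s<q<\infty$, whenever $e^{2t}\ge (q-1)/(s-1)$, the operator $P_t$ is a contraction from $L^s(\gamma;F)$ to $L^q(\gamma;F)$. Applied to $X$ and combined with the eigenfunction identity,
\[
e^{-dt}\bigl(\mathbb{E}\|X\|^q\bigr)^{1/q} \;=\; \|P_tX\|_{L^q(F)} \;\le\; \|X\|_{L^s(F)} \;=\; \bigl(\mathbb{E}\|X\|^s\bigr)^{1/s}.
\]
Setting $e^{2t}=(q-1)/(s-1)$ gives $e^{-dt}=((q-1)/(s-1))^{-d/2}$, which rearranges to the stated bound with $L_d=1$.

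The main obstacle is justifying the vector-valued hypercontractivity. The scalar Nelson theorem follows from tensorizing a two-point hypercontractive inequality, but the two-point case does not transfer to an arbitrary normed target from convexity alone. A cleaner route, and the one taken in de la Pe\~na--Gin\'e (Theorem 3.2.10), is to bypass vector-valued hypercontractivity altogether: first apply a decoupling inequality for Gaussian chaos to replace $X$ by a tetrahedral chaos $X^{\mathrm{dec}}$ built from $d$ independent Gaussian blocks $g^{(1)},\dots,g^{(d)}$, at the cost of a constant depending only on $d$; then iterate the one-dimensional Gaussian Kahane--Khintchine inequality $d$ times, conditioning on all but one block at each step. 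For an $F$-valued linear combination $\sum_i g_i^{(j)} v_i$ of independent standard Gaussians, the one-block Kahane--Khintchine inequality yields an $L^q$-to-$L^s$ comparison with constant $((q-1)/(s-1))^{1/2}$, and the $d$ iterations multiply to the desired exponent $d/2$.

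The delicate bookkeeping is that each conditional comparison moves between $L^q$ and $L^s$ of the $F$-norm, and after each integration one must pass back to a pure $L^s$ on the right-hand side; this is where having the same fixed pair $(q,s)$ throughout, rather than interpolating, is essential so the factors compose multiplicatively. The constant $L_d$ then absorbs the decoupling constant together with the purely dimensional overhead from the iteration, yielding a bound of the claimed form with a constant depending only on $d$.
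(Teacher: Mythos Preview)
The paper does not prove this lemma; it is quoted verbatim as Theorem~3.2.10 of de~la~Pe\~na and Gin\'e, so there is no in-paper argument to compare against. Your decoupling-plus-iteration sketch is correct and is exactly the route taken in that reference: decouple to a tetrahedral chaos in $d$ independent Gaussian blocks (at the cost of a constant depending only on $d$), then apply the one-block Gaussian Kahane--Khintchine inequality $d$ times, at each step regarding the chaos as a linear form in one block with coefficients in the Banach space $L^s(\text{remaining blocks};F)$.

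One remark on your first route. Your concern about Banach-valued hypercontractivity is in fact unnecessary: since the Mehler kernel is a probability measure, Jensen's inequality gives the pointwise domination $\|P_t f(x)\|_F \le P_t(\|f\|_F)(x)$ for any $F$-valued $f$, and then scalar Nelson hypercontractivity applied to the nonnegative function $\|f\|_F$ yields $\|P_t f\|_{L^q(\gamma;F)} \le \|f\|_{L^s(\gamma;F)}$ whenever $e^{2t}\ge (q-1)/(s-1)$. Combined with $P_t X = e^{-dt}X$ on the $d$-th homogeneous chaos, this already proves the lemma with $L_d=1$ for homogeneous chaos, with no need for a vector-valued two-point lemma. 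The decoupling route is still worth knowing because it handles non-homogeneous (tetrahedral) chaos uniformly and is what the cited source actually does.
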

Let us assume that $r$ is larger than four. Applying the last lemma with $d=4$, $q=r/2$, and $s=2$ yields
$$ \mathbb{E}\left[\left\|\left(\boldsymbol{\epsilon}+\boldsymbol{\epsilon}_m\right)\left(\boldsymbol{\epsilon}+\boldsymbol{\epsilon}_m\right)^*\right\|_F^r\right]^{\frac{2}{r}}\leq L_4(r/2-1)^2\mathbb{E}\left[\left\|\left(\boldsymbol{\epsilon}+\boldsymbol{\epsilon}_m\right)\left(\boldsymbol{\epsilon}+\boldsymbol{\epsilon}_m\right)^*\right\|_F^4\right]^{\frac{1}{2}}. $$
By standard Gaussian properties, we compute the fourth moment of this chaos and obtain
\begin{eqnarray*}
 \mathbb{E}\left[\left\|\left(\boldsymbol{\epsilon}+\boldsymbol{\epsilon}_m\right)\left(\boldsymbol{\epsilon}+\boldsymbol{\epsilon}_m\right)^*\right\|_F^4\right]^{\frac{1}{2}} & \leq & L n^2\left[\sigma^2 +l(\theta_m,\theta)\right]^2\ .
\end{eqnarray*}
Hence, we get the upper bound
\begin{eqnarray}\label{majoration_chaos}
 \mathbb{E}\left[\left\|\left(\boldsymbol{\epsilon}+\boldsymbol{\epsilon}_m\right)\left(\boldsymbol{\epsilon}+\boldsymbol{\epsilon}_m\right)^*\right\|_F^r\right]^{\frac{1}{r}}\leq L(r-1)n\left[\sigma^2+l(\theta_m,\theta)\right]\ .
\end{eqnarray}
Straightforward computations allow to extend this bound to $r=2$ and $r=3$.~\\

Let us turn to bounding the second term of (\ref{lp_principal}).
Since the eigenvalues of the matrix $\left({\bf Z}^*_m{\bf Z}_m\right)^{-1}$ are almost surely non-negative,  it follows that $$tr\left[\left({\bf Z}^*_m{\bf Z}_m\right)^{-2}\right] \leq tr\left[\left({\bf Z}^*_m{\bf Z}_m\right)^{-1}\right]^2\ .$$ Consequently, we shall upper bound the $r$-th moment of the trace of an inverse standard Wishart matrix. 
For any couple of matrices $A$ and $B$ respectively of size $p_1\times q_1$ and $p_2\times q_2$, we define the Kronecker product matrix $A\otimes B$ as the matrix of size $p_1p_2\times q_1 q_2$ that satisfies:
$$A\otimes B[i_2+p_2(i_1-1);j_2+q_2(j_1-1)] := A[i_1;j_1]B[i_2;j_2]\ ,\hspace{0.5cm}\text{for any }\left\{\begin{array}{c} 1\leq i_1\leq p_1\\ 1\leq i_2\leq p_2\\ 1\leq j_1\leq q_1\\ 1\leq j_2\leq q_2\end{array}\right.\ .$$ 
For any matrix $A$, $\otimes^kA$ refers to the $k$-th power of $A$ with respect to the Kronecker product. Since $tr(A)^k = tr\left(\otimes^k A\right)$ for any square matrix $A$, we obtain 
\begin{eqnarray*}
 \mathbb{E}\left[tr({\bf Z}^*_m{\bf Z}_m)^{-1}\right]^k & = &\mathbb{E}\left[tr\left(\otimes^k ({\bf Z}^*_m{\bf Z}_m)^{-1}\right)\right]\\
& = & tr\left[\mathbb{E}\left(\otimes^k ({\bf Z}^*_m{\bf Z}_m)^{-1}\right)\right]\\
& \leq & \sqrt{d_m^k}\left\|\mathbb{E}\left[\otimes^k ({\bf Z}^*_m{\bf Z}_m)^{-1}\right]\right\|_F\ , 
\end{eqnarray*}
thanks to Cauchy-Schwarz inequality. In Equation (4.2) of  \cite{rosen1988}, Von Rosen has characterized  recursively the expectation of $\otimes^k ({\bf Z}^*_m{\bf Z}_m)^{-1}$ as long as $n-d_m-2k-1$ is positive:
\begin{eqnarray}\label{majoration_kronecker}
\text{vec}\left(\mathbb{E}\left[\otimes^{k+1} ({\bf Z}^*_m{\bf Z}_m)^{-1}\right]\right) = A(n,d_m,k)^{-1}\text{vec}\left(\mathbb{E}\left[\otimes^{k} ({\bf Z}'_m{\bf Z}_m)^{-1}\right]\otimes I\right)\ ,
\end{eqnarray}
 where $\text{'vec'}$ refers to the vectorized version of the matrix. See Section 2 of \cite{rosen1988} for more details about this definition. $A(n,d_m,k)$ is a symmetric matrix of size $d_m^{k+1}\times d_m^{k+1}$ which only depends on $n$, $d_m$, and $k$ and is known to be diagonally dominant. More precisely, any diagonal element of $A(n,d_m,k)$ is greater or equal to one plus the corresponding row sums of the absolute values of the off-diagonal elements. Hence, the matrix $A$ is invertible and its smallest eigenvalue is larger or equal to one. Consequently, $\varphi_{\text{max}}\left( A^{-1}\right)$ is smaller or equal to one. It then follows from (\ref{majoration_kronecker}) that
\begin{eqnarray*}
\left\|\mathbb{E}\left[\otimes^{k+1} ({\bf Z}^*_m{\bf Z}_m)^{-1}\right]\right\|_F & = &\left\|\text{vec}\left(\mathbb{E}\left[\otimes^{k+1} ({\bf Z}^*_m{\bf Z}_m)^{-1}\right]\right)\right\|_F \\& \leq& \varphi_{\text{max}}( A^{-1} ) \left\|\text{vec}\left(\mathbb{E}\left[\otimes^{k} ({\bf Z}^*_m{\bf Z}_m)^{-1}\right]\otimes I\right)\right\|_F\\
& \leq & \sqrt{d_m} \left\|\mathbb{E}\left[\otimes^k ({\bf Z}^*_m{\bf Z}_m)^{-1}\right]\right\|_F\ .
\end{eqnarray*}
By induction, we obtain	
\begin{eqnarray}\label{majoration_risque2}
\mathbb{E}\left[tr({\bf Z}^*_m{\bf Z}_m)^{-1}\right]^r  \leq d_m^r\ ,
\end{eqnarray}
if $n-d_m-2r+1>0$. Combining upper bounds (\ref{majoration_chaos}) and (\ref{majoration_risque2}) enables to conclude
\begin{eqnarray*}
 \mathbb{E}\left[l(\widehat{\theta}_m,\theta_m)^{r}\right]^{\frac{1}{r}} & \leq &L rd_mn(\sigma^2+l(\theta_m,\theta))\ .
\end{eqnarray*}
\end{proof}

\subsection{Proof of Proposition \ref{optimal}}
\begin{proof}[Proof of Proposition \ref{optimal}]

Let $m_*$ be the model that minimizes the loss function $l(\widehat{\theta}_m,\theta)$:
\begin{eqnarray*}
m_* = \arg \inf_{m\in\mathcal{M}_{\lfloor n/2\rfloor}} l(\widehat{\theta}_m,\theta)\ .
\end{eqnarray*}
It is almost surely uniquely defined. Contrary to the oracle $m^*$,  the model $m_*$ is random.
By definition of $\widehat{m}$, we derive that
\begin{eqnarray}
l(\widetilde{\theta},\theta) \leq l(\widehat{\theta}_{m_*},\theta) + \gamma_n(\widehat{\theta}_{m^*})pen(m_*) +
\overline{\gamma}_n(\widehat{\theta}_{m_*}) - \gamma_n(\widetilde{\theta})pen(\widehat{m}) - \overline{\gamma}_n(\widetilde{\theta})\ , 
\label{majoration_principal_optimal}\end{eqnarray}
where $\overline{\gamma}_n$ is defined in the proof of Theorem $\ref{ordered_selection}$.
The proof divides in two parts. First, we state that on an event $\Omega_1$ of large probability, the dimensions of $\widehat{m}$ and of $m^*$ are moderate. Afterwards, we prove that on another event of large probability  $\Omega_1\cap \Omega_2\cap \Omega_3$, the ratio $l(\widetilde{\theta},\theta)/l(\widehat{\theta}_{m^*},\theta)$ is close to one.
\begin{lemma}\label{controle_dimension}
Let us define the event $\Omega_1$ as:
$$\Omega_1:=\left\{\log^2(n) <d_{m_*}< \frac{n}{\log n} \, \, \, \, \, \text{and}  \, \, \, \, \,\log^2(n) <d_{\widehat{m}}< \frac{n}{\log n}\right\}\ .$$
The event $\Omega_1$ is achieved with large probability: $\mathbb{P}\left(\Omega_1\right)\geq 1-\frac{L(R,s)}{n^2}$. 
\end{lemma}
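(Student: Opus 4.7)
The plan is to combine deterministic ellipsoid-based bounds on the bias with uniform probabilistic control over the nested collection $\mathcal{M}_{\lfloor n/2\rfloor}$, and then optimize deterministically. From $\theta \in \mathcal{E}'_s(R)$, the bias $b_i := l(\theta_{m_i},\theta)$ satisfies $b_i \leq R^2\sigma^2 i^{-s}$ for every $i \geq 1$. Conversely, the assumption $\theta \notin \mathcal{E}'_{s'}(R')$ for any $R' > 0$ (which forces $s' > s$) translates, via an Abel summation on $a_j := l(\theta_{m_{j-1}},\theta_{m_j})=b_{j-1}-b_j$, into $\sum_j j^{s'-1} b_j = +\infty$; combined with the monotonicity of $i\mapsto b_i$ (nested models), this yields a lower bound of the form $b_{\lfloor \log^2 n\rfloor} \geq c(s',R)(\log n)^{-2s''}$ for some $s''\geq s'$, which is polylogarithmic in $n^{-1}$.

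Next, using Lemma \ref{lemma_expression_perte_perte_empirique}, write
$$l(\widehat{\theta}_m,\theta) = b_{d_m} + (\boldsymbol{\epsilon}+\boldsymbol{\epsilon}_m)^*{\bf Z}_m({\bf Z}_m^*{\bf Z}_m)^{-2}{\bf Z}_m^*(\boldsymbol{\epsilon}+\boldsymbol{\epsilon}_m),$$
combine Lemmas \ref{concentration_chi2}, \ref{concentration_chi2_fine}, and \ref{concentration_vp_wishart} with a union bound over the at most $n/2+1$ models of $\mathcal{M}_{\lfloor n/2\rfloor}$, and take deviation parameter $x \asymp \log n$. This produces a single event $\Omega$ of probability at least $1 - L(R,s)/n^2$ on which, uniformly in $m$, the stochastic term $l(\widehat{\theta}_m,\theta_m)$ is within constant factors of $(\sigma^2+b_{d_m})d_m/n$, and the residual $\gamma_n(\widehat{\theta}_m)$ lies in a $(1\pm O(\sqrt{\log(n)/(n-d_m)}))$ window around $(\sigma^2+b_{d_m})(n-d_m)/n$. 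In particular, on $\Omega$,
$$l(\widehat{\theta}_m,\theta) \asymp b_{d_m} + \sigma^2\frac{d_m}{n} \quad\text{and}\quad \text{Crit}(m) = (\sigma^2+b_{d_m})\Bigl(1 + \frac{d_m}{n}\Bigr)\bigl(1+o(1)\bigr)$$
uniformly in $m\in\mathcal{M}_{\lfloor n/2\rfloor}$.

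Optimizing deterministically, the balance $d_m^{-s}\asymp \sigma^2 d_m/n$ places the optimal dimension near $n^{1/(s+1)}$ with optimal risk of order $n^{-s/(s+1)}$. Since $\log^2 n \ll n^{1/(s+1)} \ll n/\log n$, any model with $d_m<\log^2 n$ has bias $\geq b_{\log^2 n}\gtrsim (\log n)^{-2s''}$, which strictly exceeds the optimal polynomial rate $n^{-s/(s+1)}$ for $n$ large; any model with $d_m>n/\log n$ has a variance (or equivalently, penalty) term at least $\sigma^2/\log n$, which also strictly exceeds the optimal rate. Because the penalty $pen(m)=2d_m/(n-d_m)$ reproduces the variance scale up to a factor of $2$, the same two-sided comparison applies to $\text{Crit}(m)$, so on $\Omega$ both minimizers $m_*$ and $\widehat{m}$ fall in the window $(\log^2 n,\, n/\log n)$, which is the claim.

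The delicate step is the lower bound on the bias: extracting a monotonicity-friendly, pointwise lower bound on $b_{\lfloor \log^2 n\rfloor}$ from the purely qualitative non-membership $\theta\notin\mathcal{E}'_{s'}(R')$ requires a careful summation by parts together with the monotonicity of $b_i$, and one must check that $(\log n)^{-2s''}$ really does dominate the polynomial optimal rate beyond an explicit threshold depending on $s,s',R$. In parallel, the uniform concentration in the second step must degrade only polynomially in $n$ to reach the $L(R,s)/n^2$ tail, which is what dictates the choice $x\asymp \log n$ in the $\chi^2$ and Wishart deviation inequalities and the corresponding care needed in the union bound.
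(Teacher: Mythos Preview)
Your overall strategy---compare both the loss and the criterion at a reference dimension $\asymp n^{1/(1+s)}$ against small models ($d_m\le\log^2 n$, bias too large) and large models ($d_m\ge n/\log n$, variance/penalty too large), with uniform $\chi^2$/Wishart concentration at level $x\asymp\log n$ over the $O(n)$ nested models---is exactly the paper's. Your Abel summation giving $\sum_j j^{s'-1}b_j=+\infty$ from $\theta\notin\mathcal{E}'_{s'}(R')$ is also correct.

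The gap is precisely the step you flag as delicate: the pointwise lower bound $b_{\lfloor\log^2 n\rfloor}\ge c(s',R)(\log n)^{-2s''}$ does \emph{not} follow from $\sum_j j^{s'-1}b_j=+\infty$ together with monotonicity, for any choice of $s''$. Counterexample: take a super-polynomially growing sequence $J_k$ (e.g.\ $J_1=2$, $J_k=J_{k-1}^{\,k}$) and set $a_{J_k}=J_k^{-s}/k^2$, $a_j=0$ otherwise. Then $\sum_j j^s a_j=\sum_k k^{-2}<\infty$ so $\theta\in\mathcal{E}'_s(R)$ for some $R$, and $\sum_j j^{s'}a_j=\sum_k J_k^{s'-s}/k^2=+\infty$ so $\theta\notin\mathcal{E}'_{s'}(R')$ for every $R'$. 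Yet for $i=J_{m-1}$ one has $b_i=\sum_{k\ge m}J_k^{-s}/k^2\le 2J_m^{-s}/m^2=2J_{m-1}^{-ms}/m^2$, hence $i^{s''}b_i\le 2J_{m-1}^{\,s''-ms}/m^2\to 0$ for every fixed $s''$. There is therefore no polynomial lower bound on $b_i$ valid for all large $i$, so your comparison ``$b_{\lfloor\log^2 n\rfloor}\gtrsim(\log n)^{-2s''}\gg n^{-s/(1+s)}$'' cannot be made to hold for all large $n$ with constants depending only on $(s,s',R)$.

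The paper avoids this by a contradiction argument rather than a quantitative bound: it supposes that along an infinite increasing sequence $(u_n)$ one has $b_{\lfloor\log^2 u_n\rfloor}\le C(R,s)\,u_{n+1}^{-s/(1+s)}$, sets $v_n=\log^2 u_n$, and block-sums $\sum_i i^{s'}a_i$ over the intervals $(\lfloor v_n\rfloor,\lfloor v_{n+1}\rfloor]$, using that each block contributes at most $\lfloor v_{n+1}\rfloor^{s'}\exp(-c\sqrt{v_{n+1}})$, which is summable---contradicting $\theta\notin\mathcal{E}'_{s'}(R')$. This yields only ``for all sufficiently large $n$'', with the threshold implicitly depending on $\theta$ through the finite exceptional set, which is enough for the asymptotic Proposition~\ref{optimal}. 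You should replace your claimed polynomial lower bound by this contradiction argument.
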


\begin{lemma}\label{controle_mhat}
There exists an event $\Omega_2$  of probability larger than $1- L\frac{\log n}{n}$ such that
\begin{eqnarray*}
 \left[-\overline{\gamma}_n(\widetilde{\theta}) -  \gamma_n(\widetilde{\theta})pen(\widehat{m}) -\sigma^2 +
\|\boldsymbol{\epsilon}\|^2_n\right]\mathbf{1}_{\Omega_1\cap\Omega_2} \leq  l(\widetilde{\theta},\theta)\tau_1(n),
\end{eqnarray*}
where $\tau_1(n)$ is a positive sequence converging to zero when $n$ goes to infinity.
\end{lemma}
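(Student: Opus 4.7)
The plan is to mimic the decomposition of Lemma~\ref{lemme_decomposition}, but with auxiliary parameters depending on $n$ and with sharper concentration bounds made possible by the dimension restriction from $\Omega_1$. First I would choose $\kappa_1:=1/\sqrt{\log n}$ and $\kappa_2:=1-1/\sqrt{\log n}$, so that the decomposition
\begin{eqnarray*}
 -\overline{\gamma}_n(\widetilde{\theta}) -  \gamma_n(\widetilde{\theta})pen(\widehat{m}) -\sigma^2 +
\|\boldsymbol{\epsilon}\|^2_n \leq l(\widetilde{\theta},\theta)\left[A_{\widehat{m}}\vee(1-\kappa_2)\right]+ \sigma^2 B_{\widehat{m}}
\end{eqnarray*}
provided by Lemma~\ref{lemme_decomposition} reduces the claim to showing that, on $\Omega_1\cap\Omega_2$, $A_{\widehat{m}}\vee(1-\kappa_2)\leq\tau_1(n)$ and $\sigma^2 B_{\widehat{m}}\leq l(\widetilde{\theta},\theta)\,\tau_1(n)$, where $A$ and $B$ are as in (\ref{definition_A})--(\ref{definition_B}) with $K=2$.

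The event $\Omega_2$ would be defined as the intersection, over every $m\in\mathcal{M}_{\lfloor n/2\rfloor}$ with $\log^2 n\leq d_m\leq n/\log n$, of the four deviation events already used in the proof of Lemma~\ref{concentration_thrm1}: the $\chi^2$-concentrations of the norms $n\|\Pi_m^\perp\boldsymbol{\epsilon}_m\|_n^2/l(\theta_m,\theta)$, $n\|\Pi_m(\boldsymbol{\epsilon}+\boldsymbol{\epsilon}_m)\|_n^2/[l(\theta_m,\theta)+\sigma^2]$ and $n\|\Pi_m^\perp(\boldsymbol{\epsilon}+\boldsymbol{\epsilon}_m)\|_n^2/[l(\theta_m,\theta)+\sigma^2]$, together with two-sided Wishart bounds on $\varphi_{\text{max}}({\bf Z}_m^*{\bf Z}_m)$ and $\varphi_{\text{max}}[({\bf Z}_m^*{\bf Z}_m)^{-1}]$. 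Applying Lemmas~\ref{concentration_chi2}, \ref{concentration_chi2_fine} and \ref{concentration_vp_wishart} with deviation parameter $x=c\log n$, each individual event fails with probability at most $n^{-c}$; since $|\mathcal{M}_{\lfloor n/2\rfloor}|\leq n$, a union bound yields $\mathbb{P}(\Omega_2^c)\leq L\log(n)/n$ for $c$ large enough.

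On $\Omega_1\cap\Omega_2$ the leading-order values of the four terms defining $A_{\widehat{m}}$ are
\begin{eqnarray*}
A_{\widehat{m}}\approx\kappa_1+\frac{d_{\widehat{m}}}{n}+\kappa_2\frac{d_{\widehat{m}}}{n}-2\frac{d_{\widehat{m}}}{n}=\kappa_1-(1-\kappa_2)\frac{d_{\widehat{m}}}{n}\leq\kappa_1,
\end{eqnarray*}
and the deviation residuals around this leading order, coming from the tight $\chi^2$ and Wishart controls at $x=c\log n$, are of relative size $O(\sqrt{\log n/d_{\widehat{m}}})=O(1/\sqrt{\log n})$ since $d_{\widehat{m}}\geq\log^2 n$ on $\Omega_1$. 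Together with $1-\kappa_2=1/\sqrt{\log n}$, this gives $A_{\widehat{m}}\vee(1-\kappa_2)\leq L/\sqrt{\log n}$. The same concentrations yield $\sigma^2 B_{\widehat{m}}\leq L\sigma^2(\log n)^{3/2}/n$; the $(\log n)^{3/2}/n$ piece comes from the first summand of $B_{\widehat{m}}$, where $\kappa_1^{-1}\cdot O(\chi^2(1)\log n/n)\sim\sqrt{\log n}\cdot\log n/n$.

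The remaining step is to absorb $\sigma^2 B_{\widehat{m}}$ into $l(\widetilde{\theta},\theta)\,\tau_1(n)$, and this is the main obstacle. Using the identity $l(\widehat{\theta}_m,\theta_m)=(\boldsymbol{\epsilon}+\boldsymbol{\epsilon}_m)^*{\bf Z}_m({\bf Z}_m^*{\bf Z}_m)^{-2}{\bf Z}_m^*(\boldsymbol{\epsilon}+\boldsymbol{\epsilon}_m)$ from Lemma~\ref{lemma_expression_perte_perte_empirique} and the formula $\Pi_m={\bf Z}_m({\bf Z}_m^*{\bf Z}_m)^{-1}{\bf Z}_m^*$, one has the deterministic lower bound $l(\widehat{\theta}_m,\theta_m)\geq\|\Pi_m(\boldsymbol{\epsilon}+\boldsymbol{\epsilon}_m)\|^2/\varphi_{\text{max}}({\bf Z}_m^*{\bf Z}_m)$. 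On $\Omega_2$, $\varphi_{\text{max}}({\bf Z}_m^*{\bf Z}_m)\leq Cn$ and the $\chi^2(d_m)$ lower tail gives $\|\Pi_m(\boldsymbol{\epsilon}+\boldsymbol{\epsilon}_m)\|_n^2\geq c(\sigma^2+l(\theta_m,\theta))d_m/n$, whence $l(\widetilde{\theta},\theta)\geq l(\widehat{\theta}_{\widehat{m}},\theta_{\widehat{m}})\geq c\sigma^2 d_{\widehat{m}}/n\geq c\sigma^2\log^2 n/n$. Dividing then yields $\sigma^2 B_{\widehat{m}}/l(\widetilde{\theta},\theta)\leq L/\sqrt{\log n}$, and the conclusion follows with $\tau_1(n):=L/\sqrt{\log n}$. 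The delicate point is that this uniform lower bound on $l(\widehat{\theta}_m,\theta_m)$ must hold simultaneously over the whole range of admissible models without costing more than $L\log(n)/n$ in probability, which a union bound with $x=c\log n$ also delivers.
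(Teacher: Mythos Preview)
Your argument is correct and follows the same overall scaffold as the paper (the $A/B$ decomposition from Lemma~\ref{lemme_decomposition} with $n$-dependent $\kappa_1,\kappa_2$, plus concentration over the dimension window $\log^2 n<d_m<n/\log n$), but it diverges from the paper in one substantive place and is a bit sloppy in another.

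\textbf{Different route for $B_{\widehat m}$.} The paper does not try to absorb $\sigma^2 B_{\widehat m}$ into $l(\widetilde\theta,\theta)$. Instead it takes $\kappa_1(n)=1/\log n$ and a model-dependent deviation level $x_m\propto d_m/\log n$, then chooses the sequence $\kappa_2(n)\to 1$ precisely so that the bracket multiplying $d_{\widehat m}/n$ in $B_{\widehat m}$ is nonpositive; hence $B_{\widehat m}\le 0$ on $\Omega_1\cap\Omega_2$ and the $\sigma^2 B_{\widehat m}$ term simply disappears. Your alternative---fixing $\kappa_2=1-1/\sqrt{\log n}$, letting $B_{\widehat m}$ be $O((\log n)^{3/2}/n)$, and dividing by the deterministic lower bound $l(\widetilde\theta,\theta)\ge l(\widehat\theta_{\widehat m},\theta_{\widehat m})\ge c\sigma^2 d_{\widehat m}/n\ge c\sigma^2\log^2 n/n$ obtained from the two-sided Wishart control---is a perfectly valid workaround and has the merit of making $\tau_1(n)=L/\sqrt{\log n}$ explicit. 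The paper's route is cleaner because it avoids introducing the extra Wishart upper bound and the loss lower bound; yours is more hands-on but equally rigorous.

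\textbf{Minor gap in the definition of $\Omega_2$.} As written, your $\Omega_2$ only contains the events from the proof of Lemma~\ref{concentration_thrm1}, which control $A_m$. To bound $B_{\widehat m}$ you also need, for each admissible $m$, the $\chi^2(1)$ deviation for $n\langle\Pi_m^\perp\boldsymbol\epsilon/\|\Pi_m^\perp\boldsymbol\epsilon\|_n,\Pi_m^\perp\boldsymbol\epsilon_m\rangle_n^2/l(\theta_m,\theta)$, the $\chi^2(d_m)$ deviation for $n\|\Pi_m\boldsymbol\epsilon\|_n^2/\sigma^2$, and the global event $\|\boldsymbol\epsilon\|_n^2\le 2\sigma^2$ (you implicitly use all three when you write ``$\kappa_1^{-1}\cdot O(\chi^2(1)\log n/n)$''). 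Adding these to $\Omega_2$ costs only another $O(n\cdot n^{-c})$ in probability under your $x=c\log n$ regime, so the stated bound $\mathbb P(\Omega_2^c)\le L\log n/n$ survives; just make the list complete.
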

\begin{lemma}\label{controle_m*}
There exists an event $\Omega_3$  of probability larger than $1- L\frac{\log n}{n}$ such that
\begin{eqnarray*}
 \left[\overline{\gamma}_n(\widehat{\theta}_{m^*}) +  \gamma_n(\widehat{\theta}_{m^*})pen(m^*) +\sigma^2 -
\|\boldsymbol{\epsilon}\|^2_n\right]\mathbf{1}_{\Omega_1\cap\Omega_3} \leq  l\left(\widehat{\theta}_{m_*},\theta\right)\tau_2(n),
\end{eqnarray*}
where $\tau_2(n)$ is a positive sequence converging to zero when $n$ goes to infinity. 
\end{lemma}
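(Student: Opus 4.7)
The plan is to mimic the structure of the proof of Lemma \ref{controle_mhat}, replacing the selected model $\widehat{m}$ by the random oracle $m_*$. Since $m_*$ is random and data-dependent, I would not work with $m_*$ directly but rather establish uniform deviation bounds over all models in $\mathcal{M}_{\lfloor n/2\rfloor}$ whose dimension lies in the window $[\log^2 n,\, n/\log n]$ guaranteed by $\Omega_1$, then specialize to $m_*$.

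First, I would use Lemma \ref{lemma_expression_perte_perte_empirique} to expand, for any fixed model $m$ of dimension $d_m<n$,
\begin{eqnarray*}
\overline{\gamma}_n(\widehat{\theta}_m)+\sigma^2-\|\boldsymbol{\epsilon}\|_n^2
&=& -\|\Pi_m\boldsymbol{\epsilon}\|_n^2 + \|\Pi_m^\perp\boldsymbol{\epsilon}_m\|_n^2 -l(\theta_m,\theta) +2\langle \Pi_m^\perp\boldsymbol{\epsilon},\Pi_m^\perp\boldsymbol{\epsilon}_m\rangle_n -l(\widehat{\theta}_m,\theta_m),
\end{eqnarray*}
and $\gamma_n(\widehat{\theta}_m)\,pen(m)=\frac{2d_m}{n-d_m}\|\Pi_m^\perp(\boldsymbol{\epsilon}+\boldsymbol{\epsilon}_m)\|_n^2$. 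Each of the six resulting terms is either a $\chi^2$-type quadratic form, a cross term controllable by Cauchy--Schwarz (i.e.\ $2ab\le \kappa a^2+\kappa^{-1}b^2$ for an appropriately small $\kappa$), or the quadratic form $l(\widehat{\theta}_m,\theta_m)=(\boldsymbol{\epsilon}+\boldsymbol{\epsilon}_m)^*{\bf Z}_m({\bf Z}_m^*{\bf Z}_m)^{-2}{\bf Z}_m^*(\boldsymbol{\epsilon}+\boldsymbol{\epsilon}_m)$, which was bounded in the proof of Lemma \ref{lemme_decomposition} in terms of $\varphi_{\max}[({\bf Z}_m^*{\bf Z}_m)^{-1}]$ and $\|\Pi_m(\boldsymbol{\epsilon}+\boldsymbol{\epsilon}_m)\|_n^2$.

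Second, I would apply Lemmas \ref{concentration_chi2}, \ref{concentration_chi2_fine} and \ref{concentration_vp_wishart} simultaneously for all $m\in\mathcal{M}_{\lfloor n/2\rfloor}$ at deviation level $x=3\log n$, paying a union bound cost at most $L n\cdot n^{-3}=L/n^{2}$. This defines the event $\Omega_3$ with $\mathbb{P}(\Omega_3^c)\le L\log n/n$ (the $\log n$ factor comes from the number of terms, not from the union bound). On $\Omega_1\cap \Omega_3$, every $\chi^2(d_m)$ and $\chi^2(n-d_m)$ quantity above deviates from its mean by at most $O(\sqrt{d_m\log n/n}+\log n/n)$, and $n\varphi_{\max}[({\bf Z}_{m_*}^*{\bf Z}_{m_*})^{-1}]\le 1+o(1)$, uniformly over the admissible range of dimensions. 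Plugging these into the expansion and comparing with $pen(m_*)\,\gamma_n(\widehat{\theta}_{m_*})$, the deterministic main terms cancel up to lower order: the $\|\Pi_{m_*}^\perp\boldsymbol{\epsilon}_{m_*}\|_n^2 - l(\theta_{m_*},\theta)$ piece contributes $O(\sqrt{l(\theta_{m_*},\theta)^2\log n/n})$, the $\|\Pi_{m_*}\boldsymbol{\epsilon}\|_n^2$ piece contributes $\sigma^2 d_{m_*}/n+O(\sigma^2\sqrt{d_{m_*}\log n}/n)$, the cross term $2\langle \Pi_{m_*}^\perp\boldsymbol{\epsilon},\Pi_{m_*}^\perp\boldsymbol{\epsilon}_{m_*}\rangle_n$ splits via $2ab\le \kappa a^2+\kappa^{-1}b^2$, and the penalty term produces exactly the $2\sigma^2 d_{m_*}/n$ needed to dominate the $\|\Pi_{m_*}\boldsymbol{\epsilon}\|_n^2$ contribution.

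Third, I would lower bound $l(\widehat{\theta}_{m_*},\theta)$ to convert absolute error terms into the relative bound of the lemma. Using the decomposition $l(\widehat{\theta}_{m_*},\theta)=l(\theta_{m_*},\theta)+l(\widehat{\theta}_{m_*},\theta_{m_*})$, the bias and variance each contribute at most $l(\widehat{\theta}_{m_*},\theta)$, and by the bias--variance decomposition of Lemma \ref{prte_basique} (applied in probability rather than in expectation, using the concentration bounds above) we have
\begin{eqnarray*}
l(\widehat{\theta}_{m_*},\theta)\;\gtrsim\; l(\theta_{m_*},\theta)+\sigma^2\,\frac{d_{m_*}}{n}\;\gtrsim\;\sigma^2\frac{\log^2 n}{n}
\end{eqnarray*}
on $\Omega_1$, since $d_{m_*}\ge \log^2 n$. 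Every error term produced in the previous step is of the form $\sigma^2\sqrt{d_{m_*}\log n}/n$, $\sqrt{l(\theta_{m_*},\theta)^2\log n/n}$, or $\sigma^2\log n/n$, each of which is at most $\tau_2(n)\,l(\widehat{\theta}_{m_*},\theta)$ with $\tau_2(n)=O(\sqrt{\log n/\log^2 n})=O(1/\sqrt{\log n})\to 0$.

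The main obstacle is the uniformity in $m$: one must verify that the deviation level $x=3\log n$ is simultaneously strong enough to control the tails of the $O(n)$ models in the collection and weak enough (i.e.\ $x=o(\log^2 n)$) so that the resulting relative error goes to zero on the event $\Omega_1$ where $d_{m_*}\ge \log^2 n$. This is the role of the peculiar window $[\log^2 n,\, n/\log n]$ of Lemma \ref{controle_dimension}, which, combined with the assumption $\theta\in\mathcal{E}'_s(R)\setminus\bigcup_{R'>0}\mathcal{E}'_{s'}(R')$, guarantees enough separation between the deviation scale and the typical size of $l(\widehat{\theta}_{m_*},\theta)$.
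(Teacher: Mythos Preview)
Your outline has the right skeleton but contains a sign error that breaks the leading-order cancellation. You invoke the bound from Lemma~\ref{lemme_decomposition}, namely
\[
l(\widehat{\theta}_m,\theta_m)\ \le\ n\,\varphi_{\max}\!\left[({\bf Z}_m^*{\bf Z}_m)^{-1}\right]\,\|\Pi_m(\boldsymbol{\epsilon}+\boldsymbol{\epsilon}_m)\|_n^2,
\]
but this inequality points the wrong way here: in your expansion the term $-l(\widehat{\theta}_{m_*},\theta_{m_*})$ carries a minus sign, so to upper bound the left-hand side you need a \emph{lower} bound on $l(\widehat{\theta}_{m_*},\theta_{m_*})$, namely
\[
l(\widehat{\theta}_m,\theta_m)\ \ge\ \frac{n}{\varphi_{\max}({\bf Z}_m^*{\bf Z}_m)}\,\|\Pi_m(\boldsymbol{\epsilon}+\boldsymbol{\epsilon}_m)\|_n^2,
\]
controlled by the \emph{second} (direct Wishart) deviation inequality in Lemma~\ref{concentration_vp_wishart}. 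This is exactly what the paper singles out as ``the main difference with the proof of Lemma~\ref{controle_mhat}.'' Without it your accounting in step~2 does not close: the penalty term contributes $+2(\sigma^2+l(\theta_{m_*},\theta))\,d_{m_*}/n$ at leading order, whereas $-\|\Pi_{m_*}\boldsymbol{\epsilon}\|_n^2$ and the bias of $\|\Pi_{m_*}^\perp\boldsymbol{\epsilon}_{m_*}\|_n^2-l(\theta_{m_*},\theta)$ together cancel only half of it, leaving a residual $(\sigma^2+l(\theta_{m_*},\theta))\,d_{m_*}/n$ of the \emph{same} order as $l(\widehat{\theta}_{m_*},\theta)$. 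Your sentence ``the penalty term produces exactly the $2\sigma^2 d_{m_*}/n$ needed to dominate the $\|\Pi_{m_*}\boldsymbol{\epsilon}\|_n^2$ contribution'' has the sign of $-\|\Pi_{m_*}\boldsymbol{\epsilon}\|_n^2$ backwards and omits the $l(\theta_{m_*},\theta)$ part of the penalty.

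Once you plug in the correct lower bound on $l(\widehat{\theta}_{m_*},\theta_{m_*})$, your argument becomes essentially the paper's: the paper writes the left-hand side as $C_{m_*}\,l(\theta_{m_*},\theta)+D_{m_*}\,\sigma^2+\kappa_2(n)\,l(\widehat{\theta}_{m_*},\theta_{m_*})$ with $\kappa_2(n)\to 0$, where $C_{m_*}$ and $D_{m_*}$ carry the crucial negative term $-(1+\kappa_2(n))\,\frac{n}{\varphi_{\max}({\bf Z}_{m_*}^*{\bf Z}_{m_*})}\,\frac{\|\Pi_{m_*}(\boldsymbol{\epsilon}+\boldsymbol{\epsilon}_{m_*})\|_n^2}{\sigma^2+l(\theta_{m_*},\theta)}$, and then shows $D_{m_*}\le 0$ and $C_{m_*}\le L/\log n$ on $\Omega_1\cap\Omega_3$. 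The paper uses deviation level $x_m=2d_m/\log n$ (proportional to $d_m$) rather than your fixed $x=3\log n$; both choices give relative fluctuations of order $1/\sqrt{\log n}$ on the window $\log^2 n\le d_m\le n/\log n$, so this part of your plan is fine.
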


Gathering these three lemma, we derive from the upper bound (\ref{majoration_principal_optimal}) the inequality
\begin{eqnarray*}
 \frac{l(\widetilde{\theta},\theta)}{l(\widehat{\theta}_{m_*},\theta)}\mathbf{1}_{\Omega_1\cap\Omega_2\cap\Omega_3}\leq \frac{1+\tau_2(n)}{1-\tau_1(n)} \ ,
\end{eqnarray*}
 which allows to conclude.

\end{proof}
\begin{proof}[Proof of Lemma \ref{controle_dimension}]

Let us consider the model $m_{R,s}$ defined by $d_{m_{R,s}} := \lfloor(nR^2)^{\frac{1}{1+s}}\rfloor$. If $n$ is larger than some quantity $L(R,s)$, then $d_{m_{R,s}}$ is smaller than $n/2$ and $m_{R,s}$ therefore belongs to the collection $\mathcal{M}_{\lfloor n/2\rfloor}$. We shall prove that outside an event of small probability, the loss $l(\widehat{\theta}_{m_{R,s}},\theta)$ is smaller than the loss $l(\widehat{\theta}_{m},\theta)$ of all models $m\in \mathcal{M}_{\lfloor n/2\rfloor}$ whose dimension is smaller than $\log^2(n)$ or larger than $\frac{n}{\log n}$. Hence, the model $m_*$ satisfies $\log^2(n) < d_{m_*} <\frac{n}{\log n}$ with large probability. ~\\

First, we need to upper bound the loss $l(\widehat{\theta}_{m_{R,s}},\theta)$.
Since $l(\widehat{\theta}_{m_{R,s}},\theta)= l(\theta_{m_{R,s}},\theta) + l(\widehat{\theta}_{m_{R,s}},\theta_{m_{R,s}})$, it comes to upper bounding both the bias term and the variance term. Since $\theta$ belongs to $\mathcal{E}'_s(R)$, 
\begin{eqnarray}
 l\left(\theta_{m_{R,s}},\theta\right) &  = & \sum_{i>d_{m_{R,s}}}^{+\infty} l(\theta_{m_{i-1}},\theta_{m_i}) \nonumber\\
& \leq & (d_{m_i}+1)^{-s} \sum_{i>d_{m_{R,s}}}^{+\infty} \frac{l(\theta_{m_{i-1}},\theta_{m_i})}{i^{-s}} \leq  \sigma^2 \left(\frac{R^2}{n^s}\right)^\frac{1}{1+s} \ .\label{majoration_biais}
\end{eqnarray}
Then, we bound the variance term $l(\widehat{\theta}_{m_{R,s}},\theta_{m_{R,s}})$  thanks to (\ref{decomposition3}) as in the proof of Lemma  \ref{lemme_decomposition}. 
\begin{eqnarray*}
 l\left(\widehat{\theta}_{m_{R,s}},\theta_{m_{R,s}}\right)& \leq & \left[\sigma^2+l\left(\theta_{m_{R,s}},\theta\right)\right]\varphi_{\text{max}}\left[ n({\bf Z}^*_{m_{R,s}} {\bf Z}_{m_{R,s}})^{-1}\right] \frac{\left\|\Pi_{m_{R,s}}(\boldsymbol{\epsilon}+\boldsymbol{\epsilon}_{m_{R,s}})\right\|_n^2}{\sigma^2 + l(\theta_{m_{R,s}},\theta)}\ .
\end{eqnarray*}
The two random variables involved in this last expression respectively follow (up to a factor $n$) the distribution of an inverse Wishart matrix with parameters $(n,d_{m_{R,s}})$ and a $\chi^2$ distribution with $d_{m_{R,s}}$ degrees of freedom. Thanks to Lemma \ref{concentration_chi2} and \ref{concentration_vp_wishart}, we prove that outside an event of probability smaller than $L(R,s)\exp[-L'(R,s)n^\frac{1}{1+s}]$ with $L'(R,s)>0$, 
\begin{eqnarray*}
 l\left(\widehat{\theta}_{m_{R,s}},\theta_{m_{R,s}}\right)& \leq & 4\left[\sigma^2+l\left(\theta_{m_{R,s}},\theta\right)\right]   \frac{d_{m_{R,s}}}{n}\ ,
\end{eqnarray*}
if $n$ is large enough. Gathering this last upper bound with (\ref{majoration_biais}) yields 
\begin{eqnarray}
 l\left(\widehat{\theta}_{m_{R,s}},\theta\right) \leq \sigma^2\left[5\frac{R^{\frac{2}{1+s}}}{n^{\frac{s}{1+s}}}+ 4\left(\frac{R^{\frac{2}{1+s}}}{n^{\frac{s}{1+s}}}\right)^2\right]\leq \sigma^2\frac{C(R,s)}{n^{\frac{s}{1+s}}}\ \label{majoration_perte_m1}
\end{eqnarray} 
where $C(R,s)$ is a constant that only depends on $R$ and $s$.

Let us prove that the bias term of any model of dimension smaller than $\log^2(n)$ is larger than (\ref{majoration_perte_m1}) if $n$ is large enough. Obviously, we only have to consider the model of dimension $\lfloor \log^2(n)\rfloor$. Assume that there exists an infinite increasing sequence of integers $u_n$ satisfying:
\begin{eqnarray}\label{condition_impossible}
\sum_{i>\log^2(u_n)} l\left(\theta_{m_{i-1}},\theta_{m_{i}}\right) \leq  \frac{C(R,s)}{(u_{n+1})^{\frac{s}{1+s}}} \ . 
\end{eqnarray}
 Then, the sequence $(v_n)$ defined by $v_n:= \log^2(u_n)$ satisfies
\begin{eqnarray*}
\sum_{i>v_n} l\left(\theta_{m_{i-1}},\theta_{m_{i}}\right) \leq  C(R,s)\exp\left[-\sqrt{v_{n+1}}\frac{s}{1+s}\right] \ . 
\end{eqnarray*}
Let us consider a subsequence of $(v_n)$ such that $\lfloor v_n\rfloor$ is strictly increasing. For the sake of simplicity we still call it $v_n$. It follows that
\begin{eqnarray*}
 \sum_{i=\lfloor v_0\rfloor +1}^{+\infty} \frac{l\left(\theta_{m_{i-1}},\theta_{m_{i}}\right)}{i^{-s'}} & = & \sum_{n=0}^{+\infty}\sum_{i=\lfloor v_n\rfloor +1}^{\lfloor v_{n+1}\rfloor} \frac{l\left(\theta_{m_{i-1}},\theta_{m_{i}}\right)}{i^{-s'}} \nonumber\\
& \leq & C(R,s)\sum_{n=0}^{+\infty} \lfloor v_{n+1}\rfloor^{s'}\exp\left[-\sqrt{\lfloor v_{n+1}\rfloor}\frac{s}{1+s}\right]  < \infty \ ,
\end{eqnarray*}
and $\theta$ therefore belongs to some ellipsoid $\mathcal{E}_{s'}(R')$. 
This contradicts the assumption $\theta$ does not belong to any ellipsoid $\mathcal{E}_{s'}(R')$. As a consequence, there only exists a finite sequence of integers $u_n$ that satisfy Condition (\ref{condition_impossible}). For $n$ large enough, the bias term of any model of dimension less than $\log^2(n)$ is therefore larger than the loss  $l(\widehat{\theta}_{m_{R,s}},\theta)$ with overwhelming probability.
	
Let us turn to the models of dimension larger than $n/\log n$. We shall prove that with large probability, for any model $m$ of dimension larger than $n/\log n$, the variance term $l(\widehat{\theta}_m,\theta_m)$ is larger than the order $\sigma^2/\log n$.
For any model $m\in \mathcal{M}_{\lfloor n/2\rfloor}$,  
\begin{eqnarray*}
l\left(\widehat{\theta}_m,\theta_m\right) & \geq & \frac{n\sigma^2}{\varphi_{\text{max}}\left( {\bf Z}^*_m {\bf Z}_m\right)}\frac{\left\|\Pi_m(\boldsymbol{\epsilon}+\boldsymbol{\epsilon}_m)\right\|_n^2}{\sigma^2+l(\theta_m,\theta)} \ .
\end{eqnarray*}
The two random variables involved in this expression respectively follow (up to a factor $n$) a Wishart distribution with parameters $(n,d_m)$ and a $\chi^2$ distribution with $d_m$. Again, we apply Lemma \ref{concentration_chi2} and \ref{concentration_vp_wishart} to control the deviations of these random variables. Hence, outside an event of probability smaller than $L(\xi)\exp[-n\xi/\log n]$, 
\begin{eqnarray*}
 l\left(\widehat{\theta}_m,\theta_m\right) & \geq & \sigma^2\left(1+\sqrt{\frac{d_m}{n}}+ \sqrt{2\xi\frac{d_m}{n}}\right)^{-2}\frac{d_m}{n}\left(1-2\sqrt{\xi}\right) \ ,
\end{eqnarray*}
for any model $m$ of dimension larger than $n/\log n$. For any model $m\in \mathcal{M}_{\lfloor n/2\rfloor}$, the ratio $d_m/n$ is smaller than $1/2$. As a consequence, we get
\begin{eqnarray*}
  l\left(\widehat{\theta}_m,\theta_m\right)  \geq  \frac{\sigma^2}{\log n}\left(1-2\sqrt{\xi}\right)\left(1+\sqrt{1/2}+\sqrt{\xi}\right)^{-2}\ .
\end{eqnarray*}
Choosing for instance $\xi=1/16$ ensures that for $n$ large enough the loss $l(\widehat{\theta}_m,\theta_m)$ is larger than  $l(\widehat{\theta}_{m_{R,s}},\theta)$ for every model $m$ of dimension larger than $n/\log n$ outside an event of probability smaller than $L_1\exp[-L_2 n/\log n]+ L_3(R,s)\exp[-L_4(R,s)n^{1/(1+s)}]$ with $L_4(R,s)>0$.\\

Let us now turn to the selected model $\widehat{m}$. We shall prove that outside an event of small probability, 
\begin{eqnarray}\label{comparaison_critere}
 \gamma_{n}\left(\widehat{\theta}_{m_{R,s}}\right)[1+pen(m_{R,s})]\leq \gamma_{n}\left(\widehat{\theta}_{m}\right)[1+pen(m)]\ , 
\end{eqnarray}
for all models $m$ of dimension smaller than $\log^2 n$ or larger than $n/\log n$.
We first consider the models of dimension smaller than $\log^2(n)$. For any model $m\in \mathcal{M}_{\lfloor n/2\rfloor}$ , $\gamma_{n}(\widehat{\theta}_{m})*n/[\sigma^2+l(\theta_m,\theta)]$ follows a $\chi^2$ distribution with $n-d_m$ degrees of freedom.
Again, we apply Lemma \ref{concentration_chi2}. Hence, with probability larger than $1- e/[n^2(e-1)]$, the following upper bound holds for any model $m$ of dimension smaller than $\log^2(n)$.
\begin{eqnarray*}
\gamma_{n}\left(\widehat{\theta}_{m}\right)[1+pen(m)]& \geq & \sigma^2\left[1+\frac{l(\theta_m,\theta)}{\sigma^2}\right]\left(1+2\frac{d_m}{n-d_m}\right)\left[\frac{n-d_m}{n}-2\frac{\sqrt{(n-d_m)(d_m+2\log(n))}}{n}\right] \\
& \geq & \sigma^2\left[1+\frac{l(\theta_m,\theta)}{\sigma^2}\right]\left(1+\frac{d_m}{n}\right)\left[1-2\sqrt{\frac{d_m+2\log(n)}{n-d_m}}\right] \\
& \geq & \sigma^2\left[1+\frac{l(\theta_m,\theta)}{\sigma^2}\right] \left[1-4\frac{\log n}{\sqrt{n}}\right]\ ,
\end{eqnarray*}
for $n$ large enough. Besides, outside an event of probability smaller than $\frac{1}{n^2}$,  
\begin{eqnarray*}
\gamma_{n}\left(\widehat{\theta}_{m_{R,s}}\right)[1+pen(m_{R,s})]& \leq & \sigma^2\left[1+\frac{l(\theta_{m_{R,s}},\theta)}{\sigma^2}\right]\left(1+2\frac{d_{m_{R,s}}}{n-d_{m_{R,s}}}\right)\times \\& &\left[\frac{n-d_{m_{R,s}}}{n}+2\frac{\sqrt{(n-d_{m_{R,s}})2\log n}}{n}+4\frac{\log n}{n}\right] \\ 
& \leq & \sigma^2\left[1+\frac{l(\theta_{m_{R,s}},\theta)}{\sigma^2}\right] \left(1+\frac{d_{m_{R,s}}}{n}\right)\left[1 +2\frac{\sqrt{2\log n}}{\sqrt{n-d_{m_{R,s}}}}+4\frac{\log n}{n-d_{m_{R,s}}}\right]\ .
\end{eqnarray*}
For $n$ large enough, $d_{m_{R,s}}$ is smaller than $\frac{n}{2}$, and the last upper bound becomes:
\begin{eqnarray*}
 \gamma_{n}\left(\widehat{\theta}_{m_{R,s}}\right)[1+pen(m_{R,s})] \leq \sigma^2\left[1+\frac{C(R,s)}{n^{\frac{s}{1+s}}}\right]^2\left(1+10\frac{\log(n)}{\sqrt{n}}\right)\ .
\end{eqnarray*}
Hence, $\gamma_{n}\left(\widehat{\theta}_{m_{R,s}}\right)[1+pen(m_{R,s})]\leq \gamma_{n}\left(\widehat{\theta}_{m}\right)[1+pen(m)]$ if 
$$\frac{l(\theta_{m_{\lfloor \log^2 n\rfloor}},\theta)}{\sigma^2}\geq 3\frac{C(R,s)}{n^{\frac{s}{1+s}}}\times\frac{1+10\log(n)/\sqrt{n}}{1-4\log(n)/\sqrt{n}}+ 14\frac{\log(n)}{\sqrt{n}}\ .$$   
As previously, this inequality always holds except for a finite number of $n$, since $\theta$ does not belong to any ellipsoid $\mathcal{E}_{s'}(R')$. Thus, outside an event of probability smaller than $\frac{L}{n^2}$, $d_{\widehat{m}}$ is larger than $\log^2 n$.\\

Let us now turn to the models of large dimension. Inequality (\ref{comparaison_critere}) holds if the quantity
\begin{eqnarray}\label{comparaison_critere2}
{\small \|\boldsymbol{\epsilon}\|_n^2\left(\frac{2d_{m_{R,s}}}{n-d_{m_{R,s}}}- \frac{2d_{m}}{n-d_{m}}\right)+  \|\Pi_{m}\boldsymbol{\epsilon}\|_n^2\left(1+\frac{2d_m}{n-d_{m}}\right) + \langle\Pi_{m_{R,s}}^\perp\boldsymbol{\epsilon}_{m_{R,s}}, \Pi_{m_{R,s}}^\perp\boldsymbol{\epsilon}+2\boldsymbol{\epsilon}_{m_{R,s}}\rangle_n\left(1+\frac{2d_{m_{R,s}}}{n-d_{m_{R,s}}}\right)} 
\end{eqnarray}
is non-positive. The three following bounds hold outside an event of probability smaller than $\frac{L(\xi)}{n^2}$:
\begin{eqnarray*}
 \|\boldsymbol{\epsilon}\|_n^2 & \geq & 1 -4\frac{\sqrt{\log n}}{\sqrt{n}}\ ,\\
\|\Pi_{m}\boldsymbol{\epsilon}\|_n^2 & \leq & (1+\xi)\frac{d_m}{n}, \, \, \, \text{for all models $m$ of dimension } d_m >\frac{n}{\log n}\ ,\\
\langle\Pi_{m_{R,s}}^\perp\boldsymbol{\epsilon}_{m_{R,s}}, \Pi_{m_{R,s}}^\perp\boldsymbol{\epsilon}+2\boldsymbol{\epsilon}_{m_{R,s}}\rangle_n & \leq & 	 l(\theta_{m_{R,s}},\theta)\left[\frac{n-d_{m_{R,s}}}{n}+ 4\frac{\sqrt{(n-d_{m_{R,s}})\log n}}{n}+\frac{4\log n}{n}\right] \\& + & 4\sqrt{l(\theta_{m_{R,s}},\theta)}\sigma \frac{\sqrt{(n-d_{m_{R,s}})\log n}}{n}\ .
\end{eqnarray*}
Gathering these three inequalities we upper bound  (\ref{comparaison_critere2}) by 
\begin{eqnarray*}
 \sigma^2\frac{d_m}{n-d_m}\left[-2+8\sqrt{\frac{\log n}{n}} +(1+\xi)\left(\frac{n+d_m}{n}\right)\right]+ 2\sigma^2\frac{d_{m_{R,s}}}{n-d_{m_{R,s}}}+\hspace{2.5cm}\\ \hspace{2.5cm}+   \sigma^2L\left(1+\frac{d_{m_{R,s}}}{n}\right)\left(\frac{l(\theta_{m_{R,s}},\theta)}{\sigma^2}+\frac{\sqrt{l(\theta_{m_{R,s}},\theta)}}{\sigma}\right)\left(1+\sqrt{\frac{\log n }{n-d_{m_{R,s}}}}\right) \ .
\end{eqnarray*}
The dimension of any model $m\in \mathcal{M}_{\lfloor n/2 \rfloor}$ is assumed to be smaller than $n/2$ and the dimensions of the models $m$ considered are larger than $\frac{n}{\log n}$. For $\xi$ small enough and $n$ large enough, the previous expression is therefore upper bounded by
\begin{eqnarray}
 \sigma^2 \frac{2}{\log n}\left[\frac{3}{2}(1+\xi)-2+ 8\sqrt{\frac{\log n}{n}}\right]+ L\sigma^2 \left[\frac{R^{\frac{2}{1+s}}}{n^\frac{s}{1+s}}+ \frac{R^{\frac{1}{1+s}}}{n^\frac{a}{2(1+a)}}\right] \ . 	
\end{eqnarray}
For $n$ large enough, this last quantity is clearly non-positive. 

All in all, we have proved that for $n$ large enough outside an event of probability smaller than $\frac{L(R,s)}{n^2}$, it holds that
$$\log^2(n) <d_{m_*}< \frac{n}{\log n} \, \, \, \, \, \text{and}  \, \, \, \, \,\log^2(n) <d_{\widehat{m}}< \frac{n}{\log n}\ .$$

\end{proof}

\begin{proof}[Proof of Lemma \ref{controle_mhat}]

Arguing as in the proof of Theorem \ref{ordered_selection}, we upper bound 
\begin{eqnarray}
 -\overline{\gamma}_n(\widetilde{\theta}) - \gamma_n(\widetilde{\theta}) pen(\widehat{m}) +\sigma^2 +
\|\boldsymbol{\epsilon}\|^2_n \leq l(\theta_{\widehat{m}},\theta)  A_{\widehat{m}} + \sigma^2 B_{\widehat{m}} +(1-\kappa_2(n))l(\widetilde{\theta},\theta_{\widehat{m}})\ ,\label{decomposition_generale_optimale}
\end{eqnarray}	
 where $A_{\widehat{m}}$ and $B_{\widehat{m}}$ are respectively defined in (\ref{definition_A}) and in (\ref{definition_B}). We will fix the quantities $\kappa_1(n)$ and $\kappa_2(n)$ later. Besides, we define and bound the quantity $E_{\widehat{m}}$ as in (\ref{E_major}).

Applying Lemma \ref{concentration_chi2} and Lemma \ref{concentration_vp_wishart} and arguing as in the proofs of Lemma \ref{concentration_thrm1} and Lemma \ref{majoration_esperance_conditionelle}, there exists an event $\Omega_2$ of large probability
\begin{eqnarray*}
 \mathbb{P}(\Omega_1^c)\leq \exp\left[-n/8\right]+5\sum_{d=\log^2(n)}^{\frac{n}{\log n}}\exp\left[-\frac{2d}{\log n}\right]\leq \exp\left[-n/8\right]+\frac{5\log n}{2n^2(1-1/\log n)}\ , 
\end{eqnarray*}
 and such that conditionally on $\Omega_1\cap\Omega_2$,
\begin{eqnarray*}
 \frac{\|\Pi_{\widehat{m}}^{\perp}\boldsymbol{\epsilon}_{\widehat{m}}\|^2_n}{l(\theta_{\widehat{m}},\theta)} &\geq & \frac{n-d_{\widehat{m}}}{n} -
2\frac{\sqrt{2(n-d_{\widehat{m}})d_{\widehat{m}}/\log n}}{n}, \\ 
\frac{\|\Pi_{\widehat{m}}(\boldsymbol{\epsilon}+\boldsymbol{\epsilon}_{\widehat{m}})\|_n^2}{\sigma^2+ l(\theta_{\widehat{m}},\theta)} & \leq & \frac{d_{\widehat{m}}}{n}
+\frac{2\sqrt{2}d_{\widehat{m}}}{n\sqrt{\log n}} +
4\frac{ d_{\widehat{m}}}{n\log n},\\
\frac{\|\Pi_{\widehat{m}}^\perp(\boldsymbol{\epsilon}+\boldsymbol{\epsilon}_{\widehat{m}})\|_n^2}{\sigma^2+ l(\theta_{\widehat{m}},\theta)} & \geq & \frac{n-d_{\widehat{m}}}{n}
-2\frac{\sqrt{2(n-d_{\widehat{m}})d_{\widehat{m}}/\log n} }{n}\\
\varphi_{\text{max}}\left[ \left({\bf Z}^*_{\widehat{m}}{\bf Z}_{\widehat{m}}\right)^{-1}\right] & \leq & n^{-1}\left(1-\left(1+\sqrt{\frac{4}{\log n}}\right)\sqrt{\frac{d_{\widehat{m}}}{n}}\right)^{-2}\\
\|\boldsymbol{\epsilon}\|^2_n & \leq & 2\\
E_{\widehat{m}}
&  \leq & \frac{d_{\widehat{m}}+2\kappa_1^{-1}(n)}{n} + \frac{2}{n}\sqrt{\left[d_{\widehat{m}} +\left(2\kappa_1^{-1}(n)\right)^2\right]\frac{2d_{\widehat{m}}}{\log n} }+8\kappa_1^{-1}(n)\frac{d_{\widehat{m}}}{n\log n }\ .\\ 
\end{eqnarray*}
Gathering these six upper bounds, we are able to upper bound $A_{\widehat{m}}$ and $B_{\widehat{m}}$,
\begin{eqnarray*}
A_{\widehat{m}} & \leq & \kappa_1(n) +L_1\sqrt{\frac{d_{\widehat{m}}}{n\log n}} +\frac{d_{\widehat{m}}}{n}\left[-1 +L_2\sqrt{\frac{d_{\widehat{m}}}{(n-d_{\widehat{m}})\log n}}+ \kappa_2(n) \frac{1+L_3/\sqrt{\log(n)} }{\left[1-(1+\sqrt{\frac{4}{\log n} })\sqrt{\frac{d_{\widehat{m}}}{n})}\right]^2}\right]\ ,\\
B_{\widehat{m}} & \leq & \frac{d_{\widehat{m}}}{n}\left[-1+L_1\sqrt{\frac{d_{\widehat{m}}}{(n-d_{\widehat{m}})\log n}}+ \kappa_2(n) \frac{1+L_2/\sqrt{\log(n)}}{\left[1-(1+\sqrt{\frac{4}{\log n}})\sqrt{\frac{d_{\widehat{m}}}{n})}\right]^2}\right]\\& +&  L_3\frac{d_{\widehat{m}}}{n}\left[\frac{\kappa_1^{-1}(n)}{d_{\widehat{m}}}+\frac{\kappa_1^{-1}(n)}{\log n}+\frac{1}{\sqrt{\log(n)}}+\frac{\kappa_1^{-1}(n)}{\sqrt{\log(n)d_{\widehat{m}}}}\right]\ .
\end{eqnarray*}
Conditionally to the event $\Omega_1$, the dimension of $\widehat{m}$ is moderate. Setting $\kappa_1$ to $\frac{1}{\log n}$, we get
\begin{eqnarray*}
A_{\widehat{m}} & \leq & \frac{L_1}{\log n} +\frac{d_{\widehat{m}}}{n}\left[-1 +\frac{L_2}{\log n}+ \kappa_2(n) \frac{1+\frac{L_3}{\sqrt{\log n}}}{\left[1-\frac{L_4}{\sqrt{\log(n)}}\right]^2}\right]\ ,\\
B_{\widehat{m}} & \leq & \frac{d_{\widehat{m}}}{n}\left[-1 +\frac{L_1}{\log n}+ \kappa_2(n) \frac{1+\frac{L_2}{\sqrt{\log n}}}{\left[1-\frac{L_3}{\sqrt{\log(n)}}\right]^2} + \frac{L_4}{\sqrt{\log n}}\right]\ .
\end{eqnarray*}
Hence, there exists a sequence $\kappa_2(n)$ converging to one such that conditionally on $\Omega_1\cap \Omega_2$, $B_{\widehat{m}}$ is non-positive and $A_{\widehat{m}}$ is bounded by $\frac{L}{\log n}$ when $n$ is large enough. Coming back to the inequality (\ref{decomposition_generale_optimale}) yields 
\begin{eqnarray*}
 \left[-\overline{\gamma}_n(\widetilde{\theta}) -  \gamma_n(\widetilde{\theta})pen(\widehat{m}) -\sigma^2 +
\|\boldsymbol{\epsilon}\|^2_n\right]\mathbf{1}_{\Omega_1\cap\Omega_2} \leq  l(\widetilde{\theta},\theta)\left[\frac{L}{\log n}\vee \left(1-\kappa_2(n)\right)\right]\ ,
\end{eqnarray*}
which concludes the proof.
\end{proof}

\begin{proof}[Proof of Lemma \ref{controle_m*}]
We follow a similar approach to the previous proof.
\begin{eqnarray}
 \overline{\gamma}_n(\widehat{\theta}_{m_*}) + \gamma_n(\widehat{\theta}_{m_*})pen(m_*) +\sigma^2 - 
\|\boldsymbol{\epsilon}\|^2_n & \leq &  C_{m_*}l(\theta_{m_*},\theta) + D_{m_*}\sigma^2 + \kappa_2(n) l(\widehat{\theta}_{m_*},\theta_{m_*}),
\end{eqnarray}
where for any model $m'\in \mathcal{M}_{\lfloor n/2 \rfloor}$, $C_{m'}$ and $D_{m'}$ are respectively defined as 
\begin{eqnarray*}
C_{m'} & = &  \kappa_1(n) + \frac{\|\Pi_{m'}^\perp \boldsymbol{\epsilon}_{m'}\|_n^2}{l(\theta_{m'},\theta)}-1 + 2\frac{d_{m'}}{n-d_{m'}} \frac{\|\Pi_{m'}^\perp (\boldsymbol{\epsilon}+ \boldsymbol{\epsilon}_{m'})\|_n^2}{l(\theta_{m'},\theta)+\sigma^2}\\ &-& (1+\kappa_2(n))\frac{n}{\varphi_{\text{max}}\left( {\bf Z}^*_{m'}{\bf Z}_{m'}\right)} \frac{\|\Pi_m (\boldsymbol{\epsilon}
  +\boldsymbol{\epsilon}_{{m'}})\|^2_n}{l(\theta_{m'},\theta)+\sigma^2}\\
D_{m'} & = & \kappa_1^{-1}(n)\frac{\langle\Pi^{\perp}_{m'}\boldsymbol{\epsilon},\Pi^{\perp}_{m'}\boldsymbol{\epsilon}_{{m'}}
  \rangle^2_n}{\sigma^2l(\theta_{m'},\theta)} - \frac{\|\Pi_{{m'}}
  \epsilon\|^2_n}{\sigma^2} \\ &-& (1+\kappa_2(n)) \frac{n}
 {\varphi_{\text{max}}\left(  {\bf Z}^*_{{m'}}{\bf Z}_{{m'}}\right)}\frac{\|\Pi_{{m'}} (\boldsymbol{\epsilon}
   +\boldsymbol{\epsilon}_{{m'}})\|^2_n}{l(\theta_{m'},\theta)+\sigma^2}\nonumber+ 2\frac{d_{{m'}}}{n-d_{{m'}}}\frac{\|\Pi^{\perp}_{m'}( \boldsymbol{\epsilon}  +\boldsymbol{\epsilon}_{{m'}})\|^2_n}{l(\theta_{m'},\theta)+\sigma^2}\ .
\end{eqnarray*}
We fix $\kappa_1(n)=1/\log n$ whereas $\kappa_2(n)$ will be fixed later. Arguing as in the proof of Lemma \ref{controle_mhat}, there exists an event $\Omega_3$ of large probability
\begin{eqnarray*}
\mathbb{P}(\Omega_3^c)\leq \exp\left[-n/8\right]+5\sum_{d=\log^2(n)}^{\frac{n}{\log n}}\exp\left[-\frac{2d}{\log n}\right] \leq \exp\left[-n/8\right]+\frac{5\log n}{2n^2(1-1/\log(n))}\ ,
\end{eqnarray*}
such that conditionally on $\Omega_1\cap \Omega_3$, the two following bounds hold:
\begin{eqnarray*}
C_{m_*} & \leq & \frac{L_1}{\log n} + \frac{d_{m_*}}{n}\left[1 + \frac{L_2}{\log n} - (1+\kappa_2(n))\frac{1+L_3\sqrt{\frac{2}{\log n}}}{\left[1+\frac{L_4}{\sqrt{\log n}}\right]^2}\right]\ ,\\
D_{m_*} & \leq &  \frac{d_{m_*}}{n}\left[1+\frac{L_1}{\log n}+\frac{L_2}{\sqrt{\log n}}- (1+\kappa_2(n))\frac{1+L_3\sqrt{\frac{2}{\log n}}}{\left[1+\frac{L_4}{\sqrt{\log n}}\right]^2}\right]\ ,
\end{eqnarray*}
if $n$ is large. The main difference with the proof of Lemma \ref{controle_mhat} lies in the fact that we now control the largest eigenvalue of ${\bf Z}^*_m{\bf Z}_m$ thanks to the second result of Lemma \ref{concentration_vp_wishart}. There exists a sequence $\kappa_2(n)$  converging to $0$ such that conditionally on 
$\Omega_1\cap\Omega_3$, $D_{m_*}$ is non-positive and $C_{m_*}$ is bounded by $\frac{L}{\log n}$ when $n$ is large.
Coming back to  (\ref{decomposition_generale_optimale}) yields 
\begin{eqnarray*}
 \left[\overline{\gamma}_n(\widehat{\theta}_{m_*}) + pen(m_*) +\sigma^2 -
\|\epsilon\|^2_n\right]\mathbf{1}_{\Omega_1\cup\Omega_3} \leq  l(\widehat{\theta}_{m_*},\theta)\left[\frac{L}{\log n}\vee \kappa_2(n)\right]\ ,
\end{eqnarray*}
which concludes the proof.
\end{proof}

\subsection{Proof of Proposition \ref{Pente_ordonnee}}

\begin{proof}[Proof of Proposition \ref{Pente_ordonnee}]
The approach is similar to the proof of Proposition 1 in \cite{massart_pente}. For any model $m\in\mathcal{M}_{\lfloor n/2 \rfloor}$, let us define
\begin{eqnarray*}
 \Delta(m,m_{\lfloor n/2\rfloor}):= \gamma_n\left(\widehat{\theta}_{m_{\lfloor n/2\rfloor}}\right)[1+pen(m_{\lfloor n/2\rfloor})]-\gamma_n\left(\widehat{\theta}_m\right)[1+pen(m)]\ .
\end{eqnarray*}	
We shall prove that with large probability the quantity $\Delta(m,m_{\lfloor n/2\rfloor})$ is negative for any model $m$ of dimension smaller than $n/4$. Hence, with large probability $d_{\widehat{m}}$ will be larger than $n/4$. Let us fix a model $m$ of dimension smaller than $n/4$.\\

First, we use Expression (\ref{perte_empirique_estimateur}) to lower bound $\gamma_n(\widehat{\theta}_m)$.
\begin{eqnarray*}
\gamma_n\left(\widehat{\theta}_m\right)& = & \|\Pi^\perp_{m}\left(\boldsymbol{\epsilon}+\boldsymbol{\epsilon}_{m_{\lfloor n/2\rfloor}}\right)\|_n^2 +  
\|\Pi^\perp_{m}\left(\boldsymbol{\epsilon}_m-\boldsymbol{\epsilon}_{m_{\lfloor n/2\rfloor}}\right)\|_n^2+2\langle\Pi^\perp_{m}\left(\boldsymbol{\epsilon}+\boldsymbol{\epsilon}_{m_{\lfloor n/2\rfloor}}\right), \Pi^\perp_{m}\left(\boldsymbol{\epsilon}_m-\boldsymbol{\epsilon}_{m_{\lfloor n/2\rfloor}}\right)\rangle_n\\
& \geq & \|\Pi^\perp_{m}\left(\boldsymbol{\epsilon}+\boldsymbol{\epsilon}_{m_{\lfloor n/2\rfloor}}\right)\|_n^2 -\left\langle\Pi^\perp_{m}\left(\boldsymbol{\epsilon}+\boldsymbol{\epsilon}_{m_{\lfloor n/2\rfloor}}\right), \frac{\Pi^\perp_{m}\left(\boldsymbol{\epsilon}_m-\boldsymbol{\epsilon}_{m_{\lfloor n/2\rfloor}}\right)}{\|\Pi^\perp_{m}\left(\boldsymbol{\epsilon}_m-\boldsymbol{\epsilon}_{m_{\lfloor n/2\rfloor}}\right)\|_n}\right\rangle_n^2\ , 
\end{eqnarray*}
since $2ab\geq -a^2-b^2$ for any number $a$ and $b$. Hence, we may upper bound $\Delta(m,m_{\lfloor n/2\rfloor})$ by
\begin{eqnarray}\nonumber
 \Delta(m,m_{\lfloor n/2\rfloor})& \leq & \|\Pi^\perp_{m_{\lfloor n/2\rfloor}}\left(\boldsymbol{\epsilon}+\boldsymbol{\epsilon}_{m_{\lfloor n/2\rfloor}}\right)\|_n^2\left[pen(m_{\lfloor n/2\rfloor})-pen(m)\right] \\ & - & \left\|[\Pi^\perp_{m}-\Pi^\perp_{m_{\lfloor n/2\rfloor}}]\left(\boldsymbol{\epsilon}+\boldsymbol{\epsilon}_{m_{\lfloor n/2\rfloor}}\right)\right\|_n^2\left[1+pen(m)\right] \nonumber\\
& + &\left\langle\Pi^\perp_{m}\left(\boldsymbol{\epsilon}+\boldsymbol{\epsilon}_{m_{\lfloor n/2\rfloor}}\right), \frac{\Pi^\perp_{m}\left(\boldsymbol{\epsilon}_m-\boldsymbol{\epsilon}_{m_{\lfloor n/2\rfloor}}\right)}{\|\Pi^\perp_{m}\left(\boldsymbol{\epsilon}_m-\boldsymbol{\epsilon}_{m_{\lfloor n/2\rfloor}}\right)\|_n}\right\rangle_n^2\left[1+pen(m)\right]\ . \label{variable_complique}
\end{eqnarray}

Arguing as the proof of Lemma \ref{prte_basique}, we observe that $\|\Pi^\perp_{m_{\lfloor n/2\rfloor}}\left(\boldsymbol{\epsilon}+\boldsymbol{\epsilon}_{m_{\lfloor n/2\rfloor}}\right)\|_n^2*n/[\sigma^2+l(\theta_{m_{\lfloor n/2\rfloor}})]$ follows a $\chi^2$ distribution with $n-\lfloor n/2\rfloor$ degrees of freedom. Analogously, the random variable $\|[\Pi^\perp_{m}-\Pi^\perp_{m_{\lfloor n/2\rfloor}}]\left(\boldsymbol{\epsilon}+\boldsymbol{\epsilon}_{m_{\lfloor n/2\rfloor}}\right)\|_n^2*n/[\sigma^2+l(\theta_{m_{\lfloor n/2\rfloor})}]$ follows a $\chi^2$ distribution with $(d_{m_{\lfloor n/2 \rfloor }}-d_m)$ degrees of freedom. Let us turn to the distribution of the third term. Coming back to the definition of $\epsilon_m$, we observe that $$\epsilon_m- \epsilon_{m_{\lfloor n/2\rfloor}}=  Y-X\theta_m-(Y-X\theta_{m_{\lfloor n/2\rfloor}})= X(\theta_m-\theta_{m_{\lfloor n/2\rfloor}})\ .$$
Hence, $\epsilon_m- \epsilon_{m_{\lfloor n/2\rfloor}}$ is both independent of $X_m$ and of $\epsilon+\epsilon_{m_{\lfloor n/2\rfloor}}$. Consequently, by conditioning and unconditioning, we conclude that the random variable defined in (\ref{variable_complique}) follows (up to a $[\sigma^2+l(\theta_{m_{\lfloor n/2\rfloor}})]/n$ factor) a $\chi^2$ distribution with $1$ degree of freedom.

Once again, we apply Lemma \ref{concentration_chi2} and the classical deviation bound $\mathbb{P}\left(|\mathcal{N}(0,1)|\geq \sqrt{2x}\right)\leq 2e^{-x}$. Let $x$ be some positive number smaller than one that we shall fix later. There exists an event $\Omega_x$ of probability larger than $1-\exp(-nx/2)-3\exp(-(n/4-1)x)\frac{1}{1-e^{-x}}$ such for any model of dimension smaller than $n/4$,
\begin{eqnarray*}
\frac{\Delta(m,m_{\lfloor n/2\rfloor})}{\sigma^2+l(\theta_{m_{\lfloor n/2\rfloor}})}&\leq & \left(\frac{n-\lfloor n/2\rfloor}{n}\right)\left(1+2\sqrt{x}+2x\right)\left(pen(m_{\lfloor n/2\rfloor})-pen(m)\right)\\ & -& \frac{\lfloor n/2\rfloor -d_m}{n}(1-2\sqrt{x}-2x)(1+pen(m)) \ .
\end{eqnarray*}
We now replace the penalty terms by their values thanks to Assumption (\ref{condition_petite_penalite}). Conditionally to $\Omega_x$, we obtain that
\begin{eqnarray*}
 \frac{\Delta(m,m_{\lfloor n/2\rfloor})}{\sigma^2+l(\theta_{m_{\lfloor n/2\rfloor}})}&\leq &\frac{\lfloor n/2\rfloor -d_m}{n}\left\{4(1-\nu)(\sqrt{x}+x)\left[1+\frac{d_m}{n-d_m}\right]-\nu(1-2\sqrt{x}-2x)\right\} \ .
\end{eqnarray*}
Since the dimension of the model $m$ is smaller than $n/4$, $\frac{d_m}{n-d_m}$ is smaller than $1/3$. Hence, the last upper bound becomes 
\begin{eqnarray*}
 \frac{\Delta(m,m_{\lfloor n/2\rfloor})}{\sigma^2+l(\theta_{m_{\lfloor n/2\rfloor}})}&\leq &\frac{\lfloor n/2\rfloor -d_m}{n}\left\{\frac{16}{3}(1-\nu)(\sqrt{x}+x)-\nu(1-2\sqrt{x}-2x)\right\} \ .
\end{eqnarray*}
There exists some $x(\nu)$ such that conditionally on $\Omega_{x(\nu)}$, $\Delta(m,m_{\lfloor n/2\rfloor})$ is negative for any model $m$ of dimension smaller than $n/4$. Since $\mathbb{P}(\Omega_{x(\nu)}^c)$ goes exponentially fast with $\nu$ to 0, there exists some $n_0(\nu,\delta)$ such that for any $n$ larger than $n_0(\nu,\delta)$, $\mathbb{P}(\Omega_{x(\nu)}^c)$ is smaller than $\delta$. We have proved that with probability larger than $1-\delta$, the dimension of $\widehat{m}$ is larger than $n/4$.\\

Let us simultaneously lower bound the loss $l(\widehat{\theta}_m,\theta_m)$ for every model $m\in\mathcal{M}$ of dimension larger than $n/4$. In the sequel, $\succeq$  means "stochastically larger than". Thanks to (\ref{perte_biais}), we stochastically lower bound $l(\widehat{\theta}_m,\theta_m)$
\begin{eqnarray*}
l(\widehat{\theta}_m,\theta_m) & \geq & n\varphi_{\text{max}}\left( {\bf Z}_m^*{\bf Z}_m\right)^{-1}\|\Pi_{m}(\boldsymbol{\epsilon}+\boldsymbol{\epsilon}_{m})\|_n^2 \\ & \succeq & \varphi_{\text{max}}\left( n{\bf Z}_m^*{\bf Z}_m\right)^{-1}\|\Pi_{m}\boldsymbol{\epsilon}\|_n^2 ,
\end{eqnarray*}
where ${\bf Z}^*_m{\bf Z}_m$ follows a standard Wishart distribution with parameters $(n,d_m)$. Applying Lemma \ref{concentration_chi2} and Lemma \ref{concentration_vp_wishart} in order to simultaneously lower bound the loss $l(\widehat{\theta}_m,\theta_m)$, we find an event $\Omega'$ of probability larger than $1-\frac{2\exp(-n/4)}{1-e^{-1/16}}$, such that 
\begin{eqnarray*}
 l(\widehat{\theta}_m,\theta_m)\mathbf{1}_{\Omega'} & \geq & \left(1+\sqrt{\frac{d_m}{n}}+\sqrt{\frac{2d_m}{16n}}\right)^{-2}\frac{d_m}{2n}\sigma^2 \geq  \frac{d_m}{8n}\sigma^2\ ,
\end{eqnarray*}
for any model $m\in\mathcal{M}$ of dimension larger than $n/4$. On the event $\Omega_{x(\nu)}$, the dimension $d_{\widehat{m}}$ is larger than $n/4$. As a consequence, $l(\widetilde{\theta},\theta_{\widehat{m}})\mathbf{1}_{\Omega'\cap \Omega_{x(\nu)}}\geq \frac{\sigma^2}{32}$. All in all, we obtain
\begin{eqnarray*}
 \mathbb{E}\left[l(\widetilde{\theta},\theta)\right]& \geq &  l(\theta_{m_{\lfloor n/2\rfloor}},\theta)+ \mathbb{E}\left[\mathbf{1}_{\Omega'\cap \Omega_{x(\nu)}}l(\widetilde{\theta},\theta_{\widehat{m}})\right]\\
& \geq & l(\theta_{m_{\lfloor n/2\rfloor}},\theta)+\left[1-\mathbb{P}(\Omega^c_{x(\nu)})-\mathbb{P}(\Omega'^c)\right]\frac{\sigma^2}{32}\\
& \geq & l(\theta_{m_{\lfloor n/2\rfloor}},\theta)+L(\delta,\nu)\sigma^2\ ,
\end{eqnarray*}
if $n$ is larger than some $n_0(\nu,\delta)$.

\end{proof}

\subsection{Proofs of the minimax lower bounds}

All these minimax lower bounds are based on Birg\'e's version of Fano's Lemma \cite{birgelemma}.

\begin{lemma}\label{lemmebirge}\emph{\bf (Birg\'e's Lemma)} Let $(\Theta,d)$ be some pseudo-metric space and 
  $\{\mathbb{P}_\theta,\theta\in \Theta\}$ be some statistical model. Let $\kappa$ denote some absolute constant smaller than one. Then for any estimator $\widehat{\theta}$ and any finite subset $\Theta_1$ of $\Theta$,
  setting $\delta=min_{\theta,\theta'\in \Theta_1,\theta\neq \theta'}d(\theta,\theta')$, provided that
  $\max_{\theta,\theta'\in \Theta_1}\mathcal{K}(\mathbb{P}_{\theta},\mathbb{P}_{\theta'})\leq \kappa \log|\Theta_1|$, the
  following lower bound holds for every $p\geq 1$,
\begin{eqnarray*}
\sup_{\theta\in \Theta_1}\mathbb{E}_{\theta}[d^p(\widehat{\theta},\theta)]\geq 2^{-p}\delta^p (1-\kappa)\ .
\end{eqnarray*}
\end{lemma}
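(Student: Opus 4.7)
The plan is to use the standard reduction from estimation to multiple testing, together with Birgé's sharpened Fano inequality.

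First, I would associate to any candidate estimator $\widehat{\theta}$ a test $T$ valued in $\Theta_1$ defined by $T := \arg\min_{\theta'\in\Theta_1} d(\widehat{\theta},\theta')$, with ties broken arbitrarily. The triangle inequality together with the definition of $\delta$ yields the inclusion $\{T\neq\theta\}\subseteq\{d(\widehat{\theta},\theta)\geq\delta/2\}$ for every $\theta\in\Theta_1$: indeed if $d(\widehat{\theta},\theta)<\delta/2$, then for any $\theta'\in\Theta_1\setminus\{\theta\}$ one has $d(\widehat{\theta},\theta')\geq d(\theta,\theta')-d(\widehat{\theta},\theta)>\delta/2 > d(\widehat{\theta},\theta)$, so $\theta$ is the strict minimizer and $T=\theta$.

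Second, by Markov's inequality, $\mathbb{E}_\theta[d^p(\widehat{\theta},\theta)]\geq(\delta/2)^p\mathbb{P}_\theta(d(\widehat{\theta},\theta)\geq\delta/2)\geq(\delta/2)^p\mathbb{P}_\theta(T\neq\theta)$. Taking the supremum over $\theta\in\Theta_1$ reduces the problem to a lower bound on $\sup_{\theta\in\Theta_1}\mathbb{P}_\theta(T\neq\theta)$ for an arbitrary test $T$ valued in the finite family $\Theta_1$.

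Third, I would invoke Birgé's refinement of Fano's inequality: under the hypothesis $\max_{\theta,\theta'\in\Theta_1}\mathcal{K}(\mathbb{P}_\theta,\mathbb{P}_{\theta'})\leq\kappa\log|\Theta_1|$, any test $T$ valued in $\Theta_1$ satisfies $\sup_{\theta\in\Theta_1}\mathbb{P}_\theta(T\neq\theta)\geq 1-\kappa$. Chaining this with the previous two steps yields exactly $\sup_{\theta\in\Theta_1}\mathbb{E}_\theta[d^p(\widehat{\theta},\theta)]\geq 2^{-p}\delta^p(1-\kappa)$, as claimed.

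The main technical obstacle is the third step. The classical Fano inequality only yields a weaker bound of the form $1-(\kappa\log N+\log 2)/\log N$ (with $N=|\Theta_1|$), which carries a spurious additive $\log 2$ term and in particular would force an extra assumption on $N$ to conclude. Birgé's sharpening avoids this by applying a convexity argument to the Kullback functional against the uniform mixture $\overline{\mathbb{P}}:=\frac{1}{|\Theta_1|}\sum_{\theta'\in\Theta_1}\mathbb{P}_{\theta'}$ and exploiting the variational characterization of total variation between a single $\mathbb{P}_\theta$ and $\overline{\mathbb{P}}$, thereby eliminating the $\log 2$ defect. Since this is precisely the content of \cite{birgelemma}, I would simply cite the reference rather than redo the derivation.
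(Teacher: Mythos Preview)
The paper does not prove this lemma; it simply states it and attributes the result to \cite{birgelemma}. Your proposal correctly reconstructs the standard argument---reduction from estimation to testing via the nearest-neighbor rule, Markov's inequality, and then Birg\'e's Fano-type bound on the minimax testing error---and ultimately also defers the key third step to the same reference, so there is nothing substantive to compare.
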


First, we compute the Kullback-Leibler divergence between the distribution $\mathbb{P}_{\theta}$ and $\mathbb{P}_{\theta'}$. 
\begin{eqnarray*}
 \mathcal{K}\left(\mathbb{P}_{\theta};\mathbb{P}_{\theta'}\right) = \mathcal{K}\left(\mathbb{P}_{\theta}(X);\mathbb{P}_{\theta'}(X)\right)+\mathbb{E}_{\theta}\left[\mathcal{K}\left(\mathbb{P}_{\theta}(Y|X);\mathbb{P}_{\theta'}(Y|X)\right)\left.\right|X\right]
\end{eqnarray*}
The two marginal distributions $\mathbb{P}_{\theta}(X)$ and $\mathbb{P}_{\theta'}(X)$ are equal. The conditional distributions $\mathbb{P}_{\theta}(Y|X)$ and $\mathbb{P}_{\theta'}(Y|X)$ are Gaussian with variance $\sigma^2$ and with mean respectively equal to $X\theta$ and $X\theta'$. Hence, the conditional Kullback-Leibler divergence  equals
$$\mathcal{K}\left(\mathbb{P}_{\theta}(Y|X);\mathbb{P}_{\theta'}(Y|X)\right)= \frac{\left[X(\theta-\theta')\right]^2}{2\sigma^2}\ .$$
Reintegrating with respect to $X$ yields 
\begin{eqnarray}\label{comparaison_kullback}
\mathcal{K}\left(\mathbb{P}_{\theta};\mathbb{P}_{\theta'}\right)=\frac{l(\theta',\theta)}{2\sigma^2} \text{ and }\  \mathcal{K}\left(\mathbb{P}^{\otimes n}_{\theta};\mathbb{P}^{\otimes n}_{\theta'}\right)=n\frac{l(\theta',\theta)}{2\sigma^2} \ .
\end{eqnarray}

\begin{proof}[Proof of Proposition \ref{minoration_ellipsoides}]

First, we need a lower bound of the minimax rate of estimation on a subspace of dimension $D$.

\begin{lemma}\label{lemme_minoration_dimension}
Let $D$ be some positive number smaller than $p$ and $r$ be some arbitrary positive number. Let $S_D$ be the set of vectors in $\mathbb{R}^p$ whose support in included in $\{1,\ldots,D\}$. Then, for any estimator $\widehat{\theta}$ of $\theta$,  
\begin{eqnarray}
\sup_{\theta\in S_D,\, l(0_p,\theta)\leq Dr^2}\mathbb{E}_{\theta}\left[l(\widehat{\theta},\theta)\right]\geq L D\left[r^2\wedge \frac{\sigma^2}{n}\right] \ .
\end{eqnarray}
\end{lemma}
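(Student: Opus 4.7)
The plan is to apply Birgé's version of Fano's lemma (Lemma \ref{lemmebirge}) to a carefully constructed finite family of parameters inside $S_D$. The main technical tool is Varshamov–Gilbert's lemma, which will provide an exponentially large subset of $\{0,1\}^D$ whose elements are pairwise well-separated in Hamming distance. I would handle small $D$ (say $D=1$) separately, via a direct two-point argument, so assume $D\geq 2$ in the main body of the proof.

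First, I would reduce to the case where the covariates $X_1,\ldots,X_D$ are i.i.d. standard Gaussian. Let $\Sigma_D$ be the top-left $D\times D$ block of $\Sigma$ and set $X'_m := X_m\Sigma_D^{-1/2}$, so that $X'_m$ is standard Gaussian. For any $\theta\in S_D$ with transformed coordinates $\theta' := \Sigma_D^{1/2}\theta_m$, we have $X\theta = X'_m\theta'$ and hence
\begin{equation*}
 l(\theta_1,\theta_2) \;=\; \|\theta'_1-\theta'_2\|^2,\qquad l(0_p,\theta)\;=\;\|\theta'\|^2,
\end{equation*}
where $\|.\|$ is the Euclidean norm. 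Since the map $\theta\mapsto\theta'$ is a bijection on $S_D$ preserving both the loss and the constraint, it suffices to work in transformed coordinates.

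Next I would construct the finite subset $\Theta_1$. Let $\rho>0$ be a parameter to be fixed later and, for each $\omega\in\{0,1\}^D$, define $\theta^{(\omega)}\in S_D$ by its transformed coordinates $\theta'^{(\omega)}_i := \rho\,\omega_i$ for $1\leq i\leq D$. Then $l(0_p,\theta^{(\omega)})=\rho^2\|\omega\|^2\leq D\rho^2$, so the constraint is met provided $\rho^2\leq r^2$. Moreover, $l(\theta^{(\omega)},\theta^{(\omega')})=\rho^2 d_H(\omega,\omega')$, the Hamming distance. By Varshamov–Gilbert (see e.g.\ \cite{massartflour}, Lemma 4.7), there exists $\Omega\subset\{0,1\}^D$ with $|\Omega|\geq e^{D/8}$ such that $d_H(\omega,\omega')\geq D/4$ for all distinct $\omega,\omega'\in\Omega$. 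Set $\Theta_1:=\{\theta^{(\omega)}:\omega\in\Omega\}$; then
\begin{equation*}
 \delta^2 \;:=\; \min_{\theta\neq\theta'\in\Theta_1} l(\theta,\theta')\;\geq\;\frac{D\rho^2}{4},
 \qquad
 \max_{\theta,\theta'\in\Theta_1} l(\theta,\theta')\;\leq\; D\rho^2.
\end{equation*}

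Using identity (\ref{comparaison_kullback}), the Kullback discrepancies between the $n$-fold product measures satisfy
\begin{equation*}
 \max_{\theta,\theta'\in\Theta_1}\mathcal{K}\left(\mathbb{P}^{\otimes n}_{\theta};\mathbb{P}^{\otimes n}_{\theta'}\right)
 \;\leq\; \frac{nD\rho^2}{2\sigma^2}.
\end{equation*}
I now choose $\rho^2 := \bigl(r^2\bigr)\wedge\bigl(\kappa\sigma^2/(4n)\bigr)$ where $\kappa$ is the absolute constant of Lemma \ref{lemmebirge}. With $\log|\Theta_1|\geq D/8$, this guarantees $\max\mathcal{K}\leq \tfrac{\kappa}{8}D\leq \kappa\log|\Theta_1|$. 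Applying Birgé's Fano lemma with $p=1$ then yields
\begin{equation*}
 \sup_{\theta\in\Theta_1}\mathbb{E}_{\theta}\!\left[l(\widehat{\theta},\theta)\right]
 \;\geq\; \frac{1-\kappa}{4}\,\delta^2
 \;\geq\; \frac{(1-\kappa)D\rho^2}{16}
 \;\geq\; L\,D\!\left[r^2\wedge\frac{\sigma^2}{n}\right],
\end{equation*}
which is the claimed bound since $\Theta_1$ is contained in the supremum's feasible set. The only real obstacle is bookkeeping the constants in Varshamov–Gilbert and in Birgé's lemma, and correctly reducing to the standard Gaussian design via $\Sigma_D^{1/2}$; the rest is routine.
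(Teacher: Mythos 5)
Your proof is correct and takes essentially the same route as the paper: a hypercube in whitened coordinates, Varshamov--Gilbert's lemma, the Kullback identity~(\ref{comparaison_kullback}), and Birg\'e's Fano lemma, with only cosmetic differences (whitening via $\Sigma_D^{-1/2}$ at the start rather than Gram--Schmidt at the end; folding the two regimes into the single choice $\rho^2=r^2\wedge(\kappa\sigma^2/(4n))$). One small slip: you say you apply Birg\'e's lemma ``with $p=1$,'' but since you take $d=\sqrt{l}$ and bound $\mathbb{E}_\theta[l(\widehat\theta,\theta)]=\mathbb{E}_\theta[d^2(\widehat\theta,\theta)]$ by $2^{-2}\delta^2(1-\kappa)$, you are in fact using $p=2$.
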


Let us fix some $D\in \{1,\ldots,p\}$. Consider the set $\Theta_D:=\left\{\theta\in S_D, l(0_p,\theta)\leq a_D^2R^2\right\}$. Since the $a_j$'s are 
non increasing, it holds that $$\sum_{i=1}^p\frac{l(\theta_{m_{i-1}},\theta_{m_{i}})}{a_i^2}\leq \sum_{i=1}^D\frac{l(\theta_{m_{i-1}},\theta_{m_{i}})}{a_D^2}\leq \frac{l(0_p,\theta)}{a_D^2}\leq R^2\ ,$$
for any $\theta\in \Theta_D$. Hence $\Theta_D$ is included in $\mathcal{E}_a(R)$. Applying Lemma \ref{lemme_minoration_dimension}, we get
\begin{eqnarray*}
 \inf_{\widehat{\theta}}\sup_{\theta\in\mathcal{E}_a(R)}& \geq &LD \left[ \frac{a_D^2R^2}{D}\wedge\frac{\sigma^2}{n}\right]\\
& \geq & L \left[a_D^2R^2\wedge\frac{D\sigma^2}{n}\right] \ .
\end{eqnarray*}
Taking the supremum over $D$ in $\{1,\ldots, p\}$ enables to conclude.

\end{proof}

\begin{proof}[Proof of Lemma \ref{lemme_minoration_dimension}]
Let us assume first that $\Sigma=I_p$. Consider the hypercube $\mathcal{C}_D(r):=\{0,r\}^D\times \{0\}^{p-D}$. Thanks to (\ref{comparaison_kullback}), we upper bound the Kullback-Leibler divergence between the distributions $\mathbb{P}_\theta$ and $\mathbb{P}_{\theta'}$
\begin{eqnarray*}
  \mathcal{K}\left(\mathbb{P}^{\otimes n}_{\theta};\mathbb{P}^{\otimes n}_{\theta'}\right) \leq \frac{nDr^2}{2\sigma^2} \ ,
\end{eqnarray*}
where $\theta$ and $\theta'$ belong to $\mathcal{C}_D(r)$.
Then, we apply Varshamov-Gilbert's lemma (e.g. Lemma 4.7 in \cite{massartflour}) to the set $\mathcal{C}_D(r)$.
\begin{lemma}[Varshamov-Gilbert's lemma]
Let $\{0,1\}^D$ be equipped with Hamming distance $d_H$. There exists some subset $\Theta$ of $\{0,1\}^D$ with the following properties
$$d_H(\theta,\theta')>D/4\text{ for every }(\theta,\theta')\in \Theta^2\text{ with }\theta\neq \theta'\text{ and } \log |\Theta|\geq D/8\ .$$ 
\end{lemma}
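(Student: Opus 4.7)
The plan is to prove this classical packing result by a greedy construction combined with a Chernoff-type bound on the volume of a Hamming ball. First, I would build $\Theta$ iteratively: initialize $E_0 = \{0,1\}^D$, and at step $i \geq 1$ pick any $\theta_i \in E_{i-1}$ and update $E_i = E_{i-1} \setminus B(\theta_i, D/4)$, where $B(\theta,r) := \{\theta' \in \{0,1\}^D : d_H(\theta, \theta') \leq r\}$ denotes the closed Hamming ball. Since $\{0,1\}^D$ is finite, the procedure terminates after finitely many steps, producing $\Theta := \{\theta_1, \ldots, \theta_N\}$; by construction, any two distinct elements satisfy $d_H(\theta_i, \theta_j) > D/4$, which is the separation property required by the lemma.

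Next, I would lower bound $N$ by a volumetric argument. At each step at most $\max_{\theta} |B(\theta, D/4)|$ points are discarded, so $N \geq 2^D / \max_{\theta} |B(\theta, D/4)|$. The cardinality of a Hamming ball does not depend on its centre by translation invariance, and equals
\[
|B(\theta, D/4)| = \sum_{k=0}^{\lfloor D/4 \rfloor} \binom{D}{k} = 2^D \cdot \mathbb{P}\left[\mathrm{Bin}(D, 1/2) \leq D/4\right].
\]
Applying Hoeffding's inequality to the centred binomial variable at displacement $t = D/4$ from its mean $D/2$ yields $\mathbb{P}[\mathrm{Bin}(D, 1/2) \leq D/4] \leq \exp(-2(D/4)^2/D) = \exp(-D/8)$. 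Combining the two bounds gives $N \geq e^{D/8}$, hence $\log|\Theta| \geq D/8$.

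The argument has no genuine conceptual obstacle: it is a standard greedy/pigeonhole packing calculation, which is precisely why the author cites it from \cite{massartflour} rather than reproducing a proof. The only point requiring care is verifying the numerical match of the constant $1/8$, which follows directly from the Hoeffding bound with displacement $D/4$ around the mean $D/2$. An alternative probabilistic-method proof (sampling $N$ points i.i.d. uniformly and removing one element of each overly close pair) would reach a comparable bound but with a slightly worse constant, so the greedy version is preferable here.
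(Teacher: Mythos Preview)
Your proof is correct. The paper does not supply its own proof of this lemma; it simply cites it as Lemma~4.7 in \cite{massartflour}. Your greedy packing argument combined with the Hoeffding bound on the Hamming-ball volume is exactly the standard route and matches the constants in the statement.
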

Combining Lemma \ref{lemmebirge} with the set $\Theta$ defined in the last lemma yields
\begin{eqnarray*}
 \inf_{\widehat{\theta}}\sup_{\theta\in \mathcal{C}_D(r)}\mathbb{E}_{\theta}\left[d_H(\widehat{\theta},\theta)\right]\geq \frac{D}{16}\ ,
\end{eqnarray*}
provided that $\frac{nDr^2}{2\sigma^2}\leq D/16$. Coming back to the loss function $l(.,.)$ yields
\begin{eqnarray*}
 \inf_{\widehat{\theta}}\sup_{\theta\in \mathcal{C}_D(r)}\mathbb{E}_{\theta}\left[l(\widehat{\theta},\theta)\right]\geq LDr^2\ ,
\end{eqnarray*}
if $r^2\leq L\frac{\sigma^2}{n}$. Finally, we get
\begin{eqnarray*}
 \inf_{\widehat{\theta}}\sup_{\theta\in S_D,\ l(0_p,\theta)\leq Dr^2}\mathbb{E}_{\theta}\left[l(\widehat{\theta},\theta)\right]\geq LD\left[r^2\wedge \frac{\sigma^2}{n}\right]\ .
\end{eqnarray*}

If we no longer assume that the covariance matrix $\Sigma$ is the identity, we orthogonalize the sequence $X_i$ thanks to Gram-Schmidt process. Applying the previous argument to this new sequence of covariates allows to conclude.
\end{proof}

\begin{proof}[Proof of Corollary \ref{adaptation_ellipsoide}]
This result follows from the upper bound on the risk of $\widetilde{\theta}$ in Theorem \ref{ordered_selection} and the minimax lower bound of Proposition \ref{minoration_ellipsoides}. Let $\mathcal{E}_a(R)$ an ellipsoid satisfying $\frac{\sigma^2}{n}\leq R^2\leq \sigma^2n^\beta$, then $l(0_p,\theta)$ is smaller than $\sigma^2n^\beta$. By Theorem \ref{ordered_selection}, the estimator $\widetilde{\theta}$ defined with the collection $\mathcal{M}_{\lfloor n/2\rfloor\wedge p}$ and $pen(m)=K\frac{d_m}{n-d_m}$ satisfies
\begin{eqnarray*}
\mathbb{E}_{\theta}\left[l(\widetilde{\theta},\theta)\right]& \leq &
 L(K)\inf_{1
\leq i\leq \lfloor n/2\rfloor \wedge p}\left\{l(\theta_{m_i},\theta)+K\frac{i}{n-i}[\sigma^2+l(\theta_{m_i},\theta)]\right\}
 + L(K,\beta)\frac{\sigma^2}{n}\\
& \leq &L(K,\beta)\inf_{1
\leq i\leq \lfloor n/2\rfloor \wedge p}\left[l(\theta_{m_i},\theta)+\frac{i}{n}\sigma^2\right] \ .
\end{eqnarray*}
If $\theta$ belongs to $\mathcal{E}_a(R)$, then
\begin{eqnarray*}
 l(\theta_{m_i},\theta)\leq a_{i+1}^2\sum_{j=i+1}^p\frac{l(\theta_{m_j},\theta_{m_{j-1}})}{a_j^2}\leq R^2a_{i+1}^2\ , 
\end{eqnarray*}
since the $(a_i)$'s are increasing. It follows that 
\begin{eqnarray}\label{majoration_ellipsoide}
\mathbb{E}_{\theta}\left[l(\widetilde{\theta},\theta)\right]& \leq &
L(K,\beta)\inf_{1\leq i\leq \lfloor n/2\rfloor \wedge p}\left[R^2a_{i+1}^2+\frac{i}{n}\sigma^2\right] \ .
\end{eqnarray}
Let us define $i^*:=\sup\left\{1\leq i\leq p\, ,\,R^2a_i^2\geq \frac{\sigma^2i}{n}\right\}$,
with the convention $\sup \varnothing =0$. Since $R^2\geq \sigma^2/n$, $i^*$ is
larger or equal to one. By Proposition $\ref{minoration_ellipsoides}$, the minimax rates of estimation is lower bounded as follows
\begin{eqnarray*}
 \inf_{\widehat{\theta}}\sup_{\theta\in \mathcal{E}_a(R)}\mathbb{E}_{\theta}\left[l(\widehat{\theta},\theta)\right]\geq L\left[a_{i^*+1}^2R^2 \vee \frac{\sigma^2 i^*}{n}\right]\geq L\left[a_{i^*+1}^2R^2+\frac{\sigma^2i^*}{n}\right]\ .
\end{eqnarray*}
 If either $p\leq 2n$ or $a_{\lfloor n/2\rfloor +1}^2R^2\leq \sigma^2/2$, then $i^*$ is smaller or equal to $\lfloor n/2\rfloor \wedge p$ and we obtain thanks to (\ref{majoration_ellipsoide}) that
\begin{eqnarray*}
 \mathbb{E}_{\theta}\left[l(\widetilde{\theta},\theta)\right]& \leq &L(K,\beta)\left[a_{i^*+1}^2R^2+\frac{\sigma^2i^*}{n}\right]\\
& \leq &L(K,\beta)\inf_{\widehat{\theta}}\sup_{\theta\in \mathcal{E}_a(R)}\mathbb{E}\left[l(\widehat{\theta},\theta)\right] \ .
\end{eqnarray*}

\end{proof}

\begin{proof}[Proof of Proposition \ref{minoration_minimax}]
First, we use (\ref{comparaison_kullback}) to upper bound the Kullback-Leibler divergence between the distributions corresponding to parameters $\theta$ and $\theta'$ in the set $\Theta[k,p](r)$
\begin{eqnarray*}
  \mathcal{K}\left(\mathbb{P}^{\otimes n}_{\theta};\mathbb{P}^{\otimes n}_{\theta'}\right) \leq \frac{nkr^2}{2\sigma^2}\ ,
\end{eqnarray*}
since the covariates are i.i.d standard Gaussian variables. Let us state a combinatorial argument due to Birg\'e and Massart \cite{birge98}.
\begin{lemma}\label{lemmabirgemassart}
Let $\{0,1 \}^p$ be equipped with Hamming distance $d_H$ and given $1\leq k\leq p/4$, define $\{0,1\}_k^p:=\left\{x\in\{0,1\}^p:d_H(0,x)=k\right\}$. There exists some
subset $\Theta$ of $\{0,1 \}_k^p$ with the following properties
\begin{eqnarray*}
  d_H(\theta,\theta')>k/8 \text{ for every $(\theta,\theta') \in \Theta^2$
  with $\theta\neq\theta'$}\text{ and }
\log|\Theta| \geq   k/5\log \left(\frac{p}{k}\right)\ .
\end{eqnarray*}
\end{lemma}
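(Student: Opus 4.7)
The plan is to prove the lemma by a standard greedy packing argument (Gilbert--Varshamov type) within the constant-weight Hamming sphere $\{0,1\}^p_k$.

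First I would set up the packing. Let $\Theta\subset\{0,1\}^p_k$ be a \emph{maximal} set with the property that any two distinct elements of $\Theta$ satisfy $d_H(\theta,\theta')>k/8$; such a set exists by a trivial greedy construction (start from any element and add points as long as possible). By maximality, every $\theta'\in\{0,1\}^p_k$ lies within Hamming distance $k/8$ of some $\theta\in\Theta$, so the ``constant-weight balls'' $B(\theta,k/8):=\{\theta'\in\{0,1\}^p_k:d_H(\theta,\theta')\leq k/8\}$ cover $\{0,1\}^p_k$. Denoting by $V$ the common cardinality of these balls (which does not depend on $\theta$ by the transitive action of coordinate permutations on $\{0,1\}^p_k$), the covering bound gives
\begin{eqnarray*}
|\Theta|\;\geq\;\frac{\binom{p}{k}}{V}.
\end{eqnarray*}

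Next I would estimate $V$ combinatorially. Two elements of $\{0,1\}^p_k$ at Hamming distance exactly $2i$ differ by swapping $i$ coordinates of the support with $i$ coordinates outside it, so
\begin{eqnarray*}
V\;=\;\sum_{i=0}^{\lfloor k/16\rfloor}\binom{k}{i}\binom{p-k}{i}.
\end{eqnarray*}
Since $i\mapsto\binom{k}{i}$ and $i\mapsto\binom{p-k}{i}$ are both increasing on $\{0,\dots,\lfloor k/16\rfloor\}$ (this range being well below the respective medians, using $k\leq p/4$), one bounds $V$ by $(\lfloor k/16\rfloor+1)$ times the largest term, then applies the classical estimate $\binom{n}{j}\leq(en/j)^{j}$ to obtain
\begin{eqnarray*}
V\;\leq\;\left(\tfrac{k}{16}+1\right)(256\,e^{2}\,p/k)^{k/16}.
\end{eqnarray*}

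Finally I would combine with the lower bound $\binom{p}{k}\geq(p/k)^{k}$ (which follows because each factor $(p-i)/(k-i)$ in the product expansion is $\geq p/k$ when $p\geq k$). Taking logarithms,
\begin{eqnarray*}
\log|\Theta|\;\geq\;\tfrac{15k}{16}\log(p/k)-\tfrac{k}{16}\log(256\,e^{2})-\log\!\left(\tfrac{k}{16}+1\right),
\end{eqnarray*}
and the hypothesis $k\leq p/4$ ensures $\log(p/k)\geq\log 4$, so $\log(p/k)$ is bounded away from zero. A direct arithmetic check then shows the right-hand side is at least $(k/5)\log(p/k)$.

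The one non-routine point, and hence the main obstacle, is the constant bookkeeping in the last step: one must verify that the additive $O(k)$ correction $(k/16)\log(256e^{2})$ and the $O(\log k)$ correction $\log(k/16+1)$ are absorbed by the gap between $\tfrac{15}{16}$ and $\tfrac{1}{5}$. The slack $\tfrac{15}{16}-\tfrac{1}{5}=\tfrac{59}{80}$ multiplied by the floor $\log(p/k)\geq\log 4$ is comfortably larger than $\tfrac{1}{16}\log(256e^{2})$, which is what makes the factor $1/5$ (rather than something closer to $15/16$) appear in the statement --- chosen loose enough to kill the lower-order terms uniformly in $k$ and $p$. The packing-covering duality and the Hamming ball count are otherwise standard.
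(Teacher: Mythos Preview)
Your argument is correct: the greedy packing/covering duality on the constant-weight sphere, the ball count $V=\sum_{i\leq k/16}\binom{k}{i}\binom{p-k}{i}$, the Stirling-type bound $\binom{n}{j}\leq(en/j)^j$, and the final arithmetic (using $\log(p/k)\geq\log 4$ to absorb the $O(k)$ and $O(\log k)$ corrections into the slack $\tfrac{15}{16}-\tfrac15$) all go through. One minor remark: to say that $i\mapsto\binom{p-k}{i}$ is increasing on $\{0,\dots,\lfloor k/16\rfloor\}$ you need $k/16\leq(p-k)/2$, which follows from $k\leq p/4$; you noted this but it is worth making explicit.

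As for comparison with the paper: there is nothing to compare. The paper does not prove this lemma at all --- it quotes it as ``a combinatorial argument due to Birg\'e and Massart'' and refers to \cite{birge98}. Your proof is exactly the standard Gilbert--Varshamov packing argument that underlies the cited result, so you have in effect supplied the omitted proof.
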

Suppose that $k$ is smaller than $p/4$. Applying Lemma \ref{lemmebirge} with Hamming distance $d_H$ and the set $r\Theta$ introduced 
in Lemma \ref{lemmabirgemassart} yields
\begin{eqnarray}\label{minoration_hamming}
\inf_{\widehat{\theta}}\sup_{\theta \in \Theta[k,p](r) }\mathbb{E}_{\theta}\left[d_H\left(\widehat{\theta},\theta\right)\right]\geq  \frac{k}{16}\ ,\hspace{0.5cm} \text{ provided that }\hspace{0.5cm} \frac{nkr^2}{2\sigma^2} \leq  \frac{k}{10}\log\left(\frac{p}{k}\right)\ .
\end{eqnarray}
Since the covariates $X_i$ are independent and of variance 1, the lower bound (\ref{minoration_hamming}) is equivalent to
\begin{eqnarray*}
 \inf_{\widehat{\theta}}\sup_{\theta \in \Theta[k,p](r) }\mathbb{E}_{\theta}\left[l\left(\widehat{\theta},\theta\right)\right]\geq  \frac{kr^2}{16} .
\end{eqnarray*}
All in all, we obtain 
\begin{eqnarray*}
\inf_{\widehat{\theta}} \sup_{\theta \in \Theta[k,p](r) }\mathbb{E}_{\theta}\left[l\left(\widehat{\theta},\theta\right)\right]\geq Lk\left(r^2\wedge \frac{\log \left(\frac{p}{k}\right)}{n}\sigma^2\right)\ .
\end{eqnarray*}
Since $p/k$ is larger than $4$, we obtain the desired lower bound by changing the constant $L$:
\begin{eqnarray*}
\inf_{\widehat{\theta}} \sup_{\theta \in \Theta[k,p](r) }\mathbb{E}_{\theta}\left[l\left(\widehat{\theta},\theta\right)\right]\geq Lk\left(r^2\wedge \frac{1+\log \left(\frac{p}{k}\right)}{n}\sigma^2\right)\ .
\end{eqnarray*}
If $p/k$ is smaller than $4$, we know from the proof of Lemma \ref{lemme_minoration_dimension}, that 
\begin{eqnarray*}
 \inf_{\widehat{\theta}} \sup_{\theta \in \mathcal{C}_k(r) }\mathbb{E}_{\theta}\left[l\left(\widehat{\theta},\theta\right)\right]\geq Lk\left(r^2\wedge \frac{\sigma^2}{n}\right)\ .
\end{eqnarray*}
We conclude by observing that $\log(p/k)$ is smaller than $\log(4)$ and that $\mathcal{C}_k(r)$ is included in $\Theta[k,p](r)$.

\end{proof}

\begin{proof}[Proof of Proposition \ref{minoration_restricted_isometry}]
Assume first the covariates $(X_i)$ have a unit variance. If this is not the case, then one only has to rescale them.
By Condition (\ref{condition_restrited}), the Kullback-Leibler divergence between the distributions corresponding to parameters $\theta$ and $\theta'$ in the set $\Theta[k,p](r)$ satisfies
\begin{eqnarray*}
  \mathcal{K}\left(\mathbb{P}^{\otimes n}_{\theta};\mathbb{P}^{\otimes n}_{\theta'}\right) \leq (1+\delta)^2 \frac{nkr^2}{2\sigma^2} ,
\end{eqnarray*}
We recall that $\|.\|$ refers to the canonical norm in $\mathbb{R}^p$.
Arguing as in the proof of Proposition \ref{minoration_minimax}, we lower bound the risk of any estimator $\widehat{\theta}$ with the loss function $\|.\|$,
\begin{eqnarray*}
 \inf_{\widehat{\theta}}\sup_{\theta \in \Theta[k,p](r) }\mathbb{E}_{\theta}\left[\|\widehat{\theta}-\theta\|^2\right]\geq Lk\left(r^2\wedge \frac{1+\log \left(\frac{p}{k}\right)}{(1+\delta)^2n}\sigma^2\right)\ ,
\end{eqnarray*}
 Applying again Assumption (\ref{condition_restrited}) allows to obtain the desired lower bound on the risk
\begin{eqnarray*}
\inf_{\widehat{\theta}}\sup_{\theta \in \Theta[k,p](r) }\mathbb{E}_{\theta}\left[l(\widehat{\theta},\theta)\right]\geq Lk(1-\delta)^2\left(r^2\wedge \frac{1+\log \left(\frac{p}{k}\right)}{(1+\delta)^2n}\sigma^2\right)\ .
\end{eqnarray*}
\end{proof}

\begin{proof}[Proof of Proposition \ref{spatial}]
In short, we find a subset $\Phi\subset\{1,\ldots,p\}$ whose correlation matrix follows a $1/2$-Restricted Isometry Property of size $2k$. We then apply Proposition \ref{minoration_restricted_isometry} with the subset $\Phi$ of covariates.\\

We first consider the correlation matrix $\Psi_1(\omega)$. Let us pick a maximal subset $\Phi\subset\{1,\ldots p\}$ of points that are $\lceil \log(4k)/\omega \rceil$ spaced with respect to the toroidal distance. Hence, the cardinality of $\Phi$ is $\lfloor p\lceil \log(4k)/\omega \rceil^{-1}\rfloor$. Assume that $k$ is smaller than this quantity.
We call $C$ the correlation matrix of the points that belong to $\Phi$. Obviously, for any $(i,j)\in \Phi^2$, it holds that $|C(i,j)|\leq 1/(4k)$ if $i\neq j$. Hence, any submatrix of $C$ with size $2k$ is diagonally dominant and the sum of the absolute value of its non-diagonal elements is smaller than $1/2$. Hence, the eigenvalues of any submatrix of $C$ with size $2k$ lies between $1/2$ and $3/2$. The matrix $C$ therefore follows a $1/2$-Restricted Isometry Property of size $2k$. Consequently, we may apply Proposition \ref{minoration_restricted_isometry} with the subset of covariates $\Phi$ and the result follows. The second case is handled similarly.~\\ ~\\
{\bf Definition of the correlations}

Let us now justify why these correlations are well-defined when $p$ is an odd integer. We shall prove that the matrices $\Psi_1(\omega)$ and $\Psi_2(t)$ are non-negative. Observe that these two matrices are  symmetric and circulant. This means that there exists a family of numbers $(a_k)_{1\leq k \leq p}$ such that 
$$\Psi_1(\omega)[i,j]= a_{i-j \text{ mod }p}\ \text{ for any } 1\leq i,j\leq p\ .$$
Such matrices are known to be jointly diagonalizable in the same basis and their eigenvalues correspond to the discrete Fourier transform of $(a_k)$. More precisely, their eigenvalues $(\lambda_l)_{1\leq l\leq p}$ are expressed as
\begin{eqnarray}\label{definition_valeur_propre}
 \lambda_l:= \sum_{k=0}^{p-1}\exp\left(\frac{2i\pi kl}{p}\right) a_k\ .
\end{eqnarray}
We refer to \cite{rue} Sect. 2.6.2 for more details.
In the first example, $a_k$ equals $\exp(-\omega(k\wedge(p-k))$, whereas it equals $[1+(k\wedge (p-k))]^{-t}$ in the second example.~\\~\\
{\bf CASE 1:}
Using the expression (\ref{definition_valeur_propre}), one can compute $\lambda_l$.
\begin{eqnarray*}
 \lambda_l & =  &-1+ 2\sum_{k=0}^{(p-1)/2}\cos\left(\frac{2\pi kl }{p}\right) \exp(-k\omega) \\
& =&  -1 + 2\mathrm{Re}\left\{\sum_{k=0}^{(p-1)/2} \exp\left[k(i2\pi\frac{l}{p}-\omega\right]\right\}\\
&= & -1+ 2\mathrm{Re}\left\{\frac{1-e^{-\omega\frac{p+1}{2}}(-1)^le^{i2\pi\frac{l}{p}}}{1-e^{-\omega+i2\pi\frac{l}{p}}}\right\}\\
&= & -1 +2 \frac{1-e^{-\omega}\cos\left(\frac{2\pi l}{p}\right)+e^{-\omega(p+1)/2}(-1)^{l}\cos\left(\frac{\pi l}{p}\right)\left(e^{-\omega}-1\right)}{1+e^{-2\omega}-2e^{-\omega}\cos\left(\frac{2\pi l}{p}\right)}
\end{eqnarray*}
Hence, we obtain that
\begin{eqnarray*}
 \lambda_l\geq 0 \Leftrightarrow  1+ 2e^{-\omega(p+1)/2}(-1)^l\cos\left(\frac{\pi l}{p}\right)\left(e^{-\omega}-1\right)- e^{-2\omega}\geq 0\ .
\end{eqnarray*}
It is sufficient to prove that 
\begin{eqnarray*}
 1 - e^{-2\omega} +2e^{-\omega(p+3)/2}- 2e^{-\omega(p+1)/2}\geq 0\ .
\end{eqnarray*}
This last expression is non-negative if $\omega$ equals zero and is increasing with respect to $\omega$. We conclude that $\lambda_l$ is non-negative for any $1\leq l \leq p$. The matrix $\Psi_1(\omega)$ is therefore non-negative and defines a correlation.~\\~\\
{\bf CASE 2:}
Let us prove that the corresponding eigenvalues $\lambda_l$ are non-negative.
\begin{eqnarray*}
\lambda_l = -1+ 2\sum_{k=0}^{(p-1)/2}\cos\left(\frac{2\pi kl }{p}\right) (k+1)^{-t} 
\end{eqnarray*}
Using the following identity
$$(k+1)^{-t}= \frac{1}{\Gamma(t)}\int_{0}^{\infty}e^{-r(k+1)}r^{t-1}dr\ ,$$
we decompose $\lambda_l$ into a sum of integrals.
\begin{eqnarray*}
 \lambda_l =\frac{1}{\Gamma(t)}\left\{\int_{0}^{\infty}r^{t-1}e^{-r}\left[-1+2\sum_{k=0}^{(p-1)/2}\cos\left(\frac{2\pi kl}{p}\right)e^{-rk}\right]\right\}dr\ .
\end{eqnarray*}
The term inside the brackets corresponds to the eigenvalue for an exponential correlation with parameter $r$ (CASE 1). This expression is therefore non-negative for any $r\geq 0$. In conclusion, the matrix $\Psi_2(t)$ is non-negative and the correlation is defined.
\end{proof}

\section*{Appendix}

\begin{proof}[Proof of Lemma \ref{lemma_expression_perte_perte_empirique}]
We recall that $\gamma_n(\widehat{\theta}_m)=\|{\bf Y}-\Pi_{m}{\bf Y}\|_n^2$. Thanks to the definition (\ref{definition_epsilonm}) of $\epsilon$ and $\epsilon_m$ , we obtain the first result. Let us turn to the mean squared error $\gamma(\widehat{\theta}_m)$. In the following computation $\widehat{\theta}_m$ is considered as fixed and we only use that $\widehat{\theta}_m$ belongs to $S_m$. By definition,
\begin{eqnarray*}
 \gamma(\widehat{\theta}_m) & = &\mathbb{E}_{Y,X}\left[Y-X\widehat{\theta}_m\right]^2 = \sigma^2+ \mathbb{E}_{X}\left[X(\theta-\widehat{\theta}_m)\right]^2 \\
& = & \sigma^2 + l(\theta_m,\theta)+ l(\widehat{\theta}_m,\theta_m) \ ,
\end{eqnarray*}
since $\theta_m$ is the orthogonal projection of $\theta$ with respect to the inner product associated to the loss $l(.,.)$. We then derive that
\begin{eqnarray*}
l(\widehat{\theta}_m,\theta_m) & = & \mathbb{E}_{X_m}\left[X\left(\theta_m-\widehat{\theta}_m \right)\right]^2=\left(\theta_m-\widehat{\theta}_m \right)^*\Sigma \left(\theta_m-\widehat{\theta}_m \right)\ .
\end{eqnarray*}
Since $\widehat{\theta}_m$ is the least-squares estimator of $\theta_m$, it follows from (\ref{definition_epsilonm}) that 
\begin{eqnarray*}
 l(\widehat{\theta}_m,\theta_m) & = &(\boldsymbol{\epsilon}+\boldsymbol{\epsilon}_{m})^*{\bf X}_{m}({\bf X}^*_{m}{\bf X}_{m})^{-1}\Sigma_m({\bf X}^*_{m}{\bf X}_{m})^{-1}{\bf X}^*_{m}(\boldsymbol{\epsilon}+\boldsymbol{\epsilon}_{m})\ .
\end{eqnarray*}
We replace ${\bf X}_m$ by ${\bf Z}_m \sqrt{\Sigma_m}$ and therefore obtain
\begin{eqnarray*}
 l(\widehat{\theta}_m,\theta_m) & = &(\boldsymbol{\epsilon}+\boldsymbol{\epsilon}_{m})^*{\bf Z}_{m}({\bf Z}^*_{m}{\bf Z}_{m})^{-2}{\bf Z}^*_{m}(\boldsymbol{\epsilon}+\boldsymbol{\epsilon}_{m})\ .
\end{eqnarray*}
\end{proof}

\begin{proof}[Proof of Lemma \ref{prte_basique}]
 Thanks to Equation (\ref{perte_empirique_estimateur}), we know that $\gamma_n(\widehat{\theta}_m)= \|\Pi_m^{\perp}(\boldsymbol{\epsilon}+\boldsymbol{\epsilon}_m)\|_n^2$. The variance of $\epsilon+\epsilon_m$ is $\sigma^2 +l(\theta_m, \theta)$. Since $\boldsymbol{\epsilon}+\boldsymbol{\epsilon}_m$ is independent of ${\bf X}_m$, $\gamma_n(\widehat{\theta}_m)*n/[\sigma^2+l(\theta_m,\theta)]$  follows a $\chi^2$ distribution with $n-d_m$ degrees of freedom  and the result follows.

Let us turn to the expectation of $\gamma(\widehat{\theta}_m)$. By (\ref{perte_estimateur}), $\gamma(\widehat{\theta}_m)$ equals 
\begin{eqnarray*}
\gamma\left(\widehat{\theta}_m\right)& = &\sigma^2+l(\theta_m,\theta)+(\boldsymbol{\epsilon}+\boldsymbol{\epsilon}_{\widehat{m}})^*{\bf Z}_{\widehat{m}}({\bf Z}^*_{\widehat{m}}{\bf Z}_{\widehat{m}})^{-2}{\bf Z}^*_{\widehat{m}}(\boldsymbol{\epsilon}+\boldsymbol{\epsilon}_{\widehat{m}})\ ,
\end{eqnarray*}
following the arguments of the proof of Lemma \ref{lemma_expression_perte_perte_empirique}. Since $\epsilon + \epsilon_m$ and $X_m$ are independent, one may integrate with respect to $\boldsymbol{\epsilon}+\boldsymbol{\epsilon}_m$ 
\begin{eqnarray*}
\mathbb{E}\left[ \gamma(\widehat{\theta}_m)\right]= & \left[\sigma^2 + l(\theta_m,\theta)\right]\left\{1+ \mathbb{E}\left[tr\left( {\bf Z}^*_m{\bf Z}_m)^{-1}\right)\right]\right\}\ ,
\end{eqnarray*}
where the last term it the expectation of the trace of an inverse standard Wishart matrix of parameters $(n,d_m)$. Thanks to  \cite{rosen1988}, we know  that it equals $\frac{d_m}{n-d_m-1}$.
\end{proof}

\begin{proof}[Proof of Lemma \ref{concentration_chi2_fine}]
The random variable $\sqrt{\chi^2(d)}$ may be interpreted as a Lipschitz function with constant 1 on $\mathbb{R}^d$ equipped with the standard Gaussian measure. Hence, we may apply the Gaussian concentration theorem (see e.g. \cite{massartflour} Th. 3.4). For any $x>0$, 
\begin{eqnarray}\label{concentration_gaussienne_1}
 \mathbb{P}\left(\sqrt{\chi^2(d)}\leq \mathbb{E}\left[\sqrt{\chi^2(d)}\right]-\sqrt{2x}\right)\leq \exp(-x)\ .
\end{eqnarray}
In order to conclude, we need to lower bound $\mathbb{E}\left[\sqrt{\chi^2(d)}\right]$. Let us introduce the variable $Z:=1-\sqrt{\frac{\chi^2(d)}{d}}$. By definition, $Z$ is smaller or equal to one. Hence, we upper bound $\mathbb{E}(Z)$ as
\begin{eqnarray*}
 \mathbb{E}(Z) \leq \int_{0}^1\mathbb{P}(Z\geq t)dt \leq \int_{0}^{\sqrt{\frac{1}{8}}}\mathbb{P}(Z\geq t)dt + \mathbb{P}(Z \geq \sqrt{\frac{1}{8}})\ .
\end{eqnarray*}
Let us upper bound $\mathbb{P}(Z\geq t)$ for any $0\leq t\leq \sqrt{\frac{1}{8}}$ by applying Lemma \ref{concentration_chi2}
\begin{eqnarray*}
 \mathbb{P}\left(Z \geq t\right)& \leq & \mathbb{P}\left(\chi^2(d)\leq d\left[1-t\right]^2\right)\\
&\leq & \mathbb{P}\left(\chi^2(d)\leq d-2\sqrt{d}\sqrt{dt^2/2}\right)\leq \exp\left(-\frac{dt^2}{2}\right)\ ,
\end{eqnarray*}
since $t\leq 2-\sqrt{2}$. Gathering this upper bound with the previous inequality yields
\begin{eqnarray*}
 \mathbb{E}(Z) &\leq & \exp\left(-\frac{d}{16}\right) +\int_{0}^{+\infty}\exp\left(-\frac{dt^2}{2}\right)dt\\
&\leq &	\exp\left(-\frac{d}{16}\right) +\sqrt{\frac{\pi}{2d}}\ .
\end{eqnarray*}
Thus, we obtain $\mathbb{E}\left(\sqrt{\chi^2(d)}\right)\geq \sqrt{d}-\sqrt{d}\exp(-d/16)-\sqrt{\pi/2}$. Combining this lower bound with (\ref{concentration_gaussienne_1}) allows to conclude.
\end{proof}

\section*{Acknowledgements}
I gratefully thank Pascal Massart for many fruitful discussions. I also would like to thank the referee for his suggestions that led to an improvement of the paper.

\addcontentsline{toc}{section}{References}

\bibliographystyle{plain}

\bibliography{estimation}

\end{document}